\numberwithin{equation}{section}
\theoremstyle{plain}
\newtheorem{Cl}[equation]{Claim}
\newtheorem{Cor}[equation]{Corollary}
\newtheorem{Lem}[equation]{Lemma}
\newtheorem{Prop}[equation]{Proposition}
\newtheorem{Thm}[equation]{Theorem}
\theoremstyle{definition}
\newtheorem{Defn}[equation]{Definition}
\newtheorem{Ex}[equation]{Example}
\newtheorem{Exs}[equation]{Examples}
\theoremstyle{remark}
\newtheorem{Rk}[equation]{Remark}
\theoremstyle{plain}
\newtheorem*{Cl*}{Claim}
\newtheorem*{Conj*}{Conjecture}
\newtheorem*{Lem*}{Lemma}
\newtheorem*{Prop*}{Proposition}
\newtheorem*{Q*}{Question}
\newtheorem*{Schol*}{Scholium}
\newtheorem*{SubCl*}{Subclaim}
\newtheorem*{Thm*}{Theorem}
\theoremstyle{definition}
\newtheorem*{Cond*}{Condition}
\newtheorem*{Cstr*}{Construction}
\newtheorem*{Defn*}{Definition}
\newtheorem*{Ex*}{Example}
\newtheorem*{Exs*}{Examples}
\newtheorem*{Md*}{Method}
\newtheorem*{Nt*}{Notation}
\newtheorem*{Pty*}{Property}
\theoremstyle{remark}
\newtheorem*{Rk*}{Remark}
\newtheorem*{Rks*}{Remarks}
\newtheorem*{A-d}{Aside}
\newcommand{\cag}{\begin{equation}\begin{gathered}}
\newcommand{\caag}{\end{gathered}\end{equation}}
\newcommand{\caw}{\begin{equation*}\begin{gathered}}
\newcommand{\caaw}{\end{gathered}\end{equation*}}
\newcommand{\e}{\begin{equation}\begin{aligned}}
\newcommand{\ee}{\end{aligned}\end{equation}}
\newcommand{\ew}{\begin{equation*}\begin{aligned}}
\newcommand{\eew}{\end{aligned}\end{equation*}}
\newcommand{\bcd}{\begin{tikzcd}}
\newcommand{\ecd}{\end{tikzcd}}
\newcommand{\bma}{\begin{matrix}}
\newcommand{\ema}{\end{matrix}}
\newcommand{\bpm}{\begin{pmatrix}}
\newcommand{\epm}{\end{pmatrix}}
\newcommand{\bvm}{\begin{vmatrix}}
\newcommand{\evm}{\end{vmatrix}}
\newcommand{\nts}{\begin{tcolorbox}}
\newcommand{\ntss}{\end{tcolorbox}}
\newcommand{\aref}[1]{Appendix \ref{#1}}
\newcommand{\cref}[1]{Corollary \ref{#1}}
\newcommand{\dref}[1]{Definition \ref{#1}}
\newcommand{\eref}[1]{eqn.\hspace{0.6mm}(\ref{#1})}
\newcommand{\erefs}[1]{eqns.\hspace{0.6mm}(\ref{#1})}
\newcommand{\exref}[1]{Example \ref{#1}}
\newcommand{\exrefs}[1]{Examples \ref{#1}}
\newcommand{\lref}[1]{Lemma \ref{#1}}
\newcommand{\pref}[1]{Proposition \ref{#1}}
\newcommand{\rref}[1]{Remark \ref{#1}}
\newcommand{\sref}[1]{\S\ref{#1}}
\newcommand{\srefs}[1]{\S\S\ref{#1}}
\newcommand{\tref}[1]{Theorem \ref{#1}}
\newcommand{\trefs}[1]{Theorems \ref{#1}}
\newcommand{\mss}[1]{\mbox{\scriptsize \(#1\)}}
\newcommand{\mns}[1]{\mbox{\normalsize \(#1\)}}
\newcommand{\mLa}[1]{\mbox{\Large \(#1\)}}
\newcommand{\bb}[1]{\mathbb{#1}}
\newcommand{\cal}[1]{\mathscr{#1}}
\newcommand{\fr}[1]{\mathfrak{#1}}
\newcommand{\mb}[1]{\mbox{\boldmath \(#1\)}}
\newcommand{\mc}[1]{\mathcal{#1}}
\newcommand{\as}{\hspace{5mm}\text{as}\hspace{5mm}}
\newcommand{\et}{\hspace{5mm}\text{and}\hspace{5mm}}
\newcommand{\hs}[1]{\hspace{#1}}
\newcommand{\vs}[1]{\vspace{#1}}
\DeclareMathSymbol{\Alpha}{\mathalpha}{operators}{"41}
\DeclareMathSymbol{\Beta}{\mathalpha}{operators}{"42}
\DeclareMathSymbol{\Epsilon}{\mathalpha}{operators}{"45}
\DeclareMathSymbol{\Zeta}{\mathalpha}{operators}{"5A}
\DeclareMathSymbol{\Eta}{\mathalpha}{operators}{"48}
\DeclareMathSymbol{\Iota}{\mathalpha}{operators}{"49}
\DeclareMathSymbol{\Kappa}{\mathalpha}{operators}{"4B}
\DeclareMathSymbol{\Mu}{\mathalpha}{operators}{"4D}
\DeclareMathSymbol{\Nu}{\mathalpha}{operators}{"4E}
\DeclareMathSymbol{\Omicron}{\mathalpha}{operators}{"4F}
\DeclareMathSymbol{\Rho}{\mathalpha}{operators}{"50}
\DeclareMathSymbol{\Tau}{\mathalpha}{operators}{"54}
\DeclareMathSymbol{\Chi}{\mathalpha}{operators}{"58}
\DeclareMathSymbol{\omicron}{\mathord}{letters}{"6F}
\newcommand{\al}{\alpha}
\newcommand{\be}{\beta}
\newcommand{\ga}{\gamma}
\newcommand{\de}{\delta}
\newcommand{\ep}{\varepsilon}
\newcommand{\ze}{\zeta}
\renewcommand{\th}{\theta}
\newcommand{\io}{\iota}
\newcommand{\ka}{\kappa}
\newcommand{\la}{\lambda}
\newcommand{\omi}{\omicron} 
\newcommand{\vpi}{\varpi}
\newcommand{\rh}{\rho}
\newcommand{\si}{\sigma}
\newcommand{\ta}{\tau}
\newcommand{\up}{\upsilon}
\newcommand{\ph}{\phi}
\newcommand{\vph}{\upvarphi}
\newcommand{\vps}{\uppsi}
\newcommand{\ch}{\chi}
\newcommand{\ps}{\psi}
\newcommand{\om}{\omega}
\newcommand{\Ga}{\Gamma}
\newcommand{\De}{\Delta}
\newcommand{\La}{\Lambda}
\newcommand{\Ph}{\Phi}
\newcommand{\Ps}{\Psi}
\newcommand{\Om}{\Omega}
\newcommand{\<}{\langle}
\newcommand{\?}{\rangle}
\newcommand{\Ann}{\operatorname{Ann}}
\newcommand{\Ds}{\bigoplus}
\newcommand{\ds}{\oplus}
\newcommand{\End}{\operatorname{End}}
\newcommand{\Hom}{\operatorname{Hom}}
\newcommand{\Id}{\operatorname{Id}}
\newcommand{\Ker}{\operatorname{Ker}}
\newcommand{\lqt}[2]{\left.\raisebox{-1mm}{\(#2\)}\middle\backslash\raisebox{1mm}{\(#1\)}\right.}
\newcommand{\rqt}[2]{\left.\raisebox{1mm}{\(#1\)}\middle/\raisebox{-1mm}{\(#2\)}\right.}
\newcommand{\ts}{\otimes}
\newcommand{\Tr}{\operatorname{Tr}}
\newcommand{\x}{\times}
\newcommand{\del}{\partial}
\newcommand{\Hocl}[1]{\overset{\circ}{H^{#1}}_{\kern-1.9mm\cl}}
\newcommand{\lop}{\left\|\kern-1.30mm\left\|}
\newcommand{\op}{\|\kern-1.30mm\|}
\newcommand{\rop}{\right\|\kern-1.30mm\right\|}
\newcommand{\SI}{\operatorname{\cal{I}\kern-1.5pt nd}}
\newcommand{\cc}{\subseteq}
\newcommand{\es}{\emptyset}
\newcommand{\mf}{\leftmapsto}
\newcommand{\mt}{\mapsto}
\newcommand{\osr}{\backslash}
\newcommand{\oto}[1]{\xrightarrow{#1}}
\newcommand{\pc}{\subset}
\newcommand{\B}{\mathrm{B}}
\newcommand{\CL}{\mathcal{C}l}
\newcommand{\cl}{\mathrm{closed}}
\newcommand{\Conv}{\mathrm{Conv}}
\newcommand{\dd}{\mathrm{d}}
\newcommand{\Diff}{\operatorname{Diff}}
\newcommand{\dR}[1]{H^{#1}_{\operatorname{dR}}}
\newcommand{\emb}{\hookrightarrow}
\newcommand{\Gr}{\mathrm{Gr}}
\newcommand{\hk}{\righthalfcup}
\newcommand{\Hs}{\raisebox{1pt}{\mss{\bigstar}}}
\newcommand{\ls}{\left\lsem}
\newcommand{\M}{\mathrm{M}}
\newcommand{\N}{\mathrm{N}}
\newcommand{\oGr}{\widetilde{\mathrm{\Gr}}}
\newcommand{\Op}{\mathcal{O}p}
\newcommand{\rs}{\right\rsem}
\newcommand{\s}{\odot}
\newcommand{\sch}[2]{\operatorname{H}^{#1}\left(#2\right)}
\newcommand{\sph}{\widetilde{\phi}}
\renewcommand{\ss}[2][{}]{\bigodot{\hspace{-1mm}}^{#2}_{#1}\hspace{0.6mm}}
\newcommand{\svph}{\widetilde{\upvarphi}}
\newcommand{\svps}{\widetilde{\uppsi}}
\newcommand{\T}{\mathrm{T}}
\newcommand{\tl}{{\mns{\sim}}}
\newcommand{\w}{\wedge}
\newcommand{\ww}[2][{}]{\bigwedge{\hspace{-1mm}}^{#2}_{\hspace{1mm}#1}\hspace{0.1mm}}
\newcommand{\0}{\infty}
\newcommand{\1}{\cdot}
\renewcommand{\ge}{\geqslant}
\newcommand{\gl}{\hspace{0.4mm}\raisebox{0.8mm}{\(>\)}\kern-1.8mm\raisebox{-0.8mm}{\(<\)}\hspace{0.4mm}}
\newcommand{\gle}{\hspace{0.4mm}\raisebox{1.2mm}{\(\ge\)}\kern-1.8mm\raisebox{-1.2mm}{\(\le\)}\hspace{0.4mm}}
\renewcommand{\le}{\leqslant}
\newcommand{\pt}{\bullet}
\newcommand{\sgn}{\operatorname{sign}}
\newcommand{\g}{\(\mathrm{G}_2\)}
\newcommand{\Gg}{\mathrm{G}}
\newcommand{\GL}{\operatorname{GL}}
\newcommand{\sg}{\(\widetilde{\mathrm{G}}_2\)}
\newcommand{\SL}{\operatorname{SL}}
\newcommand{\slc}{\(\operatorname{SL}(3;\mathbb{C})\)}
\newcommand{\slr}{\(\operatorname{SL}(3;\mathbb{R})^2\)}
\newcommand{\Sp}{\operatorname{Sp}}
\newcommand{\Stab}{\operatorname{Stab}}
\newcommand{\SU}{\operatorname{SU}}
\renewcommand{\iff}{if and only if}
\newcommand{\stp}{suffices to prove}
\newcommand{\Wlg}{Without loss of generality}
\newcommand{\wlg}{without loss of generality}
\newcommand{\wrt}{with respect to}
\newcommand{\F}{Fr\'{e}chet}
\newcommand{\ol}[1]{\overline{#1}}
\newcommand{\h}{\widehat}
\newcommand{\lt}{\left}
\newcommand{\m}{\middle}
\newcommand{\rt}{\right}
\newcommand{\tld}{\widetilde}
\title[Relative \(\lowercase{h}\)-principles for closed stable forms]{\boldmath Relative \(\lowercase{h}\)-principles for closed stable forms}
\author{Laurence H. Mayther}
\begin{document}\fontsize{10pt}{12pt}\selectfont
\begin{abstract}
\footnotesize{This paper uses convex integration to develop a new, general method for proving relative \(h\)-principles for closed, stable, exterior forms on manifolds.  This method is applied to prove the relative \(h\)-principle for 4 classes of closed stable forms which were previously not known to satisfy the \(h\)-principle, {\it viz.}\ stable \((2k-2)\)-forms in \(2k\) dimensions, stable \((2k-1)\)-forms in \(2k+1\) dimensions, \sg\ 3-forms and \sg\ 4-forms.  The method is also used to produce new, unified proofs of all three previously established \(h\)-principles for closed, stable forms, {\it viz.}\ the \(h\)-principles for closed stable 2-forms in \(2k+1\) dimensions, closed \g\ 4-forms and closed \slc\ 3-forms.  In addition, it is shown that if a class of closed stable forms satisfies the relative \(h\)-principle, then the corresponding Hitchin functional (whenever defined) is necessarily unbounded above.

Due to the general nature of the \(h\)-principles considered in this paper, the application of convex integration requires an analogue of Hodge decomposition on arbitrary \(n\)-manifolds (possibly non-compact, or with boundary) which cannot, to the author's knowledge, be found elsewhere in the literature.  Such a decomposition is proven in \aref{NCH-Sec}.}
\end{abstract}
\maketitle

\section{Introduction}

Let \(\si_0 \in \ww{p}\lt(\bb{R}^n\rt)^*\) and let \(\M\) be an oriented \(n\)-manifold.  A \(\si_0\)-form on \(\M\) shall mean a \(p\)-form \(\si \in \Om^p(\M)\) such that, for each \(x \in \M\), there exists an orientation-preserving isomorphism \(\al: \T_x\M \to \bb{R}^n\) satisfying \(\al^*\si_0 = \si\).  Write \(\ww[\si_0]{p}\T^*\M\) for the bundle of \(\si_0\)-forms over \(\M\) and \(\Om^p_{\si_0}\) for the corresponding sheaf of sections.  Following Hitchin \cite{SF&SM}, say that \(\si_0\) is stable if its \(\GL_+(n;\bb{R})\)-orbit in \(\ww{p}\lt(\bb{R}^n\rt)^*\) is open.

Now let \(A \pc \M\) be a possibly empty submanifold of \(\M\) (or, more generally, a polyhedron; see \sref{diff-rels}).  Following \cite{PDR}, write \(\Op(A)\) for an arbitrarily small but unspecified open neighbourhood of \(A\) in \(\M\), which may be shrunk whenever necessary.  Let \(D^q\) denote the \(q\)-dimensional disc (\(q \ge 0\)), let \(\al: D^q \to \dR{p}(\M)\) be a continuous map and let \(\fr{F}_0: D^q \to \Om^p_{\si_0}(\M)\) be a continuous map such that:
\begin{enumerate}
\item For all \(s \in \del D^q\): \(\dd\fr{F}_0(s) = 0\) and \([\fr{F}_0(s)] = \al(s) \in \dR{p}(\M)\);
\item For all \(s \in D^q\): \(\dd\lt(\fr{F}_0(s)|_{\Op{A}}\rt) = 0\) and \(\lt[\fr{F}_0(s)|_{\Op(A)}\rt] = \al(s)|_{\Op(A)} \in \dR{p}(\Op(A))\).
\end{enumerate}

By combining the \(h\)-principles defined in \cite[\S6.2.C]{ItthP} and \cite[Thm.\ 5.3]{NIoG2S}, \(\si_0\)-forms shall be said to satisfy the relative \(h\)-principle if for every \(\M\), \(A\), \(q\), \(\al\) and \(\fr{F}_0\) as above, there exists a homotopy \(\fr{F}_\pt: [0,1] \x D^q \to \Om^p_{\si_0}(\M)\), constant over \(\del D^q\), satisfying:
\begin{enumerate}
\setcounter{enumi}{2}
\item For all \(s \in D^q\) and \(t \in [0,1]\): \(\fr{F}_t(s)|_{\Op(A)} = \fr{F}_0(s)|_{\Op(A)}\);
\item For all \(s \in D^q\): \(\dd\fr{F}_1(s) = 0\) and \([\fr{F}_1(s)] = \al(s) \in \dR{p}(\M)\).
\end{enumerate}

The following four choices of \(\si_0\) are of primary interest in this paper:
\ew
\si_0 &= \svph_0 = \th^{123} - \th^{145} - \th^{167} + \th^{246} - \th^{257} - \th^{347} - \th^{356} \in\ww{3}\lt(\bb{R}^7\rt)^*;\\
\si_0 &= \svps_0 = \th^{4567} - \th^{2367} - \th^{2345} + \th^{1357} - \th^{1346} - \th^{1256} - \th^{1247} \in\ww{4}\lt(\bb{R}^7\rt)^*;\\
\si_0 &= \vpi_+(k) = \sum_{i=1}^k \th^{12...\h{2i-1,2i}...2k-1,2k} \in \ww{2k-2}\lt(\bb{R}^{2k}\rt)^*, \hs{3mm} (k \ge 3);\\
\si_0 &= \xi_0(k) = \th^1 \w \sum_{i=1}^k \th^{23...\h{2i,2i+1}...2k,2k+1} \in \ww{2k-1}\lt(\bb{R}^{2k+1}\rt)^*, \hs{3mm} (k \ge 2),
\eew
where \(\lt(\th^1,...,\th^n\rt)\) denotes the canonical basis of \(\lt(\bb{R}^n\rt)^*\), \(\h{~}\) denotes that the corresponding indices should be omitted and multi-index notation \(\th^{ij...k} = \th^i \w \th^j \w ... \w \th^k\) is used throughout this paper.  \(\svph_0\)-forms and \(\svps_0\)-forms are termed \sg\ 3- and 4-forms respectively and are equivalent to \sg-structures on \(\M\), i.e.\ principal \sg-subbundles of the frame bundle of \(\M\).  \(\vpi_+(k)\)-forms are here termed ossymplectic forms (or, more precisely, osemproplectic forms; see \sref{OS-P-OP} for the motivation for these names) and are equivalent to \(\Sp(2k;\bb{R})\)-structures (i.e.\ almost symplectic structures) on \(\M\).  \(\xi_0(k)\)-forms are here termed ospseudoplectic forms and define co-oriented almost contact structures on \(\M\).  The main result of this paper is:

\begin{Thm}[\trefs{os}, \ref{op}, \ref{sg-4} and \ref{sg-3}]\label{4hP}
\sg\ 3-forms, \sg\ 4-forms, ossymplectic forms and ospseudoplectic forms satisfy the relative \(h\)-principle.
\end{Thm}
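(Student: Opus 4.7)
The plan is to apply the general convex integration framework for closed stable forms developed in the main body of this paper; each of the four cases will then reduce to a verification of the corresponding ampleness hypothesis for the appropriate \(\GL_+(n;\bb{R})\)-orbit \(\cal{O}(\si_0) \cc \ww{p}\lt(\bb{R}^n\rt)^*\).

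The first step is to recast the problem as an open first-order partial differential relation.  Using the non-compact Hodge decomposition of \aref{NCH-Sec}, one fixes, for each \(s \in D^q\), a smooth closed reference \(p\)-form \(\si_*(s)\) with \([\si_*(s)] = \al(s)\) in \(\dR{p}(\M)\) and \(\si_*(s)|_{\Op(A)} = \fr{F}_0(s)|_{\Op(A)}\).  The relative \(h\)-principle is then equivalent, modulo a connecting homotopy inside \(\Om^p_{\si_0}\), to the existence of a family of \((p-1)\)-forms \(\eta_\pt\) (vanishing over \(\Op(A) \cup \del D^q\)) such that \(\si_*(s) + \dd\eta_t(s) \in \Om^p_{\si_0}(\M)\) for all \(t\).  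Stability of \(\si_0\) makes the defining condition open in the \(1\)-jet of \(\eta\), so Gromov's convex integration applies provided one can verify the ampleness of \(\cal{O}(\si_0)\).

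The main step is then the ampleness check.  Concretely, for each hyperplane \(H \pc \bb{R}^n\) with chosen transverse covector \(\nu^*\), and each \(\si_0\)-form \(\si_1\), one considers the affine subspace of \(\ww{p}\lt(\bb{R}^n\rt)^*\) obtained by fixing the tangential part of \(\si_1\) (its component in \(\ww{p}H^*\)) and varying the normal part (the component in \(\nu^* \w \ww{p-1}H^*\)); ampleness asserts that the intersection of this affine subspace with \(\cal{O}(\si_0)\) has convex hull equal to the whole subspace.  For ossymplectic forms this reduces to the elementary statement that the set of \(2\)-forms on \(\bb{R}^{2k}\) with nonvanishing top exterior power is the complement of a codimension-\(1\) polynomial locus whose affine slices are readily treated; the ospseudoplectic case is analogous, using \(\th \w \om^{k-1}\) in place of \(\om^k\).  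For \sg\ 3-forms and \sg\ 4-forms the verification is genuinely more subtle, requiring an analysis of the \sg-orbit structure in \(\ww{3}\lt(\bb{R}^7\rt)^*\) and \(\ww{4}\lt(\bb{R}^7\rt)^*\) and, in particular, the demonstration that the non-admissible locus within each affine slice has codimension at least \(2\).

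The principal obstacle lies in this ampleness verification for the \sg\ cases.  The \sg-orbit properly refines the open \g-orbit treated by \CN\ for closed \g\ 4-forms, so the admissible set within each affine slice is cut out not only by the discriminant hypersurface governing stability but also by the additional discrete \sg-orientation data.  Controlling the irreducible decomposition of the non-admissible locus and verifying path-connectedness of the admissible set within each slice is expected to be the crux of the proof; once it is in place, the relative \(h\)-principles for all four form types follow directly from the general framework, with a sanity check being the recovery of the \CN\ \(h\)-principle when the \sg-refinement is forgotten.
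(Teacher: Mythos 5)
Your outline of the reduction is essentially right: the non-compact Hodge decomposition of \aref{NCH-Sec} is used to recast the problem as an open first-order relation of the form \(\dd\eta + a(s) \in \Om^p_{\si_0}\), stability of \(\si_0\) gives openness of the relation, and one then invokes Gromov's convex integration provided an ampleness hypothesis is verified.  That much agrees with \lref{hP->hP} and \tref{hPThm-1}.  However, there are two substantive errors in your formulation of the ampleness step, and both would sink the \sg\ cases — which, as you correctly identify, are the crux.

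First, you define ampleness as the condition that the intersection of each principal affine slice with \(\cal{O}(\si_0)\) has convex hull equal to the whole slice; this is not the correct definition, and the discrepancy matters.  The Convex Integration Theorem requires that the convex hull of \emph{each path component} of the intersection be the whole slice (or that the intersection be empty).  In the ospseudoplectic case the set \(\mc{N}_{\xi_0(k)}(0) = \ww[+]{2k-2}\lt(\bb{R}^{2k}\rt)^* \cup \ww[-]{2k-2}\lt(\bb{R}^{2k}\rt)^*\) has exactly two path components; knowing \(0 \in \Conv(\mc{N}_{\xi_0(k)}(0))\) is therefore \emph{not} sufficient, and the paper proves the stronger statement (\lref{ossymp-set-ampl}) that each component separately has full convex hull.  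With only your version of ampleness, this case would be left unproven.

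Second, your proposed criterion for the \sg\ cases — that the non-admissible locus in each slice have codimension \(\ge 2\) — is false as stated and does not hold in these examples.  The action of \(\GL_+(7;\bb{R})\) on \(\ww{3}\lt(\bb{R}^7\rt)^*\) and \(\ww{4}\lt(\bb{R}^7\rt)^*\) has several distinct open orbits (\(\pm\Gg_2\), \(\pm\tld{\Gg}_2\)); consequently the complement of the \sg-orbit within a generic affine slice contains open subsets of that slice and has codimension \emph{zero}, not \(\ge 2\).  What one needs instead, and what the paper actually proves, is the path-connectedness of \(\mc{N}_{\svph_0}(\ta)\) and \(\mc{N}_{\svps_0}(\ta)\), established via the ``faithful \(+\) connected \(\Rightarrow\) path-connected'' mechanism of \lref{PConn} and \tref{FCA->A}, combined with abundance (\(0 \in \Conv\,\mc{N}\)) and scale-invariance.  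In the 3-form case this requires a genuine case-by-case analysis according to whether the relevant hyperplane is spacelike, timelike or null for \(\tld{g}\) — with the null/parabolic case forcing explicit, somewhat elaborate matrix signature calculations (\pref{NRP}) — none of which follows from any codimension estimate.  You should replace the codimension heuristic with the faithfulness/connectedness/abundance machinery of \sref{FCA}, and note that the ospseudoplectic case requires an additional argument beyond abundance because of the two-component phenomenon.
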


\tref{4hP} has the following significant corollary.  Given \(\si_0 \in \ww{p}\lt(\bb{R}^n\rt)^*\), an oriented \(n\)-manifold \(\M\) and a fixed cohomology class \(\al \in \dR{p}(\M)\), write \(\CL^p_{\si_0}(\M)\) for the set of closed \(\si_0\)-forms on \(\M\) and \(\CL^p_{\si_0}(\al)\) for the set of closed \(\si_0\)-forms representing the cohomology class \(\al\).  More generally, given a submanifold (or polyhedron) \(A \pc \M\), let \(\si_r\) be a closed \(\si_0\)-form on \(\Op(A)\) such that \([\si_r] = \al|_{\Op(A)} \in \dR{p}(\Op(A))\) and write:
\caw
\Om^p_{\si_0}(\M;\si_r) = \lt\{ \si \in \Om^p_{\si_0}(\M) ~\m|~ \si|_{\Op(A)} = \si_r\rt\}\\
\CL^p_{\si_0}(\M;\si_r) = \lt\{ \si \in \Om^p_{\si_0}(\M;\si_r) ~\m|~ \dd\si = 0 \rt\}\\
\CL^p_{\si_0}(\al;\si_r) = \lt\{ \si \in \CL^p_{\si_0}(\M;\si_r) ~\m|~ [\si] = \al \in \dR{p}(\M)\rt\}.
\caaw

\noindent By combining \tref{4hP} with standard homotopy-theoretic arguments (see \cite[\S6.2.A]{ItthP}), one obtains:

\begin{Thm}
Let \(\si_0\) be one of \(\svph_0\), \(\svps_0\), \(\vpi_+(k)\) and \(\xi_0(k)\), as above.  Then, for every \(\M\), \(A\), \(\al\) and \(\si_r\), the inclusions:
\ew
\CL^p_{\si_0}(\al;\si_r) \emb \CL^p_{\si_0}(\M;\si_r) \emb \Om^p_{\si_0}(\M; \si_r)
\eew
are homotopy equivalences (where \(p = 3,4,2k-2, 2k-1\), as appropriate).  In particular, taking \(A = \es\), the inclusions:
\ew
\CL^p_{\si_0}(\al) \emb \CL^p_{\si_0}(\M) \emb \Om^p_{\si_0}(\M)
\eew
are, also, homotopy equivalences.  Thus, if \(\M\) admits any \sg\ 3-form, then every degree \(p\) cohomology class on \(\M\) can be represented by a \sg\ 3-form and likewise for \sg\ 4-forms, ossymplectic forms and ospseudoplectic forms.
\end{Thm}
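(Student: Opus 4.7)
The plan is to deduce both homotopy equivalences directly from \tref{4hP} via the standard criterion (\cite[\S6.2.A]{ItthP}) that a continuous inclusion \(i: X \emb Y\) is a weak homotopy equivalence \iff, for every \(q \ge 0\) and every continuous \(f: D^q \to Y\) with \(f(\del D^q) \pc X\), there is a homotopy of \(f\), constant on \(\del D^q\), ending in a map \(f_1: D^q \to X\). Each of the three spaces in question is an open subset of an affine subspace of the \F\ space \(\Om^p(\M)\); each is therefore a \F\ manifold and hence an ANR of the homotopy type of a CW complex, so weak equivalence of the inclusions will upgrade to genuine homotopy equivalence.

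For the inclusion \(\CL^p_{\si_0}(\M;\si_r) \emb \Om^p_{\si_0}(\M;\si_r)\), given \(\fr{F}_0: D^q \to \Om^p_{\si_0}(\M;\si_r)\) with \(\fr{F}_0(\del D^q) \pc \CL^p_{\si_0}(\M;\si_r)\), I would set \(\al(s) := [\fr{F}_0(s)] \in \dR{p}(\M)\) on \(\del D^q\). Since every \(\fr{F}_0(s)\) restricts to \(\si_r\) on \(\Op(A)\), each such \(\al(s)\) lies in the affine subspace \(\{\be \in \dR{p}(\M) : \be|_{\Op(A)} = [\si_r]\}\), which is contractible; I would therefore extend \(\al\) continuously to all of \(D^q\) (for \(q = 0\), pick any point of this affine subspace). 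The pair \((\al, \fr{F}_0)\) then satisfies conditions (1)--(2) of the relative \(h\)-principle from the introduction, and \tref{4hP} produces a homotopy \(\fr{F}_t\), constant on \(\del D^q\), with \(\fr{F}_1\) valued in \(\CL^p_{\si_0}(\M;\si_r)\).

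For the inclusion \(\CL^p_{\si_0}(\al;\si_r) \emb \CL^p_{\si_0}(\M;\si_r)\), the same argument applies with \(\al\) chosen to be the constant map at the prescribed class. Condition (1) holds by hypothesis on \(\fr{F}_0(\del D^q)\), and condition (2) holds because each \(\fr{F}_0(s)|_{\Op(A)} = \si_r\) is closed with cohomology class \([\si_r] = \al|_{\Op(A)}\). Another application of \tref{4hP} then yields a homotopy into \(\CL^p_{\si_0}(\al;\si_r)\).

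Setting \(A = \es\) recovers the unparameterised statements, and the final assertion -- that if \(\M\) admits any \(\si_0\)-form then every degree \(p\) class is represented by a closed one -- is the \(q = 0\) case of the first inclusion: surjectivity on \(\pi_0\) gives \(\CL^p_{\si_0}(\al) \neq \es\) whenever \(\Om^p_{\si_0}(\M) \neq \es\). The substantive obstacle is \tref{4hP} itself; granting it, the present theorem is a purely formal consequence, the only mild subtlety being the contractibility argument used above to extend \(\al\) continuously over \(D^q\).
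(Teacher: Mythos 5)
The paper itself gives no written proof of this statement, deferring to the citation to \cite[\S6.2.A]{ItthP}, so there is nothing explicit to compare against; you are reconstructing the ``standard argument'' from scratch. Your reconstruction is substantially right, but one step fails as written: your treatment of the inner inclusion \(\CL^p_{\si_0}(\al;\si_r) \emb \CL^p_{\si_0}(\M;\si_r)\) overlooks that the compression criterion requires the compressing homotopy to stay inside the ambient space \(Y\), which for this inclusion is \(\CL^p_{\si_0}(\M;\si_r)\). The homotopy \(\fr{F}_t\) produced by \tref{4hP} satisfies conditions (3)--(4), but nothing constrains \(\dd\fr{F}_t(s)\) for \(0 < t < 1\); the intermediate stages are merely \(\si_0\)-forms agreeing with \(\si_r\) near \(A\), i.e.\ they lie in \(\Om^p_{\si_0}(\M;\si_r)\) but generally not in \(\CL^p_{\si_0}(\M;\si_r)\). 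Convex-integration homotopies routinely leave the space of closed forms, so your argument as stated shows \(\pi_q\bigl(\Om^p_{\si_0}(\M;\si_r),\, \CL^p_{\si_0}(\al;\si_r)\bigr) = 0\), not the vanishing of the relative homotopy groups of the pair you actually need.

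The fix is short. Your first application of \tref{4hP} (with \(\al\) extended over \(D^q\)) shows the outer inclusion \(\CL^p_{\si_0}(\M;\si_r) \emb \Om^p_{\si_0}(\M;\si_r)\) is a weak equivalence. Your second application, with \(\al\) constant, is legitimately showing that the \emph{composite} inclusion \(\CL^p_{\si_0}(\al;\si_r) \emb \Om^p_{\si_0}(\M;\si_r)\) is a weak equivalence -- there the ambient space is \(\Om^p_{\si_0}(\M;\si_r)\), where the \(h\)-principle homotopy does live. The inner inclusion then inherits its weak-equivalence status by two-out-of-three. The rest of your argument -- extending \(\al\) continuously over \(D^q\) via contractibility of the relevant affine subspace of \(\dR{p}(\M)\) (nonempty because the fixed \(\al\) of the theorem lies in it), the ANR/CW upgrade from weak to genuine homotopy equivalence via Whitehead, and the \(q = 0\) nonemptiness deduction at the end -- is correct.
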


\tref{4hP} also has notable implications for Hitchin functionals on stable forms.  Recall that, given a stable form \(\si_0 \in \ww{p}\lt(\bb{R}^n\rt)^*\), if \(\Stab_{\GL_+(n;\bb{R})}(\si_0) \cc \SL(n;\bb{R})\), then any \(\si_0\)-form \(\si\) on an oriented manifold \(\M\) induces a volume form \(vol_\si\) on \(\M\).  In the case where \(\M\) is closed, for each degree \(p\) cohomology class \(\al\) one may define a Hitchin functional by integrating this volume form over all of \(\M\):
\ew
\bcd[row sep = 0pt]
\mc{H}: \CL^p_{\si_0}(\al)  \ar[r] & (0,\infty)\\
\si \ar[r, maps to] & \bigint_\M vol_\si.
\ecd
\eew

\begin{Thm}\label{HF-thm}
Let \(\si_0 \in \ww{p}\lt(\bb{R}^n\rt)^*\) be a stable form such that \(\Stab_{\GL_+(n;\bb{R})}(\si_0) \cc \SL(n;\bb{R})\) and suppose that \(\si_0\)-forms satisfy the relative \(h\)-principle.  Then, for any closed, oriented \(n\)-manifold admitting \(\si_0\)-forms and each \(\al \in \dR{p}(\M)\), the functional:
\ew
\mc{H}: \CL^p_{\si_0}(\al) \to (0,\infty)
\eew
is unbounded above.  In particular, the Hitchin functionals on ossymplecitc forms, \sg\ 3-forms and \sg\ 4-forms are always unbounded above.  (Note that \(\Stab_{\GL_+(2k+1;\bb{R})}(\xi_0(k)) \nsubseteq \SL(2k+1;\bb{R})\) and thus one cannot define Hitchin functionals on ospseudoplectic forms; see \sref{Pseudo-P}.)
\end{Thm}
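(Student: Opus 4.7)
The plan is to construct, for any given $C > 0$, a closed $\si_0$-form in $\CL^p_{\si_0}(\al)$ whose Hitchin functional exceeds $C$, by applying the relative $h$-principle to a formal solution that is forced to equal a large multiple of $\si_0$ on a fixed coordinate ball. Since the $h$-principle fixes the form on $\Op(A)$, the resulting closed form will automatically inherit this large contribution to its Hitchin integrand.

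First, fix any background form $\si_{\mathrm{bg}} \in \Om^p_{\si_0}(\M)$ (which exists by hypothesis), pick a coordinate chart $B \pc \M$ identified with an open ball in $\bb{R}^n$, and choose a closed ball $K$ together with an open ball $W$ satisfying $K \pc W$ and $\overline{W} \pc B$. Set $A := K$. For each $M > 0$ I would construct a $\si_0$-form $\tilde{\si}_M \in \Om^p_{\si_0}(\M)$ as follows: set $\tilde{\si}_M := M\si_0$ in the coordinate basis on $W$; set $\tilde{\si}_M := \si_{\mathrm{bg}}$ on $\M \setminus B$; and interpolate smoothly through $\si_0$-forms on the annular region $B \setminus \overline{W}$. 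Such an interpolation exists because the set of $\si_0$-forms at any point of $\bb{R}^n$ is the open, path-connected $\GL_+(n;\bb{R})$-orbit of $\si_0$ in $\ww{p}(\bb{R}^n)^*$, while the annulus is contractible.

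With $q = 0$ (so that $D^q$ is a point and $\del D^q = \es$), the hypotheses of the relative $h$-principle hold for $\tilde{\si}_M$ with target class $\al$: on the neighbourhood $W$ of $A$, the form $\tilde{\si}_M = M\si_0$ is closed, and $[\tilde{\si}_M|_W] = 0 = \al|_W \in \dR{p}(W)$ since $W$ is contractible. The relative $h$-principle then yields a closed $\si_0$-form $\si_M \in \CL^p_{\si_0}(\al)$ with $\si_M|_{\Op(A)} = \tilde{\si}_M|_{\Op(A)}$; in particular $\si_M = M\si_0$ on $K$. Since $\Stab_{\GL_+(n;\bb{R})}(\si_0) \cc \SL(n;\bb{R})$ and the linear map $v \mt M^{1/p}v$ takes $\si_0$ to $M\si_0$ within its $\GL_+$-orbit, the induced volume form satisfies $vol_{M\si_0} = M^{n/p}\,\dd x^1 \w \cdots \w \dd x^n$ in coordinates. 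Hence
\[
\mc{H}(\si_M) \ge \int_K vol_{M\si_0} = M^{n/p}\operatorname{vol}_{\bb{R}^n}(K) \longrightarrow \0 \quad \text{as } M \to \0.
\]

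The only non-routine step is the construction of the interpolation across the annulus when $M$ is large, at which point $M\si_0$ and $\si_{\mathrm{bg}}$ lie far apart in the $\GL_+$-orbit. Path-connectedness of the orbit (as the continuous image of the connected group $\GL_+(n;\bb{R})$) together with contractibility of the annulus makes this construction routine: explicitly, one may lift a path in $\GL_+(n;\bb{R})$ from $\Id$ to $M^{1/p}\Id$, compose with a radial cut-off function, and patch across $\partial B$ with $\si_{\mathrm{bg}}$. Crucially, $\tilde{\si}_M$ need not be closed on this annulus — the $h$-principle handles the closure condition globally, yielding the desired $\si_M \in \CL^p_{\si_0}(\al)$ with arbitrarily large Hitchin functional; applying this argument for $\si_0 = \vpi_+(k), \svph_0, \svps_0$ (each of which satisfies the stabiliser hypothesis and, by \tref{4hP}, the relative $h$-principle) yields the particular claims.
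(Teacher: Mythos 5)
Your overall strategy is the same as the paper's: force the formal solution to have a large Hitchin volume form on a small ball, then invoke the relative $h$-principle rel that ball to obtain a closed $\si_0$-form in class $\al$ with arbitrarily large integral. The details differ in one place, and your stated justification for that place is wrong. You pin the form to be the coordinate-constant form $M\si_0$ on the inner ball and patch to a background $\si_0$-form $\si_{\mathrm{bg}}$ on the annulus $B\setminus\overline{W}$, asserting this interpolation exists because ``the annulus is contractible.'' It is not: $B\setminus\overline{W}$ has the homotopy type of $S^{n-1}$, which is not contractible for $n\ge2$, and all the $n$ under consideration here satisfy $n\ge4$. The interpolation nevertheless does exist, but for a different reason: both the constant section $M\si_0$ and the section $\si_{\mathrm{bg}}|_B$ are defined over all of the contractible ball $B$, so after trivialising $\ww[\si_0]{p}\T^*\M|_B\cong B\times\mc{O}$ both give null-homotopic maps $S^{n-1}\to\mc{O}$, and any homotopy between them yields the desired section over the annulus. (Equivalently, one can pre-compose the chart by a linear map so that $\si_{\mathrm{bg}}(0)=\si_0$, shrink $B$ until $\si_{\mathrm{bg}}|_B$ is $C^0$-close to the constant $\si_0$, and then use openness of the orbit to interpolate by genuine convex combination; your ``lift a path in $\GL_+$ and cut off radially'' sketch handles the scaling from $M\si_0$ to $\si_0$ but glosses over this matching step.) The paper sidesteps this interpolation entirely: it first uses the $h$-principle (in the homotopy-equivalence form) to produce a closed $\si_0$-form $\si\in\CL^p_{\si_0}(\al)$, then replaces $\si$ by $(1+\la\ch)\si$ for a bump function $\ch$, which is automatically a $\si_0$-form everywhere since positive scalars lie in $\GL_+(n;\bb{R})$, and takes $A=\ol{U}\cup(\M\setminus W)$. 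That variant turns your non-trivial patching step into a one-line observation, which is why the paper preferred it; your version is salvageable but needs the obstruction-theoretic (or $C^0$-shrinking) argument made explicit in place of the contractibility claim.
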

Likewise, the Hitchin functionals on \g\ 4-forms and \slc\ 3-forms are also unbounded above; see \sref{conseq} for details.  I remark that alternative proofs of the unboundedness above of Hitchin functionals on \sg\ 3- and 4-forms, and \g\ 4-forms were previously obtained by the author in \cite{UA&BotDHFoG2&SG2F}, using different methods.

The proofs of all four \(h\)-principles in \tref{4hP} make use of the following result, which provides a general method for establishing relative \(h\)-principles for closed, stable forms:

\begin{Thm}\label{hPThm-1}
Let \(\si_0\in\ww{p}\lt(\bb{R}^n\rt)^*\) be stable (\(p \ge 1\)).  Given an arbitrary \(p\) form \(\ta\) on \(\bb{R}^{n-1}\), define:
\ew
\mc{N}_{\si_0}(\ta) = \lt\{\nu \in \ww{p-1}\lt(\bb{R}^{n-1}\rt)^* ~\m|~ \th \w \nu + \ta \in \ww[\si_0]{p}\lt(\bb{R} \ds \bb{R}^{n-1}\rt)^* \rt\} \pc \ww{p-1}\lt(\bb{R}^{n-1}\rt)^*,
\eew
where \(\th\) is the standard annihilator of \(\bb{R}^{n-1} \pc \bb{R} \ds \bb{R}^{n-1}\).  Suppose that, for every \(\ta\), the set \(\mc{N}_{\si_0}(\ta)\) is ample in the sense of affine geometry, i.e.\ \(\mc{N}_{\si_0}(\ta)\) is either empty, or the convex hull of every path component of \(\mc{N}_{\si_0}(\ta)\) equals \(\ww{p-1}\lt(\bb{R}^{n-1}\rt)^*\) (in such cases, say that \(\si_0\) itself is ample).  Then, \(\si_0\)-forms satisfy the relative \(h\)-principle.
\end{Thm}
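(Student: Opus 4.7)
The plan is to translate the relative \(h\)-principle into a first-order, open, ample differential relation for a \((p-1)\)-form primitive, and then invoke Gromov's parametric convex integration theorem.  The hypothesis on the sets \(\mc{N}_{\si_0}(\ta)\) will feed directly into the ampleness condition of the resulting relation.

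To begin, I would use the non-compact parametric Hodge decomposition of \aref{NCH-Sec} to produce a continuous family \(\ze: D^q \to \Om^p(\M)\) of closed \(p\)-forms with \([\ze(s)] = \al(s)\) for every \(s \in D^q\), with \(\ze(s) = \fr{F}_0(s)\) on \(\Op(A)\) for every \(s \in D^q\), and with \(\ze(s) = \fr{F}_0(s)\) on all of \(\M\) for every \(s \in \del D^q\).  This is feasible because on these loci \(\fr{F}_0\) is already closed in the correct cohomology class, so the extension reduces to a parametric Hodge-theoretic projection.  I would then look for the desired homotopy in the form \(\fr{F}_t(s) = \ze(s) + \xi_t(s)\), where the family \(\xi_t\) is required to satisfy \(\xi_0(s) = \fr{F}_0(s) - \ze(s)\); \(\xi_t(s) \equiv 0\) on \(\Op(A)\) and for \(s \in \del D^q\); \(\ze(s) + \xi_t(s) \in \Om^p_{\si_0}(\M)\) for all \(s,t\); and \(\xi_1(s) = \dd\la(s)\) for some \((p-1)\)-form \(\la(s)\) vanishing on \(\Op(A)\).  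The existence of such a \(\xi_t\) is exactly the parametric, relative \(h\)-principle for the first-order differential relation
\caw
\fr{R}(s) \pc J^1\lt(\ww{p-1}\T^*\M\rt), \hs{5mm} \fr{R}(s)_x := \lt\{(\la,\eta) \in J^1_x ~\m|~ \ze(s)(x) + \eta \in \ww[\si_0]{p}\T^*_x\M\rt\},
\caaw
applied to the formal solution \((\la,\eta)(s) := (0, \fr{F}_0(s) - \ze(s))\), which is already holonomic (with witness \(\la = 0\)) on \((\Op(A) \x D^q) \cup (\M \x \del D^q)\).

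It remains to verify that \(\fr{R}(s)\) is open and ample in every principal direction.  Openness is immediate from the stability of \(\si_0\), which makes \(\ww[\si_0]{p}\T^*_x\M \pc \ww{p}\T^*_x\M\) an open subset.  For ampleness, fix \(x \in \M\) and a hyperplane \(V \pc \T_x\M\) with normalized annihilator \(\th\); split \(\eta = \th\w\nu + \ta\) with \(\ta \in \ww{p}V^*\) and \(\nu \in \ww{p-1}V^*\), and similarly write \(\ze(s)(x) = \th\w\mu_0 + \ta_0\).  The principal fiber of \(\fr{R}(s)_x\) over fixed \(\ta\) is then
\caw
\lt\{\nu \in \ww{p-1}V^* ~\m|~ \th \w (\mu_0 + \nu) + (\ta_0 + \ta) \in \ww[\si_0]{p}\T^*_x\M\rt\} = \mc{N}_{\si_0}(\ta_0 + \ta) - \mu_0,
\caaw
a translate of \(\mc{N}_{\si_0}(\ta_0 + \ta)\); since ampleness is translation-invariant, the hypothesis yields ampleness of \(\fr{R}\) in every principal direction.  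Gromov's parametric convex integration theorem for open ample relations (see \cite[\S2.4]{PDR} and \cite[Thm.\ 18.4.1]{ItthP}), in its relative form handling closed subsets on which the formal solution is already holonomic, then produces the required homotopy of formal solutions ending at a holonomic one, rel \((\Op(A) \x D^q) \cup (\M \x \del D^q)\).  The corresponding \(\fr{F}_t(s) := \ze(s) + \xi_t(s)\) satisfies conditions (3)--(4) of the relative \(h\)-principle.

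The main obstacle I expect is bookkeeping rather than new ideas: constructing \(\ze\) continuously in \(s\) and compatibly with the boundary data (which is precisely the role of the non-compact parametric Hodge theory developed in \aref{NCH-Sec}), and executing the convex integration equivariantly in \(s \in D^q\), rel the appropriate closed subset, so as to preserve the prescribed boundary conditions throughout the homotopy.  The conceptual core — the translation of the assumption on \(\mc{N}_{\si_0}(\ta)\) into ampleness of \(\fr{R}\) in every principal direction — is a direct substitution once the reformulation above is in place.
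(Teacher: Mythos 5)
Your proposal is correct and takes essentially the same route as the paper's proof: both pass through the Hodge-type decomposition of \tref{NCH} to reduce the claim to the relative \(h\)-principle for a first-order, open, fibred differential relation on \((p-1)\)-forms whose principal fibres are affine translates of \(\mc{N}_{\si_0}(\ta)\), and then invoke parametric convex integration (\tref{EGM-Thm}).  The only difference is cosmetic: you fold the \(\Op(A)\) and \(\del D^q\) compatibility into the choice of the closed representative \(\ze(s)\) so that the formal solution has zero underlying section, whereas the paper's \lref{hP->hP} keeps \(a(s) = \io\al(s)\) fixed and lifts \(\fr{F}_0\) along the symbol map via two explicit extension problems (one using that the \(\dd\)-projection from \tref{NCH} is a retraction onto a direct summand, the other a fibration with contractible fibres).
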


Significantly, the condition that \(\si_0\) be ample is entirely algebraic and may be verified by direct calculation.  Thus, \tref{hPThm-1} provides a way of reducing the more complicated question of whether a given class of closed stable forms satisfies the \(h\)-principle to a tractable, algebraic problem.  I remark also that, not only can \tref{hPThm-1} be used to prove the four new \(h\)-principles established in this paper, but it also subsumes all three previously known \(h\)-principles for stable forms, {\it viz.}\ the relative \(h\)-principles for stable 2-forms in \((2k+1)\) dimensions (\(k \ge 2\)), \g\ 4-forms and \(\SL(3;\bb{C})\)-forms established in \cite{AoCItS&CG, NIoG2S, RoG2MwB} respectively; see \sref{1st-Appl}.

Essential to the proof of \tref{hPThm-1} is the technique of convex integration, introduced by Gromov in \cite{CIoPDR} and later developed in \cite{PDR, CIT, ItthP}.  The proof also makes use of the following analogue of Hodge decomposition for non-compact manifolds, proved in \aref{NCH-Sec}, which the author believes to be of independent interest:
\begin{Thm}\label{NCH}
Let \(\M\) be an \(n\)-manifold (not necessarily oriented and possibly non-compact or with boundary).  Then, there exists an injective, continuous linear operator \(\io:\Ds_p\dR{p}(\M) \to \Ds_p\Om^p(\M)\) of degree 0 and a continuous linear operator \(\de:\Ds_p\Om^p(\M) \to \Ds_p\Om^p(\M)\) of degree \(-1\) satisfying \(\de^2 = 0\) such that for each \(0 \le p \le n\):
\e\label{NCHD-eq}
\Om^p(\M) = \io\dR{p}(\M) \ds \dd\Om^{p-1}(\M) \ds \de\Om^{p+1}(\M)
\ee
in the category of \F\ spaces, where both \(\dd\) and \(\de\) act as \(0\) on \(\io\dR{p}(\M)\) and the maps \(\dd: \de\Om^{p+1}(\M) \to \dd\Om^p(\M)\) and \(\de: \dd\Om^p(\M) \to \de\Om^{p+1}(\M)\) are mutually inverse.  In particular, the projections \(\Om^p(\M) \to \dd\Om^{p-1}(\M)\) and \(\Om^p(\M) \to \de\Om^{p+1}(\M)\) are given by \(\dd \circ \de\) and \(\de \circ \dd\), respectively.
\end{Thm}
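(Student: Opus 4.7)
The strategy is to exhibit the de Rham complex \(\lt(\Om^\pt(\M),\dd\rt)\) as a cochain complex of nuclear \F\ spaces that splits in the appropriate topological sense, and to read off \(\io\) and \(\de\) from continuous linear sections of the resulting short exact sequences.  Concretely, for each \(p\) one has
\caw
0 \to \dd\Om^{p-1}(\M) \to Z^p(\M) \to \dR{p}(\M) \to 0 \hs{3mm}\text{and}\hs{3mm} 0 \to Z^p(\M) \to \Om^p(\M) \oto{\dd} \dd\Om^p(\M) \to 0,
\caaw
where \(Z^p(\M) := \Ker\lt(\dd: \Om^p(\M) \to \Om^{p+1}(\M)\rt)\); continuous linear sections of these will yield \(\io\) and \(\de\).

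The first main step is to show that \(\dd\Om^{p-1}(\M)\) is closed in \(\Om^p(\M)\) under the \(C^\0\) topology, so that \(\dR{p}(\M)\) inherits a Hausdorff \F\ structure and both sequences take place between nuclear \F\ spaces.  For compact \(\M\) this is elliptic regularity; for a general paracompact \(\M\) (possibly non-compact, with boundary) one can proceed sheaf-theoretically via the fine-resolution property of the de Rham complex, or equivalently exhaust \(\M = \bigcup_n K_n\) by compact submanifolds with boundary, apply Hodge theory with boundary conditions on each \(K_n\), and pass to the inverse limit.  Once closed range is established, I would invoke the splitting theorem for nuclear \F\ spaces (e.g.\ Vogt's vanishing of \(\Ext^1\) under the \((DN)\)-\((\Om)\) conditions, which are standard for spaces of smooth sections of vector bundles over paracompact manifolds) to obtain continuous linear sections \(\io: \dR{p}(\M) \to Z^p(\M)\) and \(s^p: \dd\Om^p(\M) \to \Om^p(\M)\) of \(\dd\).

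Writing \(E^p := s^p(\dd\Om^p(\M))\) gives a closed topological complement of \(Z^p(\M)\) in \(\Om^p(\M)\), so that \(\Om^p(\M) = \io\dR{p}(\M) \ds \dd\Om^{p-1}(\M) \ds E^p\).  The restriction \(\dd|_{E^p}: E^p \to \dd\Om^p(\M)\) is a continuous linear bijection, hence a topological isomorphism by the open mapping theorem.  I would then define \(\de: \Om^{p+1}(\M) \to \Om^p(\M)\) to be \(0\) on the summands \(\io\dR{p+1}(\M)\) and \(E^{p+1}\), and to equal \((\dd|_{E^p})^{-1}\) on the summand \(\dd\Om^p(\M)\).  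Continuity and degree \(-1\) are manifest, \(\de\Om^{p+1}(\M) = E^p\) by construction, and \(\de^2 = 0\) holds automatically since \(\de\) takes values in \(E\) and vanishes there.  A direct check then verifies \eref{NCHD-eq}, the mutual-inverse property of \(\dd\) and \(\de\) between \(E^p\) and \(\dd\Om^p(\M)\), and the projection formulae \(\dd \circ \de\) and \(\de \circ \dd\).

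The main obstacle is establishing closed range of \(\dd\) in the \(C^\0\) topology on an arbitrary paracompact manifold, particularly when \(\M\) has boundary and/or infinite-dimensional cohomology.  The \((DN)\)-\((\Om)\) verification needed for Vogt's splitting theorem is largely routine for \F\ spaces of smooth sections, but the closed-range statement requires genuine manifold input, and I expect the bulk of \aref{NCH-Sec} to be devoted to this point.
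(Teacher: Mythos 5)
Your overall scaffold---split the two short exact sequences
\begin{equation*}
0 \to \dd\Om^{p-1}(\M) \to \Om^p_{\cl}(\M) \to \dR{p}(\M) \to 0 , \hs{6mm}
0 \to \Om^p_{\cl}(\M) \to \Om^p(\M) \oto{\dd} \dd\Om^p(\M) \to 0
\end{equation*}
and read off \(\io\) and \(\de\) from continuous linear sections---matches the strategy of the paper (\lref{Splitting-Lem}).  However, you have misidentified where the genuine difficulty lies, and the route you propose through it does not obviously work.

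First, the closed-range question is not a serious obstacle: by de Rham's theorem and Stokes' theorem, \(\dd\Om^{p-1}(\M)\) is exactly the intersection of the kernels of the continuous linear functionals \(\al \mt \int_A \al\) over all differentiable singular \(p\)-cycles \(A\), and is therefore closed in \(\Om^p(\M)\) in the \(C^\0\) topology with no compactness, orientability, Hodge-theoretic input or inverse-limit argument needed.  This is dispatched in a line in the paper (\exref{FSpace-Exs}(3)); building an exhaustion by compact submanifolds and running Hodge theory with boundary conditions is far heavier machinery than is required, and also quietly reimposes orientability that the theorem is explicitly stated to avoid.

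Second, the appeal to Vogt's \((DN)\)-\((\Om)\) splitting criterion is not ``largely routine'' here, and I do not believe it goes through as stated.  The quotient in the first sequence is \(\dR{p}(\M)\), which for non-compact \(\M\) can be any locally finite-dimensional \F\ space (in particular, \(\upomega\)-type), and such spaces need not satisfy \((DN)\); similarly \(\dd\Om^p(\M)\) is a quotient of \(\Om^p(\M)\), which for non-compact \(\M\) is not of the form \(s\) but rather of the form \(s^{\bb{N}}\), and its quotients need not satisfy \((DN)\) either.  So the hypothesis of the classical Vogt splitting theorem on the quotient side fails in exactly the non-compact case the theorem is meant to cover.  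The paper's workaround is to \emph{not} split the de Rham sequences directly, but rather to split the corresponding simplicial cochain sequences \(0 \to B^p(K) \to Z^p(K) \to \sch{p}{K} \to 0\) and \(0 \to Z^p(K) \to C^p(K) \to B^{p+1}(K) \to 0\) over a triangulation \((K,f)\) of \(\M\).  There the kernels \(B^p(K)\), \(Z^p(K)\) are \emph{locally finite-dimensional} \F\ spaces, for which a much simpler splitting result holds (\tref{LF->Ext=0-Thm}, proved by choosing finite-dimensional linear right inverses level by level and feeding them into Vogt's surjectivity criterion \tref{nuclear-split}).  One then transports the simplicial splittings back to the smooth side via the continuous integration map \(\int\), Whitney's continuous smoothing operator \(\mc{S}\), and the continuous linear map \(\mc{P}\) of \lref{Cochain-Lem}.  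Without this simplicial detour, you would need to independently verify \((DN)\)-\((\Om)\)-type conditions for the de Rham spaces; that is a real gap in your argument, not a routine check.
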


The results of the present paper were obtained during the author's doctoral studies, which where supported by EPSRC Studentship 2261110.\\

\section{Preliminaries}

\subsection{Stable forms in 6 and 7 dimensions}\label{6&7}

Let \(\rh\in\ww{3}\lt(\bb{R}^6\rt)^*\) and consider the linear map \(K_\rh:\bb{R}^6\to \bb{R}^6\ts\ww{6}\lt(\bb{R}^6\rt)^*\) defined by composing \(u \in \bb{R}^6 \mt (u\hk\rh)\w\rh \in \ww{5}\lt(\bb{R}^6\rt)^*\) with the canonical isomorphism \(\ww{5}\lt(\bb{R}^6\rt)^* \cong \bb{R}^6\ts\ww{6}\lt(\bb{R}^6\rt)^*\).  \(K_\rh\) induces a \(\GL_+(6;\bb{R})\)-equivariant map \(\La: \rh \in \ww{3}\lt(\bb{R}^6\rt)^* \mt \frac{1}{6}\Tr\lt(K_\rh^2\rt) \in \lt(\ww{6}\lt(\bb{R}^6\rt)^*\rt)^{\ts2}\).  The following result is mainly proved in \cite[\S2]{TGo3Fi6&7D}, with the general formula for \(I_\rh\) and the explicit expressions for \(vol_\rh\), \(I_\rh\) and \(J_\rh\) when \(\rh = \rh_\pm\) following from \cite[Prop.\ 1.5]{HFS&SH} (although note that, for \(\La < 0\), I have used a different normalisation for \(vol_\rh\) and a different sign convention for \(J_\rh\)):

\begin{Prop}\label{SL-Prop}
The action of \(\GL_+(6;\bb{R})\) on \(\ww{3}\lt(\bb{R}^6\rt)^*\) has exactly two open orbits, namely:
\ew
\ww[+]{3}\lt(\bb{R}^6\rt)^* = \lt\{ \rh \in \ww{3}\lt(\bb{R}^6\rt)^* ~\m|~ \La(\rh) > 0 \rt\} \et \ww[-]{3}\lt(\bb{R}^6\rt)^* = \lt\{ \rh \in \ww{3}\lt(\bb{R}^6\rt)^* ~\m|~ \La(\rh) < 0 \rt\},
\eew
both of which are invariant under \(\GL(6;\bb{R})\).  Representatives of \(\ww[+]{3}\lt(\bb{R}^6\rt)^*\) and \(\ww[-]{3}\lt(\bb{R}^6\rt)^*\) may be taken to be:
\e\label{rh+}
\rh_+ = \th^{123} + \th^{456} \et \rh_- =& \th^{135} - \th^{146} - \th^{236} - \th^{245} = \fr{Re}\Big(\lt(\th^1 + i\th^2\rt)\w\lt(\th^3 + i\th^4\rt)\w\lt(\th^5 + i\th^6\rt)\Big),
\ee
respectively.  Each \(\rh \in \ww[+]{3}\lt(\bb{R}^6\rt)^*\) induces a volume form \(vol_\rh = \lt(\La(\rh)\rt)^\frac{1}{2}\) and para-complex structure \(I_\rh = vol_\rh^{-1}K_\rh\) on \(\bb{R}^6\) (i.e.\ \(I_\rh\) is an automorphism of \(\bb{R}^6\) such that \(I_\rh^2 = \Id\), with \(+1\) and \(-1\) eigenspaces \(E_{\pm,\rh}\) each having dimension 3), and \(\Stab_{\GL_+(6;\bb{R})}(\rh) \cong \SL(3;\bb{R})^2\) acting diagonally on \(\bb{R}^6 \cong E_{+,\rh} \ds E_{-,\rh}\).  Explicitly for \(\rh = \rh_+\):
\caw
vol_{\rh_+} = \th^{123456}; \hs{5mm} I_{\rh_+} = (e_1,e_2,e_3,e_4,e_5,e_6) \mt (e_1,e_2,e_3,-e_4,-e_5,-e_6);\\
E_{+,\rh_+} = \<e_1,e_2,e_3\? \et E_{-,\rh_+} = \<e_4,e_5,e_6\?.
\caaw
By contrast, each \(\rh \in \ww[-]{3}\lt(\bb{R}^6\rt)^*\) induces a volume form \(vol_\rh = \frac{1}{2}\lt(-\La(\rh)\rt)^\frac{1}{2}\) and a complex structure \(J_\rh = -\frac{1}{2}vol_\rh^{-1}K_\rh\) on \(\bb{R}^6\) , and \(\Stab_{\GL_+(6;\bb{R})}(\rh) \cong \SL(3;\bb{C})\).  Explicitly for \(\rh = \rh_-\):
\ew
vol_{\rh_-} = \th^{123456} \et J_{\rh_-} = (e_1,e_2,e_3,e_4,e_5,e_6) \mt (e_2,-e_1,e_4,-e_3,e_6,-e_5).
\eew
\end{Prop}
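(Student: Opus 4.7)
The plan is to establish the proposition through three main steps: direct computation on the candidate representatives \(\rh_\pm\) to verify the stated formulas and the sign of \(\La\); identification of the stabilisers combined with a dimension count to conclude that the orbits of \(\rh_\pm\) are open; and finally a uniqueness argument to show these are the only open orbits.

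A preliminary observation is that \(\La\) is polynomial of degree 4 in \(\rh\) and \(\GL(6;\bb{R})\)-equivariant, with the action on the codomain \(\lt(\ww{6}\lt(\bb{R}^6\rt)^*\rt)^{\ts 2}\) factoring through \((\det g)^2 > 0\); consequently the sign of \(\La(\rh)\) is a \(\GL(6;\bb{R})\)-invariant, which yields the \(\GL(6;\bb{R})\)-invariance of \(\ww[\pm]{3}\lt(\bb{R}^6\rt)^*\) once the signs on the representatives are known.  For \(\rh_+\) one expands \((u \hk \rh_+) \w \rh_+\) for each basis vector \(u = e_i\) to obtain \(K_{\rh_+}(e_i) = \pm e_i \ts \th^{123456}\), with sign \(+\) for \(i \le 3\) and \(-\) for \(i \ge 4\); this yields \(\La(\rh_+) = \lt(\th^{123456}\rt)^{\ts 2}\) together with the stated values of \(vol_{\rh_+}\), \(I_{\rh_+}\), and its eigenspaces.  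For \(\rh_-\) I would exploit the complex factorisation \(\rh_- = \fr{Re}\,\Omega\) with \(\Omega = \lt(\th^1+i\th^2\rt)\w\lt(\th^3+i\th^4\rt)\w\lt(\th^5+i\th^6\rt)\), which shows that (after dividing out by \(\th^{123456}\)) \(K_{\rh_-}\) acts on each plane \(\<e_{2i-1},e_{2i}\?\) as a scalar multiple of a \(90^\circ\) rotation; hence \(K_{\rh_-}^2 = -4 \lt(\th^{123456}\rt)^{\ts 2} \Id\), giving \(\La(\rh_-) < 0\) and the stated formulas for \(vol_{\rh_-}\) and \(J_{\rh_-}\) on substitution into their definitions.

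To identify the stabilisers I would exploit the canonical, and hence \(\GL_+(6;\bb{R})\)-equivariant, nature of the assignments \(\rh \mt I_\rh\) and \(\rh \mt J_\rh\).  In the \(+\) case, any \(g \in \Stab_{\GL_+(6;\bb{R})}(\rh_+)\) commutes with \(I_{\rh_+}\) and hence preserves the eigenspace decomposition \(\bb{R}^6 = E_{+,\rh_+} \ds E_{-,\rh_+}\); since the restrictions \(\rh_+|_{E_{+,\rh_+}} = \th^{123}\) and \(\rh_+|_{E_{-,\rh_+}} = \th^{456}\) are volume forms on 3-dimensional spaces, one obtains \(\Stab_{\GL_+(6;\bb{R})}(\rh_+) \cong \SL(3;\bb{R}) \x \SL(3;\bb{R})\) acting diagonally.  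In the \(-\) case, \(g\) commutes with \(J_{\rh_-}\) (hence is \(\bb{C}\)-linear) and preserves the \(\bb{C}\)-valued 3-form \(\Omega = \rh_- + i (J_{\rh_-})^*\rh_-\), yielding \(\Stab_{\GL_+(6;\bb{R})}(\rh_-) \cong \SL(3;\bb{C})\).  Since \(\dim\SL(3;\bb{R})^2 = \dim_\bb{R}\SL(3;\bb{C}) = 16 = \dim\GL_+(6;\bb{R}) - \dim\ww{3}\lt(\bb{R}^6\rt)^*\), both orbits through \(\rh_\pm\) are open.

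The step I expect to be the main obstacle is the uniqueness assertion: that \(\{\La > 0\}\) and \(\{\La < 0\}\) are each a single \(\GL_+(6;\bb{R})\)-orbit, rather than a union of several open orbits.  By upper semicontinuity of stabiliser dimension, every \(\rh\) with \(\La(\rh) \ne 0\) has stabiliser of dimension at most 16 and therefore lies in an open orbit, so \(\{\La > 0\}\) and \(\{\La < 0\}\) decompose as disjoint unions of open orbits.  To upgrade this to a single orbit in each case, I would argue that the induced data \(\lt(E_{+,\rh},\rh|_{E_{+,\rh}},E_{-,\rh},\rh|_{E_{-,\rh}}\rt)\) in the \(+\) case, and \(\lt(J_\rh,\Omega_\rh\rt)\) in the \(-\) case, form complete \(\GL_+(6;\bb{R})\)-invariants: the former reduces to simultaneously choosing bases of each eigenspace in which the corresponding volume form takes its standard value on \(\bb{R}^3\), and the latter to choosing a \(\bb{C}\)-basis of \(\lt(\bb{R}^6,J_\rh\rt)\) making \(\Omega_\rh\) equal to the standard complex volume on \(\bb{C}^3\).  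A sign check on the induced orientation ensures the resulting change of basis lies in \(\GL_+(6;\bb{R})\); alternatively, one may cite Hitchin's classification \cite{SF&SM} directly.
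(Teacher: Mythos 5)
The paper does not actually prove this proposition: it cites Hitchin \cite[\S2]{TGo3Fi6&7D} for the orbit classification and Cort\'es--Leistner--Sch\"afer--Schulte-Hengesbach \cite[Prop.~1.5]{HFS&SH} for the explicit formulas, adjusting only a normalisation and a sign convention. So your task is to supply a proof where the paper merely cites, and your overall strategy (compute \(K_{\rh_\pm}\) on the representatives, identify the stabilisers, do a dimension count for openness, then use the induced geometric data as complete invariants for transitivity) is indeed the strategy underlying Hitchin's argument. The preliminary remark on the \(\GL(6;\bb{R})\)-equivariance of \(\La\) and the sign being a \(\GL(6;\bb{R})\)-invariant is correct, as are the explicit computations of \(K_{\rh_\pm}\), the identification of the stabilisers of the two representatives, and the \(36-20=16\) dimension count; you also rightly note that citing \cite{SF&SM} directly (as the paper does) is a valid fallback.

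However, the sketch has a substantive gap precisely where Hitchin's proof does the real work, namely in the transitivity step. You treat it as given that for an arbitrary \(\rh\) with \(\La(\rh)>0\) the endomorphism \(I_\rh = vol_\rh^{-1}K_\rh\) satisfies \(I_\rh^2 = \Id\), that its \(\pm1\)-eigenspaces both have dimension \(3\), and that \(\rh\) decomposes as a sum of nondegenerate \(3\)-forms on the two eigenspaces with no mixed terms (and the analogous facts for \(J_\rh\) when \(\La(\rh)<0\)). None of these is immediate from the definition \(\La=\tfrac16\Tr(K_\rh^2)\): one must first show that \(K_\rh^2\) is a scalar multiple of the identity tensored with \((\ww{6})^{\ts2}\) (not merely that it has the right trace), then that \(\Tr K_\rh = 0\) to force equal eigenspace dimensions, and then analyse how \(\rh\) sits relative to the eigenspace splitting. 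These are the content of Hitchin's Propositions 2--3 (and of \cite[Prop.~1.5]{HFS&SH}), and without them your ``complete invariants'' argument has nothing to act on. There is also a secondary misstep: the appeal to ``upper semicontinuity of stabiliser dimension'' argues in the wrong direction (semicontinuity lets the stabiliser dimension jump \emph{up} at a degeneration; it does not give a uniform upper bound at arbitrary points of \(\{\La\ne0\}\)), so that sentence as written does not establish that \(\{\La>0\}\) and \(\{\La<0\}\) are unions of open orbits. That said, this step is superfluous once the invariants argument is carried through correctly, so it is an inessential detour rather than a load-bearing error. Finally, the orientation sign check you defer to a single sentence is genuinely needed and should be spelled out: for \(\rh_+\) one exhibits an orientation-reversing automorphism (e.g.\ the swap \(e_i\leftrightarrow e_{i+3}\)) to see that the single \(\GL(6;\bb{R})\)-orbit is also a single \(\GL_+(6;\bb{R})\)-orbit, and for \(\rh_-\) one uses complex conjugation in the \(J_{\rh_-}\)-complex structure.
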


Now, turn to 7 dimensions.  Given \(\ph\in\ww{3}\lt(\bb{R}^7\rt)^*\), define a quadratic form \(Q_\ph\) on \(\bb{R}^7\) valued in \(\ww{7}\lt(\bb{R}^7\rt)^*\) by \(Q_\ph(v) = \frac{1}{6}\lt(v\hk\ph\rt)^2\w\ph\in\ww{7}\lt(\bb{R}^7\rt)^*\).  The determinant of \(Q_\ph\) is a polynomial in \(\ph\) and thus \(\lt\{ \ph ~\m|~ Q_\ph \text{ is degenerate}\rt\}\) is an affine subvariety of \(\ww{3}\lt(\bb{R}^7\rt)^*\) of positive codimension; hence \(Q_\ph\) must be non-degenerate whenever \(\ph\) is stable.  The following proposition is readily deduced from the results of \cite{A3F&ESLGoTG2}:

\begin{Prop}\label{Stable-in-7}
The action of \(\GL_+(7;\bb{R})\) on \(\ww{3}\lt(\bb{R}^7\rt)^*\) has precisely 4 open orbits, corresponding to \(Q\) having signature \((7,0)\), \((3,4)\), \((4,3)\) and \((0,7)\).  Write \(\ww[+]{3}\lt(\bb{R}^7\rt)^* = \lt\{ \ph ~\m|~ Q_\ph \text{ has signature } (7,0) \rt\}\).  For \(\ph \in \ww[+]{3}\lt(\bb{R}^7\rt)^*\), \(\Stab_{\GL_+(7;\bb{R})}(\ph) \cong \Gg_2\); accordingly, \(\ph\) is termed a \g\ 3-form.  It induces a unique positive definite inner product \(g_\ph\) and (positive) volume form \(vol_\ph\) such that \(Q_\ph = g_\ph \ts vol_\ph\) and \(\|\ph\|_{g_{\ph}}^2 = 7\).  A representative of \(\ww[+]{3}\lt(\bb{R}^7\rt)^*\) may be taken to be:
\ew
\vph_0 = \th^{123} + \th^{145} + \th^{167} + \th^{246} - \th^{257} - \th^{347} - \th^{356} \in\ww{3}(\bb{R}^7)^*.
\eew
Then, \(g_{\vph_0} = \sum_{i=1}^7 \lt(\th^i\rt)^{\ts2}\) and \(vol_{\vph_0} = \th^{12...7}\).  Likewise, write \(\ww[\tl]{3}\lt(\bb{R}^7\rt)^* = \lt\{ \sph ~\m|~ Q_{\sph} \text{ has signature } (3,4) \rt\}\).  For \(\sph \in \ww[\tl]{3}\lt(\bb{R}^7\rt)^*\), \(\Stab_{\GL_+(7;\bb{R})}(\sph) \cong \tld{\Gg}_2\); accordingly, \(\sph\) is termed a \sg\ 3-form.  It induces a unique inner product \(g_{\sph}\) of signature \((3,4)\) and (positive) volume form \(vol_{\sph}\) such that \(Q_{\sph} = g_{\sph} \ts vol_{\sph}\) and \(\lt\|\sph\rt\|_{g_{\sph}}^2 = 7\).  A representative of \(\ww[\tl]{3}\lt(\bb{R}^7\rt)^*\) may be taken to be:
\e\label{svph0}
\svph_0 = \th^{123} - \th^{145} - \th^{167} + \th^{246} - \th^{257} - \th^{347} - \th^{356} \in\ww{3}(\bb{R}^7)^*.
\ee
Then, \(g_{\svph_0} = \sum_{i=1}^3 \lt(\th^i\rt)^{\ts2} - \sum_{i=4}^7 \lt(\th^i\rt)^{\ts2}\) and \(vol_{\svph_0} = \th^{12...7}\).
\end{Prop}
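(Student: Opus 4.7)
The plan is to combine the orbit classification of \cite{A3F&ESLGoTG2} with direct calculation on the standard representatives $\vph_0$ and $\svph_0$.  The initial observation is a dimension count: since $\dim \ww{3}(\bb{R}^7)^* = 35$ and $\dim \GL(7;\bb{R}) = 49$, any open orbit has a $14$-dimensional stabilizer, which matches both real forms $\Gg_2$ and $\tld{\Gg}_2$ of the complex exceptional group $\Gg_2^{\bb{C}}$.  The cited reference classifies the $\GL(7;\bb{R})$-orbits on $\ww{3}(\bb{R}^7)^*$, producing precisely two open orbits distinguished by whether the stabilizer is the compact or the split real form.  Passing from $\GL(7;\bb{R})$ to $\GL_+(7;\bb{R})$ doubles the count, because the sign of $Q_\ph$ relative to a chosen orientation becomes an additional invariant; the four resulting orbits are indexed by the signatures $(7,0)$, $(0,7)$, $(3,4)$, $(4,3)$.

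The first concrete step is to verify openness and identify stabilizer type for the standard representatives, i.e.\ to show $\Stab_{\GL_+(7;\bb{R})}(\vph_0) \cong \Gg_2$ and $\Stab_{\GL_+(7;\bb{R})}(\svph_0) \cong \tld{\Gg}_2$; this is essentially the classical definition of these groups.  The second step is to identify which signatures arise for which orbit by computing $Q_\ph$ explicitly on $\ph = \vph_0$ and $\ph = \svph_0$: for each basis vector $e_i$ one contracts, squares in $\ww{2}(\bb{R}^7)^*$, wedges with $\ph$, and reads off the coefficient of $\th^{12\ldots 7}$.  The sign pattern in \eref{svph0} differs from that of $\vph_0$ in exactly three places, and this is precisely what converts the positive definite signature into $(3,4)$.

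Third, to construct the canonical metric $g_\ph$ and volume form $vol_\ph$, I would invoke Schur's lemma: both $\Gg_2$ and $\tld{\Gg}_2$ act irreducibly on $\bb{R}^7$, so the spaces of invariant symmetric bilinear forms and invariant top forms are each one-dimensional.  The relation $Q_\ph = g_\ph \ts vol_\ph$ pins these down up to the common rescaling $(g_\ph, vol_\ph) \mapsto (\la^2 g_\ph, \la^{-2} vol_\ph)$, and the normalization $\|\ph\|_{g_\ph}^2 = 7$ together with the requirement that $vol_\ph$ be positive relative to the orientation fixes the remaining ambiguity uniquely.

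The main obstacle is the explicit calculation of $Q_{\vph_0}$ and $Q_{\svph_0}$: this is simultaneously what certifies non-degeneracy (hence openness of the orbit) and what determines the metric signatures.  The calculation is combinatorially routine but tedious, and the essential simplification is that only monomials of $\ph$ containing the index $i$ contribute to $e_i \hk \ph$, and of these, only those pairs sharing exactly one further index survive after squaring and wedging with $\ph$, so the sum reduces to a manageable list of terms checkable by hand.
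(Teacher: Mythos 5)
Your proposal is correct and takes essentially the same route as the paper, which simply cites Herz \cite{A3F&ESLGoTG2} for the orbit classification and leaves the rest to direct computation; your write-up merely fills in the standard surrounding details (dimension count, passage from $\GL$ to $\GL_+$, explicit evaluation of $Q_\ph$ on $\vph_0$ and $\svph_0$).  Two small remarks.  First, the invocation of Schur's lemma is more machinery than is needed: since $\ww{7}\lt(\bb{R}^7\rt)^*$ is one-dimensional, \emph{any} $\ww{7}$-valued symmetric bilinear form factors as $g\ts vol$ once a trivialisation of $\ww{7}\lt(\bb{R}^7\rt)^*$ is chosen, with the only freedom being the common rescaling $(g,vol)\mt(\la g,\la^{-1}vol)$, $\la\in\bb{R}\osr\{0\}$; the positivity of $vol$ and the normalisation $\|\ph\|_{g_\ph}^2 = 7$ (which scales as $\la^{-3}$) then fix $\la$ uniquely with no representation-theoretic input.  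The irreducibility of the $\Gg_2$- and $\tld{\Gg}_2$-actions on $\bb{R}^7$ is of course true and is implicitly used (via uniqueness of the equivariant family $g_\ph$) elsewhere, but it is not needed for the decomposition of $Q_\ph$ itself.  Second, a purely cosmetic slip: $\vph_0$ and $\svph_0$ differ in two sign changes ($\th^{145}$ and $\th^{167}$), not three.
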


Clearly \(\ww[+]{3}\lt(\bb{R}^7\rt)\) and \(\lt\{ \ph ~\m|~ Q_\ph \text{ has signature } (0,7) \rt\}\) are exchanged by any orientation-reversing automorphism of \(\bb{R}^7\); in particular (since \(-\Id\) is orientation-reversing on an odd-dimensional vector space) \(\lt\{ \ph ~\m|~ Q_\ph \text{ has signature } (0,7) \rt\} = -\ww[+]{3}\lt(\bb{R}^7\rt)^*\).  Similar conclusions hold for the split-signature case.

Using \tref{Stable-in-7}, the following result is easily deduced (cf.\ \cite[p.\ 541]{MwEH}):

\begin{Prop}
Define \(\ww[+]{4}\lt(\bb{R}^7\rt)^* = \lt\{ \Hs_\ph \ph ~\m|~ \ph \in \ww[+]{3}\lt(\bb{R}^7\rt)^* \rt\}\) and define \(\ww[\tl]{4}\lt(\bb{R}^7\rt)^*\) analogously.  Then the action of \(\GL_+(7;\bb{R})\) on \(\ww[\tl]{4}\lt(\bb{R}^7\rt)^*\) has precisely 4 open orbits, given by \(\ww[+]{4}\lt(\bb{R}^7\rt)^*\), \(\ww[\tl]{4}\lt(\bb{R}^7\rt)^*\), \(-\ww[\tl]{4}\lt(\bb{R}^7\rt)^*\) and \(-\ww[+]{4}\lt(\bb{R}^7\rt)^*\), each of which are also orbits of \(\GL(7;\bb{R})\).  Forms in the first two orbits are termed \g\ 4-forms and \sg\ 4-forms, respectively.  Representatives of these orbits may be taken to be:
\cag\label{vps0}
\vps_0 = \Hs_0\vph_0 = \th^{4567} + \th^{2367} + \th^{2345} + \th^{1357} - \th^{1346} - \th^{1256} - \th^{1247};\\
\svps_0 = \tld{\Hs}_0\svph_0 = \th^{4567} - \th^{2367} - \th^{2345} + \th^{1357} - \th^{1346} - \th^{1256} - \th^{1247}.\\
\caag
\end{Prop}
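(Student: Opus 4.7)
The proof runs in parallel with \pref{Stable-in-7}, exploiting the Hodge-dual construction $\Phi:\vph\mt\Hs_\vph\vph$ to transport the classification from 3-forms to 4-forms.

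First, I would verify the explicit formulas for $\vps_0$ and $\svps_0$ by direct computation.  With $g_{\vph_0}=\sum_{i=1}^7(\th^i)^{\ts 2}$ and $vol_{\vph_0}=\th^{12\ldots 7}$ from \pref{Stable-in-7}, $\Hs_0$ sends each basis 3-form $\th^{ijk}$ to $\epsilon\cdot\th^{lmno}$, where $\{l,m,n,o\}=\{1,\ldots,7\}\setminus\{i,j,k\}$ and $\epsilon\in\{\pm 1\}$ is the sign of the permutation $(i,j,k,l,m,n,o)$ of $(1,\ldots,7)$; summing the seven monomials of $\vph_0$ yields $\vps_0$.  The analogous computation for $\svph_0$, in which each term picks up an extra factor $g_{\svph_0}(\th^{ijk},\th^{ijk})\in\{\pm 1\}$ from the split-signature metric, yields $\svps_0$.

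Next, I would establish openness.  The map $\Phi$ is $\GL_+(7;\bb{R})$-equivariant because $g_\vph$, $vol_\vph$ and $\Hs_\vph$ are canonically constructed from $\vph$; hence $\ww[+]{4}\lt(\bb{R}^7\rt)^*=\Phi\lt(\ww[+]{3}\lt(\bb{R}^7\rt)^*\rt)$ and $\ww[\tl]{4}\lt(\bb{R}^7\rt)^*=\Phi\lt(\ww[\tl]{3}\lt(\bb{R}^7\rt)^*\rt)$ are single $\GL_+$-orbits.  Since $\Stab_{\GL_+(7;\bb{R})}(\vph_0)\cong\Gg_2$ fixes $g_{\vph_0}$, $vol_{\vph_0}$ and hence $\Hs_0$, it stabilises $\vps_0=\Hs_0\vph_0$; so $\Stab_{\GL_+(7;\bb{R})}(\vps_0)\supseteq\Gg_2$ has dimension $\ge 14$, forcing the orbit dimension $\le 49-14=35=\dim\ww{4}\lt(\bb{R}^7\rt)^*$ and therefore equal to it, whence the orbit is open.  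The same reasoning applies to $\ww[\tl]{4}$, and $\GL_+$-equivariance of the negation $\psi\mt-\psi$ yields the further open orbits $-\ww[+]{4}$ and $-\ww[\tl]{4}$.

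\textbf{The main obstacle} is showing these four orbits are distinct and exhaust all open orbits.  Distinctness of $\pm\ww[+]{4}$ from $\pm\ww[\tl]{4}$ is immediate, since their stabilisers have non-isomorphic identity components ($\Gg_2$ compact versus $\tld{\Gg}_2$ non-compact).  To separate $\ww[+]{4}$ from $-\ww[+]{4}$ (and likewise in the split case), I would establish that $\Phi$ is a $\GL_+$-equivariant diffeomorphism of $\ww[+]{3}\lt(\bb{R}^7\rt)^*$ onto $\ww[+]{4}$, via a Hitchin-style recovery of $\vph$ from $\psi=\Hs_\vph\vph$ (which forces $\Stab_{\GL_+(7;\bb{R})}(\vps_0)=\Gg_2$ exactly), and then invoke the induced $\GL_+$-equivariant recovery map $\ww[+]{4}\to\ww[+]{3}\lt(\bb{R}^7\rt)^*$ together with its negation on $-\ww[+]{4}$: since $\ww[+]{3}\lt(\bb{R}^7\rt)^*$ and $-\ww[+]{3}\lt(\bb{R}^7\rt)^*$ are distinct $\GL_+$-orbits by \pref{Stable-in-7}, so are $\ww[+]{4}$ and $-\ww[+]{4}$.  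For exhaustion, I would invoke the canonical $\GL(7;\bb{R})$-equivariant isomorphism $\ww{4}\lt(\bb{R}^7\rt)^*\cong\ww{3}\bb{R}^7\ts\ww{7}\lt(\bb{R}^7\rt)^*$, which reduces the classification to that of $\ww{3}\bb{R}^7$ under $\GL_+$ (the $\det^{-1}$ twist being absorbable by positive scaling, since open-orbit stabilisers lie in $\SL$); by duality with $\ww{3}\lt(\bb{R}^7\rt)^*$, this space has exactly four open orbits, forcing the same for $\ww{4}\lt(\bb{R}^7\rt)^*$.

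Finally, $\GL(7;\bb{R})$-invariance of each orbit is immediate: any $A\in\GL(7;\bb{R})$ with $\det A<0$ may be written $A=(-\Id)\cdot(-A)$ with $-A\in\GL_+(7;\bb{R})$, and $-\Id$ acts as $(-1)^4=1$ on 4-forms; hence $A\cdot\psi=(-A)\cdot\psi$ lies in the $\GL_+$-orbit of $\psi$, so every $\GL_+$-orbit is a $\GL(7;\bb{R})$-orbit.
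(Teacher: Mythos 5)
Your proposal is correct and follows essentially the route the paper intends: the paper itself dispatches the proposition in a single line by saying it is "easily deduced" from \pref{Stable-in-7} (with a pointer to Bryant), and your argument is a careful unwinding of that deduction — compute $\Hs_0\vph_0$ and $\tld{\Hs}_0\svph_0$ explicitly, transfer the orbit classification from $3$-forms to $4$-forms via the duality $\ww{4}\lt(\bb R^7\rt)^*\cong\ww{3}\bb R^7\ts\ww{7}\lt(\bb R^7\rt)^*$, and observe that $-\Id$ acts orientation-reversingly on $\bb R^7$ but trivially on $4$-forms to upgrade $\GL_+$- to $\GL$-orbits.

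Two small remarks. First, as written your openness argument contains a non sequitur: knowing $\dim\Stab_{\GL_+(7;\bb R)}(\vps_0)\ge 14$ only gives orbit dimension $\le 35$, and "$\le 35$ and therefore equal to $35$" is not a valid inference. What you actually need — equality $\Stab_{\GL_+(7;\bb R)}(\vps_0)=\Gg_2$ — you do supply, but only later via the Hitchin-style recovery of $\vph$ from $\Hs_\vph\vph$; it would be cleaner to move that observation up front, or simply to deduce openness directly from the equivariant isomorphism of \lref{no-orb-lem}, which identifies $\ww{4}\lt(\bb R^7\rt)^*$ with $\ww{3}\bb R^7$ as $\GL_+(7;\bb R)$-spaces (after fixing a volume form and untwisting $\Ph$), and hence carries the four open orbits of $\ww{3}\lt(\bb R^7\rt)^*$ bijectively to four open orbits of $\ww{4}\lt(\bb R^7\rt)^*$ in one stroke — yielding openness, distinctness and exhaustion simultaneously, without a separate stabiliser-dimension count. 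Second, your parenthetical justification for absorbing the $\det^{-1}$ twist ("since open-orbit stabilisers lie in $\SL$") is unnecessary: the twist is absorbed purely by positive rescaling of group elements, without any hypothesis on the stabiliser. Neither issue affects the correctness of the overall argument.
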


I end this subsection with a general definition.  The term `Hitchin form' shall refer to a stable form \(\si_0 \in \ww{p}\lt(\bb{R}^n\rt)^*\) such that \(\Stab_{\GL_+(n;\bb{R})}(\si_0) \cc \SL(n;\bb{R})\).  Given an orbit \(\mc{O} \pc \ww{p}\lt(\bb{R}^n\rt)^*\) of Hitchin forms, there is a \(\GL_+(n;\bb{R})\)-equivariant volume map \(vol:\mc{O} \to \ww{n}\lt(\bb{R}^n\rt)^*\), unique up to an overall positive constant multiple.  Since \(\mc{O}\cc \ww{p}\lt(\bb{R}^n\rt)^*\) is open, the derivative of \(vol\) at \(\si \in \mc{O}\) is a linear map \(\ww{p}\lt(\bb{R}^n\rt)^* \to \ww{n}\lt(\bb{R}^n\rt)^*\) and hence there exists \(\Xi(\si) \in \ww{n-p}\lt(\bb{R}^n\rt)^*\) such that \(\mc{D}vol|_\si(\al) = \al \w \Xi(\si)\) for all \(\al \in \ww{p}\lt(\bb{R}^n\rt)^*\).  \(\Xi\) defines a \(\GL_+(n;\bb{R})\)-equivariant map from \(\mc{O}\) to an open orbit in \(\ww{n-p}\lt(\bb{R}^n\rt)^*\), unique up to a constant positive multiple.

Hitchin forms are of particular interest when considering stable forms on manifolds.  Let \(\M\) be a closed, oriented \(n\)-manifold and let \(\si \in \Om^p(\M)\) be a \(\si_0\)-form for some Hitchin form \(\si_0 \in \ww{p}\lt(\bb{R}^n\rt)^*\).  If \(\dd\si = 0\), one can define a Hitchin functional \cite{SF&SM} \(\mc{H}: \CL^p_{\si_0}([\si]) \to (0,\infty)\) by:
\ew
\mc{H}(\si') = \bigintsss_\M vol_{\si'}.
\eew
The functional derivative of \(\mc{H}\) is simply:
\ew
\bcd[row sep = 0pt]
\mc{DH}|_{\si'}:\dd\Om^{p-1}(\M) \ar[r] & \bb{R}\\
\dd\ga \ar[r, maps to] & \bigint_\M \dd\ga \w \Xi(\si')
\ecd
\eew
and thus \(\si'\) is a critical point of the functional \(\mc{H}\) \iff\ \(\dd\Xi(\si') = 0\).  Such forms shall be termed biclosed (since both \(\si\) and \(\Xi(\si)\) are closed) and are often of geometric significance: e.g.\ biclosed \g/\sg\ 3-/4-forms correspond to metrics with holonomy contained in \g\ (resp.\ \sg) \cite{TGo3Fi6&7D, MwEH} and biclosed \slc\ 3-forms define (integrable) complex structures with trivial canonical bundle \cite[Thm.\ 12]{TGo3Fi6&7D}.\\

\subsection{Differential relations}\label{diff-rels}

For a more expansive introduction to material in this subsection, see \cite[Chs.\ 1, 5, 6, 17 \&\ 18]{ItthP}.

Let \(\pi: E \to \M\) be a vector bundle.  Write \(E^{(1)}\) for the first jet bundle of \(E\); explicitly, given a connection \(\nabla\) on \(E\), by \cite[\S9, Cor.\ to Thm.\ 7]{DOoVB} one can identify \(E^{(1)} \cong E \ds \lt(\T^*\M \ts E\rt)\) such that the following diagram commutes:
\ew
\bcd[row sep = 3mm]
\Ga\lt(\M,E^{(1)}\rt) \ar[rr, "\mLa{\cong}"] & & \Ga\lt(\M,E \ds \lt(\T^*\M \ts E\rt)\rt)\\
& \Ga(\M,E) \ar[ru, "\mLa{s \mt s \ds \nabla s}"'] \ar[lu, "\mLa{j_1}"] &
\ecd
\eew
where \(\Ga(\M,-)\) denotes the space of sections over \(\M\) and \(j_1\) is the map assigning to a section of \(E\) its corresponding \(1\)-jet; write \(p_1: E^{(1)} \to E\) for the natural projection.  In particular, note that \(E^{(1)}\) naturally has the structure of a vector bundle over \(\M\).  More generally, given \(q \ge 0\), write \(E_{D^q}\) for the pullback of the vector bundle \(E\) along the projection \(D^q \x \M \to \M\); explicitly, \(E_{D^q}\) is the vector bundle \(D^q \x E \oto{\Id \x \pi} D^q \x \M\).  In this paper, a section of \(E_{D^q}\) shall refer to a continuous map \(s: D^q \x \M \to D^q \x E\) satisfying \(\pi_{E_{D^q}} \circ s = \Id_{D^q \x \M}\), and depending smoothly on \(x \in \M\); in particular, sections of \(E_{D^q}\) over \(D^q \x \M\) correspond to continuous maps \(D^q \to \Ga(E,\M)\).  Write \(E^{(1)}_{D^q}\) for the vector bundle \(\lt(E^{(1)}\rt)_{D^q}\) and note that \(E^{(1)}_{D^q} \ne \lt(E_{D^q}\rt)^{(1)}\), since only derivatives in the `\(\M\)-direction' are considered in the bundle \(E^{(1)}_{D^q}\).  A section of \(E^{(1)}_{D^q}\) is termed holonomic if it is the \(1\)-jet of a section of \(E_{D^q}\), i.e.\ if it can be written as \(s \ds \nabla s\) for some section \(s\) of \(E_{D^q}\).  A fibred differential relation (of order 1) on \(D^q\)-indexed families of sections of \(E\) is simply a subset \(\cal{R} \pc E^{(1)}_{D^q}\).  \(\cal{R}\) is termed an open relation if it is open as a subset of \(E^{(1)}_{D^q}\).

A (possibly disconnected) subset \(A\cc\M\) is termed a polyhedron if there exists a smooth triangulation \(\cal{K}\) of \(\M\) identifying \(A\) with a subcomplex of \(\cal{K}\); in particular, \(A\) is a closed subset of \(\M\) (see \sref{App-Prelim} for the definition of smooth triangulation).  Say that a fibred relation \(\cal{R}\) satisfies the relative \(h\)-principle if for every polyhedron \(A\) and every section \(F_0\) of \(\cal{R}\) over \(D^q \x \M\) which is holonomic over \([\del D^q \x \M] \cup [D^q \x \Op(A)]\), there exists a homotopy \((F_t)_{t \in [0,1]}\) of sections of \(\cal{R}\), constant over \([\del D^q \x \M] \cup [D^q \x \Op(A)]\), such that \(F_1\) is a holonomic section of \(\cal{R}\).  Say that \(\cal{R}\) satisfies the \(C^0\)-dense relative \(h\)-principle if, in addition, the induced homotopy \(p_1(F_t)\) of sections of \(E\) can be taken to be arbitrarily small in the \(C^0\)-topology.  (The reader will note the similarity between this definition and the notion of \(h\)-principles for stable forms introduced in the introduction.)

Now fix a point \(p\in\M\).  Identifying \(E^{(1)} \cong E \ds \T^*\M\ts E\), the fibre of the map \(p_1:E^{(1)}\to E\) over \(e \in E_p\) is isomorphic to the space \(p_1^{-1}(e) = \{e\} \x \T^*_p\M \ts E_p = \{e\} \x \Hom(\T_p\M,E_p)\).  Each codimension-1 hyperplane \(\bb{B}\pc\T_p\M\) and linear map \(\la:\bb{B}\to E_p\) thus define a so-called principal subspace of \(p_1^{-1}(e)\) by:
\e\label{PiBl-defn}
\Pi_e(\bb{B},\la) &= \{e\} \x \lt\{L\in\Hom(\T_p\M,E_p)~\middle|~L|_\bb{B} = \la\rt\}\\
&= \{e\} \x \Pi(\bb{B},\la).
\ee
\(\Pi_e(\bb{B},\la)\) is an affine subspace of \(p_1^{-1}(e)\) modelled on \(E_p\), though not, in general, a linear subspace.  (Note, also, that changing the choice of connection \(\nabla\) on \(E\) changes the identification \(p_1^{-1}(e) = \{e\} \x \T^*_p\M \ts E_p\) by an affine linear map and so the collection of principal subspaces of \(p_1^{-1}(e)\) is independent of the choice of connection.)

\begin{Thm}[{Convex Integration Theorem, \cite[Chs.\ 17--18]{ItthP}}]\label{EGM-Thm}
Let \(E\to\M\) be a vector bundle, let \(q \ge 0\) and let \(\cal{R}\cc E^{(1)}_{D^q}\) be an open fibred differential relation.  For each \(s \in D^q\), define \(\cal{R}_s \pc E^{(1)}\) by the formula:
\ew
\{s\} \x \cal{R}_s = \cal{R} \cap \lt( \{s\} \x E^{(1)}\rt).
\eew
Suppose that, for every \(s \in D^q\), \(e\in E\) and principal subspace \(\Pi_e \pc p_1^{-1}(e)\), the subset \(\cal{R}_s\cap \Pi_e \cc \Pi_e\) is ample in the sense of affine geometry, i.e.\ \(\cal{R}_s\cap \Pi_e\) is either empty, or the convex hull of every path component of \(\cal{R}_s\cap \Pi_e\) equals \(\Pi_e\) (in this case, say that \(\cal{R}\) is ample).  Then, \(\cal{R}\) satisfies the \(C^0\)-dense, relative \(h\)-principle.
\end{Thm}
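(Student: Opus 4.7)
The plan is to follow Gromov's classical scheme of iterated one-dimensional convex integration, carried out parametrically and relatively, along the lines developed in the references cited immediately before the statement.  First I would reduce to a local model: choose a smooth triangulation of \(\M\) fine enough that each top-dimensional simplex \(\De\) admits a chart trivialising \(E\) and identifying a neighbourhood of \(\De\) with a coordinate box \(I^n \pc \bb{R}^n\).  An induction over the skeleta of the triangulation then reduces the global statement to the following local claim: given a \(D^q\)-family \(F_0\) of sections of \(\cal{R}\) over \(I^n\) which is holonomic on \([\del D^q \x I^n] \cup [D^q \x \del I^n]\), produce a homotopy \(F_t\), constant on that locus, such that \(F_1\) is holonomic and \(p_1 \circ F_t\) is arbitrarily \(C^0\)-small.

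The analytic heart of the argument is the one-dimensional convex integration lemma: given an open ample \(\Om \pc V\) in a finite-dimensional vector space, every path component \(\Om_0\) of \(\Om\) satisfies \(\Conv(\Om_0) = V\), so that for any continuous \(u: [0,1] \to V\), any antiderivative \(g\) of \(u\), any \(\ep > 0\) and any choice of \(\Om_0\), there is a smooth \(f: [0,1] \to V\) with \(f'(t) \in \Om_0\), \(f(0) = g(0)\), \(f(1) = g(1)\) and \(\|f - g\|_{C^0} < \ep\).  One constructs \(f\) by making \(f'\) oscillate rapidly among finitely many points of \(\Om_0\) whose time-average over short intervals matches \(u\).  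I would apply this inductively over coordinate directions \(x_1, \dots, x_n\): at the \(j\)\thh\ step, the partials \(\del_{x_i} F\) for \(i < j\) have already been pinned to agree with the formal data, and the lemma is applied along each \(x_j\)-slice to upgrade \(\del_{x_j} F\) to its prescribed value.  This is precisely what the principal subspaces \(\Pi_e(\bb{B},\la)\) encode: fixing the jet on a codimension-\(1\) hyperplane \(\bb{B} \pc \T_p\M\) leaves a single affine copy of \(E_p\) along which to vary, and the hypothesis that every \(\cal{R}_s \cap \Pi_e\) is ample is exactly what permits the one-dimensional lemma at each iteration.

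Parameter dependence on \(s \in D^q\), constancy over \(\del D^q\), and the eventual gluing to a relative result on \(\M\) that preserves holonomy on \(\Op(A)\) are handled by multiplying all perturbations by smooth cut-offs vanishing on the sets to be kept fixed, and by running the one-dimensional lemma continuously in \(s\); openness of \(\cal{R}\) is used throughout to absorb the resulting small perturbations.  The main obstacle will be organising the iteration in the second step: the rapid oscillations used to correct \(\del_{x_j} F\) necessarily cause \(O(1)\)-size changes in \(\del_{x_j} F\) itself, which in principle can disrupt ampleness along subsequent principal subspaces, so one must work on arbitrarily thin slabs in the \(x_j\)-direction and track carefully how a perturbation at step \(j\) affects the already-pinned partials \(\del_{x_i} F\) for \(i < j\).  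Quantifying this interaction so that every iteration remains inside \(\cal{R}\) and the process terminates at a genuinely holonomic section is the delicate part; once achieved in the local model, reassembly via the skeletal induction together with a partition-of-unity gluing delivers the \(C^0\)-dense, relative \(h\)-principle globally.
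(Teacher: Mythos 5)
The paper gives no proof of this statement: Theorem~\ref{EGM-Thm} is quoted directly from Eliashberg and Mishachev, \emph{Introduction to the $h$-Principle}, Chapters 17--18, and is used as a black box, so there is no internal argument against which to compare your proposal. That said, your outline does correctly reproduce the classical scheme from that source: localise via a fine triangulation and induct over skeleta to a coordinate cube, apply a one-dimensional convex-integration lemma against the codimension-one principal subspaces $\Pi_e(\bb{B},\la)$ one direction at a time, and control parametricity and the relative condition by cutoffs together with openness of $\cal{R}$. You are right that the delicate point is the iterative bookkeeping, though your diagnosis slightly mislocates where the danger lies. In the Eliashberg--Mishachev construction the perturbation at step $j$ is engineered so that its derivatives in the directions $x_i$ for $i\neq j$ are arbitrarily small; consequently the previously pinned partial derivatives, the underlying $0$-jet $e$, and hence the principal subspaces $\Pi_e$ themselves drift only $C^0$-slightly, and openness of $\cal{R}$ absorbs that drift. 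The $O(1)$ motion is confined to the one direction being actively re-fitted, and there it lands in a chosen path component of $\cal{R}_s\cap\Pi_e$ by ampleness; the path-connectedness is what makes the oscillation within a single component, hence the homotopy to a holonomic section, possible. Making this quantitatively uniform in the parameter $s\in D^q$ and near the relative locus, while keeping the underlying section within $C^0$-distance $\ep$, is precisely the content of those two chapters, and it is the part your sketch correctly identifies as the crux but does not carry out.
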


\begin{Rk}\label{calc-with-amp}
The relations considered in this paper will all be of the form:
\ew
\cal{R} = E_{D^q} \x_{(D^q \x \M)} \cal{R}' \cc E_{D^q} \ds \lt(\T^*\M \ts E\rt)_{D^q}
\eew
for some subbundle \(\cal{R}' \cc \lt(\T^*\M \ts E\rt)_{D^q}\), where \(\x_{(D^q \x \M)}\) denotes the fibrewise Cartesian product of bundles over \(D^q \x \M\).  In this case, define \(\cal{R}'_s\) by the relation:
\ew
\{s\} \x \cal{R}'_s = \cal{R}' \cap \lt[\{s\} \x \lt(\T^*\M \ts E\rt)\rt] \pc \lt(\T^*\M \ts E\rt)_{D^q}.
\eew
Then, for all \(s \in D^q\), \(e \in E\) and principal subspaces \(\Pi_e(\bb{B},\la) \pc \pi^{-1}(e)\):
\ew
\cal{R}_s \cap \Pi_e(\bb{B},\la) = \{s\} \x \{e\} \x \lt(\cal{R}'_s \cap \Pi(\bb{B},\la)\rt)
\eew
for \(\Pi(\bb{B},\la)\) as defined in \eref{PiBl-defn}, and thus \(\cal{R}\) is ample \iff\ \(\cal{R}'_s \cap \Pi(\bb{B},\la) \pc \Pi(\bb{B},\la)\) is ample for all \(\bb{B}\) and \(\la\).  Thus, the underlying point \(e\) is irrelevant for relations of this form.\\
\end{Rk}

\section{Ossymplectic, pseudoplectic and ospseudoplectic forms}\label{OS-P-OP}

The aim of this section is to establish the fundamental properties of stable \(2\)-forms and \((n-2)\)-forms which will be needed in this paper.  I begin with some general results on stable forms.  Fix a volume form \(vol\) on \(\bb{R}^n\); the following result is easily verified by direct calculation:

\begin{Lem}\label{no-orb-lem}
Consider the anti-isomorphism:
\e\label{Ph-defn}
\bcd[row sep = 0pt]
\Ph: \GL_+(n;\bb{R}) \ar[r]& \GL_+(n;\bb{R})\\
F \ar[r, maps to]& \det(F)^\frac{1}{n-p} \cdot F^{-1},
\ecd
\ee
with inverse given by \(\Ps: F \mt \det(F)^\frac{1}{p} \cdot F^{-1}\).  Then, the following \(\GL_+(n;\bb{R})\)-equivariant diagram commutes:
\e\label{almost-equi}
\bcd[column sep = 1.4cm]
\hs{4.5mm} \ww{p}\bb{R}^n \hs{4.5mm} \ar[d, " \si \mt \si \hk vol "] \ar[loop right] & \GL_+(n;\bb{R}) \ar[d, " \Ph "]\\
\ww{n-p}\lt(\bb{R}^n\rt)^* \ar[loop right]& \GL_+(n;\bb{R})\\
\ecd
\ee
where the left-hand vertical arrow is an isomorphism, the top action is a left action and the bottom action is a right action.
\end{Lem}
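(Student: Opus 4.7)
The plan is to verify the lemma by two essentially independent checks: first that $\Ph$ really is an anti-isomorphism of $\GL_+(n;\bb{R})$, and second that diagram \eref{almost-equi} commutes, which amounts to equivariance of the vertical isomorphism with respect to the left action on top and right action on bottom.

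For the first part, I would begin by noting that $\det(\Ph(F)) = \det(F)^{n/(n-p)} \det(F)^{-1} = \det(F)^{p/(n-p)} > 0$, so $\Ph$ lands in $\GL_+(n;\bb{R})$, and likewise for $\Ps$. Multiplicativity of the determinant then gives $\Ph(FG) = \det(FG)^{1/(n-p)} G^{-1} F^{-1} = \Ph(G) \Ph(F)$, so $\Ph$ is an anti-homomorphism. Bijectivity reduces to checking the inverse formula, e.g.\ $\det(\Ps(F)) = \det(F)^{(n-p)/p}$, whence $\Ph(\Ps(F)) = \det(F)^{(n-p)/p \cdot 1/(n-p)} \cdot \det(F)^{-1/p} \cdot F = F$, with $\Ps \circ \Ph = \Id$ entirely analogous.

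The substantive computation is the commutativity of \eref{almost-equi}. The left vertical arrow is an isomorphism by the non-degeneracy of $vol$, so what must be shown is the equivariance identity $F_*\si \hk vol = \Ph(F)^*(\si \hk vol)$ for $\si \in \ww{p}\bb{R}^n$ and $F \in \GL_+(n;\bb{R})$. The key general fact is that for any form $\al$, one has $F_*\si \hk \al = (F^{-1})^*\lt(\si \hk F^*\al\rt)$; by multilinearity and the Leibniz rule this reduces to the case of a $1$-vector $v$, where it is the tautology $\al(Fv, Fu_1, \dots, Fu_{n-p}) = (F^*\al)(v, u_1, \dots, u_{n-p})$. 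Specialising to $\al = vol$ and using $F^*vol = \det(F) \cdot vol$ yields $F_*\si \hk vol = \det(F) \cdot (F^{-1})^*(\si \hk vol)$.

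Finally, I would note that pulling back an $(n-p)$-form by the scalar multiple $\det(F)^{1/(n-p)} F^{-1}$ rescales it by $\lt(\det(F)^{1/(n-p)}\rt)^{n-p} = \det(F)$ in addition to pulling back by $F^{-1}$, so $\Ph(F)^*(\si \hk vol) = \det(F)(F^{-1})^*(\si \hk vol)$, matching the previous expression. This is exactly the commutativity required. I do not anticipate any genuine obstacle: the lemma is a direct computation whose only real content is identifying the determinant factor of $\det(F)$ forced by the contraction with $vol$, which is precisely what dictates the normalisation $\det(F)^{1/(n-p)}$ in the definition of $\Ph$.
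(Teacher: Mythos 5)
Your verification is correct and is precisely the direct calculation the paper has in mind: the paper states this lemma without proof (it is preceded by the remark that it is ``easily verified by direct calculation'' and immediately followed by \(\qed\)), and your computation of the determinant balance \(\det(F)\cdot(F^{-1})^*(\si\hk vol) = \lt(\det(F)^{1/(n-p)}F^{-1}\rt)^*(\si\hk vol)\) via \(F_*\si\hk vol = (F^{-1})^*(\si\hk F^*vol)\) is exactly the intended argument.
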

~\qed

Next, given a \(p\)-form \(\si_0\) on \(\bb{R}^n\), define linear maps \(\io_{\si_0}\) and \(\ep_{\si_0}\) by:
\e\label{io-ep}
\bcd[row sep = 0pt]
\io_{\si_0}:\bb{R}^n \ar[r] & \ww{p-1}\lt(\bb{R}^n\rt)^* & \ep_{\si_0}:\lt(\bb{R}^n\rt)^* \ar[r] & \ww{p+1}\lt(\bb{R}^n\rt)^*\\
u \ar[r, maps to] & u \hk \si_0 & \be \ar[r, maps to]& \be \w \si_0\hs{1.5pt}.
\ecd
\ee
Recall from \cite[\S2]{MASF} that a \(p\)-form \(\si_0\) on \(\bb{R}^n\) is termed multi-symplectic if \(\io_{\si_0}\) is injective.\footnote{{\it Caveat}: not every stable form is multi-symplectic, despite the claim to the contrary in \cite[Cor.\ 2.3]{MASF}.  A counterexample is provided by pseudoplectic forms; see \pref{P}.}  Analogously, I term \(\si_0\) multi-ossymplectic if \(\ep_{\si_0}\) is injective.  Write \(\GL_-(n;\bb{R})\) for the set of orientation-reversing automorphisms of \(\bb{R}^n\) and \(\Stab_{\GL_-(n;\bb{R})}(\si_0)\) for the set of orientation-reversing automorphisms fixing a given \(p\)-form \(\si_0\).

\begin{Lem}\label{stabiliser-comp-lem}~\\
(1) Let \(\al\in\ww{p}\lt(\bb{R}^n\rt)^*\) be a multi-symplectic form, embed \(\bb{R}^n \emb \bb{R}^k \ds \bb{R}^n \cong \bb{R}^{n+k}\) for some \(k>0\) and embed \(\ww{p}\lt(\bb{R}^n\rt)^*\emb \ww{p}\lt(\bb{R}^{n+k}\rt)^*\) in the canonical way.  Then:
\ew
\Stab_{\GL_+(n+k;\bb{R})}(\al) &= \bigg\{ \bpm A_{k \x k} & B_{k \x n} \\ 0_{n \x k} & C_{n \x n} \epm ~\bigg|~ \raisebox{4pt}{\parbox{52mm}{\center{\(A \in \GL_+(k;\bb{R})\), \(B \in \End(\bb{R}^n,\bb{R}^k)\), \(C \in \Stab_{\GL_+(n;\bb{R})}(\al)\)}}} \bigg\}\\
& \hs{2cm} \coprod  \bigg\{ \bpm A_{k \x k} & B_{k \x n} \\ 0_{n \x k} & C_{n \x n} \epm ~\bigg|~ \raisebox{4pt}{\parbox{52mm}{\center{\(A \in \GL_-(k;\bb{R})\), \(B \in \End(\bb{R}^n,\bb{R}^k)\), \(C \in \Stab_{\GL_-(n;\bb{R})}(\al)\)}}} \bigg\}.
\eew

Thus, if \(\Stab_{\GL(n;\bb{R})}(\al)\) is connected, then \(\Stab_{\GL_+(n+k;\bb{R})}(\al)\) is, also, connected.

\noindent (2) Now, let \(\al\in\ww{p}\lt(\bb{R}^n\rt)^*\) be a multi-ossymplectic form. Then:
\e\label{mcs-stab}
\Stab_{\GL_+(n+k;\bb{R})}(\th^{12..k} \w \al) &= \bigg\{ \bpm A_{k \x k} & 0_{k \x n}\\ B_{n \x k} & \det(A)^{-\frac{1}{p}}C_{n \x n} \epm ~\bigg|~ \raisebox{4pt}{\parbox{52mm}{\center{\(A \in \GL_+(k;\bb{R})\), \(B \in \End(\bb{R}^k,\bb{R}^n)\), \(C \in \Stab_{\GL_+(n;\bb{R})}(\al)\)}}} \bigg\}\\
& \hs{2cm} \coprod \bigg\{ \bpm A_{k \x k} & 0_{k \x n}\\ B_{n \x k} & |\det(A)|^{-\frac{1}{p}}C_{n \x n} \epm ~\bigg|~ \raisebox{4pt}{\parbox{52mm}{\center{\(A \in \GL_-(k;\bb{R})\), \(B \in \End(\bb{R}^k,\bb{R}^n)\), \(C \in J\cdot \Stab_{\GL_{-\sgn{J}}(n;\bb{R})}(\al)\)}}} \bigg\},
\ee
where \(J \in \GL(n;\bb{R})\) is any map such that \(J^*\al = -\al\).  Thus, if either:
\begin{itemize}
\item \(\al\) and \(-\al\) lie in separate \(\GL(n;\bb{R})\)-orbits, or
\item \(\al\) and \(-\al\) lie in the same \(\GL_+(n;\bb{R})\)-orbit and \(\Stab_{\GL(n;\bb{R})}(\al) = \Stab_{\GL_+(n;\bb{R})}(\al)\),
\end{itemize}
then, the second set on the right-hand side of \eref{mcs-stab} is empty.  In particular, if either of these conditions holds and additionally \(\Stab_{\GL_+(n;\bb{R})}(\al)\) is connected, then \(\Stab_{\GL_+(n+k;\bb{R})}(\th^{12...k} \w \al)\) is, also, connected.
\end{Lem}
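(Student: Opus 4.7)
Both parts follow the same two-step strategy: use the multi-(os)symplecticity hypothesis to identify a canonical subspace of $\bb R^{n+k}$ (or of its dual) that must be preserved by any $M \in \Stab_{\GL_+(n+k;\bb R)}(\si)$, thereby forcing one off-diagonal block of $M$ to vanish; then read off the conditions on the remaining blocks from a direct expansion of $M^*\si$. For Part (1), multi-symplecticity of $\al$ gives $\ker \io_\al = \bb R^k$ inside $\bb R^{n+k}$, since $\al$ involves no $\th^i$ with $i \le k$ and $\io_\al|_{\bb R^n}$ is injective by assumption. As $M^*\al = \al$ forces $M$ to preserve $\ker \io_\al$, the lower-left block of $M$ vanishes. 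Writing $M = \bpm A & B \\ 0 & D \epm$ and expanding, the $B$-terms in $M^*\al$ drop out because they introduce $\th^i$ with $i \le k$, so $M^*\al = D^*\al$ as forms on $\bb R^n$. The stabiliser condition thus reduces to $D \in \Stab_{\GL(n;\bb R)}(\al)$, and the orientation constraint $\det A \cdot \det D > 0$ splits the stabiliser into the two stated components.

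For Part (2), set $\be = \th^{12\dots k}\w\al$. Multi-ossymplecticity gives $\ker \ep_\be = (\bb R^k)^*$: decomposing $\ga = \ga_k + \ga_n$ with $\ga_k \in (\bb R^k)^*$ and $\ga_n \in (\bb R^n)^*$, the term $\ga_k\w\th^{12\dots k}$ vanishes automatically, while $\ga_n\w\be$ vanishes iff $\ga_n\w\al = 0$ iff $\ga_n = 0$. Since $M^*\be = \be$ implies that $M^*$ preserves $\ker \ep_\be$, the upper-right block $B$ of $M$ must vanish. Writing $M = \bpm A & 0 \\ C & D \epm$ and expanding, one computes $M^*\th^{12\dots k} = \det(A)\th^{12\dots k}$, and in the wedge $\th^{12\dots k}\w M^*\al$ only the components of $M^*\th^i$ (for $i > k$) lying in the span of $\th^{k+1},\dots,\th^{n+k}$ survive; these coincide with $\tld D^*\th^i$, where $\tld D$ denotes $D$ viewed as an endomorphism of $\bb R^n$. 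Consequently $M^*\be = \det(A)\,\th^{12\dots k}\w\tld D^*\al$, and the equation $M^*\be = \be$ reduces to $\tld D^*\al = \det(A)^{-1}\al$.

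The final step is to parametrise solutions of this equation in each orientation regime. When $\det A > 0$, the substitution $\tld D = \det(A)^{-1/p}C'$ yields $(C')^*\al = \al$, and the identity $\det M = \det(A)^{1 - n/p}\det C'$ shows $M \in \GL_+$ iff $C' \in \Stab_{\GL_+(n;\bb R)}(\al)$, giving the first listed piece. When $\det A < 0$, the equation $\tld D^*\al = -|\det A|^{-1}\al$ admits a solution iff $-\al$ and $\al$ lie in the same $\GL(n;\bb R)$-orbit; picking any $J$ with $J^*\al = -\al$, the general solution has the form $\tld D = |\det A|^{-1/p}JC'$ with $C' \in \Stab_{\GL(n;\bb R)}(\al)$, and tracking $\det M$ forces $C' \in \Stab_{\GL_{-\sgn J}(n;\bb R)}(\al)$, which yields the second listed piece. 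Both sufficient conditions for emptiness of this piece follow immediately: in the first, no such $J$ exists; in the second, $J$ may be taken in $\GL_+(n;\bb R)$, whereupon $\Stab_{\GL_-(n;\bb R)}(\al) = \es$ by hypothesis. Finally, connectedness of $\Stab_{\GL_+(n+k;\bb R)}$ when $\Stab_{\GL_+(n;\bb R)}(\al)$ is connected follows from the fact that the first piece is a product of $\GL_+(k;\bb R)$, the space of lower-left blocks, and $\Stab_{\GL_+(n;\bb R)}(\al)$. I expect the main obstacle to be the careful orientation bookkeeping in the $\det A < 0$ case of Part (2), but this is essentially routine once the preceding reductions are in place.
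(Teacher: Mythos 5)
Your proposal is correct and follows essentially the same route as the paper's proof: in each part you use multi-(os)symplecticity to identify a canonical kernel, deduce block-triangularity from the fact that the stabiliser of a form preserves that kernel, and then read off the diagonal blocks from a direct expansion of the pullback, with the orientation bookkeeping handled exactly as in the paper. The only minor difference is stylistic (you name the lower-left block $C$ in Part (2), clashing with the lemma's use of $C$ for the stabiliser element, and you phrase the splitting construction $\tld D = |\det A|^{-1/p}JC'$ slightly differently from the paper's $C = |\det A|^{1/p}D$), but the substance is identical.
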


\begin{proof}
(1) Since \(\al\in\ww{p}\lt(\bb{R}^n\rt)^*\) is multi-symplectic, the kernel of the linear map:
\ew
\bcd[row sep = 0pt]
\bb{R}^{n+k} \ar[r]& \ww{p-1}\lt(\bb{R}^{n+k}\rt)^*\\
u \ar[r, maps to]& u \hk \al
\ecd
\eew
is precisely \(\bb{R}^k \ds 0\) and hence this subspace is invariant under \(\Stab_{\GL_+(n+k;\bb{R})}(\al)\).  Thus, any element of \(\Stab_{\GL_+(n+k;\bb{R})}(\al)\) has the form \(F = \bpm A_{k \x k} & B_{k \x n} \\ 0_{n \x k} & C_{n \x n} \epm\) for some \(A \in \GL(k;\bb{R})\), \(B \in \End(\bb{R}^n,\bb{R}^k)\) and \(C \in \GL(n;\bb{R})\).  Therefore, \(F^*\al = C^*\al = \al\) and the result follows.

(2) Since \(\al\in\ww{p}\lt(\bb{R}^n\rt)^*\) is multi-ossymplectic, the kernel of the linear map:
\ew
\bcd[row sep = 0pt]
\lt(\bb{R}^{n+k}\rt)^* \ar[r]& \ww{p+1}\lt(\bb{R}^{n+k}\rt)^*\\
\be \ar[r, maps to]& \be \w \lt(\th^{12...k} \w \al\rt)
\ecd
\eew
is precisely \(\lt(\bb{R}^k\rt)^* \ds 0\) and hence this subspace is invariant under \(\Stab_{\GL_+(n+k;\bb{R})}(\th^{12...k} \w \al)\).  Thus, any element of \(\Stab_{\GL_+(n+k;\bb{R})}(\th^{12...k} \w \al)\) has the form \(F = \bpm A_{k \x k} & 0_{k \x n}\\ B_{n \x k} & D_{n \x n} \epm\) for some \(A \in \GL(k;\bb{R})\), \(B \in \End(\bb{R}^k,\bb{R}^n)\) and \(D \in \GL(n;\bb{R})\), hence \(F^*\lt(\th^{12...k} \w \al\rt) = \det(A) \th^{12...k} \w D^*\al = \th^{12...k} \w \al\) and whence:
\e\label{A.D-eqn}
\det(A) \cdot D^*\al = \al.
\ee

If \(\det(A) > 0\), one can rewrite \eref{A.D-eqn} as \(\lt(\det(A)^\frac{1}{p} \cdot D\rt)^*\al = \al\).  Since \(\det(F) = \det(A)\det(D) > 0\), it follows that \(\det\lt(\det(A)^\frac{1}{p} \cdot D\rt) > 0\) and thus \(C = \det(A)^\frac{1}{p} \cdot D \in \Stab_{\GL_+(n;\bb{R})}(\al)\) as claimed.

Now suppose \(\det(A)<0\) and rewrite \eref{A.D-eqn} as \(\lt(|\det(A)|^\frac{1}{p} \cdot D\rt)^*\al = -\al\), where now \(C = |\det(A)|^\frac{1}{p} \cdot D\) has negative determinant.  Then clearly \(\al\) and \(-\al\) lie in the same \(\GL(n;\bb{R})\)-orbit; let \(J\) be some fixed element of \(\GL(n;\bb{R})\) (not necessarily equal to \(C\)) such that \(J^*\al = -\al\).  If \(\al\) and \(-\al\) lie in different \(\GL_+(n;\bb{R})\)-orbits, then \(J\) is automatically orientation-reversing and so \(C \in J \cdot \Stab_{\GL_{+}(n;\bb{R})}(\al)\) as required.  Else, \(J\) may be chosen to be orientation-preserving and hence \(C \cdot J\) is an orientation-reversing automorphism of \(\al\), implying that \(\Stab_{\GL(n;\bb{R})}(\al) \ne \Stab_{\GL_+(n;\bb{R})}(\al)\) and \(C \in J \cdot \Stab_{\GL_-(n;\bb{R})}(\al)\), again as required.  This completes the proof.

\end{proof}

\subsection{\(\mb{(2k-2)}\)-forms in \(\mb{2k}\) dimensions, \(\mb{k \ge 3}\)}

I begin by recalling the following well-known result:

\begin{Prop}\label{symp}
Let \(k \ge 2\).  The action of \(\GL_+(2k;\bb{R})\) on \(\ww{2}\lt(\bb{R}^{2k}\rt)^*\) has exactly two open orbits, given by:
\ew
\ww[+]{2}\lt(\bb{R}^{2k}\rt)^* = \lt\{\om \in \ww{2}\lt(\bb{R}^{2k}\rt)^*~\m|~ \om^k > 0\rt\} \et \ww[-]{2}\lt(\bb{R}^{2k}\rt)^* = \lt\{\om \in \ww{2}\lt(\bb{R}^{2k}\rt)^*~\m|~ \om^k < 0\rt\},
\eew
which form a single orbit under \(\GL(2k;\bb{R})\).  The stabiliser in \(\GL_+(2k;\bb{R})\) (equivalently \(\GL(2k;\bb{R})\)) of forms in either orbit is isomorphic to the real symplectic group \(\Sp(2k;\bb{R})\).  `Standard' representatives of the two orbits are given by:
\e\label{om/pm}
\om_+(k) = \th^{12} + \th^{34} + ... + \th^{2k-1,2k} \et \om_-(k) = -\th^{12} + \th^{34} + ... + \th^{2k-1,2k},
\ee
respectively.
\end{Prop}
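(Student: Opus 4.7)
The plan is to derive \pref{symp} from the classical linear Darboux normal form for alternating 2-forms, combined with a sign analysis distinguishing the two \(\GL_+(2k;\bb{R})\)-orbits. After fixing a volume form to identify \(\ww{2k}\lt(\bb{R}^{2k}\rt)^* \cong \bb{R}\), the top-power map \(\om \mt \om^k\) is a non-zero polynomial on \(\ww{2}\lt(\bb{R}^{2k}\rt)^*\) whose zero locus---the degenerate 2-forms---is a proper affine subvariety of positive codimension.  Any open \(\GL_+(2k;\bb{R})\)-orbit must therefore be contained in the non-degenerate locus \(\{\om^k \ne 0\}\), which is the disjoint union of \(\ww[+]{2}\lt(\bb{R}^{2k}\rt)^*\) and \(\ww[-]{2}\lt(\bb{R}^{2k}\rt)^*\); both pieces are \(\GL_+(2k;\bb{R})\)-invariant because \(F^*(\om^k) = \det(F) \cdot \om^k\) preserves the sign of \(\om^k\) whenever \(\det(F) > 0\).

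Next, I would invoke the linear Darboux theorem---established by an inductive, symplectic Gram--Schmidt argument---to conclude that every non-degenerate \(\om \in \ww{2}\lt(\bb{R}^{2k}\rt)^*\) is \(\GL(2k;\bb{R})\)-equivalent to \(\om_+(k)\). A direct expansion shows \(\om_+(k)^k = k! \cdot \th^{12...2k}\) and \(\om_-(k)^k = -k! \cdot \th^{12...2k}\), so \(\om_\pm(k) \in \ww[\pm]{2}\lt(\bb{R}^{2k}\rt)^*\) respectively; these must therefore lie in distinct \(\GL_+(2k;\bb{R})\)-orbits, but are exchanged by any orientation-reversing transposition (e.g.\ the swap \(e_1 \leftrightarrow e_2\), which negates \(\th^{12}\) and fixes the remaining \(\th^{2i-1,2i}\)). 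Combined with Darboux, this forces \(\ww[+]{2}\lt(\bb{R}^{2k}\rt)^*\) and \(\ww[-]{2}\lt(\bb{R}^{2k}\rt)^*\) to be precisely the two \(\GL_+(2k;\bb{R})\)-orbits of non-degenerate 2-forms, fused into a single orbit under \(\GL(2k;\bb{R})\); by the first paragraph there can be no further open orbits.

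For the stabiliser, \(\Stab_{\GL(2k;\bb{R})}(\om_+(k)) = \Sp(2k;\bb{R})\) holds by definition of the real symplectic group. Moreover, for \(F \in \Sp(2k;\bb{R})\), wedging \(F^*\om_+(k) = \om_+(k)\) with itself \(k\) times yields \(\det(F) \cdot \om_+(k)^k = \om_+(k)^k\); since \(\om_+(k)^k \ne 0\), this forces \(\det(F) = +1\), so \(\Sp(2k;\bb{R}) \cc \GL_+(2k;\bb{R})\) and the \(\GL_+\)- and \(\GL\)-stabilisers coincide (the argument transports verbatim to \(\om_-(k)\) via the orientation-reversing intertwiner from the previous paragraph).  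The proof presents no serious obstacle, being classical; the only points requiring care are the sign computations \(\om_\pm(k)^k = \pm k!\cdot \th^{12...2k}\) and the volume-form derivation that \(\Sp(2k;\bb{R}) \cc \SL(2k;\bb{R})\).
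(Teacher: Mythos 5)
Your proof is correct. The paper does not supply a proof of \pref{symp} at all---it is prefaced by ``I begin by recalling the following well-known result:''---so there is no argument in the source to compare against. Your route (cut out the degenerate locus as a positive-codimension subvariety, invoke the linear Darboux normal form to get a single \(\GL(2k;\bb{R})\)-orbit of non-degenerate forms, split it into two \(\GL_+(2k;\bb{R})\)-orbits by the sign of \(\om^k\) distinguished via the \(e_1\leftrightarrow e_2\) transposition, and read off \(\Sp(2k;\bb{R})\subset\SL(2k;\bb{R})\) by wedging the defining equation to top degree) is the standard one and fills the gap cleanly. The one place I would suggest tightening the exposition is the index-two orbit split: after observing that the non-degenerate forms constitute a single \(\GL(2k;\bb{R})\)-orbit, note explicitly that since \(\GL_+\) has index \(2\) the orbit breaks into at most two \(\GL_+\)-orbits, and since \(\ww[+]{2}\) and \(\ww[-]{2}\) are disjoint, non-empty and \(\GL_+\)-invariant, each must be exactly one of them; as written the inference ``combined with Darboux, this forces\dots'' compresses that step.
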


Forms in \(\ww[+]{2}\lt(\bb{R}^{2k}\rt)^*\) are here termed emproplectic and forms in \(\ww[-]{2}\lt(\bb{R}^{2k}\rt)^*\) termed pisoplectic.\footnote{I employ these non-standard terms to avoid overuse of the phrases `correctly-oriented symplectic 2-form' (emproplectic form) and `oppositely oriented symplectic 2-form' (pisoplectic form).  The words `emproplectic' and `pisoplectic' are based on the word `symplectic', which combines the Ancient Greek `\(\si\up\mu + \pi\la\ep\ka\ta\io\ka\omi\si\)' (`sym + plektikos') to mean `braided together'.  Likewise, the words `emproplectic' and `pisoplectic' combine `\(\ep\mu + \pi\rh\omi\si +\pi\la\ep\ka\ta\io\ka\omi\si\)' (`em + pros + plektikos') and `\((\omi)\pi\io\si\om + \pi\la\ep\ka\ta\io\ka\omi\si\)' (`(o)piso + plektikos') to mean `braided forwards' and `braided backwards', respectively.}  The \(\GL(2k;\bb{R})\)-orbit comprising of both emproplectic and pisoplectic forms is termed the orbit of symplectic forms \(\om\) and can be characterised by the property that the linear map \(\io_\om\) is an isomorphism (see \eref{io-ep}).

Using \pref{symp}, I prove the following result:
\begin{Prop}\label{dual-symp}
Let \(k \ge 3\).  The action of \(\GL_+(2k;\bb{R})\) on \(\ww{2k-2}\lt(\bb{R}^{2k}\rt)^*\) has exactly two orbits, given by:
\ew
\ww[+]{2k-2}\lt(\bb{R}^{2k}\rt)^* = \lt\{\om^{k-1}~\m|~\om \text{ is emproplectic}\rt\} \et \ww[-]{2k-2}\lt(\bb{R}^{2k}\rt)^* = \lt\{-\om^{k-1}~\m|~\om \text{ is pisoplectic}\rt\}.
\eew
Term forms in \(\ww[+]{2k-2}\lt(\bb{R}^{2k}\rt)^*\) osemproplectic and forms in \(\ww[-]{2k-2}\lt(\bb{R}^{2k}\rt)^*\) ospisoplectic.\footnote{The prefix `os' (`\(\om\si\)') means `as if' and is pronounced similarly to the word `horse'.}  Standard examples of osemproplectic and ospisoplectic forms are given by:
\e\label{vpi+}
\vpi_+(k) = \lt(\sum_{i=2}^k \th^{12...\h{2i-1,2i}...2k-1,2k}\rt) + \th^{34...2k-1,2k} = \frac{\om_+(k)^{k-1}}{(k-1)!}
\ee
and:
\ew
\vpi_-(k) = \lt(\sum_{i=2}^k \th^{12...\h{2i-1,2i}...2k-1,2k}\rt) - \th^{34...2k-1,2k} = -\frac{\om_-(k)^{k-1}}{(k-1)!}.
\eew
The stabiliser in \(\GL_+(2k;\bb{R})\) of an osemproplectic or ospisoplectic form is isomorphic to \(\Sp(2k;\bb{R})\).  If \(k\) is odd, then \(\ww[+]{2k-2}\lt(\bb{R}^{2k}\rt)^*\) and \(\ww[-]{2k-2}\lt(\bb{R}^{2k}\rt)^*\) are both individually invariant under \(\GL(n;\bb{R})\), while if \(k\) is even, the two \(\GL_+(n;\bb{R})\)-orbits form a single \(\GL(n;\bb{R})\)-orbit.  I shall say that \(\vpi\) is ossymplectic if it lies in \(\ww[+]{2k-2}\lt(\bb{R}^{2k}\rt)^* \cup \ww[-]{2k-2}\lt(\bb{R}^{2k}\rt)^*\).  This is equivalent to the condition that \(\ep_\vpi\) is an isomorphism.
\end{Prop}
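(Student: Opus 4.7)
The plan is to reduce the claims for $(2k-2)$-forms to the well-understood case of 2-forms (\pref{symp}) via two ingredients: the $\GL_+$-equivariant polynomial map $P: \om \mt \om^{k-1}/(k-1)!$ from $\ww{2}\lt(\bb{R}^{2k}\rt)^*$ to $\ww{2k-2}\lt(\bb{R}^{2k}\rt)^*$, and the equivariant bijection provided by \lref{no-orb-lem}.  I would begin with a combinatorial calculation: since $\om_\pm(k)$ is a sum of $k$ pairwise-commuting, square-zero 2-forms, the multinomial expansion of $\om_\pm(k)^{k-1}$ retains precisely the $k$ square-free products, each with multinomial coefficient $(k-1)!$; straightforward sign-bookkeeping then yields $\om_+(k)^{k-1} = (k-1)!\,\vpi_+(k)$ and $\om_-(k)^{k-1} = -(k-1)!\,\vpi_-(k)$, placing $\vpi_\pm(k)$ in the proposed open sets.

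Next I would apply \lref{no-orb-lem} with $n = 2k$ and $p = 2$ to obtain a $\GL_+$-equivariant (up to the anti-isomorphism $\Ph$) bijection $B: \si \mt \si \hk vol$ from $\ww{2}\bb{R}^{2k}$ to $\ww{2k-2}\lt(\bb{R}^{2k}\rt)^*$.  The bivector analogue of \pref{symp}, proved identically, gives exactly two open $\GL_+$-orbits on $\ww{2}\bb{R}^{2k}$, so transporting via $B$ yields exactly two open $\GL_+$-orbits on $\ww{2k-2}\lt(\bb{R}^{2k}\rt)^*$.  A short direct computation of $B(\si)$ on the bivectors dual to $\om_\pm(k)$ returns $-\vpi_\pm(k)$, identifying these two orbits as $\GL_+ \1 \vpi_\pm(k) = \ww[\pm]{2k-2}\lt(\bb{R}^{2k}\rt)^*$; in particular, they are distinct.

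For the stabiliser, I would combine the trivial inclusion $\Sp(2k;\bb{R}) = \Stab_{\GL_+}(\om_\pm(k)) \cc \Stab_{\GL_+}(\vpi_\pm(k))$ with the dimension count $\dim \Stab_{\GL_+}(\vpi_\pm(k)) = (2k)^2 - \bin{2k}{2} = k(2k+1) = \dim \Sp(2k;\bb{R})$, which forces equality of identity components; the equivariance under $\Ph$ from \lref{no-orb-lem} then excludes any discrete extension.  For the $\GL(2k;\bb{R})$-orbit structure, applying the orientation-reversing $F = \diag(-1, 1, \ldots, 1)$ gives $F^*\vpi_+(k) = -\vpi_-(k)$ by direct computation; the location of $-\vpi_-(k) = \om_-(k)^{k-1}/(k-1)!$ within $\ww[+]{2k-2}$ or $\ww[-]{2k-2}$ then depends on the parity of $k-1$, since $(-\om_-)^{k-1} = (-1)^{k-1} \om_-^{k-1}$ and $-\om_-$ is emproplectic precisely when $k$ is odd.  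The stated parity-dependent behaviour follows.

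The most delicate step, and the main obstacle, is the characterisation of ossymplectic forms as those with $\ep_\vpi$ an isomorphism.  My approach is to establish the identity $\be \w (\si \hk vol) = (\be \hk \si) \hk vol$ for bivectors $\si \in \ww{2}\bb{R}^{2k}$ by a Cartan-type computation: for decomposable $\si = e \w f$, iterated use of $\be \w (v \hk \mu) = \be(v)\,\mu - v \hk (\be \w \mu)$ together with $\be \w vol = 0$ yields $\be \w (\si \hk vol) = [\be(e)f - \be(f)e] \hk vol$, matching $(\be \hk \si) \hk vol$; linearity extends the identity to arbitrary $\si$.  Since $v \mt v \hk vol$ is a linear isomorphism $\bb{R}^{2k} \cong \ww{2k-1}\lt(\bb{R}^{2k}\rt)^*$, the identity shows that $\ep_\vpi$ is an isomorphism \iff\ the unique bivector $\si = B^{-1}(\vpi)$ is non-degenerate, i.e.\ $\si^k \ne 0$, which by the orbit correspondence of the second paragraph is equivalent to $\vpi \in \ww[+]{2k-2}\lt(\bb{R}^{2k}\rt)^* \cup \ww[-]{2k-2}\lt(\bb{R}^{2k}\rt)^*$.
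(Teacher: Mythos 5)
Your proposal follows the same route as the paper: both transport the orbit and stabiliser structure from bivectors to $\ww{2k-2}\lt(\bb{R}^{2k}\rt)^*$ via the $\GL_+(2k;\bb{R})$-equivariant contraction $\si \mt \si\hk vol$ of \lref{no-orb-lem}; that map also gives the stabiliser directly, since $\Ph$ restricts to inversion on $\SL(2k;\bb{R}) \supset \Sp(2k;\bb{R})$, so $\Stab_{\GL_+(2k;\bb{R})}(\vpi_\pm(k)) = \Ph\lt(\Sp(2k;\bb{R})\rt) = \Sp(2k;\bb{R})$ and your dimension count and appeal to `excluding discrete extensions' are unnecessary. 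Do, however, watch the signs: with the contraction convention under which \lref{no-orb-lem} holds one finds $w_\pm\hk vol = +\vpi_\pm(k)$, not $-\vpi_\pm(k)$, and as written your orbit identification tacitly assumes $\vpi_\pm(k)$ and $-\vpi_\pm(k)$ lie in the same $\GL_+(2k;\bb{R})$-orbit, which fails for one parity of $k$. The identity in your final paragraph should also carry a minus sign, $\be\w(\si\hk vol) = -(\be\hk\si)\hk vol$, though that extra sign does not affect the invertibility criterion for $\ep_\vpi$.
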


\begin{proof}
Recall diagram \eqref{almost-equi} and let \(n = 2k\) and \(p = 2\).  Let \(w\) denote an emproplectic (respectively, pisoplectic) element of \(\ww{2}\bb{R}^{2k}\) and let \(\vpi = w \hk vol\).  Then, one obtains the following diagram:
\ew
\bcd[column sep = 1.4cm]
\GL_+(2k;\bb{R}) \cdot w \arrow[loop right] \ar[d, " \si \mt \si \hk vol "] & \GL_+(2k;\bb{R}) \ar[d, " \Ph "]\\
\GL_+(2k;\bb{R}) \cdot \vpi \ar[loop right]& \GL_+(2k;\bb{R})\\
\ecd
\eew
Since the left-hand vertical arrow is an isomorphism, one has \(\Stab_{\GL_+(2k;\bb{R})}(\vpi) = \Ph\lt(\Stab_{\GL_+(2k;\bb{R})}(w)\rt)\).  As \(\Stab_{\GL_+(2k;\bb{R})}(w) \cong \Sp(2k;\bb{R})\) lies in \(\SL(2k;\bb{R})\), it follows that \(\Ph(F) = F^{-1}\) on \(\Stab_{\GL_+(2k;\bb{R})}(w)\) (see \eref{Ph-defn}) and hence \(\Stab_{\GL_+(2k;\bb{R})}(\vpi) = \Stab_{\GL_+(2k;\bb{R})}(w) \cong \Sp(2k;\bb{R})\), as required.

Now take \(w = e_{12} + e_{34} + ... + e_{2k-1,2k}\) and recall \(\om_+(k) = \th^{12} + \th^{34} + ... + \th^{2k-1,2k}\) defined in \eref{om/pm}.  Taking \(vol = \frac{\om_+(k)^k}{k!}\), one finds that:
\ew
w \hk vol = \frac{\om_+(k)^{k-1}}{(k-1)!} = \vpi_+(k) \in \ww[+]{2k-2}\lt(\bb{R}^{2k}\rt)^*.
\eew
Likewise, if one takes \(w = -e_{12} + e_{34} + ... + e_{2k-1,2k}\) and \(vol = \frac{\om_+(k)^k}{k!} = -\frac{\om_-(k)^k}{k!}\), one finds:
\ew
w \hk vol = -\frac{\om_-(k)^{k-1}}{(k-1)!} = \vpi_-(k) \in \ww[-]{2k-2}\lt(\bb{R}^{2k}\rt)^*,
\eew
as required.  To prove the statement regarding \(\GL(2k;\bb{R})\)-orbits, let \(\om\) be an emproplectic form, let \(F \in \GL_-(2k;\bb{R})\) and consider \(-F^*\om\).  If \(k\) is odd, \(\lt(-F^*\om\rt)^k = (-1)^kF^*\lt(\om^k\rt) = -F^*\lt(\om^k\rt) > 0\) and so \(-F^*\om\) is emproplectic.  Thus, \(F^*\lt(\om^{k-1}\rt) = \lt(-F^*\om\rt)^{k-1}\) is osemproplectic as claimed.  Alternatively, if \(k\) is even, then \(\lt(-F^*\om\rt)^k < 0\), hence so \(-F^*\om\) is pisoplectic and whence \(F^*\lt(\om^{k-1}\rt) = -\lt(-F^*\om\rt)^{k-1}\) is ospisoplectic, as claimed.  The final statement regarding the characterisation of ossymplectic forms is clear.

\end{proof}

I remark that the notions of osemproplectic and ospisoplectic forms are still valid in dimension 4, however in this case they coincide with emproplectic and pisoplectic forms, respectively.  Accordingly, I reserve the terms osemproplectic and ospisoplectic for dimension \(2k\), \(k \ge 3\).  Also, note that given an osemproplectic form \(\vpi\) on an oriented \(2k\)-manifold, the form \(-\vpi\) is an ospisoplectic form on \(\ol{\M}\), where the overline denotes orientation-reversal.  Thus, to prove the relative \(h\)-principle for osemproplectic and ospisoplectic forms, it suffices to prove it only for osemproplectic forms.\\

\subsection{\(\mb{2}\)- and \(\mb{(2k-1)}\)-forms in \(\mb{2k+1}\) dimensions, \(\mb{k \ge 2}\)}\label{Pseudo-P}  Although some aspects of the following result are known, to the author's knowledge the complete statement does not appear in the literature:

\begin{Prop}\label{P}
Let \(k \ge 2\).  The action of \(\GL_+(2k+1;\bb{R})\) on \(\ww{2}\lt(\bb{R}^{2k+1}\rt)^*\) has a unique open orbit:
\e\label{2-odd-orb}
\ww[P]{2}\lt(\bb{R}^{2k+1}\rt)^* = \lt\{\mu \in \ww{2}\lt(\bb{R}^{2k+1}\rt)^* ~\m|~ \mu^k \ne 0\rt\},
\ee
which is also an orbit of \(\GL(2k+1;\bb{R})\).  Equivalently, a 2-form \(\mu\) lies in \(\ww[P]{2}\lt(\bb{R}^{2k+1}\rt)^*\) \iff\ \(\io_\mu\) has rank \(2k\) (see \eref{io-ep}).  Call forms in this orbit pseudoplectic; a standard representative of this orbit may be taken to be:
\e\label{mu0}
\mu_0(k) = \th^{23} + \th^{45} + ... + \th^{2k,2k+1}.
\ee
The stabiliser of any pseudoplectic form is connected and is isomorphic to the group:
\ew
\lt\{ \bpm A_{1 \x 1} & B_{1 \x 2k} \\ 0_{2k \x 1} & C_{2k \x 2k} \epm ~\m|~ A \in \bb{R}_{>0}, B \in \lt(\bb{R}^{2k}\rt)^* \text{ and } C \in \Sp(2k;\bb{R}) \rt\}.
\eew

Likewise, the action of \(\GL_+(2k+1;\bb{R})\) on \(\ww{2k-1}\lt(\bb{R}^{2k+1}\rt)^*\) has a unique open orbit:
\ew
\ww[OP]{2k-1}&\lt(\bb{R}^{2k+1}\rt)^* =  \lt\{ \xi \in \ww{2k-1}\lt(\bb{R}^{2k+1}\rt)^* ~\m|~ \text{the linear map } \ep_\xi \text{ has rank } 2k \rt\},
\eew
which is also an orbit of \(\GL(2k+1;\bb{R})\).  Term forms in this orbit ospseudoplectic.  A standard representative of this orbit may be taken to be:
\ew
\xi_0(k) = \sum_{i = 1}^k \th^{123...\h{2i,2i+1}...2k,2k+1} = \th^1 \w \vpi_+(k),
\eew
where \(\vpi_+(k)\) is viewed as a form on \(\bb{R}^{2k+1}\) via \(\bb{R}^{2k} \cong \<e_2,...,e_{2k+1}\? \pc \bb{R}^{2k+1}\) (and formally \(\vpi_+(k) = \om_+(k)\) when \(k = 2\)).  The stabiliser of any ospseudoplectic form is isomorphic to the group:
\e\label{OP-stab}
\lt\{ \bpm A_{1 \x 1} & 0_{1 \x 2k} \\ B_{2k \x 1} & A^{-\frac{1}{2k-2}} \cdot C_{2k \x 2k} \epm ~\m|~ A \in \bb{R}_{>0}, B \in \bb{R}^{2k} \text{ and } C \in \Sp(2k;\bb{R}) \rt\}.
\ee
In particular, the stabiliser is connected.
\end{Prop}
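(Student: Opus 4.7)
The plan is to handle the two halves of the Proposition separately, each reducing in its final step to \lref{stabiliser-comp-lem}.

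For the pseudoplectic 2-forms, I would begin from the elementary fact that any 2-form on \(\bb{R}^{2k+1}\) has even rank bounded by \(2k\), with the maximum attained precisely when \(\mu^k \ne 0\), equivalently when \(\io_\mu\) has rank \(2k\); this yields the stated equivalences. Given a maximal-rank \(\mu\), pick any nonzero \(v \in \ker \io_\mu\) and a complementary \(2k\)-plane \(V\); by the symplectic basis theorem applied to \((V,\mu|_V)\) there is a basis of \(V\) in which \(\mu|_V = \om_+(k)\), and augmenting by \(v\) or \(-v\) to achieve positive orientation of the full basis yields an element of \(\GL_+(2k+1;\bb{R})\) carrying \(\mu\) to \(\mu_0(k)\). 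Thus the pseudoplectic forms constitute a single \(\GL_+(2k+1;\bb{R})\)-orbit, visibly also a single \(\GL(2k+1;\bb{R})\)-orbit, open in \(\ww{2}\lt(\bb{R}^{2k+1}\rt)^*\), and unique as an open orbit since its complement \(\{\mu^k = 0\}\) is a proper algebraic subvariety. The stabiliser then follows from \lref{stabiliser-comp-lem}(1) applied with \(\al = \om_+(k)\) and \(n = 2k\), using that \(\om_+(k)\) is multi-symplectic; the orientation-reversing component is empty because \(\Stab_{\GL(2k;\bb{R})}(\om_+(k)) = \Sp(2k;\bb{R}) \cc \SL(2k;\bb{R})\) contains no orientation-reversing elements. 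Connectedness is then immediate from that of \(\bb{R}_{>0}\), \((\bb{R}^{2k})^*\) and \(\Sp(2k;\bb{R})\).

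For the ospseudoplectic \((2k-1)\)-forms, my approach is to invoke \lref{no-orb-lem} (with \(p = 2\), \(n = 2k+1\)) to put open \(\GL_+(2k+1;\bb{R})\)-orbits in \(\ww{2k-1}\lt(\bb{R}^{2k+1}\rt)^*\) in bijection with open orbits in \(\ww{2}\bb{R}^{2k+1}\); the latter are classified by the pseudoplectic argument above applied to 2-vectors, giving a unique open orbit. A short Koszul-rule computation yields the identity \(\be \w (w \hk vol) = w'_\be \hk vol\), where \(w'_\be \in \bb{R}^{2k+1}\) is the contraction of the 2-vector \(w\) with the 1-form \(\be\); since \(\cdot \hk vol\) is an isomorphism on \(\bb{R}^{2k+1}\), this shows \(\ep_\xi\) has rank \(2k\) iff the dual 2-vector \(w\) has rank \(2k\), verifying the stated characterisation. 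Taking \(w_0 = e_2 \w e_3 + e_4 \w e_5 + \cdots + e_{2k} \w e_{2k+1}\) and \(vol = \th^{12\cdots (2k+1)}\), a direct expansion gives \(w_0 \hk vol = \th^1 \w \vpi_+(k) = \xi_0(k)\), the stated representative. The stabiliser reduces to \lref{stabiliser-comp-lem}(2) applied with \(\al = \vpi_+(k)\) and \(n = 2k\), valid since \(\vpi_+(k)\) is ossymplectic and hence multi-ossymplectic.

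The main obstacle is verifying that the orientation-reversing component in \lref{stabiliser-comp-lem}(2) is empty, which requires a case split on the parity of \(k\) using \pref{dual-symp}. For \(k\) odd, \(\pm\vpi_+(k)\) lie respectively in the disjoint \(\GL(2k;\bb{R})\)-invariant orbits \(\ww[+]{2k-2}\lt(\bb{R}^{2k}\rt)^*\) and \(\ww[-]{2k-2}\lt(\bb{R}^{2k}\rt)^*\), so the first alternative of the lemma applies. For \(k\) even, one checks that \(-\vpi_+(k)\) is osemproplectic (using \((-\om_+(k))^{k-1} = -\om_+(k)^{k-1}\) together with \(-\om_+(k)\) being emproplectic when \(k\) is even), so both \(\pm\vpi_+(k)\) lie in \(\ww[+]{2k-2}\lt(\bb{R}^{2k}\rt)^*\); moreover for \(k\) even any orientation-reversing element of \(\GL(2k;\bb{R})\) sends \(\ww[+]{2k-2}\lt(\bb{R}^{2k}\rt)^*\) to \(\ww[-]{2k-2}\lt(\bb{R}^{2k}\rt)^*\) and thus cannot stabilise \(\vpi_+(k)\), so \(\Stab_{\GL(2k;\bb{R})}(\vpi_+(k)) = \Stab_{\GL_+(2k;\bb{R})}(\vpi_+(k))\), verifying the second alternative. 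In either case the orientation-reversing component is empty, yielding the stated stabiliser, whose connectedness follows again from that of its factors.
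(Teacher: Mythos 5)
Your proposal is correct and follows essentially the same route as the paper: the affine-subvariety argument plus a symplectic-basis normalisation for the 2-form orbit; the duality of \lref{no-orb-lem} combined with a Koszul-type identity relating \(\ep_\xi\) and \(\io_\mu\) (the paper phrases this as \(\ep_\xi = -\io_\mu\hk vol\) for the dual 2-vector \(\mu\), which is the same identity you use); and the stabiliser computations via \lref{stabiliser-comp-lem}, with the same parity-of-\(k\) case split invoking \pref{dual-symp} to kill the orientation-reversing component. Your observation that the orientation-reversing piece in the pseudoplectic case is empty because \(\Stab_{\GL(2k;\bb{R})}(\om_+(k)) = \Sp(2k;\bb{R}) \subset \SL(2k;\bb{R})\) is the intended (and correct) reason, matching the hypothesis of \lref{stabiliser-comp-lem}(1).
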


\begin{proof}
The set \(\lt\{\mu \in \ww{2}\lt(\bb{R}^{2k+1}\rt)^* ~\m|~ \mu^k = 0\rt\}\) is an affine subvariety of \(\ww{2}\lt(\bb{R}^{2k+1}\rt)^*\), so if it is a proper subset of \(\ww{2}\lt(\bb{R}^{2k+1}\rt)^*\) it must have positive codimension, and hence can contain no open orbits of \(\GL_+(2k+1;\bb{R})\).  Thus, to prove \eref{2-odd-orb} it suffices to prove that \(\ww[P]{2}\lt(\bb{R}^{2k+1}\rt)^* = \lt\{\mu \in \ww{2}\lt(\bb{R}^{2k+1}\rt)^* ~\m|~ \mu^k \ne 0\rt\}\) is non-empty and a single orbit.

Firstly, let me show that \(\ww[P]{2}\lt(\bb{R}^{2k+1}\rt)^*\) is a single orbit of \(\GL_+(2k+1;\bb{R})\).  Given \(\mu \in \ww[P]{2}\lt(\bb{R}^{2k+1}\rt)^*\), since the rank of an anti-symmetric bilinear form is always even (and the dimension of \(\bb{R}^{2k+1}\) is odd) it follows that \(\io_\mu\) has a non-trivial kernel.  Pick a 1-dimensional subspace \(\bb{L}\) of the kernel and let \(\bb{B} \pc \bb{R}^{2k+1}\) be a \(2k\)-dimensional complement to \(\bb{L}\) in \(\bb{R}^{2k+1}\).  Since \(\mu^k \ne 0\), one may regard \(\mu\) as an emproplectic 2-form on \(\bb{B}\) for some suitable choice of orientation on \(\bb{B}\).  Thus, one can pick a correctly-oriented basis \((f_2,...,f_{2k+1})\) of \(\bb{B}\) with dual basis \((f^2,...,f^{2k+1})\) such that \(\mu = f^{23} + ... + f^{2k,2k+1}\).  Now, define \(f_1\) to be a non-zero vector in \(\bb{L}\) such that \((f_1,...,f_{2k+1})\) is a correctly oriented basis of \(\bb{R}^{2k+1}\).  Then, \wrt\ this basis:
\ew
\mu = f^{23} + ... + f^{2k,2k+1}.
\eew
Thus, \(\ww[P]{2}\lt(\bb{R}^{2k+1}\rt)^*\) is a single orbit under \(\GL_+(2k+1;\bb{R})\).  This argument also shows that \(\ww[P]{2}\lt(\bb{R}^{2k+1}\rt)^* \ne \es\) (since \(\lt(f^{23} + ... + f^{2k,2k+1}\rt)^k \ne 0\) in the above basis) and:
\ew
\ww[P]{2}\lt(\bb{R}^{2k+1}\rt)^* = \lt\{\mu \in \ww{2}\lt(\bb{R}^{2k+1}\rt)^* ~\m|~ \io_\mu \text{ has rank } 2k\rt\}.
\eew
Moreover, since this is the only open orbit of \(\GL_+(2k+1;\bb{R})\), it follows that \(\ww[P]{2}\lt(\bb{R}^{2k+1}\rt)^*\) must also form a single orbit under the action of \(\GL(2k+1;\bb{R})\).

Now fix a (positive) volume element \(vol\) on \(\bb{R}^{2k+1}\), let \(\mu \in \ww{2}\bb{R}^{2k+1}\) and write \(\xi = \mu \hk vol\).  Recall that \(\ep_\xi\) and \(\io_\mu\) denote the linear maps:
\ew
\bcd[row sep = 0pt]
\ep_\xi: \lt(\bb{R}^{2k+1}\rt)^* \ar[r]& \ww{2k}\lt(\bb{R}^{2k+1}\rt)^* & \io_\mu: \lt(\bb{R}^{2k+1}\rt)^* \ar[r]& \bb{R}^{2k+1}\\
\al \ar[r, maps to]& \al \w \xi & \al \ar[r, maps to]& \al \hk \mu\hs{1.5pt},
\ecd
\eew
respectively.  The first step is to understand how the maps \(\ep_\xi\) and \(\io_\mu\) are related:
\begin{Cl}
The maps \(\ep_\xi\) and \(\io_\mu\) satisfy the relation:
\ew
\ep_\xi = -\io_\mu \hk vol.
\eew
\end{Cl}
\begin{proof}[Proof of Claim]
This follows directly from the (readily verified) identity:
\ew
\al \w (\be \hk \ga) = -(\al \hk \be) \hk \ga,
\eew
where \(\al \in \lt(\bb{R}^n\rt)^*\), \(\be \in \ww{2}\bb{R}^n\) and \(\ga \in \ww{n}\lt(\bb{R}^n\rt)^*\).  Setting \(\be = \mu\) and \(\ga = vol\) yields the required result, completing the proof of the claim.

\let\qed\relax
\end{proof}

Thus, \(\io_\mu\) has the same rank as \(\ep_\xi\) and so \(\io_\mu\) has rank \(2k\) \iff\ \(\ep_\xi\) has rank \(2k\).  It follows at once from \lref{no-orb-lem} that the action of \(\GL_+(2k+1;\bb{R})\) on \(\ww{2k-1}\lt(\bb{R}^{2k+1}\rt)^*\) has a single open orbit (which is also an orbit of \(\GL(2k+1;\bb{R})\)) given by:
\ew
\ww[OP]{2k-1}&\lt(\bb{R}^{2k+1}\rt)^* = \lt\{ \xi \in \ww{2k-1}\lt(\bb{R}^{2k+1}\rt)^* ~\m|~ \ep_\xi \text{ has rank } 2k \rt\}.
\eew
Likewise, the explicit formula for \(\xi_0\) also follows at once.

The formula for the stabiliser of pseudoplectic forms is a simple application of \lref{stabiliser-comp-lem}(1).  For the stabiliser of ospseudoplectic forms, one uses \lref{stabiliser-comp-lem}(2), together with the observations that:
\begin{itemize}
\item If \(k\) is odd, then \(-\vpi_+(k) = -\frac{(-\om_+(k))^{k-1}}{(k-1)!}\) is ospisoplectic (since \(-\om_+(k)\) is pisoplectic when \(k\) is odd) and since (when \(k\) is odd) the orbit of osemproplectic and ospisoplectic forms are each closed under the action of \(\GL(2k;\bb{R})\) (and not just \(\GL_+(2k;\bb{R})\)) it follows that \(\vpi_+(k)\) and \(-\vpi_+(k)\) lie in separate \(\GL_+(2k;\bb{R})\)-orbits.  This forces the second set on the right-hand side of \eref{mcs-stab} to be empty, as claimed.
\item If \(k\) is even, then \(-\vpi_+(k) = \frac{(-\om_+(k))^{k-1}}{(k-1)!}\) is osemproplectic (since \(-\om_+(k)\) is emproplectic when \(k\) is even) and thus \(\vpi_+(k)\) and \(-\vpi_+(k)\) lie in the same \(\GL_+(2k;\bb{R})\)-orbit.  Moreover, since \(k\) is even:
\ew
\Stab_{\GL(2k;\bb{R})}\lt(\vpi_+(k)\rt) = \Stab_{\GL_+(2k;\bb{R})}\lt(\vpi_+(k)\rt)
\eew
and thus, once again, the second set on the right-hand side of \eref{mcs-stab} is empty.  This completes the proof.
\end{itemize}

\end{proof}

\begin{Rk}\label{OP-Hyp}
Let \(\mu \in \ww[P]{2}\lt(\bb{R}^{2k+1}\rt)^*\).  The kernel of the linear map \(\io_\mu\) defines a 1-dimensional subspace of \(\bb{R}^{2k+1}\) which I denote \(\ell_\mu\).  Moreover, the orientation on \(\bb{R}^{2k+1}\) induces a natural orientation on \(\ell_\mu\) defined as follows: given a 1-form \(\th\) on \(\bb{R}^{2k+1}\) which does not vanish on \(\ell_\mu\), say that \(\th\) is positive on \(\ell_\mu\) if \(\th \w \mu^k > 0\).  Likewise, let \(\xi \in \ww[OP]{2k-1}\lt(\bb{R}^{2k+1}\rt)^*\) be ospseudoplectic.  Then, the annihilator of the 1-dimensional subspace \(\Ker(\ep_\xi) \pc \lt(\bb{R}^{2k+1}\rt)^*\) defines a hyperplane in \(\bb{R}^{2k+1}\) associated to \(\xi\); denote this hyperplane by \(\Pi_\xi\).  By \eref{OP-stab}, every element \(F\) of \(\Stab_{\GL_+(2k+1;\bb{R})}(\xi)\) restricts to an orientation-preserving automorphism of \(\Pi_\xi\); thus, it is possible to orient the planes \(\Pi_\xi\) consistently for all \(\xi\).  Specifically, there is a unique orientation on \(\Pi_{\xi}\) such that \(\xi = \th \w \vpi = \th \w \om^{k-1}\), where \(\th\) is a compatibly oriented generator of \(\Ann(\Pi|_\xi) = \Ker(\ep_\xi)\) and \(\vpi\) is an osemproplectic form on \(\Pi_\xi\) \wrt\ the given orientation (equivalently, \(\om\) is an emproplectic form on \(\Pi_\xi\)).  Moreover, as \(\th\) varies, \(\om\) defines a conformal class of emproplectic forms on \(\Pi_\xi\); thus \(\xi\) determines a co-oriented almost contact structure on \(\bb{R}^{2k+1}\) (in an algebraic sense); see \cite[\S10.1.B]{ItthP} for further details.  E.g.\ In the case of the standard ospseudoplectic form \(\xi_0(k) = \th^1 \w \vpi_+(k)\), \(\Pi_{\xi_0(k)}\) is simply \(\<e_2,...,e_{2k+1}\?\) and the corresponding conformal class of emproplectic forms on \(\Pi_\xi\) is just \(\la \1 \lt(\th^{23} + ... + \th^{2k,2k+1}\rt)\) for \(\la > 0\).\\
\end{Rk}

\subsection{Classification of stable forms}

For the sake of completeness, I briefly recount the classification of stable forms; see \cite{MASF} for further details of the 8-dimensional case:

\begin{Thm}\label{stab-clas}
The action of  \(\GL_+(n;\bb{R})\) on \(\ww{p}\lt(\bb{R}^n\rt)^*\) has precisely the following open orbits for \(2 \le p \le n-2\):

\(\mb{n = 2k,}\) \(\mb{k \ge 2:}\) \(\ww[\pm]{2}\lt(\bb{R}^{2k}\rt)^*\) and \(\ww[\pm]{2k-2}\lt(\bb{R}^{2k}\rt)^*\);

\(\mb{n = 2k+1,}\) \(\mb{k \ge 2:}\) \(\ww[P]{2}\lt(\bb{R}^{2k+1}\rt)^*\) and \(\ww[OP]{2k-1}\lt(\bb{R}^{2k+1}\rt)^*\);

\(\mb{n = 6:}\) \(\ww[\pm]{3}\lt(\bb{R}^6\rt)^*\);

\(\mb{n = 7:}\) \(\pm \ww[+]{3}\lt(\bb{R}^7\rt)^*\), \(\pm \ww[\tl]{3}\lt(\bb{R}^7\rt)^*\), \(\pm \ww[+]{4}\lt(\bb{R}^7\rt)^*\) and \(\pm \ww[\tl]{4}\lt(\bb{R}^7\rt)^*\);

\(\mb{n = 8:}\) The action of \(\GL_+(8;\bb{R})\) on \(\ww{3}\lt(\bb{R}^8\rt)^*\) has precisely three open orbits, represented by the 3-forms:
\caw
\ze_c = \th^{123} + \frac{1}{2}\th^{147} - \frac{1}{2}\th^{156} + \frac{1}{2}\th^{246} + \frac{1}{2}\th^{257} + \frac{1}{2}\th^{345} - \frac{1}{2}\th^{367} + \frac{\sqrt{3}}{2}\th^{458} + \frac{\sqrt{3}}{2}\th^{678};\\
\ze_s = \frac{\sqrt{3}}{2}\th^{147} - \frac{\sqrt{3}}{2}\th^{156} + \th^{238} + \frac{1}{2}\th^{246} - \frac{1}{2}\th^{257} + \frac{1}{2}\th^{347} + \frac{1}{2}\th^{356} + \frac{1}{2}\th^{458} - \frac{1}{2}\th^{678};\\
\ze_n = -\th^{123} - \frac{1}{2}\th^{156} - \frac{1}{2}\th^{178} + \frac{1}{2}\th^{257} - \frac{1}{2}\th^{268} - \frac{1}{2}\th^{358} - \frac{1}{2}\th^{367} - \frac{\sqrt{3}}{2}\th^{458} + \frac{\sqrt{3}}{2}\th^{467},
\caaw
with stabilisers \(\bb{P}\SU(3)\), \(\SL(3;\bb{R})\) and \(\bb{P}\SU(1,2)\) respectively.  Likewise, the action of \(\GL_+(8;\bb{R})\) on \(\ww{5}\lt(\bb{R}^8\rt)^*\) has precisely three open orbits, represented by the 5-forms:
\caw
\eta_c = -\frac{\sqrt{3}}{2}\th^{12345} - \frac{\sqrt{3}}{2}\th^{12367} - \frac{1}{2}\th^{12458} + \frac{1}{2}\th^{12678} + \frac{1}{2}\th^{13468} + \frac{1}{2}\th^{13578} - \frac{1}{2}\th^{23478} + \frac{1}{2}\th^{23568} + \th^{45678};\\
\eta_s = \frac{1}{2}\th^{12345} - \frac{1}{2}\th^{12367} + \frac{1}{2}\th^{12478} + \frac{1}{2}\th^{12568} - \frac{1}{2}\th^{13468} + \frac{1}{2}\th^{13578} - \th^{14567} - \frac{\sqrt{3}}{2} \th^{23478} + \frac{\sqrt{3}}{2}\th^{23568};\\
\eta_n = -\frac{\sqrt{3}}{2} \th^{12358} + \frac{\sqrt{3}}{2}\th^{12367} - \frac{1}{2}\th^{12458} - \frac{1}{2}\th^{12467} -\frac{1}{2}\th^{13457} + \frac{1}{2}\th^{13468} - \frac{1}{2}\th^{23456} - \frac{1}{2}\th^{23478} - \th^{45678},
\caaw
with stabilisers \(\bb{P}\SU(3)\), \(\SL(3;\bb{R})\) and \(\bb{P}\SU(1,2)\) respectively.

In particular, for any stable form \(\si_0 \in \ww{p}\lt(\bb{R}^n\rt)^*\) with \(2 \le p \le n-2\) (and hence with \(0 \le p \le n\)), \(\Stab_{\GL_+(n;\bb{R})}\) is connected.\\
\end{Thm}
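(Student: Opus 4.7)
The plan is a case analysis driven by a dimension-counting reduction, combined with appeals to the classifications already established in this section and, in the top-dimensional case, to the classical Vinberg--Elashvili theorem. First I would reduce the range of $(p,n)$: if $\si_0 \in \ww{p}\lt(\bb{R}^n\rt)^*$ is stable then its orbit is open in a $\binom{n}{p}$-dimensional ambient space and so has dimension at most $\dim \GL_+(n;\bb{R}) = n^2$, forcing $\binom{n}{p} \le n^2$. Moreover, \lref{no-orb-lem} supplies a $\GL_+(n;\bb{R})$-equivariant bijection between open orbits in $\ww{p}\bb{R}^n$ and open orbits in $\ww{n-p}\lt(\bb{R}^n\rt)^*$, so I may further assume $p \le n/2$. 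Combined with $2 \le p \le n-2$, direct inspection leaves only $p = 2$ (any $n \ge 4$) and $(p,n) \in \{(3,6),(3,7),(3,8)\}$, together with their Hodge duals.

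Next I would dispatch the standard cases. For $p = 2$ the Darboux classification identifies the open orbits as those of maximal rank, yielding $\ww[\pm]{2}\lt(\bb{R}^{2k}\rt)^*$ (\pref{symp}) and $\ww[P]{2}\lt(\bb{R}^{2k+1}\rt)^*$ (\pref{P}). The dual cases $p = n-2$ follow by \lref{no-orb-lem}, as executed in \pref{dual-symp} and \pref{P}. The case $(p,n) = (3,6)$ is \pref{SL-Prop}, while $n = 7$ with $p = 3$ or $4$ is covered by \pref{Stable-in-7} and the proposition immediately following it.

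The main obstacle will be the case $n = 8$, which requires the Vinberg--Elashvili classification of real stable 3-forms on $\bb{R}^8$; I would appeal to the detailed treatment in \cite{MASF}. To verify that $\ze_c, \ze_s, \ze_n$ lie in pairwise distinct $\GL_+(8;\bb{R})$-orbits, one can either compute polynomial invariants separating them or, equivalently, identify their stabilisers as the three real forms $\bb{P}\SU(3)$, $\SL(3;\bb{R})$ and $\bb{P}\SU(1,2)$ of a common complex Lie group (which are pairwise non-conjugate, hence give distinct orbits). The 5-form representatives $\eta_c, \eta_s, \eta_n$ are then recovered from the corresponding $\ze$-forms by contraction against the standard volume $\th^{12...8}$, as in diagram \eqref{almost-equi}.

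Finally, for the connectedness of stabilisers I would proceed case by case: $\Sp(2k;\bb{R})$ is connected (covering $p = 2$ and $p = 2k-2$ in $n = 2k$); the pseudoplectic and ospseudoplectic stabilisers in \pref{P} are explicitly shown there to be connected; the groups $\SL(3;\bb{R})^2$, $\SL(3;\bb{C})$, \g\ and \sg\ (in dimensions 6 and 7) are classically connected; and for $n = 8$ the stabilisers $\bb{P}\SU(3)$, $\SL(3;\bb{R})$ and $\bb{P}\SU(1,2)$ are quotients of connected Lie groups by central discrete subgroups, hence connected. The parenthetical extension to $0 \le p \le n$ is immediate: for $p \in \{0,n\}$ the stabiliser is $\SL(n;\bb{R})$, and for $p \in \{1,n-1\}$ it is a semidirect product of $\bb{R}_{>0}$ with the stabiliser of a hyperplane, each visibly connected.
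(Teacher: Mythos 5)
The paper offers no proof of this theorem at all — it is stated as a ``recounting'' with a single citation to \cite{MASF} for the eight-dimensional case — so there is no proof in the text to compare against; your reconstruction is, however, essentially the unique natural one and it is correct. The dimension-counting inequality $\binom{n}{p} \le n^2$ combined with the duality from \lref{no-orb-lem} is a clean way to organize the case analysis and correctly isolates the exceptional cases $(p,n) \in \{(3,6),(3,7),(3,8)\}$ and their duals along with the degree-$2$ and codegree-$2$ series; these are then each dispatched by the propositions already proved in the section, with \cite{MASF} supplying the $n = 8$ classification, exactly as the paper's citation suggests. One small imprecision in the coda: for $p = 1$ the stabiliser of $\th^1$ is \emph{not} a semidirect product with a factor $\bb{R}_{>0}$ — the condition $F^*\th^1 = \th^1$ forces the top-left entry to be exactly $1$, so the group is of the form $\lt\{\lt(\begin{smallmatrix} 1 & 0 \\ c & D \end{smallmatrix}\rt) : c \in \bb{R}^{n-1},\, D \in \GL_+(n-1;\bb{R}) \rt\}$, with no scaling factor present. (Your description is closer to what happens at $p = n-1$, where a positive dilation does survive.) This does not affect the conclusion — the group is manifestly connected — but the stated description should be corrected.
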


\section{Unboundedness of Hitchin functionals via \(h\)-principles}\label{conseq}

The aim of this section is to describe how \tref{HF-thm} follows from \tref{4hP}.  Let \(\si_0 \in \ww{p}\lt(\bb{R}^n\rt)^*\) be a Hitchin form and suppose that \(\si_0\)-forms satisfy the \(h\)-principle.
\begin{Thm}
For any closed, oriented \(n\)-manifold \(\M\) admitting \(\si_0\)-forms and any \(\al \in \dR{p}(\M)\), the Hitchin functional:
\ew
\mc{H}: \CL^p_{\si_0}(\al) \to (0,\infty)
\eew
is unbounded above.  More generally, if \(\M\) is a closed, oriented \(n\)-orbifold and \(\CL^p_{\si_0}(\al) \ne \es\), then the same conclusions apply.
\end{Thm}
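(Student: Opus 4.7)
\emph{Outline.} The plan is to realise arbitrarily large values of \(\mc{H}\) within \(\CL^p_{\si_0}(\al)\) in three steps: (i) start with any closed \(\si_0\)-form \(\si\) representing \(\al\); (ii) multiply \(\si\) by a large positive bump function supported in a small ball \(B \pc \M\), producing a non-closed \(\si_0\)-form of arbitrarily large volume; (iii) straighten this inflated form back into a closed \(\si_0\)-form in the class \(\al\) via the relative \(h\)-principle, preserving most of the inflated volume by exploiting the \(C^0\)-density supplied by the underlying convex integration.

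\emph{Construction of the inflated form.} Since \(\M\) admits \(\si_0\)-forms and \(\si_0\)-forms satisfy the \(h\)-principle, \(\CL^p_{\si_0}(\al) \ne \es\) (in the orbifold case this is hypothesised); fix any \(\si \in \CL^p_{\si_0}(\al)\). Choose concentric open balls \(B' \C B \pc \M\) (inside the smooth locus, in the orbifold case), and for each \(\la > 1\) a smooth function \(f_\la: \M \to [1,\la]\) with \(\supp(f_\la - 1) \pc B\) and \(f_\la \equiv \la\) on \(B'\).  Because the \(\GL_+(n;\bb{R})\)-orbit of \(\si_0\) is preserved under multiplication by positive scalars (via \(c^{1/p}\Id \in \GL_+(n;\bb{R})\)), \(\si_\la := f_\la \1 \si\) is a smooth \(\si_0\)-form, agreeing with \(\si\) off \(B\).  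The Hitchin condition \(\Stab_{\GL_+(n;\bb{R})}(\si_0) \cc \SL(n;\bb{R})\), together with \(\GL_+(n;\bb{R})\)-equivariance of \(vol\), forces \(vol_{c\si} = c^{n/p}vol_\si\) for all \(c > 0\); hence \(vol_{\si_\la} = f_\la^{n/p}\,vol_\si\) and
\ew
\bigintsss_\M vol_{\si_\la} \ge \la^{n/p}\bigintsss_{B'}vol_\si \xrightarrow{\la \to \0} \0.
\eew

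\emph{Invoking the \(h\)-principle.} Choose a polyhedron \(A \pc \M\) with \(\M \osr B \pc A\) and \(A \cap B' = \es\) (e.g.\ the closure of \(\M \osr B\) in a triangulation compatible with \(\del B\)); then \(f_\la \equiv 1\) on some neighbourhood of \(A\), so \(\si_\la|_{\Op(A)} = \si|_{\Op(A)}\) is closed and represents \(\al|_{\Op(A)}\).  Taking \(q = 0\), the relative \(h\)-principle for \(\si_0\)-forms---applied in the \(C^0\)-dense form supplied by the underlying Convex Integration \tref{EGM-Thm}---yields, for any prescribed \(\ep_\la > 0\), a closed \(\si_0\)-form \(\tld\si_\la \in \CL^p_{\si_0}(\al)\) with \(\tld\si_\la|_{\Op(A)} = \si|_{\Op(A)}\) and \(\|\tld\si_\la - \si_\la\|_{C^0(\M)} < \ep_\la\).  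Continuity of \(\tau \mt vol_\tau\) on the open subspace of \(\si_0\)-forms, combined with compactness of \(\M\), allows \(\ep_\la\) to be chosen small enough that \(\mc{H}(\tld\si_\la) \ge \frac{1}{2}\la^{n/p}\int_{B'}vol_\si\), which tends to \(\0\) with \(\la\), proving unboundedness.  The orbifold case is identical, the entire argument being local.

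\emph{Main obstacle.} The delicate point is the \(C^0\)-density of the \(h\)-principle at the final step: without a quantitative control on how far the corrective homotopy moves \(\si_\la\), the inflated volume could be destroyed by the straightening procedure.  The bare statement of the relative \(h\)-principle in the introduction does not assert \(C^0\)-density, but this property is automatic from the convex integration method of \tref{EGM-Thm} that underlies \tref{hPThm-1}; verifying that \(C^0\)-density indeed propagates through the proof of \tref{hPThm-1} is therefore the one verification meriting care.
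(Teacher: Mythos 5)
Your construction up through the inflated form $\si_\la$ and the volume estimate $\int_\M vol_{\si_\la} \ge \la^{n/p}\int_{B'}vol_\si$ is correct and matches the spirit of the paper, but the final step---passing from $\si_\la$ to a closed form $\tld\si_\la$ via ``$C^0$-density of the $h$-principle''---does not go through, and this is precisely the point you flag as delicate.  The $C^0$-density in \tref{EGM-Thm} controls the underlying section $p_1(F_t)$ of the bundle $E = \ww{p-1}\T^*\M$, i.e.\ the \emph{$(p-1)$-form potentials}, not the resulting family of $p$-forms $\fr{F}_t = \mc{D}F_t + a$, which depend on first derivatives of $p_1 F_t$.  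The paper states this explicitly immediately after the proof of \lref{hP->hP}: the homotopy $\fr{F}_t$ of $p$-forms \emph{cannot} be taken $C^0$-small, for the elementary reason that $\Om^p_\cl(\M)\pc\Om^p(\M)$ is $C^0$-closed by Stokes' theorem (evaluate against a $p$-cycle supported in $B'$).  So no choice of $\ep_\la$ in your scheme can prevent $vol_{\tld\si_\la}$ from collapsing on $B'$: the problem is not quantitative but structural, and the verification you propose at the end would in fact reveal the step to be false.

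The fix---and the route the paper takes---is to not attempt any approximation at all, but to force exactness by enlarging the polyhedron.  Instead of choosing $A$ disjoint from $B'$, take $A = \ol{U} \cup (\M \osr W)$ for nested balls $U \Subset W$, and set $\si_\la = (1+\la\ch)\si$ with $\ch$ locally constant near $A$ (so $\si_\la$ is closed on $\Op(A)$; contractibility of $\ol U$ gives the cohomological compatibility).  The relative $h$-principle then produces a closed $\si'_\la$ in the class $\al$ which \emph{equals} $(1+\la)\si$ on $U$, by design, so $\mc{H}(\si'_\la) \ge (1+\la)^{n/p}\int_U vol_\si \to \infty$ with no analytic control needed.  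The lesson is that the relative freedom built into the $h$-principle (holding the solution fixed on $\Op(A)$) is already exactly the device that replaces the false $C^0$-estimate.
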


\begin{proof}
Begin with the case where \(\M\) is a manifold.  Since \(\Om_{\si_0}^p(\M) \ne \es\) and \(\CL^p_{\si_0}(\al) \emb \Om_{\si_0}^p(\M)\) is a homotopy equivalence, \(\CL^p_{\si_0}(\al) \ne \es\).  Thus, pick \(\si \in \CL^p_{\si_0}(\al)\).  Let \(f: \B^n_1(0) \emb \M\) be an embedding, write \(W = f\lt(\B^n_1(0)\rt)\) and \(U = f\Big(\B^n_\frac{1}{2}(0)\Big)\) and consider the polyhedron \(A \pc \M\) given by \(A = \ol{U} \cup \M\osr W\).  Let \(\ch:\M \to [0,1]\) be a smooth function on \(\M\) such that \(\ch|_{\Op\lt(\ol{U}\rt)} \equiv 1\) and \(\ch|_{\Op(\M\osr W)} \equiv 0\), and for each \(\la \in (0,\infty)\) define \(\si_\la \in \Om_{\si_0}^p(\M)\) by:
\ew
\si_\la = \lt(1 + \la \cdot \ch\rt)\si.
\eew
Then \(\dd\si_\la = 0\) on \(\Op(A)\), since \(\dd\si = 0\) and \(\ch\) is locally constant on \(\Op(A)\), and hence \(\si_\la \in \Om_{\si_0}(\M;\si_\la|_{\Op(A)})\) for all \(\la > 0\).  Moreover, the restrictions \(\si|_{\Op(A)}\) and \(\si_\la|_{\Op(A)}\) both lie in \(\al|_{\Op(A)} \in \dR{p}(\Op(A))\) (for this point, it is useful to recall that \(\ol{U}\) is contractible), so by the relative \(h\)-principle, \(\CL^p_{\si_0}(\al;\si_\la|_{\Op(A)}) \emb \Om_{\si_0}(\M;\si_\la|_{\Op(A)})\) is a homotopy equivalence and thus one can continuously deform \(\si_\la\) relative to \(\Op(A)\) into \(\si'_\la \in \CL^p_{\si_0}(\al)\) such that:
\ew
\si'_\la = (1+\la)\si \text{ on } U \et \si'_\la = \si \text{ on } \M\osr W.
\eew
One now computes that:
\ew
\mc{H}(\si'_\la) \ge \bigintsss_U vol_{\si'_\la} = \bigintsss_U vol_{(1+\la)\si} = (1+\la)^\frac{n}{p} \bigintsss_U vol_{\si} \to \infty \as \la \to \infty,
\eew
as required.  In the case where \(\M\) is an orbifold, provided \(\CL^p_{\si_0}(\al) \ne \es\), one can  apply the above argument to the smooth locus of \(\M\), leading to the same conclusion.

\end{proof}

Thus, \tref{HF-thm} follows from \tref{4hP}, as claimed (noting that neither pseudoplectic forms nor ospseudoplectic forms are Hitchin forms by \pref{P}).  Moreover, since \g\ 4-forms and \slc\ 3-forms are both Hitchin forms and satisfy the \(h\)-principle (see \sref{1st-Appl}), Hitchin functionals on \g\ 4-forms and \slc\ 3-forms are also always unbounded above.\\

\section{The relative \(\lowercase{h}\)-principle for stable, ample forms}\label{ample-forms}

The aim of this section is to prove \tref{hPThm-1}.  Recall the statement of the theorem:\vs{3mm}

\noindent{\bf Theorem \ref{hPThm-1}.}
\em Let \(\si_0\in\ww{p}\lt(\bb{R}^n\rt)^*\) be stable (\(p \ge 1\)).  If \(\si_0\) is ample, then \(\si_0\)-forms satisfy the relative \(h\)-principle.\vs{2mm}\em

Let \(\M\) be an oriented \(n\)-manifold.  The symbol of the exterior derivative on \((p-1)\)-forms is the unique vector bundle homomorphism \(\mc{D}:\ww{p-1}\T^*\M^{(1)}\to\ww{p}\T^*\M\) such that the following diagram commutes:
\ew
\begin{tikzcd}
\Ga\lt(\M,\ww{p-1}\T^*\M^{(1)}\rt) \ar[rr, "\mLa{\mc{D}}"] & & \Om^{p}(\M)\\
& \Om^{p-1}(\M) \ar[ru, "\mLa{\dd}"'] \ar[lu, "\mLa{j_1}"] &
\end{tikzcd}
\eew
Explicitly, identifying \(\ww{p-1}\T^*\M^{(1)} \cong \ww{p-1}\T^*\M \ds \lt(\T^*\M \ts \ww{p-1}\T^*\M\rt)\) as usual, \(\mc{D}\) is simply the composite map:
\ew
\ww{p-1}\T^*\M \ds \lt(\T^*\M \ts \ww{p-1}\T^*\M\rt) \oto{\mns{~proj_2~}} \T^*\M \ts \ww{p-1}\T^*\M \oto{\mns{\w}} \ww{p}\T^*\M.
\eew
It follows that \(\mc{D}:\ww{p-1}\T^*\M^{(1)}\to\ww{p}\T^*\M\) is a fibrewise surjective linear map.

\begin{Defn}
Let \(a: D^q \to \Om^p(\M)\) be a continuous map.  Define a fibred differential relation \(\cal{R}_{\si_0}(a) \pc \ww{p-1}\T^*\M^{(1)}_{D^q}\) via:
\ew
\cal{R}_{\si_0}(a) = \lt\{ (s,T) \in \ww{p-1}\T^*\M^{(1)}_{D^q} ~\m|~ \mc{D}(T) + a(s) \in \ww[\si_0]{p}\T^*\M \rt\}.
\eew
Equivalently, \(\cal{R}_{\si_0}(a)\) is the preimage of \(\ww[\si_0]{p}\T^*\M_{D^q}\) under the map:
\ew
\ww{p-1}\T^*\M^{(1)}_{D^q} \oto{\mc{D} + a} \ww{p}\T^*\M_{D^q}.
\eew
\end{Defn}

\begin{Lem}\label{hP->hP}
Suppose that, for every \(q \ge 0\) and every continuous \(a: D^q \to \Om^p(\M)\), the relation \(\cal{R}_{\si_0}(a)\) satisfies the relative \(h\)-principle.  Then, \(\si_0\)-forms satisfy the relative \(h\)-principle.
\end{Lem}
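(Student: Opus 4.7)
The plan is to reduce the desired $h$-principle for $\si_0$-forms to the hypothesised $h$-principle for the jet relation $\cal{R}_{\si_0}(a)$, by choosing a suitable closed representative $a$ of the cohomology data $\al$ and then constructing, for the given $\fr{F}_0$, a formal (not necessarily holonomic) section of $\cal{R}_{\si_0}(a)$ that ``realises'' $\fr{F}_0$. Concretely, using the non-compact Hodge decomposition of \tref{NCH}, set $a(s) := \io\al(s)$, so that $a: D^q \to \Om^p(\M)$ is continuous with each $a(s)$ closed and $[a(s)] = \al(s)$.  Writing $\om(s) := \fr{F}_0(s) - a(s)$, conditions (1) and (2) ensure that $\om(s)$ is closed and exact on $\M$ for each $s \in \del D^q$, and that $\om(s)|_{\Op(A)}$ is closed and exact on $\Op(A)$ for each $s \in D^q$.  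The aim is then to produce a section $F_0 = (\be_0, T_0)$ of $\cal{R}_{\si_0}(a)$ over $D^q \x \M$ satisfying $\mc{D}(T_0(s)) + a(s) = \fr{F}_0(s)$ everywhere and holonomic (i.e.\ $T_0 = \nabla\be_0$) over $[\del D^q \x \M] \cup [D^q \x \Op''(A)]$ for some suitably shrunken $\Op''(A) \pc \Op(A)$.

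The substantive step is the construction of the primitive $\be_0$, for which the holonomic condition reduces to $\dd\be_0(s) = \om(s)$ on the holonomy region.  Using the $\de$-operators of \tref{NCH} applied separately to $\M$ and to $\Op(A)$, define the Hodge primitives $\h\be^{\M}(s) := \de_\M\om(s)$ and $\be^A(s) := \de_{\Op(A)}\lt(\om(s)|_{\Op(A)}\rt)$; one checks that $\dd\h\be^{\M}(s) = \om(s)$ on $\M$ for each $s \in \del D^q$ and $\dd\be^A(s) = \om(s)|_{\Op(A)}$ for each $s \in D^q$.  The discrepancy $\eta(s) := \be^A(s) - \h\be^{\M}(s)|_{\Op(A)}$, initially defined for $s \in \del D^q$, is a closed $(p-1)$-form on $\Op(A)$; since $\Om^{p-1}_\cl(\Op(A))$ is a Fr\'echet space, Dugundji's extension theorem yields a continuous $\tld\eta: D^q \to \Om^{p-1}_\cl(\Op(A))$ restricting to $\eta$ on $\del D^q$.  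Setting $\tld\be^A(s) := \be^A(s) - \tld\eta(s)$ then produces a continuous family of primitives of $\om(s)|_{\Op(A)}$ on $\Op(A)$ that agrees with $\h\be^{\M}(s)|_{\Op(A)}$ over $\del D^q$.  With a smooth cutoff $\ch: \M \to [0,1]$ equal to $1$ on $\Op''(A)$ and vanishing off some intermediate neighbourhood $\Op'(A) \pc \Op(A)$, set
\ew
\be_0(s, x) := \ch(x)\tld\be^A(s)(x) + (1 - \ch(x))\h\be^{\M}(s)(x),
\eew
with $\ch\tld\be^A$ extended by zero off $\Op(A)$; a direct calculation, using $\tld\be^A(s)|_{\Op(A)} = \h\be^{\M}(s)|_{\Op(A)}$ on $\del D^q$ and $\ch \equiv 1$ on $\Op''(A)$, confirms $\dd\be_0(s) = \om(s)$ on $[\del D^q \x \M] \cup [D^q \x \Op''(A)]$.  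Finally, choosing a continuous linear right-inverse $R$ of the fibrewise surjective symbol $\mc{D}$, define $T_0 := \nabla\be_0 + R(\om - \dd\be_0)$; then $\mc{D}(T_0) + a = \fr{F}_0$ everywhere, whilst $T_0 = \nabla\be_0$ on the holonomy region, so $F_0 := (\be_0, T_0)$ is the required formal solution.

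With $F_0$ in hand, the hypothesised $h$-principle for $\cal{R}_{\si_0}(a)$ yields a homotopy $F_t = (\be_t, T_t)$ of sections, constant on $[\del D^q \x \M] \cup [D^q \x \Op''(A)]$, with $F_1 = (\be_1, \nabla\be_1)$ holonomic on all of $D^q \x \M$.  Setting $\fr{F}_t(s) := \mc{D}(T_t(s)) + a(s)$ produces the desired homotopy of $\si_0$-forms: constancy of $F_t$ gives $\fr{F}_t(s) = \fr{F}_0(s)$ for $s \in \del D^q$ and $\fr{F}_t(s)|_{\Op''(A)} = \fr{F}_0(s)|_{\Op''(A)}$ for all $s \in D^q$, whilst holonomicity of $F_1$ yields $\fr{F}_1(s) = \dd\be_1(s) + \io\al(s)$, which is closed with cohomology class $\al(s)$.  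The principal obstacle is the construction of $\be_0$: the Hodge primitives on $\M$ and on $\Op(A)$ are intrinsically incompatible on the overlap, and reconciling them requires both the non-compact Hodge decomposition of \tref{NCH} (essentially because $\M$ may be non-compact or with boundary, so the standard Hodge theory does not apply directly) and a Dugundji-type parametric extension to smooth out the closed-form discrepancy.
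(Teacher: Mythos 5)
Your proof is correct and follows essentially the same strategy as the paper: set $a = \io\al$ using \tref{NCH}, build a primitive of $\fr{F}_0 - a$ over the holonomy region via the $\de$-operator from \tref{NCH}, lift to a formal section $F_0$ of $\cal{R}_{\si_0}(a)$, and invoke the hypothesised $h$-principle.  The only differences are presentational --- where you patch $\de_{\M}$- and $\de_{\Op(A)}$-primitives with a cutoff and extend the closed discrepancy via Dugundji, and give $T_0 = \nabla\be_0 + R(\om - \dd\be_0)$ explicitly via a right-inverse $R$ of $\mc{D}$, the paper invokes abstractly that $\dd \colon \Om^{p-1}(\Op(A)) \to \dd\Om^{p-1}(\Op(A))$ is a split surjection (so the extension of $\eta(-)|_{\Op(A)}$ is trivial) and that $\mc{D} + a$ is a fibration with contractible fibres.
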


The proof of \lref{hP->hP} relies on \tref{NCH}, proved in \aref{NCH-Sec}.  Recall the statement of the theorem:\vs{3mm}

\noindent{\bf Theorem \ref{NCH}}{\bf .}
\em Let \(\M\) be an \(n\)-manifold (not necessarily oriented and possibly non-compact or with boundary).  Then, there exists an injective continuous linear operator \(\io:\Ds_p\dR{p}(\M) \to \Ds_p\Om^p(\M)\) of degree 0 and a continuous linear operator \(\de:\Ds_p\Om^p(\M) \to \Ds_p\Om^p(\M)\) of degree \(-1\) satisfying \(\de^2 = 0\) such that for each \(0 \le p \le n\):
\begin{equation*}
\Om^p(\M) = \io\dR{p}(\M) \ds \dd\Om^{p-1}(\M) \ds \de\Om^{p+1}(\M) \tag{\ref*{NCHD-eq}}
\end{equation*}
in the category of \F\ spaces, where both \(\dd\) and \(\de\) act as \(0\) on \(\io\dR{p}(\M)\) and the maps \(\dd: \de\Om^{p+1}(\M) \to \dd\Om^p(\M)\) and \(\de: \dd\Om^p(\M) \to \de\Om^{p+1}(\M)\) are mutually inverse.  In particular, the projections \(\Om^p(\M) \to \dd\Om^{p-1}(\M)\) and \(\Om^p(\M) \to \de\Om^{p+1}(\M)\) are given by \(\dd \circ \de\) and \(\de \circ \dd\), respectively.\vs{2mm}\em

\begin{proof}[Proof of \lref{hP->hP}]
Let \(A \pc \M\) be a (possibly empty) polyhedron, let \(\al: D^q \to \dR{p}(\M)\) be a continuous map and let \(\fr{F}_0: D^q \to \Om^p_{\si_0}(\M)\) be a continuous map such that:
\begin{enumerate}
\item For all \(s \in \del D^q\): \(\dd\fr{F}_0(s) = 0\) and \([\fr{F}_0(s)] = \al(s) \in \dR{p}(\M)\);
\item For all \(s \in D^q\): \(\dd\lt(\fr{F}_0(s)|_{\Op{A}}\rt) = 0\) and \(\lt[\fr{F}_0(s)|_{\Op(A)}\rt] = \al(s)|_{\Op(A)} \in \dR{p}(\Op(A))\).
\end{enumerate}

Then, \(\fr{F}_0\) is a \(D^q\)-indexed family of \(p\)-forms on \(\M\), and thus one may regard \(\fr{F}_0\) as a section of the bundle \(\lt(\ww{p}\T^*\M\rt)_{D^q}\).  Define \(a: D^q \to \Om^p(\M)\) via \(a(s) = \io\al(s)\), for \(\io\) as in \tref{NCH}, so that \(a(s)\) represents the cohomology class \(\al(s)\) for each \(s \in D^q\), and consider the diagram:
\ew
\bcd[row sep = 15mm]
\lt(\ww{p-1}\T^*\M\rt)^{(1)}_{D^q} \ar[rr, " \mc{D} + a "] \ar[dr] & & \lt(\ww{p}\T^*\M\rt)_{D^q} \ar[dl, shift right = 3pt] \\
& D^q \x \M \ar[ur, shift right = 3pt, "\fr{F}_0" '] &
\ecd
\eew
The strategy of the proof will be to lift the section \(\fr{F}_0\) along the map \(\mc{D} + a\) to a section \(F_0\) of \(\lt(\ww{p-1}\T^*\M\rt)^{(1)}_{D^q}\) which is holonomic over the region \([\del D^q \x \M] \cup [D^q \x \Op(A)]\) and then apply the relative \(h\)-principle for the fibred relation \(\cal{R}_{\si_0}(a)\).

Firstly, consider the map:
\e\label{F0-a}
\bcd[row sep = 0pt]
\del D^q \ar[r]& \dd\Om^{p-1}(\M)\\
s \ar[r, maps to]& \fr{F}_0(s) - a(s),
\ecd
\ee
which is well-defined since \([\fr{F}_0(s)] = [a(s)] = \al(s)\) for \(s \in \del D^q\).  For each \(s \in \del D^q\) define \(\eta(s) = \de\lt(\fr{F}_0(s) - a(s)\rt)\) for \(\de\) as in \tref{NCH}, so that \(\dd\eta(s) = \fr{F}_0(s) - a(s)\) for all \(s \in \del D^q\) and consider the `extension problem':
\ew
\bcd[row sep = 0pt]
\del D^q \ar[r, "\eta(-)|_{\Op(A)}"] \ar[ddddddd, hook]& \Om^{p-1}(\Op(A)) \ar[ddddddd, "\dd"]\\
~&\\
~&\\
~&\\
~&\\
~&\\
~&\\
D^q \ar[ruuuuuuu, dashed, "\ze"] \ar[r] & \dd\Om^{p-1}(\Op(A))\\
s \ar[r, maps to] & \fr{F}_0(s)|_{\Op(A)} - a(s)|_{\Op(A)}
\ecd
\eew
Since, by \tref{NCH}, the right-hand vertical map is (up to isomorphism) simply a projection onto a direct summand, the extension problem for this map is trivial and thus one can extend the lift \(\eta(-)|_{\Op(A)}\) from \(\del D^q\) to all of \(D^q\), yielding a map \(\ze\) as shown in the diagram (see \cite[Thm.\ 9]{HToIDM} for a closely related result).  Define a map \(f_0: [\del D^q \x \M] \cup [D^q \x \Op(A)] \to \lt(\ww{p-1}\T^*\M\rt)^{(1)}_{D^q}\) via:
\ew
f_0(s,p) =
\begin{dcases*}
(j_1\eta(s))|_p &if \((s,p) \in \del D^q \x \M\);\\
(j_1\ze(s))|_p &if \((s,p) \in D^q \x \Op(A)\),
\end{dcases*}
\eew
noting that this definition is valid since \(\ze|_{\del D^q}(-) = \eta(-)|_{\Op(A)}\), and consider the new extension problem:
\ew
\bcd[row sep = 15mm]
~[\del D^q \x \M] \cup [D^q \x \Op(A)] \ar[r, "f_0"] \ar[d, hook]& \lt(\ww{p-1}\T^*\M\rt)^{(1)}_{D^q} \ar[d, "\mc{D} + a"]\\
D^q \x \M \ar[ru, dashed, "F_0"] \ar[r, "\fr{F}_0"]& \lt(\ww{p}\T^*\M\rt)_{D^q}
\ecd
\eew
The right-hand vertical map is a fibration with contractible fibres and thus one can extend the lift \(f_0\) from \([\del D^q \x \M] \cup [D^q \x \Op(A)]\) to all of \(D^q \x \M\), yielding a map \(F_0\), as shown in the diagram.

The \(F_0\) defines a section of \(\cal{R}_{\si_0}(a)\) which is holonomic over \([\del D^q \x \M] \cup [D^q \x \Op(A)]\).  Since \(\cal{R}_{\si_0}(a)\) satisfies the relative \(h\)-principle, one can choose a homotopy of sections \(F_t\) of \(\cal{R}_{\si_0}(a)\), constant over \([\del D^q \x \M] \cup [D^q \x \Op(A)]\), such that \(F_1\) is holonomic over all of \(D^q \x \M\).  Then, \(\fr{F}_t = \mc{D} F_t + a\) defines the required homotopy of \(\fr{F}_0\) and thus \(\si_0\)-forms satisfy the \(h\)-principle, as claimed.

\end{proof}

Note that the homotopy of sections \(\fr{F}_t: [0,1] \x D^q \x \M \to \lt(\ww{p}\T^*\M\rt)_{D^q}\) cannot be taken to be arbitrarily \(C^0\)-small, due to the well-known consequence of Stokes' Theorem that \(\Om_\cl^p(\M)\pc\Om^p(\M)\) is closed in the \(C^0\)-topology and not just the \(C^1\)-topology.  Nevertheless, for the choices of \(\si_0\) considered in this paper, the relation \(\cal{R}_{\si_0}(a)\) satisfies the \(C^0\)-dense relative \(h\)-principle and thus the homotopy \(p_1 F_t\) of sections of \(\ww{p-1}\T^*\M_{D^q}\) arising in the above proof can be taken to be arbitrarily \(C^0\)-small.  This is not a contradiction, however, since \(\fr{F}_t\) depends on the full 1-jet \(F_t\), and not just on the underlying section \(p_1 F_t\).

In view of \lref{hP->hP}, to prove \tref{hPThm-1}, it suffices to prove that if \(\si_0\) is stable and ample, then \(\cal{R}_{\si_0}(a)\) satisfies the relative \(h\)-principle for any \(a: D^q \to \Om^p(\M)\).  This follows by combining the Convex Integration Theorem (\tref{EGM-Thm}) with the following result:
\begin{Prop}
Fix \(q \ge 0\) and \(a: D^q \to \Om^p(\M)\).

(1) \(\cal{R}_{\si_0}(a)\) is an open subset of \(\ww{p-1}\T^*\M^{(1)}\) \iff\ \(\si_0\) is stable.

(2) \(\cal{R}_{\si_0}(a)\) is an ample fibred differential relation \iff\ \(\si_0\) is ample.
\end{Prop}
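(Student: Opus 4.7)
The plan is to handle (1) and (2) separately, each via a pointwise algebraic computation that exploits the fibrewise surjectivity of \(\mc{D}\) noted in the excerpt.  The unifying observation is that \(\cal{R}_{\si_0}(a)\) is by definition the preimage of the subbundle \(\ww[\si_0]{p}\T^*\M_{D^q}\) under the continuous map \(\mc{D} + a : \lt(\ww{p-1}\T^*\M\rt)^{(1)}_{D^q} \to \ww{p}\T^*\M_{D^q}\), and that this map is an affine surjection on each fibre.  For (1), one direction is immediate: if \(\si_0\) is stable then \(\ww[\si_0]{p}\T^*\M\) is open in \(\ww{p}\T^*\M\), so by continuity \(\cal{R}_{\si_0}(a)\) is open.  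Conversely, a fibrewise affine surjection between finite-dimensional spaces is open on each fibre, so any point of \(\cal{R}_{\si_0}(a)\) yields an open neighbourhood of \(\mc{D}(e,L) + a(s)|_p\) contained in \(\ww[\si_0]{p}\T^*_p\M\), forcing \(\ww[\si_0]{p}\T^*\M\) to be open whenever \(\cal{R}_{\si_0}(a)\) is open and non-empty.

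For (2), I fix \((s,p) \in D^q \x \M\) and a principal subspace \(\Pi_e(\bb{B},\la) \pc p_1^{-1}(e)\), aiming to show that \(\cal{R}_{\si_0}(a) \cap \Pi_e(\bb{B},\la)\) is, under a natural affine identification with \(\ww{p-1}\lt(\bb{R}^{n-1}\rt)^*\), an affine translate of \(\mc{N}_{\si_0}(\ta')\) for some \(\ta' \in \ww{p}\lt(\bb{R}^{n-1}\rt)^*\) depending on the data.  I pick \(u \in \T_p\M\) complementary to \(\bb{B}\) with dual \(\th \in \T^*_p\M\) satisfying \(\th(u) = 1\) and \(\th|_\bb{B} = 0\), extend to a basis \((u, e_2, \ldots, e_n)\) adapted to \(\T_p\M = \<u\? \ds \bb{B}\), and identify \((\T_p\M, \text{orient.}) \cong \bb{R} \ds \bb{R}^{n-1}\) compatibly.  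Each \(L \in \Pi_e(\bb{B},\la)\) is then determined by \(\nu := L(u) \in \ww{p-1}\T^*_p\M\), and a direct calculation gives
\ew
\mc{D}(e,L) + a(s)|_p = \th \w \nu + \sum_{i=2}^n e^i \w \la(e_i) + a(s)|_p.
\eew
Decomposing every \(p\)-form uniquely as \(\th \w (\cdot) + (\cdot)\) with both components living on \(\bb{R}^{n-1}\), and writing \(a(s)|_p = \th \w \al + \ga\) and \(\sum_{i=2}^n e^i \w \la(e_i) = -\th \w \xi + \ta'\), the above expression becomes \(\th \w (\nu + \al - \xi) + (\ga + \ta')\); this is a \(\si_0\)-form precisely when \(\nu \in \mc{N}_{\si_0}(\ga + \ta') - (\al - \xi)\), as claimed.

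Since ampleness in the sense of affine geometry is invariant under affine translation, the forward direction of (2) is then immediate: if every \(\mc{N}_{\si_0}(\cdot)\) is ample, so is every \(\cal{R}_{\si_0}(a) \cap \Pi_e(\bb{B},\la)\).  For the converse, I need the map \(\la \mt \ta'\) to be surjective onto \(\ww{p}\lt(\bb{R}^{n-1}\rt)^*\); decomposing each \(\la(e_i) \in \ww{p-1}\T^*_p\M\) into its \(\th\)- and \(\bb{B}^*\)-components and using that \(\{e^2, \ldots, e^n\}\) is a basis of \(\bb{B}^*\), so that the natural wedge map \(\bb{B}^* \ts \ww{p-1}\bb{B}^* \to \ww{p}\bb{B}^*\) is surjective, I verify this directly.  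Then ampleness of \(\cal{R}_{\si_0}(a)\) forces ampleness of \(\mc{N}_{\si_0}(\ta)\) for every \(\ta\), i.e., ampleness of \(\si_0\).  The step I expect to be most delicate is the bookkeeping just described: tracking the \(\th\)- and \(\bb{B}^*\)-components of each \(\la(e_i)\) and confirming that the ``\(\th\)-free'' part of \(\sum_{i=2}^n e^i \w \la(e_i)\) genuinely sweeps out all of \(\ww{p}\lt(\bb{R}^{n-1}\rt)^*\) as \(\la\) varies independently of \(s\).
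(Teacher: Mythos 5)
Your proof follows the same overall strategy as the paper's: part (1) is the same continuity/openness argument, and part (2) is the same pointwise affine computation, parametrising a principal subspace by \(L \mt L(u)\) and splitting \(p\)-forms via the \(\th\)-decomposition to reduce to \(\mc{N}_{\si_0}\).  However, there is a dimensional slip in the intermediate claim that \(\cal{R}_{\si_0}(a) \cap \Pi_e(\bb{B},\la)\) is ``under a natural affine identification with \(\ww{p-1}\lt(\bb{R}^{n-1}\rt)^*\), an affine translate of \(\mc{N}_{\si_0}(\ta')\).''  The principal subspace \(\Pi_e(\bb{B},\la)\) is modelled on \(\ww{p-1}\T^*_p\M \cong \ww{p-1}\lt(\bb{R}^n\rt)^*\), of dimension \(\binom{n}{p-1}\), whereas \(\mc{N}_{\si_0}(\ta') \pc \ww{p-1}\lt(\bb{R}^{n-1}\rt)^*\) lives in a space of dimension \(\binom{n-1}{p-1}\); for \(p \ge 2\) these differ by \(\dim \ww{p-2}\bb{B}^*\).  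The missing dimensions are exactly the \(\th \w \ww{p-2}\bb{B}^*\)-component of \(\nu = L(u)\), which is annihilated once you wedge with \(\th\) inside \(\mc{D}L\); those directions are therefore entirely unconstrained, and the intersection \(\cal{R}_{\si_0}(a) \cap \Pi_e(\bb{B},\la)\) is not a translate of \(\mc{N}_{\si_0}(\ta')\) but rather an affine cylinder \(\ww{p-2}\bb{B}^* \x \lt(\mc{N}_{\si_0}(\ta') - \nu_0\rt)\).  This is precisely the step the paper handles by writing \(\Pi(\bb{B},\la) \cong \ww{p-2}\bb{B}^* \x \ww{p-1}\bb{B}^* \x \{\la\}\) and factoring the intersection accordingly.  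The slip is harmless for the conclusion — since \(\Conv\lt(\bb{V} \x S\rt) = \bb{V} \x \Conv(S)\), ampleness of such a cylinder is equivalent to ampleness of \(S\) — but you should state that extra observation explicitly rather than asserting a false identification between spaces of unequal dimension.
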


\begin{proof}
(1) is clear, since \(\ww{p}\T^*\M_{D^q} \oto{\mns{+a}} \ww{p}\T^*\M_{D^q}\) is a homeomorphism and \(\ww{p-1}\T^*\M^{(1)}_{D^q} \oto{\mns{\mc{D}}} \ww{p}\T^*\M_{D^q}\) is continuous and open (being a fibrewise linear surjection).  For (2), note that:
\ew
\cal{R}_{\si_0}(a) = \ww{p-1}\T^*\M_{D^q} \x_{(D^q \x \M)} \cal{R}'_{\si_0}(a) \pc \ww{p-1}\T^*\M_{D^q} \ds \lt(\T^*\M \ts \ww{p-1}\T^*\M\rt)_{D^q}
\eew
as in \rref{calc-with-amp}, where:
\ew
\cal{R}'_{\si_0}(a) = \lt\{ (s,T) \in \lt(\T^*\M \ts \ww{p-1}\T^*\M\rt)_{D^q} ~\m|~ \w(T) + a(s) \in \ww[\si_0]{p}\T^*\M \rt\}.
\eew
Then, in the notation of \rref{calc-with-amp}, for each \(s \in D^q\):
\ew
\cal{R}'_{\si_0}(a)_s = \lt\{ T \in \T^*\M \ts \ww{p-1}\T^*\M ~\m|~ \w(T) + a(s) \in \ww[\si_0]{p}\T^*\M \rt\}.
\eew
As explained in \rref{calc-with-amp}, \(\cal{R}_{\si_0}(a)\) is ample \iff\ for all points \(x\in\M\), hyperplanes \(\bb{B}\pc\T_x\M\) and linear maps \(\la \in \bb{B}^* \ts \ww{p-1}\T^*_x\M\), \(\cal{R}'_{\si_0}(a)_s \cap \Pi(\bb{B},\la) \cc \Pi(\bb{B},\la)\) is ample.

Choose a splitting \(\T_x\M = \bb{L} \ds \bb{B}\) and pick an orientation on \(\bb{B}\). This choice canonically orients \(\bb{L}\); choose a correctly oriented generator \(\th\) of \(\bb{L}^*\).  Using this data, one may write:
\ew
\T^*_x\M = \bb{R}\1\th \ds \bb{B}^* \et \ww{p-1}\T^*_x\M = \th \w \ww{p-2}\bb{B}^* \ds \ww{p-1}\bb{B}^*.
\eew
Hence, there is an isomorphism:
\ew
\bcd[row sep = 0pt]
\ww{p-2}\bb{B}^* \ds \ww{p-1}\bb{B}^* \ds \lt(\bb{B}^* \ts \ww{p-1}\T^*_x\M\rt) \ar[r, "\mLa{\cong}"] & \T^*_x\M \ts \ww{p-1}\T^*_x\M\\
\al \ds \nu \ds \la \ar[r, maps to] & \th \ts (\th \w \al + \nu) + \la.
\ecd
\eew
(For completeness, in the case \(p=1\) one simply treats the space \(\ww{p-2}\bb{B}^*\) as 0, although note this paper will only be concerned with the case \(p \ge 2\).)  Using this isomorphism, one obtains the explicit description:
\ew
\Pi(\bb{B},\la) \cong \ww{p-2}\bb{B}^* \x \ww{p-1}\bb{B}^* \x \{\la\}.
\eew

Now, define \(\nu_0\in\ww{p-1}\bb{B}^*\) and \(\ta_0\in\ww{p}\bb{B}^*\) by the equation:
\e\label{la-nu&ta}
\w(\la) + a(s) = \th \w \nu_0 + \ta_0.
\ee
Then given \((\al,\nu) \in \ww{p-2}\bb{B}^* \ds \ww{p-1}\bb{B}^*\), one can compute that:
\ew
\w\lt[\th \ts \lt(\th \w \al + \nu\rt) + \la\rt] + a(s) = \th \w (\nu + \nu_0) + \ta_0,
\eew
which is a \(\si_0\)-form \iff\ \(\nu + \nu_0 \in \mc{N}_{\si_0}(\ta_0)\). Thus:
\ew
\cal{R}'_{\si_0}(a)_s \cap \Pi(\bb{B},\la) \cong \ww{p-2}\bb{B}^* \x (\mc{N}_{\si_0}(\ta_0) - \nu_0) \x \{\la\} \cc \ww{p-2}\bb{B}^* \x \ww{p-1}\bb{B}^* \x \{\la\} \cong \Pi(\bb{B},\la).
\eew
Thus, \(\cal{R}'_{\si_0}(a)_s \cap \Pi(\bb{B},\la) \cc \Pi(\bb{B},\la)\) is ample \iff\ \(\mc{N}_{\si_0}(\ta_0) - \nu_0 \cc \ww{p-1}\bb{B}^*\) is ample, which in turn is equivalent to \(\mc{N}_{\si_0}(\ta_0) \cc \ww{p-1}\bb{B}^*\) being ample.

Finally, note that, for fixed \(a(s)\), the assignment \(\la \mt (\nu_0,\ta_0)\) described in \eref{la-nu&ta} is surjective and thus, as \(\la\) varies, \(\ta_0\) realises all possible values in \(\ww{p}\bb{B}^*\).  Hence, \(\cal{R}_{\si_0}(a)\) is ample \iff\ \(\mc{N}_{\si_0}(\ta_0) \cc \ww{p-1}\bb{B}^*\) is ample for all \(\ta_0 \in \ww{p}\bb{B}^*\), i.e.\ \iff\ \(\si_0\) is ample, as claimed.

\end{proof}

\section{Faithful, connected and abundant \(p\)-forms}\label{FCA}

The aim of this section is to develop theoretical tools for effectively verifying whether a given stable form is ample.  Let \(Emb\lt(\bb{R}^{n-1},\bb{R}^n\rt)\) denote the space of linear embeddings \(\io:\bb{R}^{n-1} \to \bb{R}^n\).  Given \(\si_0 \in \ww{p}\lt(\bb{R}^n\rt)^*\) (\(p \ge 1\)), there is a natural map:
\ew
\bcd[row sep = 0pt]
\cal{T}_{\si_0}:Emb\lt(\bb{R}^{n-1},\bb{R}^n\rt) \ar[r] & \ww{p}\lt(\bb{R}^{n-1}\rt)^*\\
\io \ar[r, maps to] & \io^*\si_0.
\ecd
\eew
\(\GL_+(n-1;\bb{R})\) acts on \(Emb\lt(\bb{R}^{n-1},\bb{R}^n\rt)\) via precomposition, and the quotient \(\rqt{Emb\lt(\bb{R}^{n-1},\bb{R}^n\rt)}{\GL_+(n-1;\bb{R})}\) can naturally be identified with the oriented Grassmannian \(\oGr_{n-1}\lt(\bb{R}^n\rt)\).  Given \(f\in\GL_+(n-1;\bb{R})\), a direct computation shows:
\ew
\cal{T}_{\si_0}(\io \circ f) = f^*\io^*\lt(\si_0\rt) = f^*\cal{T}_{\si_0}(\io).
\eew
Thus, \(\cal{T}_{\si_0}\) descends to a map:
\ew
\bcd
\oGr_{n-1}\lt(\bb{R}^n\rt) \ar[r] & \rqt{\ww{p}\lt(\bb{R}^{n-1}\rt)^*}{\GL_+(n-1;\bb{R})}.
\ecd
\eew
Write \(\mc{S}(\si_0)\) for the stabiliser of \(\si_0\) in \(\GL_+(n;\bb{R})\) and note that \(\mc{S}(\si_0)\) acts on \(Emb\lt(\bb{R}^{n-1},\bb{R}^n\rt)\) (and hence on \(\oGr_{n-1}\lt(\bb{R}^n\rt)\)) on the left, via post-composition.  Clearly, \(\cal{T}_{\si_0}\) is invariant under this action and thus \(\cal{T}_{\si_0}\) descends further to a map:
\ew
\bcd
\mc{T}_{\si_0}:\lqt{\oGr_{n-1}\lt(\bb{R}^n\rt)}{\mc{S}(\si_0)} \ar[r] & \rqt{\ww{p}\lt(\bb{R}^{n-1}\rt)^*}{\GL_+(n-1,\bb{R})}.
\ecd
\eew
Using this notation, I make the following definition:
\begin{Defn}
Let \(\si_0\in\ww{p}(\bb{R}^n)^*\).
\begin{enumerate}
\item \(\si_0\) is termed faithful if \(\mc{T}_{\si_0}\) is injective.

\item \(\si_0\) is termed connected if for each orbit \(\mc{O}\in\lqt{\oGr_{n-1}\lt(\bb{R}^n\rt)}{\mc{S}(\si_0)}\) the stabiliser of some (equivalently any) \(\ta \in \mc{T}_{\si_0}(\mc{O})\) is path-connected.

\item \(\si_0\) is termed abundant if for all \(\mc{O} \in \lqt{\oGr_{n-1}\lt(\bb{R}^n\rt)}{\mc{S}(\si_0)}\) and some (equivalently any) \(\ta\in\mc{T}_{\si_0}(\mc{O})\):
\ew
0 \in \Conv~\mc{N}_{\si_0}(\ta) \cc \ww{p-1}\lt(\bb{R}^{n-1}\rt)^*.
\eew
\end{enumerate}
\end{Defn}

In words, \(\si_0\) is faithful if for all oriented hyperplanes \(\bb{B}\), \(\bb{B}' \pc \bb{R}^n\) the restrictions \(\si_0|_\bb{B}\) and \(\si_0|_{\bb{B}'}\) are isomorphic \iff\ \(\bb{B}\) and \(\bb{B}'\) lie in the same orbit of \(\mc{S}(\si_0)\), \(\si_0\) is connected if for every oriented hyperplane \(\bb{B} \pc \bb{R}^n\) the stabiliser of \(\si_0|_\bb{B}\) in \(\GL_+(\bb{B})\) is connected, and \(\si_0\) is abundant if for every \(\ta \in \ww{p}\lt(\bb{R}^{n-1}\rt)^*\) either \(\mc{N}_{\si_0}(\ta)\) is empty or the convex hull of \(\mc{N}_{\si_0}(\ta)\) contains 0.

Verifying the above three properties, in practice, is greatly helped by the following three results:
\begin{Prop}\label{open-to-open}
Let \(\si_0 \in \ww{p}\lt(\bb{R}^n\rt)^*\) be stable and equip the spaces \(\lqt{\oGr_{n-1}\lt(\bb{R}^n\rt)}{\mc{S}(\si_0)}\) and \(\rqt{\ww{p}\lt(\bb{R}^{n-1}\rt)^*}{\GL_+(n-1,\bb{R})}\) with their natural quotient topologies.  Then, \(\mc{T}_{\si_0}\) is an open map.  In particular, if \(\mc{O} \in \lqt{\oGr_{n-1}\lt(\bb{R}^n\rt)}{\mc{S}(\si_0)}\) is an open orbit, then \(\mc{T}_{\si_0}(\mc{O})\) is, also, an open orbit, i.e.\ the orbit of a stable p-form on \(\bb{R}^{n-1}\).
\end{Prop}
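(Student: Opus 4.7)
The plan is to reduce the proposition to the openness of the unquotiented map
\ew
\cal{T}_{\si_0}: Emb\lt(\bb{R}^{n-1},\bb{R}^n\rt) \to \ww{p}\lt(\bb{R}^{n-1}\rt)^*, \hs{3mm} \iota \mt \iota^*\si_0,
\eew
and then to establish this openness by showing \(\cal{T}_{\si_0}\) is a submersion. For the reduction, the two quotient maps
\ew
q_1: Emb\lt(\bb{R}^{n-1},\bb{R}^n\rt) \to \lqt{\oGr_{n-1}\lt(\bb{R}^n\rt)}{\cal{S}(\si_0)} \et q_2: \ww{p}\lt(\bb{R}^{n-1}\rt)^* \to \rqt{\ww{p}\lt(\bb{R}^{n-1}\rt)^*}{\GL_+(n-1;\bb{R})}
\eew
are both open, being quotients by continuous group actions, and they satisfy \(q_2 \circ \cal{T}_{\si_0} = \mc{T}_{\si_0} \circ q_1\) by construction. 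If \(\cal{T}_{\si_0}\) is open, then for any open \(U \cc \lqt{\oGr_{n-1}\lt(\bb{R}^n\rt)}{\cal{S}(\si_0)}\), the set \(\mc{T}_{\si_0}(U) = q_2\lt(\cal{T}_{\si_0}\lt(q_1^{-1}(U)\rt)\rt)\) is the image under an open map of an open set, hence open.

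The main step is to prove that \(\cal{T}_{\si_0}\) is a submersion at every \(\iota_0 \in Emb\lt(\bb{R}^{n-1},\bb{R}^n\rt)\). To this end, I would introduce the auxiliary map \(\Ph: \GL_+(n;\bb{R}) \to Emb\lt(\bb{R}^{n-1},\bb{R}^n\rt)\), \(F \mt F\iota_0\), and factor the composition \(\cal{T}_{\si_0} \circ \Ph\) alternatively as
\ew
F \mt F^*\si_0 \mt \iota_0^*\lt(F^*\si_0\rt).
\eew
The first factor is a submersion at \(\Id\) because \(\si_0\) is stable, so its \(\GL_+(n;\bb{R})\)-orbit is open in \(\ww{p}\lt(\bb{R}^n\rt)^*\); the second factor \(\iota_0^*: \ww{p}\lt(\bb{R}^n\rt)^* \to \ww{p}\lt(\bb{R}^{n-1}\rt)^*\) is a linear surjection, since any choice of linear retraction \(r: \bb{R}^n \to \bb{R}^{n-1}\) of \(\iota_0\) yields a right inverse \(r^*\). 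Thus the composition \(\cal{T}_{\si_0} \circ \Ph\) is a submersion at \(\Id\). On the other hand, \(\dd\Ph|_\Id: X \mt X\iota_0\) surjects onto \(T_{\iota_0}Emb = \Hom\lt(\bb{R}^{n-1},\bb{R}^n\rt)\): given \(A \in \Hom\lt(\bb{R}^{n-1},\bb{R}^n\rt)\), one may define \(X \in \fr{gl}(n;\bb{R})\) by setting \(X|_{\iota_0(\bb{R}^{n-1})} = A \circ \iota_0^{-1}\) and \(X\) to vanish on a chosen complement of \(\iota_0\lt(\bb{R}^{n-1}\rt)\). By the chain rule applied at \(\Id\), \(\dd\cal{T}_{\si_0}|_{\iota_0}\) must then be surjective; since \(\iota_0\) was arbitrary, \(\cal{T}_{\si_0}\) is a submersion, hence open.

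The "in particular" statement follows formally: each orbit \(\mc{O} \in \lqt{\oGr_{n-1}\lt(\bb{R}^n\rt)}{\cal{S}(\si_0)}\) is, by construction of \(\mc{T}_{\si_0}\), sent to a single \(\GL_+(n-1;\bb{R})\)-orbit in \(\ww{p}\lt(\bb{R}^{n-1}\rt)^*\); if \(\mc{O}\) is open, then the openness of \(\mc{T}_{\si_0}\) forces \(\mc{T}_{\si_0}(\mc{O})\) to correspond to an open orbit in \(\ww{p}\lt(\bb{R}^{n-1}\rt)^*\), i.e., to the orbit of a stable \(p\)-form on \(\bb{R}^{n-1}\). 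I expect the main technical point to be the submersion calculation, which is nonetheless short given stability of \(\si_0\); the remaining arguments are routine topology of quotients.
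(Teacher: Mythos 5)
Your proposal is correct and follows essentially the same route as the paper: both reduce openness of \(\mc{T}_{\si_0}\) to openness of the unquotiented \(\cal{T}_{\si_0}\) via the same commutative diagram, and both derive the latter from stability of \(\si_0\) (so that \(F \mapsto F^*\si_0\) is a submersion onto a neighbourhood of \(\si_0\)) together with surjectivity of the restriction \(\io_0^*\). The paper phrases the second step as an explicit construction of a preimage using a fixed right inverse \(\be\) of \(\io^*\), whereas you phrase it as a submersion argument via the chain rule; these are the same calculation, and your remark that \(\dd\Ph|_{\Id}\) is surjective is in fact unnecessary, since surjectivity of \(\dd(\cal{T}_{\si_0}\circ\Ph)|_{\Id}\) already implies surjectivity of \(\dd\cal{T}_{\si_0}|_{\io_0}\).
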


\begin{proof}
Consider the commutative diagram:
\ew
\bcd
Emb\lt(\bb{R}^{n-1},\bb{R}^n\rt) \ar[r, "\cal{T}_{\si_0}"] \ar[d, "quot"] & \ww{p}\lt(\bb{R}^{n-1}\rt)^* \ar[d, "quot"]\\
\lqt{\oGr_{n-1}\lt(\bb{R}^n\rt)}{\mc{S}(\si_0)} \ar[r, "\mc{T}_{\si_0}"] & \rqt{\ww{p}\lt(\bb{R}^{n-1}\rt)^*}{\GL_+(n-1,\bb{R})}
\ecd
\eew
Since the left-hand map is a continuous surjection and the right-hand map is open, to prove \(\mc{T}_{\si_0}\) is open it suffices to prove that \(\cal{T}_{\si_0}\) is open.  To this end, let \(\io:\bb{R}^{n-1} \to \bb{R}^n\) be an embedding and fix a splitting \(\be\) of the exact sequence:
\ew
\bcd
\ww{p}\lt(\bb{R}^n\rt)^* \ar[r, "\io^*", shift left = 3pt] & \ww{p}\lt(\bb{R}^{n-1}\rt)^* \ar[l, "\be", shift left = 3pt] \ar[r] & 0.
\ecd
\eew
Since \(\si_0\) is stable, for all \(\ta \in \ww{p}\lt(\bb{R}^{n-1}\rt)^*\) sufficiently small there is \(F \in \GL_+(n;\bb{R})\) (close to \(\Id\)) such that:
\ew
F^*\si_0 = \si_0 + \be(\ta).
\eew
Set \(\io' = F \circ \io \in Emb\lt(\bb{R}^{n-1},\bb{R}^n\rt)\).  For \(\ta\) small enough, \(\io'\) can be taken arbitrarily close to \(\io\).  Moreover:
\ew
\io'^*\si_0 = \io^*F^*\si_0 = \io^*(\si_0 + \be(\ta)) = \io^*\si_0 + \ta,
\eew
since \(\io^* \circ \be = \Id\).  Thus, \(\cal{T}_{\si_0}\) is an open map and the result follows.

\end{proof}

\begin{Prop}\label{tr->conn}
Let \(\si_0\) be a stable \(p\)-form on \(\bb{R}^n\) and suppose that \(\mc{S}(\si_0)\) acts transitively on \(\oGr_{n-1}\lt(\bb{R}^n\rt)\).  Then, \(\si_0\) is faithful and connected.  In particular, if \(\mc{S}(\si_0)\) contains a subgroup which (i) preserves a positive definite inner product and (ii) acts transitively on the corresponding unit sphere in \(\bb{R}^n\) (equivalently, in \(\lt(\bb{R}^n\rt)^*\)), then \(\si_0\) is faithful and connected.
\end{Prop}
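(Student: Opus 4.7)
The plan is to deduce both faithfulness and connectedness from the transitivity hypothesis, using results already established in the paper. First, if \(\mc{S}(\si_0)\) acts transitively on \(\oGr_{n-1}\lt(\bb{R}^n\rt)\), then the quotient \(\lqt{\oGr_{n-1}\lt(\bb{R}^n\rt)}{\mc{S}(\si_0)}\) consists of a single orbit \(\mc{O}\). Since \(\mc{T}_{\si_0}\) is then defined on a singleton, it is trivially injective, so \(\si_0\) is faithful.

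For connectedness, the unique orbit \(\mc{O}\) is automatically open in \(\oGr_{n-1}\lt(\bb{R}^n\rt)\), so by \pref{open-to-open} (which uses that \(\si_0\) is stable), \(\mc{T}_{\si_0}(\mc{O})\) is an open orbit in \(\rqt{\ww{p}\lt(\bb{R}^{n-1}\rt)^*}{\GL_+(n-1;\bb{R})}\), i.e.\ the orbit class of a stable \(p\)-form \(\ta\) on \(\bb{R}^{n-1}\). It then remains to verify that \(\Stab_{\GL_+(n-1;\bb{R})}(\ta)\) is path-connected, which is exactly the final assertion of the classification of stable forms (\tref{stab-clas}). Since the stabilisers of any two elements of a single \(\GL_+(n-1;\bb{R})\)-orbit are conjugate (in particular homeomorphic), the condition `for some \(\ta\)' and `for any \(\ta\)' in the definition of connectedness are equivalent, so checking one representative suffices.

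For the `in particular' statement, let \(H \cc \mc{S}(\si_0)\) be a subgroup preserving a positive definite inner product \(g\) and acting transitively on the corresponding unit sphere in \(\bb{R}^n\). The inner product \(g\) (together with the orientation of \(\bb{R}^n\)) induces an identification \(\oGr_{n-1}\lt(\bb{R}^n\rt) \cong S^{n-1}\), sending an oriented hyperplane to the unique unit vector perpendicular to it and compatibly oriented. Because elements of \(H\) preserve \(g\) and lie in \(\GL_+(n;\bb{R})\), their action on \(\oGr_{n-1}\lt(\bb{R}^n\rt)\) intertwines with their standard action on \(S^{n-1}\). Thus transitivity of \(H\) on \(S^{n-1}\) lifts to transitivity of \(\mc{S}(\si_0) \supseteq H\) on \(\oGr_{n-1}\lt(\bb{R}^n\rt)\), reducing the second statement to the first.

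No step of the argument is genuinely difficult; it is essentially a composition of \pref{open-to-open} with \tref{stab-clas}. The only points requiring care are (i) checking that the orientation convention in the identification \(\oGr_{n-1}\lt(\bb{R}^n\rt) \cong S^{n-1}\) is compatible with the \(\GL_+(n;\bb{R})\)-action, and (ii) verifying connectedness of stabilisers of stable forms in the degenerate ranges \(p \in \{0,1,n-1,n\}\) not explicitly addressed by the classification (all of which are straightforward: the stabilisers are \(\GL_+(n-1;\bb{R})\), \(\bb{R}^{n-2} \sdp \GL_+(n-2;\bb{R})\), their duals under the anti-isomorphism \(\Ph\) of \lref{no-orb-lem}, and \(\SL(n-1;\bb{R})\), respectively).
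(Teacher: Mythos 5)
Your proof is correct and follows essentially the same route as the paper: faithfulness is immediate from transitivity, connectedness comes from \pref{open-to-open} together with the final assertion of \tref{stab-clas}, and the `in particular' statement reduces to the main one via the metric identification of \(\oGr_{n-1}(\bb{R}^n)\) with the unit sphere. Your parenthetical check of the degenerate degrees \(p \in \{0,1,n-1,n\}\) is a sensible precaution, and is in fact exactly what the paper's \tref{stab-clas} is implicitly invoking in its ``(and hence with \(0 \le p \le n\))'' clause.
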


\begin{proof}
Faithfulness is clear.  Since \(\mc{S}(\si_0)\) acts transitively on \(\oGr_{n-1}\lt(\bb{R}^n\rt)\), the unique orbit must be open and hence, by \pref{open-to-open}, it must map under \(\mc{T}_{\si_0}\) to the orbit of a stable form.  However, using the final statement from \tref{stab-clas}, the stabiliser in \(\GL_+(n-1;\bb{R})\) of every stable form on \(\bb{R}^{n-1}\) is connected.  The final statement follows since \(\oGr_{n-1}\lt(\bb{R}^n\rt)\) is isomorphic to the unit sphere in \(\lt(\bb{R}^n\rt)^*\).

\end{proof}

\begin{Prop}\label{Abt-Lem}
If there exists an orientation-reversing automorphism of \(\bb{R}^n\) which preserves \(\si_0\), then \(\si_0\) is abundant. In particular, if \(n = 2k+1\) is odd and \(2 \le p \le 2k\) is even, then any \(\si_0\in\ww{p}\lt(\bb{R}^n\rt)^*\) is abundant.
\end{Prop}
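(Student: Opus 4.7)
The plan is to prove abundance by showing that $\mc{N}_{\si_0}(\ta)$ is stable under the reflection $\nu \mt -\nu$: for any $\nu \in \mc{N}_{\si_0}(\ta)$, the midpoint $\tfrac{1}{2}\nu + \tfrac{1}{2}(-\nu) = 0$ then automatically lies in $\Conv~\mc{N}_{\si_0}(\ta)$, which is exactly what abundance demands.

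First I would leverage the hypothesis to show that the open orbit $\ww[\si_0]{p}\lt(\bb{R}^n\rt)^* = \GL_+(n;\bb{R}) \cdot \si_0$ coincides with the full $\GL(n;\bb{R})$-orbit of $\si_0$. Indeed, if $F \in \GL(n;\bb{R})$ is orientation-reversing with $F^*\si_0 = \si_0$, then for any $G \in \GL(n;\bb{R})$ one has $G^*\si_0 = (GF)^*\si_0$, and precisely one of $G, GF$ lies in $\GL_+(n;\bb{R})$. Consequently $\ww[\si_0]{p}\lt(\bb{R}^n\rt)^*$ is preserved by $R^*$ for every $R \in \GL(n;\bb{R})$, orientation-reversing maps included.

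Next, with the splitting $\bb{R}^n = \bb{R} \ds \bb{R}^{n-1}$ and $\th$ the canonical annihilator of $\bb{R}^{n-1}$, I would introduce $R \in \GL(n;\bb{R})$ defined by $R|_{\bb{R}^{n-1}} = \Id$ and $R|_{\bb{R}} = -\Id$, so that $R^*\th = -\th$ while $R^*$ fixes every element of $\ww{*}\lt(\bb{R}^{n-1}\rt)^*$. For any $\nu \in \mc{N}_{\si_0}(\ta)$ the form $\th \w \nu + \ta$ lies in $\ww[\si_0]{p}\lt(\bb{R}^n\rt)^*$, and by the previous step so does its image $R^*(\th \w \nu + \ta) = -\th \w \nu + \ta = \th \w (-\nu) + \ta$, giving $-\nu \in \mc{N}_{\si_0}(\ta)$ and hence the required symmetry.

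Finally, for the ``in particular'' clause I would simply take $F = -\Id_{\bb{R}^n}$: when $n = 2k+1$ we have $\det(-\Id) = -1$ so $F$ is orientation-reversing, and when $p$ is even $(-\Id)^*\si_0 = (-1)^p\si_0 = \si_0$, so the hypothesis is verified and the first part applies. The proof is essentially formal and I foresee no serious obstacle; the only delicate point is the initial reduction that upgrades the abstract existence of one orientation-reversing stabilising element into invariance of the whole orbit under the specific reflection $R$ constructed from the chosen splitting.
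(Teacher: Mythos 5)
Your proof is correct and follows essentially the same route as the paper: both arguments hinge on the reflection $R$ (the paper's $\mc{I}$) that negates the $\bb{R}$-factor and hence sends $\th\w\nu+\ta$ to $\th\w(-\nu)+\ta$, with the existence of an orientation-reversing stabiliser of $\si_0$ invoked to ensure this reflection keeps one inside the set of $\si_0$-forms. The only cosmetic difference is that you package the justification as ``the $\GL_+$-orbit equals the full $\GL$-orbit,'' whereas the paper conjugates the given orientation-reversing automorphism onto $\th\w\nu+\ta$ and then post-composes with $\mc{I}$ to get an orientation-preserving map; these are the same argument.
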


\begin{proof}
Fix \(\ta\in\ww{p}\lt(\bb{R}^{n-1}\rt)^*\) and suppose \(\mc{N}_{\si_0}(\ta) \ne \es\).  Choose some \(\nu\in\mc{N}_{\si_0}(\ta)\).  I claim that \(-\nu\) also lies in \(\mc{N}_{\si_0}(\ta)\).  Indeed, since \(\th \w \nu + \ta \in \ww{p}\lt(\bb{R} \ds \bb{R}^{n-1}\rt)\) is a \(\si_0\)-form, by assumption there exists an orientation-reversing map \(F \in \GL(\bb{R} \ds \bb{R}^{n-1})\) preserving \(\th \w \nu + \ta\).  Now consider the map:
\ew
\bcd[row sep = 0]
\mc{I}:\bb{R}\ds \bb{R}^{n-1} \ar[r] & \bb{R}\ds \bb{R}^{n-1}\\
v\ds w \ar[r, maps to] & -v \ds w.
\ecd
\eew
Since \(\mc{I}\) is also orientation-reversing, the composite \(F \circ \mc{I}\) is orientation-preserving and thus \((F \circ \mc{I})^*(\th \w \nu + \ta)\) is a \(\si_0\)-form.  However:
\ew
(F \circ \mc{I})^*(\th \w \nu + \ta) &= \mc{I}^*(F)^*(\th \w \nu + \ta)\\
&= \mc{I}^*(\th \w \nu + \ta)\\
&= -\th \w \nu + \ta
\eew
and thus \(-\nu \in \mc{N}_{\si_0}(\ta)\) as claimed. The proof is completed by noting that if \(n=2k+1\) and \(2 \le p \le 2k\) is even, then \(-\Id\) is an orientation-reversing automorphism preserving \(\si_0\).

\end{proof}

The significance of the notions of faithfulness, connectedness and abundance lies in the following result:

\begin{Thm}\label{FCA->A}
Let \(\si_0 \in \ww{p}\lt(\bb{R}^n\rt)^*\) be stable, faithful, connected and abundant.  Then, \(\si_0\) is ample; in particular, \(\si_0\)-forms satisfy the \(h\)-principle.
\end{Thm}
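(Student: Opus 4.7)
The plan is to verify ampleness directly: for every \(\ta \in \ww{p}\lt(\bb{R}^{n-1}\rt)^*\) with \(\mc{N} := \mc{N}_{\si_0}(\ta) \ne \es\), I would show that \(\mc{N}\) is path-connected and that \(\Conv(\mc{N}) = \ww{p-1}\lt(\bb{R}^{n-1}\rt)^*\); the conclusion then follows from \tref{hPThm-1}.

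The convexity part is short.  First I would observe that the block matrix \(\bpm a & 0 \\ 0 & \Id_{n-1} \epm \in \GL_+(n;\bb{R})\) (for \(a > 0\)) pulls \(\th \w \nu + \ta\) back to \(\th \w (a\nu) + \ta\), so that \(\mc{N}\) is a cone.  Stability of \(\si_0\) ensures \(\mc{N}\) is open, and abundance gives \(0 \in \Conv(\mc{N})\).  Thus \(\Conv(\mc{N})\) is an open (the convex hull of an open set in a finite-dimensional vector space is open) convex cone containing \(0\), and any such cone must equal the ambient space: a small neighbourhood \(B_\ep(0) \pc \Conv(\mc{N})\) exists, so for any \(v\) and small \(t > 0\) one has \(tv \in B_\ep(0)\), whence \(v = t^{-1}(tv) \in \Conv(\mc{N})\) by the cone property.

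For path-connectedness, given \(\nu, \nu' \in \mc{N}\), I would pick \(\al, \al' \in \GL_+(n;\bb{R})\) with \(\al^*\si_0 = \th \w \nu + \ta\) and \(\al'^*\si_0 = \th \w \nu' + \ta\); with \(\io: \bb{R}^{n-1} \emb \bb{R}^n\) the standard inclusion, both compositions \(\al \circ \io, \al' \circ \io\) then pull \(\si_0\) back to \(\ta\).  By faithfulness, these embeddings lie in a common orbit of \(\mc{S}(\si_0) \x \GL_+(n-1;\bb{R})\), say \(g \circ \al \circ \io = \al' \circ \io \circ h\) with \(g \in \mc{S}(\si_0)\) and \(h \in \GL_+(n-1;\bb{R})\); pulling back \(\si_0\) forces \(h \in \Stab_{\GL_+(n-1;\bb{R})}(\ta)\).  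By connectedness of \(\si_0\) this stabiliser is path-connected, and by the final assertion of \tref{stab-clas} so is \(\mc{S}(\si_0)\).  I would choose paths \(g_t, h_t\) from the identity to \(g, h\) in these stabilisers, extend \(h_t\) block-diagonally to \(\tld{h}_t = \bpm 1 & 0 \\ 0 & h_t \epm\), and set \(\be_t = g_t \circ \al \circ \tld{h}_t^{-1}\); then \((\be_t \circ \io)^*\si_0 = (h_t^{-1})^*\ta = \ta\), so \(\be_t^*\si_0 = \th \w \nu_t + \ta\) defines a path in \(\mc{N}\) from \(\nu\) to some \(\nu_1\) with \(\be_1 \circ \io = \al' \circ \io\).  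The discrepancy \(\be_1^{-1}\al'\) fixes \(\bb{R}^{n-1}\) pointwise, hence has the form \(\bpm a' & 0 \\ b' & \Id \epm\) with \(a' > 0\), and a straight-line path within this triangular subgroup (which preserves the pullback to \(\bb{R}^{n-1}\)) completes the journey from \(\nu_1\) to \(\nu'\).

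The hard part will be assembling this single coherent path: it requires correctly combining faithfulness, connectedness of \(\Stab_{\GL_+(n-1;\bb{R})}(\ta)\) (from connectedness of \(\si_0\)) and connectedness of \(\mc{S}(\si_0)\) (from \tref{stab-clas}) so that the resulting path in \(\GL_+(n;\bb{R})\) has pullback to \(\bb{R}^{n-1}\) equal to \(\ta\) at every time.  Once this is achieved, the cone/open/contains-zero step is elementary.
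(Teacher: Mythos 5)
Your proposal is correct and follows essentially the same strategy as the paper: \(\mc{N}_{\si_0}(\ta)\) is open (stability), a cone (\lref{Scale-Inv}), path-connected (your construction parallels \lref{PConn}), with \(0\) in its convex hull (abundance), and the cone argument of \lref{Am-Lem} finishes. The one small variation is that your path-connectedness step additionally invokes connectedness of \(\mc{S}(\si_0)\) via the final assertion of \tref{stab-clas}, whereas the paper's \lref{PConn} absorbs the \(\mc{S}(\si_0)\)-factor into a single coordinate change (replacing \(F\) with \(G\circ F\)) and thereby needs only path-connectedness of \(\Stab_{\GL_+(n-1;\bb{R})}(\ta)\).
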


\begin{Rk}
The converse need not hold: see \sref{ossymp-hP}.
\end{Rk}

The proof proceeds by a series of lemmas.
\begin{Lem}\label{Am-Lem}
Let \(\bb{A}\) be a (real) finite-dimensional vector space and let \(A\cc\bb{A}\) be a path-connected, open subset such that:
\begin{itemize}
\item \(0\in\Conv(A)\);
\item \(A\) is scale-invariant, i.e.\ for all \(\la\in(0,\infty)\), \(\la\cdot A = A\).
\end{itemize}
Then, \(\Conv(A) = \bb{A}\), i.e.\ \(A\) is ample.
\end{Lem}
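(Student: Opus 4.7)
The plan is to argue by contradiction using Hahn--Banach separation. Set $C := \Conv(A)$, suppose $C \neq \bb{A}$, and aim to deduce $0 \notin C$, contradicting the second hypothesis. First I would promote $C$ to a convex cone: for any $x = \sum t_i a_i \in C$ and $\lambda > 0$, one has $\lambda x = \sum t_i (\lambda a_i) \in C$ by scale-invariance of $A$. Combined with $0 \in C$, this makes $C$ a convex cone with apex at the origin, and $C$ has nonempty interior because it contains the nonempty open set $A$. Since $C$ is proper by assumption, its closure $\overline{C}$ is a proper closed convex cone in the finite-dimensional space $\bb{A}$, and Hahn--Banach separation produces a nonzero $\ell \in \bb{A}^*$ with $\ell \leq 0$ on $\overline{C}$ (the cone property forces the separating hyperplane to pass through the origin), hence in particular $\ell|_A \leq 0$.

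Next I would use openness of $A$ to upgrade this to a strict inequality: if some $a \in A$ had $\ell(a) = 0$, then every neighborhood of $a$ would meet $\{\ell > 0\}$, contradicting $A \subseteq \{\ell \leq 0\}$. Hence $\ell(a) < 0$ for every $a \in A$, so any convex combination $\sum t_i a_i$ of elements of $A$ (with $t_i \geq 0$, $\sum t_i = 1$) satisfies $\ell\bigl(\sum t_i a_i\bigr) = \sum t_i \ell(a_i) < 0$, since at least one $t_i$ is strictly positive. Consequently $0 \notin \Conv(A)$, the desired contradiction.

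The proof is short and I do not anticipate a serious obstacle. The subtlety worth flagging is precisely the upgrade from $\ell \leq 0$ to $\ell < 0$ on $A$: without openness one would only obtain $\Conv(A) \subseteq \{\ell \leq 0\}$, which is consistent with $0 \in \Conv(A)$. I also observe that the path-connectedness hypothesis is not used in the argument above; presumably it appears in the statement because subsequent applications will extract a single path-component of $\mc{N}_{\si_0}(\ta)$ and invoke the lemma on that component.
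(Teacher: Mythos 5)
Your proof is correct, but it takes a genuinely different route from the paper's. The paper argues directly: the convex hull of an open set is open, and it inherits scale-invariance from $A$; since $\Conv(A)$ is open and contains $0$ it contains a small ball about $0$, and scale-invariance then forces $\Conv(A) = \bb{A}$. You instead argue by contradiction via Hahn--Banach separation, producing a nonzero $\ell$ with $\ell \le 0$ on $\ol{\Conv(A)}$, upgrading this to $\ell < 0$ on $A$ via openness, and concluding $0 \notin \Conv(A)$. Both arguments use openness essentially, and — as you observe — neither uses path-connectedness, which indeed only enters the downstream ampleness definition. The paper's approach is shorter and avoids any separation machinery; yours is heavier but more robust (it would survive in settings where one lacks the clean fact that convex hulls of open sets are open, e.g.\ infinite dimensions) and it isolates exactly how openness rules out the degenerate case $\ell \equiv 0$ on part of $A$. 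One small point you elide: to pass from $C \ne \bb{A}$ to $\ol{C} \ne \bb{A}$, you should note that $C$ has nonempty interior and that for convex sets with nonempty interior $\text{int}(\ol{C}) = \text{int}(C)$, so $\ol{C} = \bb{A}$ would force $C = \bb{A}$. This is routine but worth a sentence.
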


\begin{proof}
Since \(A\) is open and scale-invariant, so too is \(\Conv(A)\). However, by assumption \(\Conv(A)\) contains \(0\), and thus by openness it contains a small open ball about \(0\) in \(\bb{A}\). The scale-invariance of \(\Conv{A}\) then implies that \(\Conv(A) = \bb{A}\).

\end{proof}

\begin{Lem}\label{Scale-Inv}
Let \(\si_0 \in \ww{p}\lt(\bb{R}^n\rt)\), \(\ta \in \ww{p}\lt(\bb{R}^{n-1}\rt)\) and suppose \(\mc{N}_{\si_0}(\ta) \ne \es\).  Then, \(\mc{N}_{\si_0}(\ta)\) is scale-invariant.
\end{Lem}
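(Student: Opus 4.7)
\textbf{Proof plan for Lemma \ref{Scale-Inv}.} The plan is to exhibit, for each $\la > 0$, an explicit orientation-preserving linear automorphism of $\bb{R}\ds\bb{R}^{n-1}$ whose pullback sends $\th\w\nu + \ta$ to $\th\w(\la\nu) + \ta$. Since being a $\si_0$-form is by definition invariant under the action of $\GL_+(n;\bb{R})$, this will immediately give $\la\nu \in \mc{N}_{\si_0}(\ta)$ whenever $\nu \in \mc{N}_{\si_0}(\ta)$.

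First, fix $\la > 0$ and define
\ew
F_\la: \bb{R}\ds\bb{R}^{n-1} \to \bb{R}\ds\bb{R}^{n-1}, \qquad v \ds w \mt \la v \ds w.
\eew
Clearly $\det F_\la = \la > 0$, so $F_\la \in \GL_+(n;\bb{R})$. Next I would compute the pullback of each of the three natural objects on $\bb{R}\ds\bb{R}^{n-1}$: the annihilator $\th$ of $\bb{R}^{n-1}$, and the forms $\nu$ and $\ta$ viewed as pullbacks from $\bb{R}^{n-1}$ via the projection $\pi:\bb{R}\ds\bb{R}^{n-1} \to \bb{R}^{n-1}$. By direct evaluation $F_\la^*\th = \la\th$, while $\pi \circ F_\la = \pi$ implies $F_\la^*\nu = \nu$ and $F_\la^*\ta = \ta$.

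Combining these three identities gives
\ew
F_\la^*(\th\w\nu + \ta) \;=\; \la\th\w\nu + \ta \;=\; \th\w(\la\nu) + \ta.
\eew
Since the left-hand side is the $\GL_+(n;\bb{R})$-pullback of a $\si_0$-form by an orientation-preserving map, it is itself a $\si_0$-form; hence so is the right-hand side, proving $\la\nu \in \mc{N}_{\si_0}(\ta)$. As $\la > 0$ was arbitrary, this shows $\la\cdot\mc{N}_{\si_0}(\ta) \cc \mc{N}_{\si_0}(\ta)$, and applying the argument to $\la^{-1}$ yields the reverse inclusion.

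There is essentially no obstacle here: the only subtlety is making sure the scaling is performed on the $\bb{R}$-factor (not the $\bb{R}^{n-1}$-factor), so that $\ta$ is preserved while $\th$ acquires the desired scalar. Scaling the $\bb{R}^{n-1}$-factor by $\mu$ would instead multiply $\ta$ by $\mu^p$, forcing $\mu = 1$ and recovering the map already chosen. This is also the reason only positive scalars appear, which matches exactly the hypothesis of scale-invariance required for \lref{Am-Lem}.
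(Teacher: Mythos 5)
Your proof is correct and takes the same approach as the paper: both construct the orientation-preserving map $F_\la : v\ds w \mt \la v \ds w$ and observe that its pullback sends $\th\w\nu+\ta$ to $\th\w(\la\nu)+\ta$, preserving the property of being a $\si_0$-form. Your added remark on why the scaling must act on the $\bb{R}$-factor rather than on $\bb{R}^{n-1}$, and the explicit check of $\la^{-1}$ to get equality of sets, are fine expository supplements but do not change the argument.
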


\begin{proof}
Suppose \(\nu\in\mc{N}_{\si_0}(\ta)\), i.e.\ \(\th \w \nu + \ta\in\ww{p}\lt(\bb{R}\ds\bb{R}^{n-1}\rt)^*\) is a \(\si_0\)-form.  Consider the orientation-preserving isomorphism:
\ew
\bcd[row sep = 0]
F:\bb{R}\ds\bb{R}^{n-1} \ar[r] & \hs{1.75mm}\bb{R}\hs{1.75mm}\ds\bb{R}^{n-1}\\
\hs{7.5mm}v\ds w \ar[r, maps to] & \la v \ds w.
\ecd
\eew
Then, \(F^*\si = \th\w(\la\nu) + \ta\) is a \(\si_0\)-form, as required.

\end{proof}

\begin{Lem}\label{PConn}
Let \(\si_0 \in \ww{p}\lt(\bb{R}^n\rt)^*\) and suppose that \(\mc{O} \in \lqt{\oGr_{n-1}(\bb{R}^n)}{\mc{S}(\si_0)}\) satisfies \(\mc{T}_{\si_0}^{-1}(\{\mc{T}_{\si_0}(\mc{O})\}) = \{\mc{O}\}\).  Suppose, moreover, that the stabiliser in \(\GL_+(n-1;\bb{R})\) of some (equivalently every) \(\ta \in \mc{T}_{\si_0}(\mc{O})\) is path-connected. Then, for all \(\ta \in \mc{T}_{\si_0}(\mc{O})\), the space \(\mc{N}_{\si_0}(\ta) \pc \ww{p-1}\lt(\bb{R}^{n-1}\rt)^*\) is path-connected.  In particular, if \(\si_0\) is faithful and connected, then for every \(\ta \in \ww{p}\lt(\bb{R}^{n-1}\rt)^*\), either \(\mc{N}_{\si_0}(\ta) = \es\) or \(\mc{N}_{\si_0}(\ta)\) is path-connected.
\end{Lem}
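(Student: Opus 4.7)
The plan is to exhibit $\mc{N}_{\si_0}(\ta)$ as the continuous image of a suitably chosen path-connected subgroup of $\GL_+(n;\bb{R})$. Choose a representative $\bb{B}_0 \in \mc{O}$ and an orientation-preserving identification $\bb{R}^n = \bb{R} \ds \bb{R}^{n-1}$ sending $\bb{B}_0$ to $0 \ds \bb{R}^{n-1}$; let $\io_0: \bb{R}^{n-1} \emb \bb{R}^n$ be the resulting inclusion, and (after composing with an element of $\GL_+(n-1;\bb{R})$ if necessary) arrange that $\io_0^*\si_0 = \ta$. Define
\[
\mc{H} := \lt\{ \bpm a & B \\ 0 & C \epm ~\m|~ a \in \bb{R}_{>0},~ B \in \lt(\bb{R}^{n-1}\rt)^*,~ C \in \Stab_{\GL_+(n-1;\bb{R})}(\ta) \rt\} \pc \GL_+(n;\bb{R}),
\]
which is well-defined because $\det C > 0$ ensures $\det H = a\det C > 0$.

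Next I would construct a continuous surjection $\Psi: \mc{H} \to \mc{N}_{\si_0}(\ta)$, and here is where the hypothesis $\mc{T}_{\si_0}^{-1}(\{\mc{T}_{\si_0}(\mc{O})\}) = \{\mc{O}\}$ plays the decisive role. For each $H \in \mc{H}$ the block structure gives $\io_0^* H^*\si_0 = C^*\ta = \ta$, so $H^*\si_0 = \th \w \nu(H) + \ta$ for a unique $\nu(H) \in \ww{p-1}\lt(\bb{R}^{n-1}\rt)^*$, which necessarily lies in $\mc{N}_{\si_0}(\ta)$; set $\Psi(H) := \nu(H)$. For surjectivity, given $\nu \in \mc{N}_{\si_0}(\ta)$ pick $F \in \GL_+(n;\bb{R})$ with $F^*\si_0 = \th \w \nu + \ta$; then $(F \circ \io_0)^*\si_0 = \ta$, so the oriented image $\bb{B}_\nu := F(\bb{B}_0)$ satisfies $\cal{T}_{\si_0}(F \circ \io_0) = \ta \in \mc{T}_{\si_0}(\mc{O})$, and the hypothesis forces $\bb{B}_\nu \in \mc{O}$. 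Choose $G \in \mc{S}(\si_0)$ with $G(\bb{B}_\nu) = \bb{B}_0$ as oriented planes and set $H := G \circ F$; then $H^*\si_0 = F^*\si_0 = \th \w \nu + \ta$ (since $G \in \mc{S}(\si_0)$) and $H$ preserves $\bb{B}_0$ with $H|_{\bb{B}_0} \in \Stab_{\GL_+(n-1;\bb{R})}(\ta)$, so $H \in \mc{H}$ and $\Psi(H) = \nu$. This surjectivity is the main (though still modest) obstacle, as it is the only step requiring the faithfulness-type hypothesis.

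To conclude, I would note that $\mc{H}$ is path-connected: the projection $\mc{H} \to \Stab_{\GL_+(n-1;\bb{R})}(\ta)$ onto the lower-right block is continuous with fibres homeomorphic to $\bb{R}_{>0} \x \lt(\bb{R}^{n-1}\rt)^*$, hence contractible, and the base is path-connected by the second hypothesis; a direct interpolation argument (linear in $B$, exponential-linear in $a$, concatenated with a path of $C$'s in the base) yields path-connectedness of $\mc{H}$, which $\Psi$ transports to $\mc{N}_{\si_0}(\ta)$. For the final assertion, any $\ta$ with $\mc{N}_{\si_0}(\ta) \ne \es$ is automatically of the form $(F \circ \io_0)^*\si_0$ for some $F \in \GL_+(n;\bb{R})$ and hence lies in $\mc{T}_{\si_0}(\mc{O})$ for some $\mc{O}$; faithfulness of $\si_0$ then supplies the orbit hypothesis and connectedness supplies the stabiliser hypothesis, reducing the in-particular statement to the main case.
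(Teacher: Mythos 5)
Your proof is, up to repackaging, the paper's: your group \(\mc{H}\) is the group \(\cal{K}\) of the published argument, the key step (surjectivity of \(\Psi\), equivalently the fact that any \(F\) with \(F^*\si_0 = \th \w \nu + \ta\) can be corrected by an element of \(\mc{S}(\si_0)\) so as to preserve \(\bb{B}_0\)) invokes the hypothesis \(\mc{T}_{\si_0}^{-1}(\{\mc{T}_{\si_0}(\mc{O})\}) = \{\mc{O}\}\) in exactly the same way, and path-connectedness of \(\mc{H}\) is supplied by the connectedness hypothesis in both. Presenting \(\mc{N}_{\si_0}(\ta)\) as a continuous surjective image of \(\mc{H}\), rather than joining two arbitrary elements by a path as the paper does, is merely a cosmetic repackaging of the same idea.

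The one genuine flaw is the block form of \(\mc{H}\). Having identified \(\bb{B}_0\) with \(0 \ds \bb{R}^{n-1}\), the subgroup of \(\GL_+(n;\bb{R})\) preserving \(\bb{B}_0\) and restricting on it to \(\Stab(\ta)\) is block \emph{lower}-triangular, with off-diagonal block a column \(B \in \bb{R}^{n-1}\), not a row in \(\lt(\bb{R}^{n-1}\rt)^*\). As written, your \(H\) sends \(\io_0(v)\) to \((Bv, Cv)\), which is not in \(\bb{B}_0\) when \(B \ne 0\); writing \(\si_0 = \th \w \nu_0 + \ta\) relative to the splitting, one computes \(\io_0^* H^*\si_0 = B \w C^*\nu_0 + \ta\), which differs from \(\ta\) in general, so \(\Psi\) is not well defined on the group you wrote. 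Transposing the block structure restores the identity \(\io_0^*H^*\si_0 = \ta\), and then the rest of your argument (including the surjectivity step, where \(G \circ F\) lands in the corrected group) goes through as stated.
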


\begin{proof}
Let \(\mc{O}\) be as in the statement of the lemma, let \(\ta \in \mc{T}_{\si_0}(\mc{O})\) and let \(\nu_1,\nu_2\in\mc{N}_{\si_0}(\ta)\). Then, by definition, the two \(p\)-forms:
\ew
\si_i = \th\w\nu_i + \ta \in \ww{p}\lt(\bb{R}\ds\bb{R}^{n-1}\rt)^*, ~ i=1,2
\eew
are both \(\si_0\)-forms on \(\bb{R}\ds\bb{R}^{n-1}\). Thus, there is \(F\in\GL_+\lt(\bb{R}\ds\bb{R}^{n-1}\rt)\) such that \(F^*\si_2 = \si_1\).

\begin{Cl}
The oriented hyperplanes \(\bb{R}^{n-1}\) and \(F\lt(\bb{R}^{n-1}\rt)\) in \(\bb{R}\ds\bb{R}^{n-1}\) lie in the same orbit of the stabiliser of \(\si_2\).
\end{Cl}

\begin{proof}[Proof of Claim]
Since \(\si_2\) is a \(\si_0\)-form, there is an oriented isomorphism \(\cal{I}:\bb{R}\ds\bb{R}^{n-1}\to\bb{R}^n\) such that \(\cal{I}^*\si_0 = \si_2\).  Hence, it is equivalent to prove that the oriented hyperplanes \(\cal{I}\lt(\bb{R}^{n-1}\rt)\) and \(\cal{I}\circ F\lt(\bb{R}^{n-1}\rt)\) in \(\bb{R}^n\) lie in the same \(\mc{S}(\si_0)\)-orbit.  Now, consider the commutative diagram:
\ew
\bcd
Emb\lt(\bb{R}^{n-1},\bb{R}^n\rt) \ar[r, "\cal{T}_{\si_0}"] \ar[d, "quot"] & \ww{p}\lt(\bb{R}^{n-1}\rt)^* \ar[d, "quot"]\\
\lqt{\oGr_{n-1}\lt(\bb{R}^n\rt)}{\mc{S}(\si_0)} \ar[r, "\mc{T}_{\si_0}"] & \rqt{\ww{p}\lt(\bb{R}^{n-1}\rt)^*}{\GL_+(n-1,\bb{R})}
\ecd
\eew
Since \(\mc{T}_{\si_0}^{-1}(\{\mc{T}_{\si_0}(\mc{O})\}) = \{\mc{O}\}\), it suffices to prove that both \(\cal{T}_{\si_0}\lt(\cal{I}|_{\bb{R}^{n-1}}\rt)\) and \(\cal{T}_{\si_0}\lt((\cal{I}\circ F)|_{\bb{R}^{n-1}}\rt)\) lie in the orbit \(\mc{T}_{\si_0}(\mc{O}) \in \rqt{\ww{p}\lt(\bb{R}^{n-1}\rt)^*}{\GL_+(n-1,\bb{R})}\), which is true since:
\ew
\lt(\cal{I}|_{\bb{R}^{n-1}}\rt)^*\si_0 = \si_2|_{\bb{R}^{n-1}} = \ta \in \mc{T}_{\si_0}(\mc{O})
\eew
and
\ew
\lt((\cal{I}\circ F)|_{\bb{R}^{n-1}}\rt)^*\si_0 = (F^*\si_2)|_{\bb{R}^{n-1}} = \si_1|_{\bb{R}^{n-1}} = \ta \in \mc{T}_{\si_0}(\mc{O}),
\eew
completing the proof of the claim.

\let\qed\relax
\end{proof}

Thus, choose \(G\in\GL_+\lt(\bb{R}\ds\bb{R}^{n-1}\rt)\) stabilising \(\si_2\) such that \(G\circ F\lt(\bb{R}^{n-1}\rt) = \bb{R}^{n-1}\) (and \(G \circ F\) identifies the orientations). By replacing \(F\) with \(G \circ F\), one may assume \wlg\ that \(F\) fixes the space \(\bb{R}^{n-1}\). Then:
\ew
\ta = \lt.\si_1\rt|_{\bb{R}^{n-1}} = \lt.\lt(F^*\si_2\rt)\rt|_{\bb{R}^{n-1}} = \lt(F|_{\bb{R}^{n-1}}\rt)^*\lt.\si_2\rt|_{\bb{R}^{n-1}} = \lt(F|_{\bb{R}^{n-1}}\rt)^*\ta.
\eew
Thus, \(F\) lies in the space:
\ew
\cal{K} = \lt\{F'\in\GL_+\lt(\bb{R} \ds \bb{R}^{n-1}\rt)~\m|~F'\lt(\bb{R}^{n-1}\rt) = \bb{R}^{n-1} \text{ and } F'|_{\bb{R}^{n-1}} \in \Stab(\ta) \cc \GL_+(n-1,\bb{R})\rt\}.
\eew
Since \(\Stab(\ta)\pc\GL_+(n-1,\bb{R})\) is path-connected, so too is \(\cal{K}\pc\GL_+\lt(\bb{R} \ds \bb{R}^{n-1}\rt)\), so one can choose a smooth 1-parameter family \((F_t)_{t\in[0,1]}\in\cal{K}\) such that \(F_0 = \Id\), \(F_1 = F\).  Then, for each \(t\):
\ew
F_t^*\si_2 = \th \w \nu(t) + \ta
\eew
for some \(\nu(t) \in \mc{N}_{\si_0}(\ta)\) (note that \(F_t^*\si_2\) is evidently a \(\si_0\)-form for each \(t\)) such that \(\nu(1) = \nu_1\) and \(\nu(0) = \nu_2\).  Thus, \(\mc{N}_{\si_0}(\ta)\) is path-connected.

\end{proof}

I now prove \tref{FCA->A}:

\begin{proof}[Proof of \tref{FCA->A}]
Let \(\si_0 \in \ww{p}\lt(\bb{R}^n\rt)^*\) be stable, faithful, connected and abundant, let \(\ta \in \ww{p}\lt(\bb{R}^{n-1}\rt)^*\) and suppose \(\mc{N}_{\si_0}(\ta) \ne \es\).  Since \(\si_0\) is stable, \(\mc{N}_{\si_0}(\ta) \cc \ww{p-1}\lt(\bb{R}^{n-1}\rt)^*\) is open.  Moreover \(\mc{N}_{\si_0}(\ta)\) is scale invariant by \lref{Scale-Inv}, path-connected by \lref{PConn} and \(0 \in \Conv\lt(\mc{N}_{\si_0}(\ta)\rt)\) since \(\ta\) is abundant.  Hence \(\mc{N}_{\si_0}(\ta)\) is ample by \lref{Am-Lem}.

\end{proof}

\section{Initial applications: \g\ 4-forms, \slc\ 3-forms and pseudoplectic forms}\label{1st-Appl}

Before proving \tref{4hP} in \srefs{ossymp-hP} and \ref{SG2hP}, this section illustrates the results of \srefs{ample-forms} and \ref{FCA} in some simple examples, by providing new, unified proofs of the three previously established relative \(h\)-principles, {\it viz.}\ the relative \(h\)-principles for \g\ 4-forms \cite{NIoG2S}, \slc\ 3-forms \cite{RoG2MwB} and pseudoplectic forms \cite{AoCItS&CG}.\\

\subsection{\(\mb{\Gg_2}\) 4-Forms}

\begin{Thm}[{\cite[Thm.\ 5.3]{NIoG2S}}\footnote{\cite[Thm.\ 5.3]{NIoG2S} actually only states the non-relative version of the \(h\)-principle for \g\ 4-forms (corresponding to \(A = \es\), in the notation of the introduction), however the proof presented {\it op.\ cit.}\ generalises readily to the case \(A \ne \es\).}]
\g\ 4-forms satisfy the relative \(h\)-principle.
\end{Thm}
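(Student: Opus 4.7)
My plan is to apply Theorem \ref{FCA->A} directly to the stable 4-form $\vps_0 \in \ww{4}\lt(\bb{R}^7\rt)^*$. Once I verify that $\vps_0$ is stable, faithful, connected, and abundant, Theorem \ref{FCA->A} will immediately yield that $\vps_0$ is ample, and hence that $\vps_0$-forms satisfy the relative $h$-principle.

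The stability of $\vps_0$ is built into its definition: $\vps_0$ lies in the open orbit $\ww[+]{4}\lt(\bb{R}^7\rt)^*$. For faithfulness and connectedness, I would invoke \pref{tr->conn}. The stabiliser $\mc{S}(\vps_0) \cong \Gg_2$ preserves the positive-definite inner product $g_{\vph_0}$ on $\bb{R}^7$ (since $\vps_0$ is constructed as the Hodge dual of $\vph_0$ using $g_{\vph_0}$ and $vol_{\vph_0}$, both of which are preserved by $\Gg_2 = \mc{S}(\vph_0)$). Moreover, the compact exceptional group $\Gg_2$ acts transitively on the unit sphere $S^6 \pc \bb{R}^7$ — this is a classical fact which is equivalent to the identification $S^6 \cong \Gg_2/\SU(3)$. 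The hypotheses of \pref{tr->conn} are therefore satisfied, and $\vps_0$ is both faithful and connected.

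For abundance, no further algebraic calculation is required: since $n = 7$ is odd and the form degree $p = 4$ is even with $2 \le 4 \le 6$, \pref{Abt-Lem} applies verbatim and yields that $\vps_0$ is abundant. Combining the four properties via \tref{FCA->A} then completes the argument.

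The main obstacle is essentially vacuous in this case: the whole argument is reduced, by the general machinery of \srefs{ample-forms} and \ref{FCA}, to the transitivity of $\Gg_2$ on $S^6$ and the parity of $(n,p) = (7,4)$. Unlike the four new $h$-principles asserted by \tref{4hP}, where verifying ampleness (or the hypotheses of \tref{FCA->A}) requires genuine work, here both the sphere-transitivity and the parity obstruction are classical, so the proof is little more than a bookkeeping exercise in the framework of this paper.
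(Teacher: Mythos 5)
Your proof is correct and follows exactly the same route as the paper: the paper also invokes Proposition \ref{tr->conn} for faithfulness and connectedness (via transitivity of \(\Gg_2\) on \(S^6\) and preservation of a positive-definite inner product), Lemma \ref{Abt-Lem} for abundance (via the parity of \((n,p)=(7,4)\)), and then Theorem \ref{FCA->A} to conclude. No differences of substance.
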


\begin{proof}[Proof using \tref{FCA->A}]
Let \(\si_0 = \vps_0\in\ww{4}(\bb{R}^7)^*\) be the standard \g\ 4-form (see \eref{vps0}) and write \(g_0\) for the corresponding metric.  Since \(\mc{S}\lt(\vps_0\rt) = \Gg_2\) preserves a positive definite inner product and acts transitively on \(S^6\), by \pref{tr->conn} \(\vps_0\) is faithful and connected.  Moreover, since \(\vps_0\) has even degree on an odd-dimensional space, \(\vps_0\) is abundant by \pref{Abt-Lem}.  Thus, the result follows by \tref{FCA->A}.

\end{proof}

\subsection{\(\mb{\SL(3;\bb{C})}\) 3-forms}

\begin{Thm}[{\cite[\S4]{RoG2MwB}}]
\slc\ 3-forms satisfy the relative \(h\)-principle.
\end{Thm}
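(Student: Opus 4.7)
The plan is to apply \tref{FCA->A} to the standard \slc\ 3-form \(\rh_-\in\ww[-]{3}\lt(\bb{R}^6\rt)^*\) of \eref{rh+}, mirroring the proof of the \g\ 4-form case above.  By \pref{SL-Prop}, \(\rh_-\) is stable with stabiliser \(\mc{S}(\rh_-) \cong \SL(3;\bb{C})\), so it remains to verify the three hypotheses of \tref{FCA->A}: that \(\rh_-\) is faithful, connected and abundant.

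For faithfulness and connectedness, my first step is to locate a subgroup of \(\mc{S}(\rh_-)\) satisfying the criterion of \pref{tr->conn}.  The natural candidate is \(\SU(3) \pc \SL(3;\bb{C})\): under the identification \(\bb{R}^6 \cong \bb{C}^3\) induced by the complex structure \(J_{\rh_-}\) of \pref{SL-Prop}, \(\SU(3)\) preserves the standard Euclidean inner product on \(\bb{R}^6\) (the real part of the standard Hermitian metric on \(\bb{C}^3\)) and acts transitively on \(S^5 \pc \bb{C}^3\).  Thus \pref{tr->conn} immediately delivers both faithfulness and connectedness.

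Abundance is the only step requiring a slightly finer observation: since \(\rh_-\) has odd degree \(3\) on the even-dimensional space \(\bb{R}^6\), the parity argument underlying \pref{Abt-Lem} does not apply directly.  Instead, I would exhibit an explicit orientation-reversing automorphism of \(\bb{R}^6\) preserving \(\rh_-\), which by \pref{Abt-Lem} is enough.  The natural choice is suggested by the formula \(\rh_- = \fr{Re}\bigl((\th^1 + i\th^2)\w(\th^3 + i\th^4)\w(\th^5 + i\th^6)\bigr)\) from \eref{rh+}: componentwise complex conjugation \(F:\bb{C}^3\to\bb{C}^3\) sends the holomorphic top-form above to its conjugate and hence preserves its real part \(\rh_-\), while having real determinant \((-1)^3 = -1\) and so being orientation-reversing.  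Thus \pref{Abt-Lem} yields abundance, and \tref{FCA->A} completes the proof.

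I expect the abundance verification to be the only point of real interest, as the other two properties follow by direct appeal to \pref{tr->conn}; the required symmetry is not supplied automatically by the dimension/degree parity as in the \g\ case, but rather by the essentially tautological fact that \(\rh_-\) is the real part of a holomorphic top-form, and so is invariant under complex conjugation.
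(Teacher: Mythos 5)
Your proposal is correct and follows essentially the same route as the paper: verify faithfulness and connectedness via the transitive $\SU(3)\subset\SL(3;\bb{C})$ action on $S^5$ using \pref{tr->conn}, and obtain abundance from \lref{Abt-Lem} by producing an orientation-reversing automorphism fixing $\rh_-$. The only (cosmetic) difference is that you exhibit complex conjugation explicitly, whereas the paper derives the existence of such an automorphism from the $\GL(6;\bb{R})$-invariance of $\ww[-]{3}\lt(\bb{R}^6\rt)^*$ stated in \pref{SL-Prop}.
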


\begin{proof}[Proof using \tref{FCA->A}]
Let \(\si_0 = \rh_- \in \ww{3}\lt(\bb{R}^6\rt)^*\) be the standard \slc\ 3-form (see \eref{rh+}).  The stabiliser \(\SL(3;\bb{C})\) of \(\rh_-\) contains the subgroup \(\SU(3)\) which preserves a positive definite inner product and acts transitively on \(S^5\); thus \(\rh_-\) is faithful and connected by \pref{tr->conn}.  Moreover, by \pref{SL-Prop}, \(\rh_-\) is preserved by an orientation-reversing automorphism of \(\bb{R}^6\) and thus \(\rh_-\) is abundant by \lref{Abt-Lem}.  Thus, the result follows by \tref{FCA->A}.

\end{proof}

\subsection{Pseudoplectic forms}

\begin{Thm}[{\cite[Thm.\ 2.5]{AoCItS&CG}}\footnote{\cite[Thm.\ 2.5]{AoCItS&CG} actually only states the relative \(h\)-principle for pseudoplectic forms in the case \(q = 0\) (in the notation of the introduction), however the proof presented {\it op.\ cit.}\ generalises readily to the case \(q \ge 1\).}]\label{P-repr}
Pseudoplectic forms satisfy the relative \(h\)-principle.
\end{Thm}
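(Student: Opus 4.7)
The plan is to deduce \tref{P-repr} from \tref{FCA->A} by verifying that the standard pseudoplectic form \(\mu_0(k)\in\ww{2}\lt(\bb{R}^{2k+1}\rt)^*\) is faithful, connected, and abundant.  Abundance is immediate from \pref{Abt-Lem}, since \(\mu_0(k)\) has even degree \(p=2\) on the odd-dimensional space \(\bb{R}^{2k+1}\).  The substantive content lies in faithfulness and connectedness, both of which rest on the explicit description of \(\mc{S}(\mu_0) = \Stab_{\GL_+(2k+1;\bb{R})}(\mu_0(k))\) from \pref{P}.

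For faithfulness, I would classify the orbits of \(\mc{S}(\mu_0)\) on oriented hyperplanes in \(\bb{R}^{2k+1}\).  The radical line \(\ell_{\mu_0} = \Ker\io_{\mu_0} = \<e_1\?\) is preserved by \(\mc{S}(\mu_0)\), so each hyperplane \(\bb{B}\pc\bb{R}^{2k+1}\) is either (i) transversal to \(\ell_{\mu_0}\), in which case \(\mu_0|_\bb{B}\) is non-degenerate of rank \(2k\) (hence symplectic), or (ii) contains \(\ell_{\mu_0}\), in which case \(\mu_0|_\bb{B}\) has rank \(2k-2\).  Restrictions of types (i) and (ii) lie in distinct \(\GL_+(2k;\bb{R})\)-orbits since their ranks differ, so one may analyse each case separately.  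In case (i), the shear subgroup \(\lt(\bb{R}^{2k}\rt)^* \pc \mc{S}(\mu_0)\) reduces any transversal hyperplane to the standard \(\<e_2,\ldots,e_{2k+1}\?\), and the isotropy of this hyperplane acts on it through the factor \(\bb{R}_{>0}\x\Sp(2k;\bb{R})\pc\GL_+(2k;\bb{R})\); hence the two orientation classes form separate \(\mc{S}(\mu_0)\)-orbits, which correspond respectively to the emproplectic and pisoplectic \(\GL_+(2k;\bb{R})\)-orbits distinguished by \pref{symp}.  In case (ii), \(\Sp(2k;\bb{R})\) acts transitively on non-zero vectors of \(\bb{R}^{2k+1}/\ell_{\mu_0}\), hence on its hyperplanes, and moreover the isotropy of such a hyperplane contains orientation-reversing elements (e.g.\ \(-\Id\) on a symplectic 2-plane meeting the hyperplane in a line), yielding a single orbit of oriented containing hyperplanes.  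These three orbits map injectively under \(\mc{T}_{\mu_0}\).

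For connectedness, the \(\GL_+(2k;\bb{R})\)-stabiliser of a symplectic 2-form is \(\Sp(2k;\bb{R})\), which is connected, handling case (i).  For the degenerate restriction arising in case (ii), \lref{stabiliser-comp-lem}(1) applied with \(\al = \om_+(k-1)\in\ww{2}\lt(\bb{R}^{2k-2}\rt)^*\) extended by a 2-dimensional kernel yields a stabiliser of block form \(\bpm A & B \\ 0 & C \epm\) with \(A\in\GL_+(2;\bb{R})\), \(C\in\Sp(2k-2;\bb{R})\) and \(B\) unconstrained; the second component in the lemma's formula is empty because \(\Sp(2k-2;\bb{R})\cc\SL(2k-2;\bb{R})\) admits no orientation-reversing stabilisers of \(\om_+(k-1)\).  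This stabiliser is manifestly connected.

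With faithfulness, connectedness, and abundance all established, \tref{FCA->A} yields the relative \(h\)-principle for pseudoplectic forms.  The technical crux is the orientation bookkeeping in the hyperplane orbit classification of case (ii), which must be carried out using the semi-direct product structure of \(\mc{S}(\mu_0)\) from \pref{P} together with the transitivity properties of \(\Sp(2k;\bb{R})\) on \(\bb{R}^{2k}\osr\{0\}\).
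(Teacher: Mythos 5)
Your proposal is correct and follows essentially the same route as the paper: verify faithfulness, connectedness, and abundance of $\mu_0(k)$ and invoke \tref{FCA->A}. The only differences are cosmetic — the paper counts orbits by a squeeze argument (at least three in the image, at most three in the domain) and uses $\SU(k)$-transitivity on oriented hyperplanes for the degenerate case, whereas you classify the three orbits directly and exhibit an explicit orientation-reversing element; and your phrase ``the isotropy acts on it through the factor $\bb{R}_{>0}\x\Sp(2k;\bb{R})$'' is a harmless imprecision (the $\bb{R}_{>0}$ acts on $\ell_{\mu_0}$, and the induced action on the transversal hyperplane is through $\Sp(2k;\bb{R})$ alone), but the key point that this is orientation-preserving still holds.
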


\begin{proof}[Proof using \tref{FCA->A}]
Let \(\mu_0(k) = \th^{23} + \th^{45} + ... \th^{2k,2k+1}\) be the standard pseudoplectic form on \(\bb{R}^{2k+1}\) (see \eref{mu0}) and recall the 1-dimensional subspace \(\ell_{\mu_0(k)} = \<e_1\?\) defined in \rref{OP-Hyp}.  Given an oriented hyperplane \(\bb{B}\pc\bb{R}^{2k+1}\), on dimensional grounds, either \(\dim(\bb{B}\cap\ell_{\mu_0(k)}) = 1\), in which case \(\ell_{\mu_0(k)}\pc\bb{B}\) and \(\mu_0(k)|_\bb{B}\) is a degenerate bilinear form, or \(\dim(\bb{B} \cap \ell_{\mu_0(k)}) = 0\) and \(\bb{B}\) is transverse to \(\ell_{\mu_0(k)}\), in which case \(\mu_0(k)|_\bb{B}\) is either emproplectic or pisoplectic on \(\bb{B}\).  Hence, the image of the map:
\ew
\bcd
\mc{T}_{\mu_0(k)}: \lqt{\oGr_{2k}\lt(\bb{R}^{2k+1}\rt)}{\mc{S}(\mu_0(k))} \ar[r] & \rqt{\ww{2}\lt(\bb{R}^{2k}\rt)^*}{\GL_+(2k,\bb{R})}
\ecd
\eew
contains at least three distinct orbits and whence the action of \(\mc{S}(\mu_0(k))\) on \(\oGr_{2k}\lt(\bb{R}^{2k+1}\rt)\) has at least three orbits, also.

Therefore, to prove that \(\mu_0(k)\) is faithful, it suffices to prove that the action of \(\mc{S}(\mu_0(k))\) on \(\oGr_{2k}\lt(\bb{R}^{2k+1}\rt)\) has exactly three orbits.  Recall from \pref{P} that, \wrt\ the splitting \(\bb{R}^{2k+1} = \ell_{\mu_0(k)} \ds \<e_2,...,e_{2k+1}\?\), the stabiliser \(\mc{S}(\mu_0(k))\) consists precisely of those \((2k+1)\x(2k+1)\)-matrices of the form:
\ew
\bpm
\la & G_{2k \x 1}\\
0_{1 \x 2k} & F_{2k\x2k}
\epm
\eew
where \(F\in\Sp(2k;\bb{R})\) and \(\la>0\).  Next, note that oriented hyperplanes in \(\bb{R}^{2k+1}\) containing \(\ell_{\mu_0(k)}\) are in 1-1 correspondence with oriented hyperplanes in \(\<e_2,...,e_{2k+1}\?\).  Since \(\Sp(2k;\bb{R})\) contains the subgroup \(\SU(k)\), \(\Sp(2k;\bb{R})\) acts transitively on oriented hyperplanes in \(\<e_2,...,e_{2k+1}\?\) (cf.\ \pref{tr->conn}) and thus \(\mc{S}(\mu_0(k))\) acts transitively on the set of oriented hyperplanes in \(\bb{R}^{2k+1}\) containing \(\ell_{\mu_0(k)}\).  Similarly, the (unoriented) hyperplanes in \(\bb{R}^{2k+1}\) transverse to \(\ell_{\mu_0(k)}\) are in bijective correspondence with linear maps \(\<e_2,...,e_{2k+1}\?\to\ell_{\mu_0(k)}\) and hence form a single orbit for the action of \(\mc{S}(\mu_0(k))\). Thus, the action of \(\mc{S}(\mu_0(k))\) on oriented hyperplanes in \(\bb{R}^{2k+1}\) transverse to \(\ell_{\mu_0(k)}\) has at most 2 orbits.  It follows that the action of \(\mc{S}(\mu_0(k))\) on \(\oGr_{2k}\lt(\bb{R}^{2k+1}\rt)\) has exactly three orbits and \(\mu_0(k)\) is faithful.  To see that \(\mu_0(k)\) is also connected, firstly note that the stabiliser of both emproplectic and pisoplectic forms on \(\bb{R}^{2k}\) is connected, being isomorphic to \(\Sp(2k;\bb{R})\).  For the remaining case, suppose \(\ell_{\mu_0(k)} \pc \bb{B}\); then the restriction \(\mu_0(k)|_\bb{B}\) may be written in some basis \((f_1,...,f_{2k})\) as:
\ew
\mu_0(k)|_\bb{B} = f^{12} + ... + f^{2k-3,2k-2}.
\eew
This is an emproplectic form on \(\<f_1,...,f_{2k-2}\?\).  Splitting \(\bb{B} \cong \<f_1,...,f_{2k-2}\? \ds \<f_{2k-1},f_{2k}\?\) and applying \lref{stabiliser-comp-lem}, it follows that the stabiliser of \(\mu_0(k)|_\bb{B}\) in \(\GL_+(\bb{B})\) is connected. Thus, \(\mu_0(k)\) is connected.  Finally, since pseudoplectic forms constitute a single \(\GL(2k+1;\bb{R})\)-orbit, it follows from \lref{Abt-Lem} that pseudoplectic forms are abundant.  Thus, the result follows by \tref{FCA->A}.

\end{proof}

\begin{Rk}
Crowley--Nordstr\"{o}m and Donaldson used a technique known as `Hirsch's microextension trick' (after its use by Hirsch in \cite{IoM}) to prove their \(h\)-principles for \g\ 4-forms and \slc\ 3-forms, respectively.  E.g.\ For \g\ 4-forms, the argument may be sketched as follows:  given any 8-manifold \(\N\), define a subset \(\cal{S}(\N)\pc\ww{4}\T^*\N\) by declaring \(\al\in\ww{4}\T^*_p\N\) to lie in \(\cal{S}(\N)\) \iff\ the restriction of \(\al\) to every hyperplane in \(\T_p\N\) is a \g\ 4-form for some choice (equivalently, both choices) of orientation of the hyperplane. Then, \(\cal{S}(\N)\) is an open, \(\Diff_0(\N)\)-invariant subbundle of \(\ww{4}\T^*\N\).  Given an oriented 7-manifold \(\M\), it can be shown that every \g\ 4-form \(\ps\) on \(\M\) extends to a 4-form \(\Ps\) on the open (i.e.\ non-closed) manifold \((-\ep,\ep)\x\M\) (for some \(\ep>0\) sufficiently small) such that \(\Ps \in \cal{S}\big((-\ep,\ep)\x\M\big)\). The \(h\)-principle for \g\ 4-forms then follows from Gromov's \(h\)-principle for open, diffeomorphism-invariant relations on open manifolds; cf.\ \cite[Thm.\ 10.2.1]{ItthP}.  However, this method, whilst simple, is limited in scope, since for a general stable \(p\)-form \(\si_0\) on \(\bb{R}^n\), there are no \(p\)-forms \(\si\) on \(\bb{R}^{n+1}\) such that the restriction \(\si|_\bb{A}\) is a \(\si_0\)-form for every hyperplane \(\bb{A}\pc\bb{R}^{n+1}\) for some choice of orientation on \(\bb{A}\).  As a simple example of this phenomenon, suppose that \(\Stab_{\GL_+(n;\bb{R})}(\si_0) = \Stab_{\GL(n;\bb{R})}(\si_0)\), i.e.\ \(\si_0\) has no orientation-reversing automorphisms.  If there were some \(\si\in\ww{p}\lt(\bb{R}^{n+1}\rt)^*\) such that for all \(\bb{A}\in\Gr_n\lt(\bb{R}^{n+1}\rt)\) the restriction \(\si|_\bb{A}\) was a \(\si_0\)-form for some choice of orientation on \(\bb{A}\), then this choice of orientation would be unique (since the stabilisers of \(\si_0\) in \(\GL_+(n;\bb{R})\) and \(\GL(n;\bb{R})\) coincide) and would thus define a section of the `forgetful' degree 2 covering map \(\oGr_n\lt(\bb{R}^{n+1}\rt) \to \Gr_n\lt(\bb{R}^{n+1}\rt)\), yielding a contradiction, as claimed.

By contrast, the techniques introduced in this paper can be used to prove \(h\)-principles for stable forms \(\si_0\) satisfying \(\Stab_{\GL_+(n;\bb{R})}(\si_0) = \Stab_{\GL(n;\bb{R})}(\si_0)\).  Indeed, this property is satisfied by both \sg\ 3-forms and osemproplectic forms in dimension \(2k\), where \(k\) is odd, both of which are shown to satisfy the \(h\)-principle in this paper.\\
\end{Rk}

\section{Osemproplectic and ospseudoplectic forms}\label{ossymp-hP}

The aim of this section is to prove the relative \(h\)-principles for osemproplectic and ospseudoplectic forms.  The proofs proceed via a series of lemmas:

\begin{Lem}\label{Symp-N}
Let \(k \ge 2\) and let \(\om_+(k) \in \ww[+]{2}\lt(\bb{R}^{2k}\rt)^*\) be the standard emproplectic form on \(\bb{R}^{2k}\) defined in \eref{om/pm}.  Identify \(\bb{R}^{2k} \cong \bb{R} \ds \bb{R}^{2k-1}\) in the usual way and fix \(\ta \in \ww{2}\lt(\bb{R}^{2k-1}\rt)^*\).  Then, \(\mc{N}_{\om_+(k)}(\ta) \ne \es\) \iff\ \(\ta\) is pseudoplectic.  Moreover, in this case:
\ew
\mc{N}_{\om_+(k)}(\ta) = \lt\{ \nu \in \ww{1}\lt(\bb{R}^{2k-1}\rt)^* ~\m|~ \nu|_{\ell_{\ta}} > 0\rt\}
\eew
(see \rref{OP-Hyp} for the definition of \(\ell_\ta\)).
\end{Lem}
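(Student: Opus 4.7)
The plan is to reduce the defining condition of $\mc{N}_{\om_+(k)}(\ta)$ to a direct algebraic identity, exploiting the fact that $\ta$ is a 2-form on an odd-dimensional space so that $\ta^k$ vanishes for dimensional reasons. A 2-form on $\bb{R}^{2k}$ lies in the $\GL_+(2k;\bb{R})$-orbit of $\om_+(k)$ precisely when it is emproplectic, i.e., when its $k$-th wedge power is a positive multiple of the standard volume form on $\bb{R}^{2k} = \bb{R}\ds\bb{R}^{2k-1}$ (\pref{symp}), so this is the condition I would analyse.

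Writing $\om = \th \w \nu + \ta$, where $\th$ is the standard annihilator of $\bb{R}^{2k-1}$, and using $(\th \w \nu)^2 = 0$, the binomial expansion collapses to
\ew
\om^k = k \1 \th \w \nu \w \ta^{k-1} + \ta^k.
\eew
The term $\ta^k$ vanishes since $\ta^k \in \ww{2k}\lt(\bb{R}^{2k-1}\rt)^* = 0$. Adopting the natural convention that a top form $\eta$ on $\bb{R}^{2k-1}$ is positive \iff\ $\th \w \eta$ is positive on $\bb{R}^{2k}$, the condition $\om^k > 0$ is thus equivalent to $\nu \w \ta^{k-1} > 0$ in $\ww{2k-1}\lt(\bb{R}^{2k-1}\rt)^*$. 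Existence of such a $\nu$ forces $\ta^{k-1} \ne 0$, which by \pref{P} (applied with $k$ replaced by $k-1$) is exactly the statement that $\ta$ is pseudoplectic on $\bb{R}^{2k-1}$; conversely, if $\ta^{k-1} \ne 0$ then the inequality $\nu \w \ta^{k-1} > 0$ is clearly solvable for $\nu$.

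To obtain the explicit description I would then invoke \rref{OP-Hyp}, applied with $k$ replaced by $k-1$: the orientation on $\ell_\ta$ is defined so that a 1-form $\nu$ satisfies $\nu|_{\ell_\ta} > 0$ \iff\ $\nu \w \ta^{k-1} > 0$, yielding the claimed identity $\mc{N}_{\om_+(k)}(\ta) = \lt\{\nu \in \lt(\bb{R}^{2k-1}\rt)^* ~\m|~ \nu|_{\ell_\ta} > 0\rt\}$ at once. No step here presents a substantial obstacle; the entire argument is essentially a one-line binomial expansion, and the only point that requires care is tracking the orientation conventions relating $\bb{R}^{2k}$, $\bb{R}^{2k-1}$, and the oriented line $\ell_\ta$.
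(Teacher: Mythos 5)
Your proof is correct and takes essentially the same approach as the paper: expand $\om^k$ using $(\th\w\nu)^2 = 0$ and $\ta^k = 0$ to reduce to $k\,\th\w\nu\w\ta^{k-1}$, then read off pseudoplecticity of $\ta$ and the orientation condition $\nu|_{\ell_\ta}>0$ from \rref{OP-Hyp}. (The paper omits the harmless factor of $k$ in the displayed wedge power; you have it right.)
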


\begin{proof}
Write \(\th\) for the standard annihilator of \(\bb{R}^{2k-1}\) in \(\bb{R}^{2k}\)  and define \(\om = \th \w \nu + \ta\).  Then:
\ew
\om^k = \th \w \nu \w \ta^{k-1}.
\eew
Thus, \(\om\) is emproplectic \iff\ \(\ta^{k-1} \ne 0\) (i.e.\ \(\ta\) is pseudoplectic) and \(\nu|_{\ell_\ta} > 0\) (by definition of the choice of orientation on \(\ell_\ta\)).

\end{proof}

\begin{Cor}\label{Cosymp-N}
Let \(k \ge 3\), let \(\vpi_+(k) \in \ww[+]{2k-2}\lt(\bb{R}^{2k}\rt)^*\) be the standard osemproplectic form on \(\bb{R}^{2k}\) defined in \eref{vpi+}, identify \(\bb{R}^{2k} \cong \bb{R} \ds \bb{R}^{2k-1}\) and fix \(\ta \in \ww{2k-2}\lt(\bb{R}^{2k-1}\rt)^*\).  Then:
\ew
\mc{N}_{\vpi_+(k)}(\ta) = \lt\{ \nu \in \ww{2k-3}\lt(\bb{R}^{2k-1}\rt)^* ~\m|~ \nu \text{ is ospseudoplectic and } \ta|_{\Pi_\nu} > 0\rt\}.
\eew
(See \rref{OP-Hyp} for the definition of the hyperplane \(\Pi_\nu\).)  In particular, if \(\ta = 0\) then \(\mc{N}_{\vpi_+(k)}(\ta) = \es\).
\end{Cor}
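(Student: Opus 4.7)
The key is to exploit \pref{dual-symp}'s bijection between emproplectic \(2\)-forms \(\om\) on \(\bb{R}^{2k}\) and osemproplectic \((2k-2)\)-forms \(\om^{k-1}/(k-1)!\).  Thus \(\th \w \nu + \ta\) is osemproplectic \iff\ it equals \(\om^{k-1}/(k-1)!\) for some emproplectic \(\om\) on \(\bb{R}^{2k} = \bb{R} \ds \bb{R}^{2k-1}\).  Decompose \(\om = \th \w \mu_1 + \mu\) with \(\mu_1 \in \lt(\bb{R}^{2k-1}\rt)^*\) and \(\mu \in \ww{2}\lt(\bb{R}^{2k-1}\rt)^*\); by \lref{Symp-N}, \(\om\) is emproplectic \iff\ \(\mu\) is pseudoplectic and \(\mu_1|_{\ell_\mu} > 0\).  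Expanding binomially and using \((\th \w \mu_1)^2 = 0\) gives
\caw
\om^{k-1}/(k-1)! = \mu^{k-1}/(k-1)! + \th \w \mu_1 \w \mu^{k-2}/(k-2)!;
\caaw
equating with \(\th \w \nu + \ta\) yields \(\ta = \mu^{k-1}/(k-1)!\) and \(\nu = \mu_1 \w \mu^{k-2}/(k-2)!\).  Hence \(\mc{N}_{\vpi_+(k)}(\ta)\) is precisely the image of the map \((\mu, \mu_1) \mt \mu_1 \w \mu^{k-2}/(k-2)!\) over pairs with \(\mu\) pseudoplectic, \(\mu^{k-1}/(k-1)! = \ta\), and \(\mu_1|_{\ell_\mu} > 0\).

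For the forward inclusion, since \(\mu\) is pseudoplectic it annihilates \(\ell_\mu\) and hence pulls back from any complement \(\bb{B}\) to \(\ell_\mu\); \pref{dual-symp} applied on \(\bb{B}\) (dimension \(2k-2\)) shows that \(\vpi := \mu^{k-2}/(k-2)! \in \ww{2k-4}\bb{B}^*\) is osemproplectic on \(\bb{B}\), so \(\ep_\vpi\) is an isomorphism.  Writing \(\nu = \mu_1 \w \vpi\), splitting \(\lt(\bb{R}^{2k-1}\rt)^* = \<\mu_1\? \ds \Ann(\ell_\mu)\) (valid since \(\mu_1|_{\ell_\mu} \ne 0\)), and using the injectivity of \(\ep_\vpi\), a short direct computation yields \(\Ker \ep_\nu = \<\mu_1\?\); thus \(\nu\) is ospseudoplectic with \(\Pi_\nu = \Ker \mu_1\) transverse to \(\ell_\mu\).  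Passing to a basis adapted to \(\bb{R}^{2k-1} = \ell_\mu \ds \Pi_\nu\) then verifies \(\ta|_{\Pi_\nu} > 0\) with respect to the \rref{OP-Hyp} orientation on \(\Pi_\nu\).  The ``in particular'' statement follows immediately, since if \(\ta = 0\) no ospseudoplectic \(\nu\) can satisfy \(\ta|_{\Pi_\nu} > 0\).

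For the converse, given ospseudoplectic \(\nu\) with \(\ta|_{\Pi_\nu} > 0\), pick a complement \(\ell\) to \(\Pi_\nu\), let \(\tau\) be the compatibly oriented generator of \(\Ker \ep_\nu\) from \rref{OP-Hyp}, and write \(\nu = \tau \w \vpi\) with \(\vpi \in \ww{2k-4}\Pi_\nu^*\) osemproplectic.  \pref{dual-symp} applied on \(\Pi_\nu\) produces a unique emproplectic \(\om_\Pi\) with \(\vpi = \om_\Pi^{k-2}/(k-2)!\).  Decompose \(\ta = \ta_0 + \tau \w \ta_1\) with \(\ta_0 = \ta|_{\Pi_\nu} > 0\) and \(\ta_1 \in \ww{2k-3}\Pi_\nu^*\); then the isomorphism \(\ep_\vpi\) inverts to produce a unique \(\al \in \Pi_\nu^*\) with \(\al \w \vpi = \ta_1\), and a positive rescaling of \(\tau\) (with a compensating rescaling of \(\om_\Pi\) keeping \(\vpi\) fixed) arranges \(\om_\Pi^{k-1}/(k-1)! = \ta_0\).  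Setting \(\mu := \tau \w \al + \om_\Pi\) and \(\mu_1 := \tau\) then yields, after a direct verification of the two equations from paragraph one, the required pair.  The main obstacle, and the most technically delicate part, is the orientation bookkeeping throughout---in particular, checking that the \rref{OP-Hyp} orientations on \(\Pi_\nu\) and \(\Ker \ep_\nu\) combine, via the Remark's compatibility convention, to give the reconstructed null-line \(\ell_\mu\) of \(\mu\) an orientation with respect to which \(\mu_1|_{\ell_\mu} > 0\).
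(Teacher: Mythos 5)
Your proof takes a genuinely different route from the paper's. The paper's argument is essentially a single step: apply the \(\GL_+\)-equivariant duality of \lref{no-orb-lem} by contracting against a volume form \(\upsilon = e\wedge\sigma\), so that \(\theta\wedge\nu+\tau\) becomes \(\nu\hk\sigma + e\wedge(\tau\hk\sigma)\); this turns the osemproplectic condition in degree \(2k-2\) into the emproplectic condition in degree \(2\) on the dual space, \lref{Symp-N} applies directly, and the identification \(\ell_{\nu\hk\sigma}=\Ann(\Pi_\nu)\), compatibly with orientations, is inherited from equivariance. You instead parametrize osemproplectic \((2k-2)\)-forms by their emproplectic ``square roots'' via \pref{dual-symp}, split \(\om=\theta\wedge\mu_1+\mu\), and expand \(\om^{k-1}\) binomially. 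The algebraic skeleton of your argument is sound --- the binomial expansion, the identification \(\Ker\ep_\nu=\langle\mu_1\rangle\) in the forward direction, and the inversion of \(\ep_\varpi\) in the converse are all correct --- and the approach would lead to a valid proof. However, the orientation matching that you flag as ``the most technically delicate part'' and then decline to carry out is a genuine gap, not a routine afterthought. Concretely, in your converse construction \(\mu=\theta'\wedge\al+\om_\Pi\) (note you reuse the symbol \(\tau\) for what I am calling \(\theta'\), which collides with the fixed form \(\tau\)), the null line \(\ell_\mu\) is \emph{not} the complement \(\ell\) you chose to \(\Pi_\nu\) whenever \(\al\ne0\): one finds \(\ell_\mu=\langle e+w\rangle\) with \(w\in\Pi_\nu\) determined by \(w\hk\om_\Pi=-\al\). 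So confirming \(\mu_1|_{\ell_\mu}>0\) in the sense of \lref{Symp-N} requires explicitly relating the \rref{OP-Hyp} orientation conventions on \(\Ker\ep_\nu\) and \(\Pi_\nu\) to the orientation of \(\ell_\mu\) induced by \(\mu\), and this has to be done by hand. The price of foregoing \lref{no-orb-lem} is exactly this bookkeeping, which the paper's proof avoids entirely because contraction against a volume form is \(\GL_+\)-equivariant and therefore transports all orientation data automatically.
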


\begin{proof}
The proof uses the duality described in \lref{no-orb-lem}.  Write \(e = 1 \ds 0 \in \bb{R} \ds \bb{R}^{2k-1}\), choose \(\si > 0 \in \ww{2k-1}\bb{R}^{2k-1}\) and set \(\up = e \w \si > 0 \in \ww{2k}\bb{R}^{2k}\).  Then, as in \lref{no-orb-lem}, \(\th \w \nu + \ta\) is osemproplectic \iff:
\ew
(\th \w \nu + \ta) \hk \up = \nu \hk \si + e \w (\ta \hk \si)
\eew
is emproplectic on \(\lt(\bb{R}^{2k}\rt)^*\), which by \lref{Symp-N} is equivalent to \(\nu \hk \si\) being pseudoplectic on \(\lt(\bb{R}^{2k-1}\rt)^*\) and \((\ta \hk \si)|_{\ell_{\nu \hk \si}} > 0\).  However, using the duality as in \lref{no-orb-lem} again, \(\nu \hk \si\) is pseudoplectic on \(\lt(\bb{R}^{2k-1}\rt)^*\) \iff\ \(\nu\) is ospseudoplectic on \(\bb{R}^{2k-1}\).  Moreover, one may verify that:
\ew
\ell_{\nu \hk \si} = \Ann(\Pi_\nu)
\eew
compatibly with orientations, and thus \((\ta \hk \si)|_{\ell_{\nu \hk \si}} > 0\) \iff\ \(\ta|_{\Pi_\nu} > 0\).

\end{proof}

\begin{Lem}\label{ossymp-set-ampl}
Let \(k \ge 2\).  Then:
\ew
\Conv \lt(\ww[+]{2k-2}\lt(\bb{R}^{2k}\rt)^*\rt) = \ww{2k-2}\lt(\bb{R}^{2k}\rt)^*.
\eew
(Note that when \(k = 2\), \(\ww[+]{2k-2}\lt(\bb{R}^{2k}\rt)^*\) is simply the orbit of emproplectic 2-forms.)
\end{Lem}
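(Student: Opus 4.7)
The plan is to apply \lref{Am-Lem} to \(A := \ww[+]{2k-2}\lt(\bb{R}^{2k}\rt)^*\). First, \(A\) is an open \(\GL_+(2k;\bb{R})\)-orbit (by \pref{dual-symp} for \(k \ge 3\); by \pref{symp} for \(k=2\), in which case ``osemproplectic'' just means emproplectic), hence both open and path-connected. Second, \(A\) is invariant under positive rescaling: if \(\vpi = \om^{k-1}/(k-1)!\) with \(\om\) emproplectic, then for \(\la > 0\) one has \(\la\vpi = \big(\la^{1/(k-1)}\om\big)^{k-1}/(k-1)!\), and \((\la^{1/(k-1)}\om)^k = \la^{k/(k-1)}\om^k > 0\), so \(\la^{1/(k-1)}\om\) is again emproplectic. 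Hence it suffices to verify \(0 \in \Conv(A)\).

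For the latter, I would exhibit an explicit convex combination of osemproplectic forms summing to zero.  For each sign vector \(\ep = (\ep_1,\ldots,\ep_k) \in \{\pm 1\}^k\) with \(\prod_{i}\ep_i = +1\), set \(\om_\ep = \sum_{i=1}^k \ep_i\, \th^{2i-1,2i}\).  Since \(\om_\ep^k = k!\,\big(\prod_i \ep_i\big)\,\th^{12\ldots 2k} > 0\), each \(\om_\ep\) is emproplectic, and a direct expansion (using \(\prod_{j\ne i}\ep_j = \ep_i\) for such \(\ep\), as \(\ep_i^2=1\)) gives
\ew
\vpi_\ep := \frac{\om_\ep^{k-1}}{(k-1)!} = \sum_{i=1}^k \ep_i\, \th^{12\ldots\h{2i-1,2i}\ldots 2k-1,2k}.
\eew
Summing over the \(2^{k-1}\) admissible sign vectors, the coefficient of each basis form \(\th^{12\ldots\h{2i-1,2i}\ldots 2k-1,2k}\) is \(\sum_{\ep:\prod\ep=1}\ep_i\), which vanishes because, for each fixed \(i\), exactly \(2^{k-2}\) of the admissible \(\ep\) have \(\ep_i = +1\) and the remaining \(2^{k-2}\) have \(\ep_i = -1\).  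Hence \(2^{-(k-1)}\sum_\ep \vpi_\ep = 0\) exhibits \(0\) as a convex combination of osemproplectic forms, and \lref{Am-Lem} then yields the result.

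The only place where some care is needed is the parity of \(k\).  For \(k\) even, \(-\vpi_+(k)\) is itself osemproplectic (see the proof of \pref{dual-symp}), so the trivial identity \(\tfrac{1}{2}\vpi_+(k) + \tfrac{1}{2}(-\vpi_+(k)) = 0\) already places \(0\) in \(\Conv(A)\); for \(k\) odd, however, \(-\vpi_+(k)\) is ospisoplectic rather than osemproplectic, so a more elaborate family of osemproplectic forms is required.  The sign-flip family \(\{\om_\ep\}\) above resolves both parities uniformly in a single combinatorial step, and I anticipate no real obstacle beyond this short verification.
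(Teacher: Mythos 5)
Your proof is correct, and it takes a genuinely different route from the paper's, though both begin identically by invoking \lref{Am-Lem} and reducing to the claim \(0\in\Conv\lt(\ww[+]{2k-2}\lt(\bb{R}^{2k}\rt)^*\rt)\). The paper exhibits \(0\) as a combination of the single form \(\vpi_+(k)\) together with the \(k(k-1)\) pullbacks \(F_r(p,q)^*\vpi_+(k)\), where \(F_r(p,q)\) is a diagonal matrix with entries \(-r\) and \(-r^{-1}\) on the \(2p\)-th and \(2q\)-th slots and \(r\ge 1\) is chosen so that \(r+r^{-1}=k\); the vanishing of the resulting combination is a small direct computation. You instead consider the \(2^{k-1}\) sign-changed forms \(\om_\ep=\sum_i\ep_i\th^{2i-1,2i}\) with \(\prod_i\ep_i=1\) (equivalently, pullbacks of \(\om_+(k)\) by the even sign-change subgroup \(\cong(\bb{Z}/2)^{k-1}\) acting diagonally on the \(\th^{2i-1}\)), verify the clean identity \(\vpi_\ep=\sum_j\ep_j\,\th^{12\ldots\h{2j-1,2j}\ldots 2k-1,2k}\), and observe that averaging over the sign orbit kills every coefficient. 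The details all check out: \(\om_\ep\) is emproplectic precisely when \(\prod_i\ep_i=1\), and for each fixed \(j\) exactly half the admissible \(\ep\) have \(\ep_j=+1\), so \(\sum_\ep\ep_j=0\). What your approach buys is an averaging-over-a-group-orbit argument that is conceptually transparent, uniform in \(k\), and free of the paper's somewhat ad hoc continuous parameter \(r\) (which requires \(k\ge 2\) to have a real solution of \(r+r^{-1}=k\)); what the paper's approach buys is a shorter list of forms for small \(k\) when \(k(k-1)+1<2^{k-1}\) fails (i.e.\ for small \(k\)) and a template that recurs elsewhere in its arguments. Either version proves the lemma.
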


\begin{proof}
Since \(\ww[+]{2k-2}\lt(\bb{R}^{2k}\rt)^* \pc \ww{2k-2}\lt(\bb{R}^{2k}\rt)^*\) is open, path-connected and scale-invariant, by \lref{Am-Lem}, it suffices to prove that:
\ew
0 \in \Conv \lt(\ww[+]{2k-2}\lt(\bb{R}^{2k}\rt)^*\rt) = \ww{2k-2}\lt(\bb{R}^{2k}\rt)^*.
\eew
Write \(\lt(\th^1,...,\th^{2k}\rt)\) for the canonical basis of \(\lt(\bb{R}^{2k}\rt)^*\) and recall:
\ew
\vpi_+(k) = \sum_{i = 1}^k \th^{12...\h{2i-1,2i}...2k-1,2k}.
\eew
(Recall also that formally \(\vpi_+(2) = \om_+(2)\).)  Choose \(r \ge 1\) such that \(r + \frac{1}{r} = k \ge 2\).  For each ordered pair of distinct \(p,q \in \{1,...,k\}\), let \(F_r(p,q)\) denote the orientation-preserving automorphism of \(\bb{R}^{2k}\) given by:
\ew
F_r(p,q) =&\
\lt(
\begin{array}{ccccccc}
1 &             &     &            &                   &             &   \\
   & \ddots &    &             &                   &             &   \\
   &            & -r &             &                   &             &   \\
   &            &     & \ddots &                   &             &   \\
   &            &     &             & -\frac{1}{r} &            &   \\
   &            &     &             &                   & \ddots &   \\
   &            &     &             &                   & \ddots & 1\\
\end{array}
\rt)
\begin{array}{c}
\\
\\
\leftarrow 2p^\text{th} \text{ row}\\
\\
\leftarrow 2q^\text{th} \text{ row}\\
\\
\\
\end{array}\\
& \hs{-1mm}
\begin{array}{ccccccc}
 & & & \uparrow                    & \uparrow                    & &\\
 & & & 2p^\text{th} \text{ col.} & 2q^\text{th} \text{ col.} & &\\
\end{array}
\eew
Then, for all \(p\), \(q\) and \(r\):
\ew
F_r(p,q)^*\vpi_+(k) \in \ww[+]{2k-2}\lt(\bb{R}^{2k}\rt)^*
\eew
and thus:
\ew
2(k-1) \vpi_+(k) + \sum_{p \ne q \in \{1,...,k\}} F_r(p,q)^*\vpi_+(k) \in \Conv \lt(\ww[+]{2k-2}\lt(\bb{R}^{2k}\rt)^*\rt).
\eew
(N.B. the coefficients in the linear-combination on the left-hand side of the above expression are all positive and thus, even though they do not sum to 1, the above expression is valid since \(\ww[+]{2k-2}\lt(\bb{R}^{2k}\rt)^*\) is scale-invariant.)  A direct calculation then yields:
\ew
2(k-1) \vpi_+(k) + \sum_{p \ne q \in \{1,...,k\}} F_r(p,q)^*\vpi_+(k) = (k-1)\lt(k - r - \frac{1}{r}\rt)\vpi_+(k) = 0,
\eew
as required.

\end{proof}

\begin{Thm}\label{os}
Ossymplectic forms satisfy the relative \(h\)-principle.
\end{Thm}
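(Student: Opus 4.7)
The strategy is to apply \tref{hPThm-1}: it suffices to show that $\vpi_+(k)$ is ample, and the ospisoplectic case then follows from the orientation-reversal remark at the end of \sref{OS-P-OP}. Crucially, when $k$ is odd the form $\vpi_+(k)$ is not abundant (by \pref{dual-symp}, $\vpi_+(k)$ and $-\vpi_+(k)$ lie in distinct $\GL_+(2k;\bb{R})$-orbits), so the \sref{FCA} framework is unavailable and one must verify ampleness directly.

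I must check that for every $\ta \in \ww{2k-2}\lt(\bb{R}^{2k-1}\rt)^*$, the set $\mc{N}_{\vpi_+(k)}(\ta) \pc \ww{2k-3}\lt(\bb{R}^{2k-1}\rt)^*$ is either empty or, on each path component, has convex hull equal to the whole space. When $\ta = 0$, \cref{Cosymp-N} gives $\mc{N}_{\vpi_+(k)}(\ta) = \es$, so I may assume $\ta \ne 0$. I would first establish path-connectedness of $\mc{N}_{\vpi_+(k)}(\ta)$ via \lref{PConn}: $\mc{S}(\vpi_+(k)) \cong \Sp(2k;\bb{R})$ acts transitively on $\oGr_{2k-1}\lt(\bb{R}^{2k}\rt)$ (since it contains $\Un(k)$); the restriction $\vpi_+(k)|_\bb{B}$ to any hyperplane $\bb{B}$ is a non-zero $(2k-2)$-form (the $\om_+(k)$-symplectic complement of $\bb{B}$ is always contained in $\bb{B}$ on dimensional grounds, forcing $\om_+(k)|_\bb{B}$ to have maximal rank $2k-2$); by \lref{no-orb-lem} the non-zero $(2k-2)$-forms on $\bb{R}^{2k-1}$ constitute a single open $\GL_+(2k-1;\bb{R})$-orbit, whose stabiliser is connected.

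The decisive step is to prove $0 \in \Conv\lt(\mc{N}_{\vpi_+(k)}(\ta)\rt)$. I would choose an oriented hyperplane $\Pi \pc \bb{R}^{2k-1}$ with $\ta|_\Pi > 0$ (possible since $\ta \ne 0$) and take $\th$ to be the generator of $\Ann(\Pi)$ oriented as in \rref{OP-Hyp}. By that remark, for every osemproplectic $\vpi$ on $\Pi$ the form $\nu = \th \w \vpi$ is ospseudoplectic on $\bb{R}^{2k-1}$ with $\Pi_\nu = \Pi$ as oriented hyperplane, and thus lies in $\mc{N}_{\vpi_+(k)}(\ta)$. Applying \lref{ossymp-set-ampl} to $\Pi \cong \bb{R}^{2k-2}$ (valid since $k-1 \ge 2$) gives $0 \in \Conv(\text{osemproplectic forms on } \Pi)$, and wedging with $\th$ preserves convex combinations, yielding $0 \in \Conv\lt(\mc{N}_{\vpi_+(k)}(\ta)\rt)$.

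Finally, since $\mc{N}_{\vpi_+(k)}(\ta)$ is open (by stability of $\vpi_+(k)$), path-connected, scale-invariant (\lref{Scale-Inv}), and contains $0$ in its convex hull, \lref{Am-Lem} concludes that $\Conv\lt(\mc{N}_{\vpi_+(k)}(\ta)\rt) = \ww{2k-3}\lt(\bb{R}^{2k-1}\rt)^*$, and \tref{hPThm-1} delivers the $h$-principle. I expect the main obstacle to be the recognition that \lref{ossymp-set-ampl}---an ampleness statement for osemproplectic forms in dimension $2k-2$---already suffices, via reduction to a single compatibly oriented hyperplane $\Pi$; the accompanying orientation bookkeeping needed to ensure that $\Pi_\nu = \Pi$ as \emph{oriented} hyperplanes (so the sign condition $\ta|_{\Pi_\nu} > 0$ transfers from $\ta|_\Pi > 0$) is where most of the concrete work lies.
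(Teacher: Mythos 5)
Your argument is mathematically correct and follows essentially the same route as the paper's, but it rests on a conceptual mislabelling that is worth correcting. You claim at the outset that when \(k\) is odd \(\vpi_+(k)\) is ``not abundant'' and that ``the \sref{FCA} framework is unavailable.'' Both claims are false. \lref{Abt-Lem} gives only a \emph{sufficient} condition for abundance (existence of an orientation-reversing automorphism); when \(k\) is odd that particular sufficient condition fails, but abundance itself can and does hold. Indeed, your ``decisive step''---showing \(0 \in \Conv\lt(\mc{N}_{\vpi_+(k)}(\ta)\rt)\) for every \(\ta\) via \cref{Cosymp-N} and \lref{ossymp-set-ampl}---is \emph{exactly} the definition of abundance. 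Combined with the faithfulness and connectedness you establish via \pref{tr->conn}, this is precisely the verification needed to invoke \tref{FCA->A}, so the FCA framework is not only available but is what you are using: \lref{Am-Lem}, \lref{Scale-Inv}, \lref{PConn}, \cref{Cosymp-N} and \lref{ossymp-set-ampl} are all part of that machinery, and you simply assemble them by hand instead of routing through the packaging statement \tref{FCA->A}. The paper does the identical computation (it fixes a basis with \(\ta = \th^{34\ldots 2k}\), observes \(\th^2 \w \ww[+]{2k-4}\lt(\bb{R}^{2k-2}\rt)^* \cc \mc{N}_{\vpi_+(k)}(\ta)\), and applies \lref{ossymp-set-ampl}), phrased invariantly in your version by choosing an oriented hyperplane \(\Pi\) with \(\ta|_\Pi > 0\). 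Your orientation bookkeeping for the identification \(\Pi_\nu = \Pi\) is sound, and the reduction to \(\ta \ne 0\) via \cref{Cosymp-N} is correct. So: same proof, wrong attribution of where the ``FCA framework'' applies.
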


\begin{proof}
Recall that the stabiliser of \(\vpi_+(k)\) in \(\GL_+(2k;\bb{R})\) is isomorphic to \(\Sp(2k;\bb{R})\).  Since \(\SU(k) \pc \Sp(2k;\bb{R})\), it follows by \pref{tr->conn} that \(\vpi_+(k)\) is faithful and connected.  Thus, it suffices to prove that \(\vpi_+(k)\) is abundant.  When \(k\) is even, this follows from \lref{Abt-Lem} since \(\vpi_+(k)\) admits an orientation-reversing automorphism (see \pref{dual-symp}).  However, in general, one must prove directly that \(\vpi_+(k)\) is abundant, i.e.\ (by \cref{Cosymp-N}) that 0 lies in the convex hull of the set:
\ew
\mc{N}_{\vpi_+(k)}(\ta) = \lt\{ \nu \in \ww{2k-3}\lt(\bb{R}^{2k-1}\rt)^* ~\m|~ \nu \text{ is ospseudoplectic and } \ta|_{\Pi_\nu} > 0\rt\}.
\eew

Choose a correctly-oriented basis \((e_2,...,e_{2k})\) of \(\bb{R}^{2k-1}\) with dual basis \(\lt(\th^2,...,\th^{2k}\rt)\) such that \(\ta = \th^{34...2k}\).  Then, for all \(\vpi \in \ww[+]{2k-4}\lt(\bb{R}^{2k-2}\rt)\), observe that \(\nu = \th^2 \w \vpi \in \mc{N}_{\vpi_+(k)}(\ta)\) and thus:
\ew
\th^2 \w \ww[+]{2k-4}\lt(\bb{R}^{2k-2}\rt)^* \cc \mc{N}_{\vpi_+(k)}(\ta).
\eew
However, by \lref{ossymp-set-ampl}, \(0 \in \Conv\lt(\ww[+]{2k-4}\lt(\bb{R}^{2k-2}\rt)^*\rt)\), completing the proof.

\end{proof}

Now, fix \(k \ge 2\) and consider the standard ospseudoplectic form \(\xi_0(k) = \th^1 \w \sum_{i = 1}^k \th^{23...\h{2i,2i+1}...2k,2k+1} \in \ww[OP]{2k-1}\lt(\bb{R}^{2k+1}\rt)^*\).  \tref{FCA->A} does not apply to ospseudoplectic forms, since \(\xi_0(k)\) is not faithful.  Indeed, recall from \rref{OP-Hyp} that \(\xi_0(k)\) canonically defines an oriented hyperplane \(\Pi_{\xi_0(k)} = \<e_2,...,e_{2k}\?\).  Then, both \(\lt\{\Pi_{\xi_0(k)}\rt\}\) and \(\lt\{\ol{\Pi}_{\xi_0(k)}\rt\}\) (where the overline denotes orientation-reversal) form singleton orbits for the action of \(\mc{S}(\xi_0(k))\) on \(\oGr_{2k}\lt(\bb{R}^{2k+1}\rt)\), however:
\ew
\xi_0(k)|_{\Pi_{\xi_0(k)}} = \xi_0(k)|_{\ol{\Pi}_{\xi_0(k)}} = 0.
\eew
Despite this observation, \tref{hPThm-1} does apply to ospseudoplectic forms:

\begin{Thm}\label{op}
\(\xi_0(k)\) is ample; hence ospseudoplectic forms satisfy the relative \(h\)-principle.
\end{Thm}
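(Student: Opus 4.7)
My plan is to verify directly that $\xi_0(k)$ is ample and then invoke \tref{hPThm-1}.  Since $\mc{N}_{\xi_0(k)}(F^*\ta) = F^*\mc{N}_{\xi_0(k)}(\ta)$ for every $F \in \GL_+(2k;\bb{R})$, ampleness of $\mc{N}_{\xi_0(k)}(\ta)$ depends on $\ta$ only through its $\GL_+(2k;\bb{R})$-orbit, and $\GL_+(2k;\bb{R})$ has precisely two orbits on $\ww{2k-1}(\bb{R}^{2k})^*$ (the zero orbit and the open orbit $\ww{2k-1}(\bb{R}^{2k})^* \osr \{0\}$), so it suffices to handle $\ta = 0$ and a single nonzero representative.

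The key reformulation uses the bivector duality of \lref{no-orb-lem} with $p = 2$: fixing a positive volume $\om = \th \w \om'$ on $\bb{R}\ds\bb{R}^{2k}$, the $\GL_+(2k+1;\bb{R})$-equivariant map $v \mt v \hk \om$ identifies $\ww{2}\bb{R}^{2k+1}$ with $\ww{2k-1}(\bb{R}^{2k+1})^*$, and by the bivector analogue of the characterisation of pseudoplectic forms in \pref{P}, $\xi = v \hk \om$ is ospseudoplectic \iff\ $v^k \ne 0$.  Splitting $v = e \w w + u$ with $e \in \bb{R}^{2k+1}$ dual to $\th$, $w \in \bb{R}^{2k}$ and $u \in \ww{2}\bb{R}^{2k}$, a direct calculation gives $\ta = -w\hk\om'$, $\nu = u \hk \om'$ and $v^k = u^k + k\, e \w w \w u^{k-1}$, whose two summands lie in complementary subspaces of $\ww{2k}\bb{R}^{2k+1}$; hence $\nu \in \mc{N}_{\xi_0(k)}(\ta)$ \iff\ $u^k \ne 0$ or $w \w u^{k-1} \ne 0$.

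For $\ta = 0$ (so $w = 0$) this reduces to $u^k \ne 0$, i.e.\ $\nu$ is ossymplectic, giving $\mc{N}_{\xi_0(k)}(0) = \ww[+]{2k-2}(\bb{R}^{2k})^* \cup \ww[-]{2k-2}(\bb{R}^{2k})^*$; each of these path components is a single $\GL_+(2k;\bb{R})$-orbit whose convex hull equals $\ww{2k-2}(\bb{R}^{2k})^*$ by \lref{ossymp-set-ampl} (whose argument applies unchanged to both orientations), so this case is ample.  For $\ta \ne 0$, by $\GL_+(2k;\bb{R})$-equivariance I may take $w = e_1$ in a basis $(e_1,\ldots,e_{2k})$ of $\bb{R}^{2k}$; the condition becomes $u^k \ne 0$ or $e_1 \w u^{k-1} \ne 0$, and since $\mc{N}_{\xi_0(k)}(\ta)$ is open, scale-invariant (\lref{Scale-Inv}), and contains $\ww[+]{2k-2}(\bb{R}^{2k})^*$ (whose convex hull contains $0$ by \lref{ossymp-set-ampl}), \lref{Am-Lem} reduces the remaining ampleness verification to the path-connectedness of $\mc{N}_{\xi_0(k)}(\ta)$.

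This path-connectedness is the main obstacle: the open dense stratum $\{u : u^k \ne 0\}$ already decomposes into two $\GL_+(2k;\bb{R})$-orbits corresponding to the two orientations of symplectic bivectors, and these must be joined within $\mc{N}_{\xi_0(k)}(\ta)$ through the rank-$(2k-2)$ stratum $\{u : u^k = 0,\, e_1 \w u^{k-1} \ne 0\}$, consisting of bivectors of rank exactly $2k-2$ whose $(2k-2)$-dimensional support misses $e_1$.  I would exhibit the explicit path
\ew
u_t = (1 - 2t)\, e_1 \w e_2 + e_3 \w e_4 + \ldots + e_{2k-1} \w e_{2k}, \hs{3mm} t \in [0,1],
\eew
which interpolates between the standard emproplectic bivector ($t = 0$, $u_0^k > 0$) and its pisoplectic counterpart ($t = 1$, $u_1^k < 0$), and at $t = 1/2$ specialises to $u_{1/2} = e_3 \w e_4 + \ldots + e_{2k-1} \w e_{2k}$---a rank-$(2k-2)$ bivector whose support $\<e_3,\ldots,e_{2k}\?$ does not contain $e_1$, so that $e_1 \w u_{1/2}^{k-1} = (k-1)!\, e_1 \w e_3 \w \ldots \w e_{2k} \ne 0$.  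Since each of the two open orbits is itself path-connected and every point in the rank-$(2k-2)$ stratum inside $\mc{N}_{\xi_0(k)}(\ta)$ can be perturbed into the open stratum (as $\mc{N}_{\xi_0(k)}(\ta)$ is open), this path-connects all of $\mc{N}_{\xi_0(k)}(\ta)$, completing the proof.
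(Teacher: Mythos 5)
Your proof is correct, and it takes a genuinely different route for the key step. Where the paper proves path-connectedness of $\mc{N}_{\xi_0(k)}(\ta)$ for $\ta \ne 0$ by invoking \lref{PConn} together with a transitivity argument for the action of $\mc{S}(\xi_0(k))$ on $\oGr_{2k}\lt(\bb{R}^{2k+1}\rt)_{gen}$, you instead pass to the bivector picture via \lref{no-orb-lem} and exhibit an explicit path (the segment $u_t = (1-2t)\,e_1\w e_2 + e_3\w e_4 + \cdots + e_{2k-1}\w e_{2k}$) joining the two open strata of your model $\lt\{u : u^k\ne 0 \text{ or } w\w u^{k-1}\ne 0\rt\}$ through the rank-$(2k-2)$ stratum supported away from $e_1$, and then use openness of $\mc{N}_{\xi_0(k)}(\ta)$ to absorb the degenerate stratum into the path-component argument. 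This avoids the Grassmannian machinery of \sref{FCA} entirely, at the price of being tailored to this particular $\si_0$, whereas \lref{PConn} is intended to apply uniformly across the examples in the paper. For abundance you observe $\mc{N}_{\xi_0(k)}(\ta)\yy\ww[+]{2k-2}\lt(\bb{R}^{2k}\rt)^*$ and quote \lref{ossymp-set-ampl}, whereas the paper invokes \lref{Abt-Lem} via the $\GL(2k+1;\bb{R})$-invariance of the ospseudoplectic orbit; both are sound, and the two proofs coincide on the $\ta = 0$ case. One small point you leave implicit is that a linear isomorphism $F^*$ carries ample subsets of $\ww{p-1}\lt(\bb{R}^{n-1}\rt)^*$ to ample subsets, which is what justifies your opening reduction to the two $\GL_+(2k;\bb{R})$-orbits of $\ta$; this is true but worth stating.
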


\begin{proof}
Write \(\oGr_{2k}\lt(\bb{R}^{2k+1}\rt)_{gen} = \oGr_{2k}\lt(\bb{R}^{2k+1}\rt) \osr \lt\{\Pi_{\xi_0(k)}, \ol{\Pi}_{\xi_0(k)}\rt\}\).  Then, \(\mc{S}(\xi_0(k))\) acts transitively on the space \(\oGr_{2k}\lt(\bb{R}^{2k+1}\rt)_{gen}\).  Indeed, let \(\Pi \in \oGr_{2k}\lt(\bb{R}^{2k+1}\rt)_{gen}\).  \(\Pi\) intersects \(\Pi_{\xi_0(k)}\) transversely and thus  \(\Pi \cap \Pi_{\xi_0(k)}\) has dimension \(2k-1\).  Moreover, \(\Pi \cap \Pi_{\xi_0(k)}\) can be canonically oriented as follows: choose any \(u \in \Pi\) such that \(\th^1(u) > 0\).  Then, the chosen orientation on \(\Pi\) together with the decomposition:
\ew
\Pi = \<u\? \ds \lt(\Pi \cap \Pi_{\xi_0(k)}\rt)
\eew
orients \(\Pi \cap \Pi_{\xi_0(k)}\) and thus \(\Pi \cap \Pi_{\xi_0(k)}\) defines an element of \(\oGr_{2k-1}\lt(\Pi_{\xi_0(k)}\rt)\).  Since \(\Sp(2k;\bb{R})\) acts transitively on \(\oGr_{2k-1}\lt(\bb{R}^{2k}\rt)\), by \eref{OP-stab} it follows that \(\mc{S}(\xi_0(k))\) acts transitively on \(\oGr_{2k-1}\lt(\Pi_{\xi_0(k)}\rt)\) and thus \wlg\ one may assume that:
\ew
\Pi \cap \Pi_{\xi_0(k)} = \<e_3,...,e_{2k+1}\?,
\eew
compatibly with its orientation.  Thus:
\ew
\Pi = \<e_1 + te_2, e_3,...,e_{2k+1}\?
\eew
for some \(t \in \bb{R}\).  Now, consider the automorphism of \(\bb{R}^{2k+1}\) given by:
\ew
F=
\lt(\begin{array}{c|ccc}
1 &    &             &  \\\hline
-t & 1 &             &  \\
   &    & \ddots &   \\
   &    &             & 1\\
\end{array}\rt).
\eew
By examining \eref{OP-stab}, \(F \in \mc{S}(\xi_0(k))\) and clearly \(F(\Pi) = \<e_1,e_3,...,e_{2k+1}\?\); thus \(\oGr_{2k}\lt(\bb{R}^{2k+1}\rt)_{gen}\) forms a single orbit, as claimed.  Moreover, \(\mc{T}_{\xi_0(k)}\lt(\oGr_{2k}\lt(\bb{R}^{2k+1}\rt)_{gen}\rt)\) is precisely the orbit of non-zero \((2k-1)\)-forms on \(\bb{R}^{2k}\).

Now, let \(\ta \in \mc{T}_{\xi_0(k)}\lt(\oGr_{2k}\lt(\bb{R}^{2k+1}\rt)_{gen}\rt)\).  Clearly, the stabiliser of \(\ta\) in \(\GL_+(2k;\bb{R})\) is connected.  Also, since:
\ew
\mc{T}_{\xi_0(k)}^{-1}\lt(\lt\{\mc{T}_{\xi_0(k)}\lt[\oGr_{2k}(\bb{R}^{2k+1})_{gen}\rt]\rt\}\rt) = \lt\{\oGr_{2k}(\bb{R}^{2k+1})_{gen}\rt\},
\eew
the set \(\mc{N}_{\xi_0(k)}(\ta)\) is path-connected by \lref{PConn}.  Moreover, since the \(\GL_+(2k+1;\bb{R})\)-orbit of ospseudoplectic forms is closed under the action of \(\GL(2k+1;\bb{R})\), by \lref{Abt-Lem} it follows that \(\xi_0(k)\) is abundant.  Thus, by \lref{Am-Lem}, \(\mc{N}_{\xi_0(k)}(\ta)\) is ample for all \(\ta \ne 0\).

Now, consider \(\ta = 0\).  Note that \(\th^1 \w \nu\) is ospseudoplectic \iff\ \(\nu\) is ossymplectic and thus:
\ew
\mc{N}_{\xi_0(k)}(0) = \ww[+]{2k-2}\lt(\bb{R}^{2k}\rt)^* \cup \ww[-]{2k-2}\lt(\bb{R}^{2k}\rt)^*.
\eew
This space has two path components and thus abundance alone is not sufficient to deduce that \(\mc{N}_{\xi_0(k)}(0)\) is ample.  However, by \lref{ossymp-set-ampl} the convex hull of each path component of \(\mc{N}_{\xi_0(k)}(0)\) equals \(\ww{2k-2}\lt(\bb{R}^{2k}\rt)^*\) and thus \(\xi_0(k)\) is ample, as claimed.

\end{proof}

\section{\sg\ 3- and 4-forms}\label{SG2hP}

The aim of this section is to prove the relative \(h\)-principles for \sg\ 3- and 4-forms.  Let \(\bb{R}^7\) have basis \((e_1,...,e_7)\) and dual basis \(\lt(\th^1,...,\th^7\rt)\) as usual.  Recall the standard \sg\ 3- and 4-forms defined in \erefs{svph0} and \eqref{vps0} respectively by:
\caw
\svph_0 = \th^{123} - \th^{145} - \th^{167} + \th^{246} - \th^{257} - \th^{347} - \th^{356};\\
\svps_0 = \th^{4567} - \th^{2367} - \th^{2345} + \th^{1357} - \th^{1346} - \th^{1256} - \th^{1247},
\caaw
inducing the metric \(\tld{g}_0 = \sum_{i=1}^3(\th^i)^{\ts2} - \sum_{i=4}^7(\th^i)^{\ts2}\) and volume form \(\th^{12...7}\).  For the purposes of calculations, it is advantageous to have additional `standard representatives' of \sg\ 3- and 4-forms:

\begin{Prop}
The 3-form:
\e\label{svph1}
\svph_1 = \frac{1}{2}\lt(\th^{147} + \th^{156} - \th^{237} + \th^{246} - \th^{345}\rt)
\ee
is of \sg-type.  It induces the metric and volume form:
\ew
\tld{g}_1 = -\th^1 \s \th^7 + \th^2 \s \th^6 - \th^3 \s \th^5 - \th^4 \s \th^4 \et vol_1 = \frac{1}{8}\th^{1234567}.
\eew
Moreover:
\ew
\svps_1 = \Hs_{\svph_1}\svph_1 = \frac{1}{4}\lt(\th^{2356} + 2\th^{2347} - 2\th^{1456} + \th^{1357} - \th^{1267}\rt).
\eew
\end{Prop}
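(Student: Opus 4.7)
The cleanest route is direct verification within the $\GL_+(7;\bb{R})$-equivariant framework of \pref{Stable-in-7}. The plan is to first compute $Q_{\svph_1}$, extract $\tld{g}_1$ and $vol_1$, confirm that the signature of $\tld{g}_1$ is $(3,4)$, and then apply the Hodge star termwise to obtain $\svps_1$.

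For the first step, I would compute each interior product $e_j \hk \svph_1$ explicitly; since $\svph_1$ has only five monomials, each such 2-form has at most three nonzero terms. Next, for every pair $(i,j)$, compute the 7-form $B(e_i,e_j) := \tfrac{1}{6}(e_i \hk \svph_1) \w (e_j \hk \svph_1) \w \svph_1 = C_{ij}\,\th^{1234567}$ (arising as the polarisation of $Q_{\svph_1}$). The resulting symmetric matrix $C$ equals $\tfrac{1}{8}\tld{g}_1$, so $Q_{\svph_1} = \tld{g}_1 \ts vol_1$ with $vol_1 = \tfrac{1}{8}\th^{1234567}$. Block-diagonalising $\tld{g}_1$ into three hyperbolic $2\x 2$ blocks on the index pairs $(1,7)$, $(2,6)$, $(3,5)$ plus a single diagonal entry $(-1)$ on index $4$ shows that $\tld{g}_1$ has signature $(3,4)$, so by \pref{Stable-in-7} one has $\svph_1 \in \ww[\tl]{3}(\bb{R}^7)^*$, and the associated metric and positive volume form are as claimed.

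For $\svps_1 = \Hs_{\svph_1}\svph_1$, apply the Hodge star monomial-by-monomial. The inverse metric $\tld{g}_1^{-1}$ has nonzero entries $\tld{g}_1^{17} = \tld{g}_1^{35} = -2$, $\tld{g}_1^{26} = 2$, $\tld{g}_1^{44} = -1$, so each index is raised to a unique partner. Consequently each $\Hs_{\svph_1}\th^{ijk}$ collapses to a single 4-form monomial with explicit coefficient, and summing over the five monomials of $\svph_1$ recovers the stated formula for $\svps_1$. The identity $\svph_1 \w \svps_1 = 7\,vol_1$ provides a cheap consistency check on the normalisation $\|\svph_1\|_{\tld{g}_1}^2 = 7$ required by \pref{Stable-in-7}.

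The main obstacle is purely computational bookkeeping: the polarisation step in principle involves up to $\binom{7}{2}+7 = 28$ separate 7-form products, though the hyperbolic $(1,7)(2,6)(3,5)$ pair structure of $\tld{g}_1$ reduces this substantially by symmetry. A more conceptual alternative would be to exhibit an explicit $F \in \GL_+(7;\bb{R})$ with $F^*\svph_0 = \svph_1$ and transport $\tld{g}_0$, $vol_0$, $\svps_0$ by $F^*$; such an $F$ exists and can be built by first passing to a null frame pseudo-isometrically identifying $\tld{g}_1$ with $\tld{g}_0$, then correcting by a suitable element of $\Spin(3,4)$. However, pinning down this correction is itself of comparable complexity to the direct verification.
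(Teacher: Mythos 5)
Your proposal is correct and follows essentially the same route as the paper: the paper's (one-line) proof says to compute $Q_{\svph_1} = \frac{1}{6}(-\hk\svph_1)^2\w\svph_1$ directly and read off the metric and volume form, which is exactly your polarisation computation, and $\svps_1$ is likewise obtained by a termwise Hodge-star calculation. Your inverse-metric entries, signature argument, and the consistency check $\svph_1\w\svps_1 = 7\,vol_1$ all line up correctly.
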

\noindent(To prove this result, one simply calculates the bilinear form \(Q_{\svph_1} = \frac{1}{6}\lt(-\hk\svph_1\rt)^2\w\svph_1\) explicitly, from which the metric and volume form can simply be written down.)

Now consider the space \(\lqt{\oGr_6(\bb{R}^7)}{\tld{\Gg}_2}\).  Since \(\tld{g}_0\) is non-degenerate, taking orthocomplement \wrt\ \(\tld{g}_0\) establishes a \sg-equivariant isomorphism:
\e\label{OC}
\Gr_1(\bb{R}^7) &\cong \Gr_6(\bb{R}^7)\\
\bb{L} &\mt \bb{L}^\bot\\
\bb{B}^\bot &\mf \bb{B}.
\ee
Motivated by this, I term a hyperplane \(\bb{B} \pc \bb{R}^7\) spacelike, timelike or null according to whether its orthocomplement is spacelike, timelike or null and write \(\Gr_{6,+}(\bb{R}^7)\), \(\Gr_{6,-}(\bb{R}^7)\) and \(\Gr_{6,0}(\bb{R}^7)\) for the corresponding Grassmannians.  (Recall that a 1-dimensional subspace \(\ell\) is spacelike, timelike or null according to whether \(\tld{g}_0(u,u) >0\), \(<0\) or \(=0\), respectively, for some (equivalently all) \(u \in \ell \osr \{0\}\).)

\begin{Lem}
\ew
\lqt{\oGr_6(\bb{R}^7)}{\tld{\Gg}_2} = \lt\{\oGr_{6,i}(\bb{R}^7)~\m|~i = +,-,0\rt\}.
\eew
\end{Lem}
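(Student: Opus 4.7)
The plan is to translate the problem from hyperplanes to lines via the orthocomplement map and then reduce it to a classical statement about the transitivity of $\tld{\Gg}_2$ on causal pseudospheres.  First, I would refine the unoriented identification \eref{OC} to an oriented one: given an oriented hyperplane $\bb{B} \pc \bb{R}^7$, its orthocomplement $\bb{B}^\bot$ carries a canonical orientation such that the direct sum decomposition $\bb{R}^7 = \bb{B}^\bot \ds \bb{B}$ matches the fixed orientation of $\bb{R}^7$.  Since both the metric $\tld{g}_0$ and the orientation of $\bb{R}^7$ are preserved by $\tld{\Gg}_2$, the resulting map $\oGr_6(\bb{R}^7) \to \oGr_1(\bb{R}^7)$, $\bb{B} \mt \bb{B}^\bot$, is a $\tld{\Gg}_2$-equivariant homeomorphism sending $\oGr_{6,\pt}(\bb{R}^7)$ bijectively onto the analogous sets of spacelike, timelike and null oriented lines.

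Next, I would observe that each of the three causal strata is $\tld{\Gg}_2$-invariant (as $\tld{\Gg}_2 \pc \SO(\tld{g}_0)$), so the lemma reduces to showing that $\tld{\Gg}_2$ acts transitively on each of the three sets of spacelike, timelike and null oriented lines in $\bb{R}^7$.  Connectedness of each stratum is elementary: the spacelike oriented lines are in canonical bijection with the pseudosphere $\{v \in \bb{R}^7 ~|~ \tld{g}_0(v,v) = 1\} \cong S^2 \x \bb{R}^4$, the timelike with $\{v ~|~ \tld{g}_0(v,v) = -1\} \cong \bb{R}^3 \x S^3$, and the null with the projectivised null cone $\{v \ne 0 ~|~ \tld{g}_0(v,v) = 0\}/\bb{R}_{>0} \cong S^2 \x S^3$, each of which is connected.

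The core of the argument is therefore to show transitivity on each stratum.  For this I would choose explicit representatives (e.g.\ $e_1$ for spacelike, $e_4$ for timelike, and a null vector adapted to the alternative representative $\svph_1$ of \eref{svph1}, whose metric $\tld{g}_1$ is written in a null frame) and compute the stabiliser of each in $\tld{\Gg}_2$.  It is classical that $\Stab_{\tld{\Gg}_2}(e_1) \cong \SU(1,2)$ and $\Stab_{\tld{\Gg}_2}(e_4) \cong \SL(3;\bb{R})$, each of dimension 8, so that the corresponding $\tld{\Gg}_2$-orbits have dimension $14 - 8 = 6$, matching the dimensions of the spacelike and timelike pseudospheres; being open and closed (by invariance of domain and the connectedness above), these orbits fill out the whole strata.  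An analogous dimension count in the null basis adapted to $\svph_1$ shows that the stabiliser of a null direction has dimension $9$, so the orbit has dimension $5 = \dim(S^2 \x S^3)$, again filling out the stratum.

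The main obstacle is the stabiliser computation in the null case, which is less familiar than the spacelike and timelike cases.  A clean way to sidestep tedium here is to exploit the form $\svph_1$ of \eref{svph1}: writing $\tld{g}_1$ in the null frame $(\th^1,...,\th^7)$ makes $e_1$ a null vector and allows one to read off a $9$-dimensional subgroup of $\tld{\Gg}_2$ fixing $[e_1]$ directly from the block structure of elements preserving $\svph_1$ while stabilising the line $\<e_1\?$.  Combined with the dimension count above, this completes the identification of the three orbits.
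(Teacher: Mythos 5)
Your plan works, and coincides with the paper's, for the spacelike and timelike strata: when $\bb{B}$ is spacelike or timelike one has $\bb{R}^7 = \bb{B}^\bot \ds \bb{B}$, so the orthocomplement map of \eref{OC} lifts to an oriented $\tld{\Gg}_2$-equivariant identification $\oGr_{6,\pm}(\bb{R}^7) \cong \oGr_{1,\pm}(\bb{R}^7)$, and transitivity of $\tld{\Gg}_2$ on the oriented spacelike (resp.\ timelike) unit pseudospheres, which the paper quotes from Kath and which your dimension count recovers, finishes the job. The genuine gap is the null stratum, and it is exactly the point the paper flags with ``For null planes, $\bb{B}^\bot \pc \bb{B}$ and so a different approach is required.'' When $\bb{B}$ is null, the direct-sum decomposition you invoke to orient $\bb{B}^\bot$ does not exist, so your map $\oGr_6(\bb{R}^7)\to\oGr_1(\bb{R}^7)$ is simply not defined on $\oGr_{6,0}(\bb{R}^7)$, and the reduction to oriented null lines is never established. (One can manufacture an oriented correspondence, e.g.\ via the induced orientation on $\bb{R}^7/\bb{B} \cong (\bb{B}^\bot)^*$, but that is a separate argument you neither give nor acknowledge needing.) The paper instead quotes transitivity of $\tld{\Gg}_2$ on \emph{unoriented} null lines, passes through the unoriented version of \eref{OC} to get at most two orbits in $\oGr_{6,0}(\bb{R}^7)$, and then exhibits a single explicit element of $\tld{\Gg}_2$ preserving a chosen null $6$-plane while reversing its orientation; it is precisely this last step that resolves the orientation ambiguity you have passed over.

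A second, smaller issue: ``being open and closed (by invariance of domain and the connectedness above)'' is not a valid inference. Invariance of domain gives openness of the orbit of a point whose stabiliser has the expected dimension; openness in a connected manifold does not imply closedness (the $\bb{R}_{>0}$-orbits in $\bb{R}$ are the standard counterexample). To close the argument you must rule out lower-dimensional orbits elsewhere in the stratum, for instance via upper semicontinuity of $x \mapsto \dim\Stab_{\tld{\Gg}_2}(x)$, since a stabiliser computation at a single point does not bound the stabiliser dimension at other points unless you already know the action is transitive. Both gaps are repairable, but both sit exactly where the paper is careful, so the proposal as written is incomplete.
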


\begin{proof}
If \(\bb{B}\) is either spacelike or timelike, then \(\bb{R}^7 = \bb{B}^\bot\ds\bb{B}\) and so \eref{OC} can be lifted to an isomorphism \(\oGr_{1,\pm}(\bb{R}^7) \cong \oGr_{6,\pm}(\bb{R}^7)\).  Thus, since \sg\ acts transitively on each of \(\oGr_{1,\pm}(\bb{R}^7)\) \cite[Prop.\ 2.2]{SG2SoPRM}, each of \(\oGr_{6,\pm}(\bb{R}^7)\) are orbits of \sg.

For null planes, \(\bb{B}^\bot \pc \bb{B}\) and so a different approach is required.  Since \sg\ acts transitively on \(\Gr_{1,0}(\bb{R}^7)\) \cite[Prop.\ 5.4]{A3F&ESLGoTG2}, \sg\ also acts transitively on \(\Gr_{6,0}(\bb{R}^7)\) by \eref{OC} and thus the action of \sg\ on \(\oGr_{6,0}(\bb{R}^7)\) has at most two orbits.  Now recall the \sg\ 4-form \(\svps_0\) and consider the oriented null 6-plane \(\bb{B} = \<e_1,e_2,e_4,e_5,e_6,e_3 + e_7\?\).  Consider \(F \in \tld{\Gg}_2\) given by:
\ew
(e_1,e_2,e_3,e_4,e_5,e_6,e_7) \mt (e_1,-e_2,-e_3,e_4,e_5,-e_6,-e_7).
\eew
Then, \(F\) preserves \(\bb{B}\) and \(F|_\bb{B}\) is orientation-reversing.  Thus, \(\oGr_{6,0}(\bb{R}^7)\) forms a single orbit of \sg\ as claimed.

\end{proof}

\subsection{\(\mb{h}\)-principle for \(\mb{\tld{\Gg}_2}\) 4-forms}

\begin{Thm}\label{sg-4}
\sg\ 4-forms satisfy the relative \(h\)-principle.
\end{Thm}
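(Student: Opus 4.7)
The plan is to apply \tref{FCA->A}, reducing the relative \(h\)-principle for \sg\ 4-forms to the algebraic verification that \(\svps_0\) is faithful, connected and abundant.  Abundance is immediate from \pref{Abt-Lem}: since \(p = 4\) is even and \(n = 7\) is odd, \(-\Id\) is an orientation-reversing automorphism of \(\bb{R}^7\) fixing \(\svps_0\).

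For faithfulness and connectedness, I would exploit the classification just proved of \(\tld{\Gg}_2\)-orbits on \(\oGr_6\lt(\bb{R}^7\rt)\) into the spacelike, timelike and null classes \(\oGr_{6,+}\), \(\oGr_{6,-}\) and \(\oGr_{6,0}\).  For each orbit I would pick a representative hyperplane and identify the \(\GL_+(6;\bb{R})\)-orbit of \(\svps_0|_\bb{B}\) using the \(w \mt w \hk vol\) duality of \lref{no-orb-lem}.  For the spacelike representative \(\bb{B} = \<e_2,\ldots,e_7\?\), a direct calculation gives:
\ew
\svps_0|_\bb{B} = \th^{4567} - \th^{2367} - \th^{2345} = \lt(e_{23} - e_{45} - e_{67}\rt) \hk \th^{234567},
\eew
whose associated bivector \(w\) satisfies \(w^3 > 0\); hence, by \pref{dual-symp}, \(\svps_0|_\bb{B} \in \ww[+]{4}\lt(\bb{R}^6\rt)^*\) is osemproplectic.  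The analogous computation for the timelike representative \(\bb{B} = \<e_1,\ldots,e_6\?\) produces a dual bivector with \(w^3 < 0\), giving an ospisoplectic restriction in \(\ww[-]{4}\lt(\bb{R}^6\rt)^*\).  For a null representative (handled most cleanly in the \(\svps_1\)-basis, where the null structure of \(\tld{g}_1\) is transparent), the same duality produces a dual bivector of rank only \(4\) (so \(w^3 = 0\)), meaning \(\svps_0|_\bb{B}\) is non-stable and lies in a \(\GL_+(6;\bb{R})\)-orbit distinct from both of the open ones.  The three image orbits being pairwise distinct gives faithfulness.

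For connectedness, the stabilisers of osemproplectic and ospisoplectic forms in \(\GL_+(6;\bb{R})\) are both \(\Sp(6;\bb{R})\) by \pref{dual-symp}, hence connected.  The main obstacle will be the null case: verifying that \(\Stab_{\GL_+(6;\bb{R})}\lt(\svps_0|_\bb{B}\rt)\) is path-connected when \(\bb{B}\) is null.  Here I would translate the stabilisation of \(\svps_0|_\bb{B}\) into the bivector condition \(F_*w = \det(F) \1 w\), which forces \(F\) to preserve the \(4\)-dimensional support of the rank-\(4\) bivector \(w\).  The stabiliser is then contained in the parabolic subgroup stabilising this \(4\)-plane, and unpacking the constraint block-by-block relative to a splitting \(\bb{B} = V \ds V'\) identifies it as an \(\bb{R}_{>0}\)-extension of \(\Sp(V,w)\), fibred over a determinant-constrained (hence connected) piece of \(\GL_+(V')\) and extended further by the affine space \(\Hom(V',V)\) of off-diagonal blocks --- all pieces being manifestly path-connected.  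Once this is in hand, \tref{FCA->A} yields ampleness of \(\svps_0\), and the relative \(h\)-principle for \sg\ 4-forms follows.
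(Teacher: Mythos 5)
Your proof takes essentially the same route as the paper's: abundance from \pref{Abt-Lem}, then a case-by-case identification of the three restriction types (osemproplectic, ospisoplectic, degenerate) on spacelike, timelike and null hyperplanes, giving faithfulness, with connectedness of the corresponding stabilisers obtained from the symplectic group in the two stable cases and from a block calculation in the null case.  The one place the two proofs diverge is the null stabiliser.  The paper invokes the packaged \lref{stabiliser-comp-lem}(2), applied to the pisoplectic \(2\)-form \(\al = \tfrac{1}{2}\th^{23}-\th^{14}\) on the \(4\)-plane, where the key point is that the potential second coset in \eref{mcs-stab} is empty precisely because \(\Stab_{\GL(4;\bb{R})}(\al)=\Stab_{\GL_+(4;\bb{R})}(\al)=\Sp(4;\bb{R})\).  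You replace this with a direct block decomposition relative to the invariant \(4\)-plane.  That works, but the claim that the resulting constraint group is ``determinant-constrained (hence connected)'' glosses over exactly the point the lemma is designed to guard against: one must check that the constraint actually confines the two diagonal blocks to a \emph{single} component of the parabolic.  This does hold, because the condition \(D_*w = \det(F)\,w\) together with \(\Sp\pc\SL\) forces \(\det(D)=\det(F)^2>0\) and hence also \(\det(A)>0\), but you should make this sign analysis explicit rather than asserting connectedness from a determinant constraint alone (a determinant constraint in \(\GL(V')\) need not select a connected set without the positivity input).  Once that is supplied, your argument is complete and equivalent to the paper's.
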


\begin{proof}
By \lref{Abt-Lem}, \sg\ 4-forms are automatically abundant.  Thus, it suffices to prove that \sg\ 4-forms are faithful and connected.  Initially, consider the \sg\ 4-form \(\svps_0\) and the hyperplanes:
\caw
\bb{B}_+ = \<e_2,e_3,e_4,e_5,e_6,e_7\? \in \oGr_{6,+}(\bb{R}^7);\\
\bb{B}_- = \<e_1,e_2,e_3,e_4,e_5,e_6\? \in \oGr_{6,-}(\bb{R}^7).
\caaw

Then:
\ew
\svps_0|_{\bb{B}_+} = \th^{4567} - \th^{2367} - \th^{2345} = \frac{1}{2}\lt(\th^{23} - \th^{45} - \th^{67}\rt)^2
\eew
is an osemproplectic form, with connected stabiliser in \(\GL_+(\bb{B}_+)\) isomorphic to \(\Sp(6;\bb{R})\), while:
\ew
\svps_0|_{\bb{B}_-} = - \th^{2345} - \th^{1346} - \th^{1256} = -\frac{1}{2}\lt(\th^{16} + \th^{25} + \th^{34}\rt)^2
\eew
is ospisoplectic (also with connected stabiliser in \(\GL_+(\bb{B}_-)\)).

Now, turn to the null case.  Consider the \sg\ 4-form \(2\svps_1\) and the null-hyperplane \(\bb{B}_0 = \<e_1,...,e_6\?\).  Then:
\ew
2\svps_1|_{\bb{B}_0} = \frac{1}{2}\th^{2356} - \th^{1456} = \lt(\frac{1}{2}\th^{23} - \th^{14}\rt) \w \th^{56}
\eew
is a degenerate 4-form (i.e.\ neither osemproplectic nor ospisoplectic) and hence \sg\ 4-forms are faithful.  To verify that the stabiliser of \(2\svps_1|_{\bb{B}_0}\) in \(\GL_+(\bb{B}_0)\) is connected, split \(\bb{B}_0 = \<e_5,e_6\? \ds \<e_1,e_2,e_3,e_4\? \cong \bb{R}^2 \ds \bb{R}^4\) and apply \lref{stabiliser-comp-lem} to the pisoplectic (and hence multi-ossymplectic) 2-form:
\ew
\al = \frac{1}{2}\th^{23} - \th^{14} \in \ww{2}\lt(\bb{R}^4\rt)^*,
\eew
noting that \(-\al\) and \(\al\) lie in the same \(\GL_+(4;\bb{R})\)-orbit (since \((-\al)^2 = \al^2 = -\frac{1}{4}\th^{1234}\)) and that the stabilisers of \(\al\) in \(\GL_+(2k;\bb{R})\) and \(\GL(2k;\bb{R})\) coincide and are connected.  Thus, \sg\ 4-forms are also connected.

\end{proof}

\subsection{Faithfulness of \(\mb{\tld{\Gg}_2}\) 3-forms}

\begin{Prop}\label{SG23-F}
\(\svph_0\) is faithful.  More specifically, the orbits \(\mc{T}_{\svph_0}\lt(\oGr_{6,\mp}(\bb{R}^7)\rt) \in \rqt{\ww{3}\lt(\bb{R}^6\rt)}{\GL_+(6;\bb{R})}\) are precisely the orbits \(\ww[\pm]{3}\lt(\bb{R}^6\rt)\) of \slr\ 3-forms and \slc\ 3-forms respectively, whilst the orbit \(\mc{T}_{\svph_0}\lt(\oGr_{6,0}(\bb{R}^7)\rt) \in \rqt{\ww{3}\lt(\bb{R}^6\rt)}{\GL_+(6;\bb{R})}\) is not open, i.e.\ forms in this orbit are not stable.
\end{Prop}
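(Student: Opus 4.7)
The plan is to compute $\svph_0$ restricted to a representative of each of the three $\tld{\Gg}_2$-orbits $\oGr_{6,\pm}(\bb{R}^7)$ and $\oGr_{6,0}(\bb{R}^7)$ and identify the resulting $\GL_+(6;\bb{R})$-orbit in $\ww{3}(\bb{R}^6)^*$.  If the three restrictions fall in three distinct orbits, then $\mc{T}_{\svph_0}$ is injective on $\lqt{\oGr_6(\bb{R}^7)}{\tld{\Gg}_2}$, so $\svph_0$ is faithful and the claimed identification $\oGr_{6,\mp}(\bb{R}^7) \leftrightarrow \ww[\pm]{3}(\bb{R}^6)^*$ falls out for free.

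For the spacelike orbit, take $\bb{B}_s = e_1^\bot = \<e_2,\dots,e_7\?$; reading off $\svph_0|_{\bb{B}_s} = \th^{246} - \th^{257} - \th^{347} - \th^{356}$ and applying the order-preserving relabelling $e_{i+1} \mapsto f_i$ recovers $\rh_-$ from \eref{rh+} on the nose, so $\svph_0|_{\bb{B}_s} \in \ww[-]{3}(\bb{R}^6)^*$.  For the timelike orbit, take $\bb{B}_t = e_7^\bot = \<e_1,\dots,e_6\?$, giving $\svph_0|_{\bb{B}_t} = \th^{123} - \th^{145} + \th^{246} - \th^{356}$.  Computing the contractions $e_i \hk \svph_0|_{\bb{B}_t}$ and wedging with $\svph_0|_{\bb{B}_t}$ yields the strikingly simple anti-diagonal formula $K_{\svph_0|_{\bb{B}_t}}(e_i) = 2\, e_{7-i} \ts \th^{123456}$, so $K^2 = 4\,\Id \ts (\th^{123456})^{\ts 2}$ and $\La(\svph_0|_{\bb{B}_t}) = 4(\th^{123456})^{\ts 2} > 0$; hence $\svph_0|_{\bb{B}_t} \in \ww[+]{3}(\bb{R}^6)^*$ by \pref{SL-Prop}.

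For the null orbit, reuse the representative $\bb{B}_0 = \<e_1, e_2, e_4, e_5, e_6, e_3 + e_7\?$ from the preceding lemma and use the basis $(f_1,\dots,f_6) = (e_1, e_2, e_4, e_5, e_6, e_3 + e_7)$.  Since $\th^3|_{\bb{B}_0} = \th^7|_{\bb{B}_0} = f^6$, the term $\th^{347}$ dies on restriction, leaving
\ew
\svph_0|_{\bb{B}_0} = f^{126} - f^{134} - f^{156} + f^{235} - f^{246} - f^{456}.
\eew
The key observation is that $f_6 \hk \svph_0|_{\bb{B}_0} = f^{12} - f^{15} - f^{24} - f^{45}$ factors as $(f^1 + f^4) \w (f^2 - f^5)$, and the four cross-terms in $(f^1 + f^4) \w (f^2 - f^5) \w \svph_0|_{\bb{B}_0}$ each contribute $\pm f^{12456}$ with signs summing to zero.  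Hence $f_6 \in \Ker K_{\svph_0|_{\bb{B}_0}}$, so $\La(\svph_0|_{\bb{B}_0}) = 0$ and $\svph_0|_{\bb{B}_0}$ is not stable by \pref{SL-Prop}.

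Putting the three cases together, the images lie in three pairwise disjoint $\GL_+(6;\bb{R})$-invariant sets $\ww[+]{3}(\bb{R}^6)^*$, $\ww[-]{3}(\bb{R}^6)^*$ and $\{\La = 0\}$, completing the proof.  The only genuinely computational step is the anti-diagonal formula for $K$ in the timelike case; everything else reduces to the bookkeeping of contractions and wedge products, with the null case pivoting on the pleasant factorisation of $f_6 \hk \svph_0|_{\bb{B}_0}$ noted above.
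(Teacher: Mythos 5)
Your proof is correct. I checked the key computations: the timelike restriction $\svph_0|_{e_7^\bot} = \th^{123} - \th^{145} + \th^{246} - \th^{356}$ does indeed satisfy $K(e_i) = 2e_{7-i}\ts\th^{123456}$ (verified for all $i$), so $\La > 0$; the spacelike restriction to $e_1^\bot$ is $\rh_-$ after the relabelling $e_{i+1}\mapsto f_i$; and in the null case the restriction of $\svph_0$ to $\<e_1,e_2,e_4,e_5,e_6,e_3+e_7\?$ has $f_6 \hk (\cdot) = (f^1+f^4)\w(f^2-f^5)$ and the four cross-terms in $(f_6 \hk \rh)\w\rh$ cancel in pairs, so $K(f_6) = 0$ and the form is not stable (since by \pref{SL-Prop} stable forms have $K$ proportional to a para-complex or complex structure, hence invertible, so a nonzero kernel forces $\La = 0$).

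Your route differs from the paper's in one genuine respect: you work with $\svph_0$ for all three orbits, choosing the hyperplanes $e_7^\bot$, $e_1^\bot$ and $\<e_1,e_2,e_4,e_5,e_6,e_3+e_7\?$, whereas the paper swaps to the alternative representative $2\svph_1$ (defined in \eref{svph1}) for the timelike and null cases, picking hyperplanes on which the restriction is visually recognisable — $\th^{156}-\th^{237}$ for timelike and $-\th^{237}+\th^{246}-\th^{345}$ for null. The paper buys an at-a-glance identification of the restricted forms at the cost of first establishing that $\svph_1$ is genuinely of $\tld{\Gg}_2$-type (a separate calculation of $Q_{\svph_1}$); your route avoids the auxiliary form entirely but pays for it with the anti-diagonal $K$-computation in the timelike case. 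For the null case the paper computes $K_{\rh_0}$ on all basis vectors and verifies $K_{\rh_0}^2 = 0$, while you exhibit a single kernel vector, which is a slightly lighter but equally valid argument. Both amount to directly evaluating restrictions and applying \pref{SL-Prop}; the choice is essentially a matter of which arithmetic one prefers.
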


\begin{proof}
I consider each orbit in turn.  For the timelike case, it suffices to prove that for some \sg\ 3-form \(\svph\) on \(\bb{R}^7\) and some oriented timelike subspace \(\bb{B} \pc \bb{R}^7\), the restriction \(\svph|_\bb{B}\) is an \slr\ 3-form.  Consider \(2\svph_1\) (see \eref{svph1}) and let \(\bb{B} \pc \bb{R}^7\) be the oriented timelike hyperplane \(\<e_1,e_5,e_6,-e_2,e_3,e_7\?\).  Then, \(2\svph_1|_\bb{B} = \th^{156} - \th^{237}\) is an \slr\ 3-form on \(\bb{B}\).  For the spacelike case, consider \(\svph_0\) and let \(\bb{B} \pc \bb{R}^7\) be the oriented spacelike hyperplane \(\<e_2,e_3,e_4,e_5,e_6,e_7\?\).  Then, \(\svph_0|_\bb{B} = \th^{246} - \th^{257} - \th^{347} - \th^{356}\) is an \slc\ 3-form on \(\bb{B}\).

Finally, for the null case, consider \(2\svph_1\) and let \(\bb{B} \pc \bb{R}^7\) be the oriented null hyperplane \(\<e_2,e_3,e_4,e_5,e_6,e_7\?\).  Define:
\e\label{rh0}
\rh_0 = 2\svph_1|_\bb{B} = -\th^{237} + \th^{246} - \th^{345}.
\ee
The `Hitchin map' \(K_{\rh_0}: \bb{B} \to \bb{B}\ts\ww{6}\lt(\bb{R}^6\rt)^*\) defined in \sref{6&7} is given by:
\ew
K_{\rh_0}(e_i) = \begin{dcases*}  e_5\ts \th^{234567} & if \(i = 2\);\\  e_6\ts \th^{234567} & if \(i = 3\);\\  e_7\ts \th^{234567} & if \(i = 4\);\\  0 & otherwise. \end{dcases*}
\eew
In particular, \(K_{\rh_0}^2 = 0\) and so by the results of \sref{6&7}, \(\rh_0\) is not stable

\end{proof}

The space \(\mc{T}_{\svph_0}\lt(\oGr_{6,0}(\bb{R}^7)\rt)\) shall be termed the orbit of parabolic 3-forms, and denoted \(\ww[0]{3}\lt(\bb{R}^6\rt)^*\).  (The motivation for this name derives from the fact that the stabiliser in \sg\ of a non-zero null vector is a maximal parabolic subgroup of \sg: see \cite[\S5]{A3F&ESLGoTG2}.)  In Djokovi\'{c}'s classification of 3-forms in dimensions \(n \le 8\), parabolic 3-forms correspond to the real form of the complex orbit `IV'; see \cite[\S9]{CoToa8DRVS}.

I remark that \pref{SG23-F} also shows, for all \(\rh \in \mc{T}_{\svph_0}\lt(\oGr_{6,\pm}\lt(\bb{R}^7\rt)\rt)\), that the stabiliser of \(\rh\) in \(\GL_+(6;\bb{R})\) is connected, being isomorphic to \(\SL(3;\bb{C})\) and \(\SL(3;\bb{R})^2\) respectively.  By \tref{FCA->A}, proving the relative \(h\)-principle for \sg\ 3-forms, is thus reduced to the following three lemmas:

\begin{Lem}\label{TL}
For each (equivalently any) \(\rh \in \ww[+]{3}\lt(\bb{R}^6\rt)^*\): \(0 \in \Conv\lt(\mc{N}_{\svph_0}(\rh)\rt)\).
\end{Lem}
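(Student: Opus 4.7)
The plan is to exhibit a specific orbit inside $\mc{N}_{\svph_0}(\rh)$, under the stabiliser of $\rh$, whose convex hull manifestly contains $0$.

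First I would reduce to a convenient choice of $\rh$. Since $\ww[+]{3}\lt(\bb{R}^6\rt)^*$ is a single $\GL_+(6;\bb{R})$-orbit by \pref{SL-Prop}, and since for any $g \in \GL_+(6;\bb{R})$ the extension $\tld{g} \in \GL_+(7;\bb{R})$ determined by $\tld{g}|_{\bb{R}^6} = g$ and $\tld{g}(\ell) = \ell$ (for any complement $\<\ell\? \ds \bb{R}^6 = \bb{R}^7$) fixes the annihilator $\th$, a direct computation yields $\mc{N}_{\svph_0}(g^*\rh) = g^*\mc{N}_{\svph_0}(\rh)$. Hence the property ``$0 \in \Conv(\mc{N}_{\svph_0}(\rh))$'' is $\GL_+(6;\bb{R})$-invariant, so by \pref{SG23-F} it suffices to treat the case $\rh = \svph_0|_{\bb{B}}$ for some oriented timelike hyperplane $\bb{B} \pc \bb{R}^7$. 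Identifying $\bb{B} \cong \bb{R}^6$, picking any $\ell \notin \bb{B}$ and letting $\th \in \lt(\bb{R}^7\rt)^*$ be the corresponding annihilator of $\bb{R}^6$ with $\th(\ell) = 1$, one then obtains $\svph_0 = \th \w \nu_0 + \rh$ with $\nu_0 = (\ell \hk \svph_0)|_{\bb{R}^6}$, exhibiting $\nu_0 \in \mc{N}_{\svph_0}(\rh)$.

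The key observation is that for every $g \in \Stab_{\GL_+(6;\bb{R})}(\rh) \cong \SL(3;\bb{R})^2$, the extension $\tld{g}$ lies in $\GL_+(7;\bb{R})$ and satisfies $\tld{g}^*\svph_0 = \th \w g^*\nu_0 + \rh$. Since $\tld{g}^*\svph_0$ is automatically an $\svph_0$-form, it follows that $g^*\nu_0 \in \mc{N}_{\svph_0}(\rh)$, and thus $\SL(3;\bb{R})^2 \1 \nu_0 \pc \mc{N}_{\svph_0}(\rh)$. Using the para-complex splitting $\bb{R}^6 = E_+ \ds E_-$ induced by $\rh$ (\pref{SL-Prop}), $\SL(3;\bb{R})^2 = \SL(E_+) \x \SL(E_-)$ acts block-diagonally, and $\ww{2}\lt(\bb{R}^6\rt)^*$ decomposes as $\ww{2}E_+^* \ds \lt(E_+^* \ts E_-^*\rt) \ds \ww{2}E_-^*$.

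To finish, I would exhibit $0$ as an explicit convex combination of $\SL(3;\bb{R})^2$-translates of $\nu_0$. Fixing bases of $E_\pm$, consider the $16$ sign-diagonal pairs $(\diag(\ep), \diag(\de)) \in \SL(E_+) \x \SL(E_-)$ with $\ep, \de \in \{\pm 1\}^3$ and $\prod_i \ep_i = \prod_j \de_j = 1$. Each dual basis element $\phi^{ij}$ of $\ww{2}\lt(\bb{R}^6\rt)^*$ transforms under pullback by a sign $\si_i \si_j$, where $\si = (\ep_1, \ep_2, \ep_3, \de_1, \de_2, \de_3)$; a routine check shows that, summing over the $16$ pairs, $\sum \si_i \si_j = 0$ for every $i \ne j$ (using that $\sum_\ep \ep_i = 0$ and $\sum_\ep \ep_i \ep_j = 0$ over the four sign patterns with product $+1$). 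Consequently $\frac{1}{16}\sum_{(g,h)}(g,h)^*\nu_0 = 0$, realising $0$ as a convex combination of $16$ elements of $\mc{N}_{\svph_0}(\rh)$. The main obstacle is correctly identifying the block-diagonal $\SL(E_+) \x \SL(E_-)$-action on the three summands of $\ww{2}\lt(\bb{R}^6\rt)^*$; once this is done, the sign-sum calculation is a direct combinatorial verification.
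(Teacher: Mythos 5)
Your proof is correct, and it takes a genuinely different route from the paper's.  The paper first proves \pref{TRP}, the explicit characterization of \(\mc{N}_{\svph_0}(\rh)\) via the signature of \(\cal{I}_\rh\om\) and the sign of \(\om^3\) (this requires a fairly involved computation of \(Q_{\sph}\) in terms of the para-complex structure), and then checks by hand that three specific 2-forms \(\om_1,\om_2,\om_3\) satisfy the criterion and have vanishing average.  You instead bypass \pref{TRP} entirely: you observe that \(\mc{N}_{\svph_0}(\rh)\) is non-empty (from \pref{SG23-F}, by restriction of \(\svph_0\) to a timelike hyperplane), that it is stable under the action of \(\Stab_{\GL_+(6;\bb{R})}(\rh) \cong \SL(E_+) \x \SL(E_-)\), and then average any single element \(\nu_0\) over the \(16\)-element subgroup of sign-diagonal matrices \(\lt(\diag(\ep),\diag(\de)\rt)\) with \(\prod\ep_i = \prod\de_j = 1\).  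Your sign-sum \(\sum_{(\ep,\de)}\si_i\si_j = 0\) for all \(i\ne j\) is indeed a routine verification (using \(\sum_\ep \ep_i = 0\) and \(\sum_\ep\ep_i\ep_j = 0\) on the four positive-product patterns, and a product decomposition in the mixed case), and since exterior 2-forms have no diagonal terms this kills every basis component of \(\nu_0\).  The trade-offs are complementary: the paper's argument is heavier but produces the explicit description of \(\mc{N}_{\svph_0}(\rh)\), which carries geometric content and is reused as a template in the spacelike and null cases; your argument is shorter and more robust (no element ever needs to be computed explicitly), but reveals nothing about the structure of the set beyond what \pref{SG23-F} already gives.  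One small note: you should state explicitly that the \(16\) sign-diagonal pairs lie in the stabilizer \emph{realized inside} \(\GL_+(6;\bb{R})\) via the block-diagonal embedding of \pref{SL-Prop}, with total determinant \(\prod\ep_i\cdot\prod\de_j = 1 > 0\), so that the resulting extensions \(\tld{g}\) are indeed orientation-preserving on \(\bb{R}^7\); you assert this but the needed condition \(\det\tld{g}>0\) is worth spelling out once.
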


\begin{Lem}\label{SL}
For each (equivalently any) \(\rh \in \ww[-]{3}\lt(\bb{R}^6\rt)^*\): \(0 \in \Conv\lt(\mc{N}_{\svph_0}(\rh)\rt)\).
\end{Lem}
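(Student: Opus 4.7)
By the $\GL_+(6;\bb{R})$-equivariance of the construction $\rh' \mt \mc{N}_{\svph_0}(\rh')$, it suffices to verify the claim for a single representative $\rh \in \ww[-]{3}(\bb{R}^6)^*$.  I would take $\bb{B} = \<e_2,\ldots,e_7\?$ (a spacelike hyperplane, by \pref{SG23-F}) and set $\rh = \svph_0|_\bb{B}$, giving the decomposition $\svph_0 = \th^1 \w \nu + \rh$ with distinguished element $\nu := \iota_{e_1}\svph_0 = \th^{23} - \th^{45} - \th^{67} \in \mc{N}_{\svph_0}(\rh)$.  Under the relabelling $f^i = \th^{i+1}$ one has $\rh = \rh_-$ (see \eref{rh+}) and $\nu = f^{12} - f^{34} - f^{56}$.

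The first step of the plan is to populate $\mc{N}_{\svph_0}(\rh)$ using the stabiliser action of $\mc{S}(\rh) = \SL(3;\bb{C})$.  For every $d \in \SL(3;\bb{C})$, the block automorphism $G_d := \Id_\bb{R} \ds d$ lies in $\GL_+(7;\bb{R})$ and satisfies $G_d^*\svph_0 = \th^1 \w d^*\nu + \rh$, so $d^*\nu \in \mc{N}_{\svph_0}(\rh)$; combined with scale-invariance (\lref{Scale-Inv}), the entire $\bb{R}_{>0} \x \SL(3;\bb{C})$-orbit $\mc{O}$ of $\nu$ lies in $\mc{N}_{\svph_0}(\rh)$.

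Next I would identify $\mc{O}$ geometrically.  Since $\rh$ induces a complex structure $J_\rh$ on $\bb{R}^6 \cong \bb{C}^3$ under which $\SL(3;\bb{C})$ acts $\bb{C}$-linearly (\pref{SL-Prop}), a short computation shows that $\nu$ is a real $(1,1)$-form whose associated Hermitian form has complex signature $(2,1)$.  By Sylvester's law of inertia, applied to the action of $\bb{R}_{>0} \x \SL(3;\bb{C})$ on Hermitian forms (which preserves signature while freely rescaling the determinant via the $\bb{R}_{>0}$-factor), $\mc{O}$ consists of \emph{all} real $(1,1)$-forms of Hermitian signature $(2,1)$.  I would then exhibit the explicit triple
\ew
\omega_1 &= 2f^{12} - f^{34} - f^{56},\\
\omega_2 &= -f^{12} + 2f^{34} - f^{56},\\
\omega_3 &= -f^{12} - f^{34} + 2f^{56},
\eew
each of which is diagonal in the standard complex basis $(f_1+if_2,f_3+if_4,f_5+if_6)$ with Hermitian eigenvalues a permutation of $(-2,1,1)$, and hence lies in $\mc{O} \pc \mc{N}_{\svph_0}(\rh)$.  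Since $\omega_1 + \omega_2 + \omega_3 = 0$, it follows that $0 = \tfrac{1}{3}(\omega_1+\omega_2+\omega_3) \in \Conv(\mc{N}_{\svph_0}(\rh))$, as required.

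The main obstacle, in comparison with the timelike case of \lref{TL}, is that $\mc{O}$ is \emph{not} closed under negation: the form $-\nu$ has Hermitian signature $(1,2)$ rather than $(2,1)$, so abundance cannot be obtained from a simple two-element averaging argument $\nu \mapsto -\nu$.  One must instead construct a cyclic triple with the asymmetric eigenvalue pattern $(2,-1,-1)$ in place of the symmetric $(1,-1,-1)$ of $\nu$, and verify that such forms indeed lie in $\mc{O}$; this final verification amounts to an explicit matching of Hermitian determinants via $\bb{R}_{>0}$-scaling, and is routine.
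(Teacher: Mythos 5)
Your proof is correct and arrives at the same explicit triple \(\om_1,\om_2,\om_3\) as the paper, but by a genuinely different route.  The paper's proof of \lref{SL} rests on the unnamed proposition immediately preceding it, which characterises \(\mc{N}_{\svph_0}(\rh)\) as the set of \(\om\) for which \(\cal{J}_\rh\om\) has signature \((2,4)\); that characterisation requires the same expansion of \(Q_{\sph}\) carried out in \pref{TRP}, and the three \(\om_i\) are then checked against it by direct computation of \(\cal{J}_{\rh_-}\om_i\).  You bypass the characterisation entirely: the seed element \(\nu = e_1 \hk \svph_0 \in \mc{N}_{\svph_0}(\rh)\) comes for free from the decomposition \(\svph_0 = \th^1 \w \nu + \rh\), and you then populate \(\mc{N}_{\svph_0}(\rh)\) along the \(\bb{R}_{>0}\x\SL(3;\bb{C})\)-orbit of \(\nu\), with Sylvester's law (together with the \(\bb{R}_{>0}\)-factor to absorb the unimodularity constraint) identifying that orbit as the nondegenerate real \((1,1)\)-forms of the fixed Hermitian signature.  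This is more self-contained, since the group-theoretic population of \(\mc{N}\) replaces the \(Q\)-computation; what it does not give you, of course, is the full description of \(\mc{N}_{\svph_0}(\rh)\), but that is not needed for abundance alone.  Two small remarks.  First, your Hermitian sign convention is the opposite of the paper's: under the normalisation \((u\hk\rh)\w\rh = -2J_\rh(u)\hk vol_\rh\), the form \(\nu\) has complex signature \((1,2)\), matching the real signature \((2,4)\) of \(\cal{J}_{\rh_-}\nu\); your \((2,1)\) is internally consistent and does no harm, but be aware the two conventions differ.  Second, your closing aside that the failure of the \(\nu\mt-\nu\) trick distinguishes this case from \lref{TL} is not quite right: in the timelike case \(-\om\) also fails to lie in \(\mc{N}_{\svph_0}(\rh)\), because the auxiliary condition \(\om^3<0\) in \pref{TRP} reverses sign under negation, so both cases require a cyclic three-term average for essentially the same reason.
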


\begin{Lem}\label{NL}
For each (equivalently any) \(\rh \in \ww[0]{3}\lt(\bb{R}^6\rt)^*\), \(\Stab_{\GL_+(6;\bb{R})}(\rh)\) is connected and \(0 \in \Conv\lt(\mc{N}_{\svph_0}(\rh)\rt)\).
\end{Lem}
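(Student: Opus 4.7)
The plan is to prove both assertions for a single representative. By the transitivity of $\tld{\Gg}_2$ on $\oGr_{6,0}(\bb{R}^7)$ established in \sref{SG2hP} and the equivariance of $\mc{T}_{\svph_0}$, it suffices to work with the explicit $\rh_0 = -\th^{237}+\th^{246}-\th^{345}$ of \eref{rh0}, viewed as a 3-form on $\bb{R}^6 = \<e_2,\ldots,e_7\?$. Because $\rh_0$ is parabolic, the 3-plane $\bb{W}_0 = \Ker K_{\rh_0}$ is canonically associated to $\rh_0$ and is preserved by every element of $\Stab_{\GL(6;\bb{R})}(\rh_0)$. Choosing a complementary 3-plane $\bb{V}_0$ and writing stabiliser elements in block form $F = \bpm A & 0 \\ C & B \epm$, the equation $F^*\rh_0 = \rh_0$ decomposes into (i) a determining relation expressing $B$ in terms of $A$ via the isomorphism $\Phi\colon \bb{W}_0^* \to \ww{2}\bb{V}_0^*$ induced by $\rh_0$, from which $\det F = (\det A)^{-1}$; and (ii) a linear constraint on $C$ which for $A = I$ reduces to $\Tr C = 0$. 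Consequently $\Stab_{\GL_+(6;\bb{R})}(\rh_0)$ is a semidirect product of an $8$-dimensional abelian unipotent radical by the connected Levi factor $\GL_+(3;\bb{R})$ (the condition $\det A > 0$), and is therefore connected.

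For abundance, the starting element is $\nu_0 = \th^{47} + \th^{56}$, which lies in $\mc{N}_{\svph_0}(\rh_0)$ thanks to the decomposition $2\svph_1 = \th^1 \w \nu_0 + \rh_0$. For any $F \in \Stab_{\GL(6;\bb{R})}(\rh_0)$, the extension $\hat F := \diag(\sgn \det F, F) \in \GL_+(7;\bb{R})$ yields $\sgn(\det F)\cdot F^*\nu_0 \in \mc{N}_{\svph_0}(\rh_0)$. Combining this with the unipotent action (which, upon averaging over $C$ and $-C$, produces $\nu_0 + q \in \Conv(\mc{N}_{\svph_0}(\rh_0))$ for $q$ ranging over the quadratic image in $\ww{2}\bb{V}_0^*$) and with scale invariance (\lref{Scale-Inv}), the goal is to exhibit finitely many elements of $\mc{N}_{\svph_0}(\rh_0)$ whose positive convex combination vanishes. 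A cleaner backup is a limit argument from the stable case: by \lref{TL} and \lref{SL}, $0 \in \Conv(\mc{N}_{\svph_0}(\rh_t))$ for any family $\rh_t$ deforming $\rh_0$ into $\ww[\pm]{3}(\bb{R}^6)^*$ for $t \ne 0$, and openness of $\mc{N}_{\svph_0}(\rh_0)$ together with a uniform bound on the coefficients transports the representation to $t = 0$.

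The main obstacle is the abundance step. A direct computation shows that for every $F$ in the stabiliser, the linear functional $\ph(\nu) = \nu^{14} + \nu^{25} + \nu^{36}$ (with $\nu = \sum_{i<j} \nu^{ij} g^{ij}$ in the basis $g_i := e_{i+1}$) satisfies $\ph(\sgn(\det F)\cdot F^*\nu_0) > 0$, while unipotent translations preserve $\ph(\nu_0) = 1$; thus the orbit of $\nu_0$ under $\Stab_{\GL_+(6;\bb{R})}(\rh_0)$ (together with the sign-flip extensions) lies in the open half-space $\{\ph > 0\}$, and symmetry alone cannot yield $0 \in \Conv(\mc{N}_{\svph_0}(\rh_0))$. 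The crux is therefore to exhibit elements of $\mc{N}_{\svph_0}(\rh_0)$ outside this orbit on which $\ph$ takes non-positive values --- plausible since $\mc{N}_{\svph_0}(\rh_0)$ is a non-empty open subset of a $15$-dimensional space --- or else to execute the limit argument carefully enough that the approximating convex representations remain within the open set $\mc{N}_{\svph_0}(\rh_0)$ at $t = 0$.
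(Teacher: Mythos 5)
Your argument splits into two pieces of very different quality, so let me address them separately.

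\textbf{Connectedness.} Your argument here is essentially correct and matches the paper's in substance, merely repackaged. The paper establishes connectedness by writing \(\Stab_{\GL_+(6;\bb{R})}(\rh_0) = \xi(\SL(3;\bb{R})) \cdot \cal{G}\) for an explicitly contractible \(\cal{G}\); you instead exhibit it as a semidirect product (\(8\)-dimensional abelian unipotent radical) \(\rtimes \GL_+(3;\bb{R})\), where the unipotent radical consists of block-strictly-lower-triangular matrices \(\begin{pmatrix} I & 0 \\ C & I\end{pmatrix}\) subject to \(\Tr C = 0\). Both the determining relation for \(B\) via \(\Ph:\bb{W}_0^*\to\ww{2}\bb{V}_0^*\) (from which \(\det F = \det A \cdot \det B = \det A\cdot(\det A)^{-2} = (\det A)^{-1}\)), and the reduction of the \(C\)-constraint to \(\Tr C = 0\) at \(A = I\), check out against the paper's explicit matrix computation. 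Either packaging gives the result.

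\textbf{Abundance.} Here there is a genuine gap, and moreover your diagnostic computation is incorrect. You claim that the linear functional \(\ph(\nu)=\nu^{14}+\nu^{25}+\nu^{36}\) (i.e.\ the sum of the \(\th^{25}\), \(\th^{36}\), \(\th^{47}\) coefficients) is preserved by unipotent translations of \(\nu_0 = \th^{47}+\th^{56}\), and that \(\ph(\sgn(\det F)F^*\nu_0)>0\) for every \(F\) in the stabiliser. Neither is true. Writing \(F_C = \begin{pmatrix} I & 0 \\ C & I \end{pmatrix}\) with \(C = (c_{ij})\) (rows indexed by \(e_5,e_6,e_7\), columns by \(e_2,e_3,e_4\)), one finds
\ew
F_C^*\nu_0 = \th^{47}+\th^{56} - c_{21}\th^{25} + c_{12}\th^{36} + (\text{other terms not seen by }\ph),
\eew
so \(\ph(F_C^*\nu_0) = 1 + c_{12} - c_{21}\), which is negative for, say, \(c_{21}=2\), \(c_{12}=0\), \(\Tr C = 0\). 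So the half-space obstruction you describe does not exist. That said, removing a false obstruction does not repair the proof: you still have not exhibited a convex combination of elements of \(\mc{N}_{\svph_0}(\rh_0)\) equal to zero, and you explicitly acknowledge this. The paper dispatches abundance by first proving an algebraic characterization of \(\mc{N}_{\svph_0}(\rh_0)\) as the set of \(\om\) for which the symmetric form \(\cal{H}_{\rh_0}\om\) has signature \((2,1,3)\) (\pref{NRP}, itself requiring the kernel and signature lemmas), and then writing down three explicit 2-forms satisfying this signature condition whose average is zero. You have no analogue of that characterization, which is precisely why your search for suitable elements stalls. As for the proposed limit argument from \lrefs{TL} and \ref{SL}: it is not merely ``careful execution'' that is missing. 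If \(0 = \sum_i \la_i\om_i(t)\) with \(\om_i(t)\in\mc{N}_{\svph_0}(\rh_t)\) for \(t>0\), nothing guarantees the \(\om_i(t)\) converge as \(t\to 0\), and even after a compactness argument (normalize by scale invariance, apply Carath\'{e}odory) the limit points may lie on the boundary of the open set \(\mc{N}_{\svph_0}(\rh_0)\) rather than in it. This is a substantive obstacle and would require a new idea to overcome.
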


The rest of this paper is devoted to proving each lemma in turn.\\

\subsection{Timelike case: \lref{TL}}

Let \(\rh \in \ww[+]{3}\lt(\bb{R}^6\rt)^*\) be an \slr\ 3-form.  The decomposition \(\bb{R}^6 = E_+ \ds E_-\) gives rise to a decomposition \(\lt(\bb{R}^6\rt)^* \cong E_+^* \ds E_-^*\) and hence:
\ew
\ww{p}\lt(\bb{R}^6\rt)^* \cong \Ds_{r+s = p} \ww{r}E_+^* \ts \ww{s}E_-^* = \Ds_{r+s = p} \ww{r,s}\lt(\bb{R}^6\rt)^*.
\eew
\(\rh\) defines an element of \(\ww{3,0}\lt(\bb{R}^6\rt)^* \ds \ww{0,3}\lt(\bb{R}^6\rt)^*\); thus the map:
\e\label{u-i-rh+}
\bcd[row sep = 0pt]
\bb{R}^6 \ar[r] & \ww{2,0}\lt(\bb{R}^6\rt)^* \ds \ww{0,2}\lt(\bb{R}^6\rt)^*\\
u \ar[r, maps to] & u \hk \rh
\ecd
\ee
is an \(\SL(3;\bb{R})^2\)-equivariant isomorphism.  Moreover, \(I_\rh\) defines a map:
\e\label{calIrh}
\bcd[row sep = 0pt]
\cal{I}_\rh:\ww{2}\lt(\bb{R}^6\rt)^* \ar[r] & \ss{2}\lt(\bb{R}^6\rt)^*\\
\om \ar[r, maps to] & \Big\{(a,b) \mt \frac{1}{2}\lt[\om(I_\rh a,b) + \om(I_\rh b,a)\rt]\Big\}
\ecd
\ee
(where \(\ss{2}\) denotes the symmetric square) which vanishes on the subspace \(\ww{2,0}\lt(\bb{R}^6\rt)^* \ds \ww{0,2}\lt(\bb{R}^6\rt)^*\) and satisfies \(\cal{I}_\rh \om(u_1, u_2) = \om(I_\rh u_1, u_2)\) for \(\om \in \ww{1,1}\lt(\bb{R}^6\rt)^*\).  In particular, \(\cal{I}_\rh\) defines an injection \(\ww{1,1}\lt(\bb{R}^6\rt)^* \emb \ss{2}\lt(\bb{R}^6\rt)^*\).

\begin{Prop}\label{TRP}
Let \(\rh\) be an \slr\ 3-form on \(\bb{R}^6\).  Then:
\ew
\mc{N}_{\svph_0}(\rh) = \lt\{ \om \in \ww{2}\lt(\bb{R}^6\rt)^* ~\m|~ \cal{I}_\rh\om \text{ has signature } (3,3) \text{ and } \om^3<0 \rt\}.
\eew
\end{Prop}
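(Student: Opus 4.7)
By $\GL_+(6;\bb{R})$-equivariance of both sides, it suffices to verify the equality at the standard representative $\rh = \rh_+ = \th^{123} + \th^{456}$, with the eigenspace splitting $\bb{R}^6 = E_+ \ds E_-$ from \pref{SL-Prop} and the decomposition $\om = \om_+ + \mu + \om_-$, where $\om_\pm \in \ww{2}E_\pm^*$ and $\mu \in E_+^* \ts E_-^* \cong \ww{1,1}\lt(\bb{R}^6\rt)^*$.  Identifying $\bb{R}^7 = \<e_0\? \ds \bb{R}^6$ with $\th = \th^0$, the candidate $\svph = \th \w \om + \rh$ is a $\svph_0$-form \iff\ the associated bilinear form $B_\svph(u,v) = \tfrac{1}{3}(u \hk \svph) \w (v \hk \svph) \w \svph$, normalised by the positive top form $\th^{01\ldots6}$, has signature $(3,4)$ on $\bb{R}^7$.

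I would first carry out direct expansions of this bilinear form.  For $v = e_0$: $e_0 \hk \svph = \om$, and $\om^2 \w \svph = \th \w \om^3$ (since $\om^2 \w \rh = 0$ as a 7-form on $\bb{R}^6$), giving $3 B_\svph(e_0,e_0) = \th \w \om^3$.  For $u, v \in \bb{R}^6$, writing $u \hk \svph = (u \hk \rh) - \th \w (u \hk \om)$ and expanding yields
\ew
3 B_\svph(u,v) = \th \w \lt[(u \hk \rh) \w (v \hk \rh) \w \om - (u \hk \om) \w (v \hk \rh) \w \rh - (u \hk \rh) \w (v \hk \om) \w \rh\rt].
\eew
Since $u \hk \rh_+ \in \ww{2}E_\pm^*$ for $u \in E_\pm$, the dimensional constraints $\ww{4}E_\pm^* = 0$ force $B_\svph$ to vanish on both $E_+ \x E_+$ and $E_- \x E_-$, mirroring the vanishing of $\cal{I}_\rh$ on $\ww{2,0} \ds \ww{0,2}$.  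A further coordinate computation should then verify that $B_\svph|_{E_+ \x E_-}$ equals a positive scalar multiple of $\cal{I}_\rh \om|_{E_+ \x E_-} = \cal{I}_\rh \mu|_{E_+ \x E_-}$.

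Consequently, since $E_\pm$ are totally isotropic for $B_\svph|_{\bb{R}^6}$, this restriction is non-degenerate---and hence automatically of signature $(3,3)$---\iff\ $\cal{I}_\rh \om$ has signature $(3,3)$.  Applying Sylvester's law to the block decomposition $\bb{R}^7 = \<e_0\? \ds \bb{R}^6$, the overall signature of $B_\svph$ equals $(3,4)$ \iff\ $B_\svph|_{\bb{R}^6}$ has signature $(3,3)$ and the Schur complement of $\bb{R}^6$ in $B_\svph$ is negative.  To finish, I would compute the cross-terms $B_\svph(e_0, u) = \th \w \rh \w (u \hk \om) \w \om$ and show, by a final algebraic manipulation, that the Schur complement reduces to a positive multiple of $\om^3$, thereby recovering exactly the condition $\om^3 < 0$.

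The hard part will be the sign-sensitive bookkeeping in the Schur-complement step, together with the explicit identification of $B_\svph|_{E_+ \x E_-}$ with a positive multiple of $\cal{I}_\rh \mu|_{E_+ \x E_-}$: both involve routine multilinear-algebra manipulations, but the various proportionality constants must be tracked carefully so that the two stated conditions on $\om$ come out of the calculation exactly on the nose.
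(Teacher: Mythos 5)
Your proposal diverges from the paper's proof at a crucial point, and the divergence opens a genuine gap.  The paper does \emph{not} carry the components $\om_\pm$ through the signature computation.  Instead, it first observes that for the shear $F = \bpm 1 & 0\\ u & \Id\epm$ one has $F^*\lt(\th\w\om+\rh\rt) = \th\w\lt(\om + u\hk\rh\rt) + \rh$, and since the image of $u\mt u\hk\rh$ is exactly $\ww{2,0}\lt(\bb{R}^6\rt)^*\ds\ww{0,2}\lt(\bb{R}^6\rt)^*$, this yields
\ew
\mc{N}_{\svph_0}(\rh) = \lt[\mc{N}_{\svph_0}(\rh)\cap\ww{1,1}\lt(\bb{R}^6\rt)^*\rt]\x\ww{2,0}\lt(\bb{R}^6\rt)^*\x\ww{0,2}\lt(\bb{R}^6\rt)^*,
\eew
reducing everything to $\om$ of type $(1,1)$.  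In that case $\om\w\rh=0$, so the cross-term $Q_{\sph}(e_0,u) = -\tfrac{1}{2}\,\th\w(u\hk\om)\w\om\w\rh$ vanishes identically, $\<e_0\?$ and $\bb{R}^6$ are $Q_{\sph}$-orthogonal, and the signature splits as $(3,3)\ds(0,1)$ with no Schur complement to compute.

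Your plan keeps $\om_\pm$ in play and asks the Schur complement to come out as a positive multiple of $\om^3$.  It will not: for $\rh=\rh_+$ one has $\om\w\rh_+ = \om_+\w\th^{456} + \om_-\w\th^{123}$, which is nonzero as soon as $\om_+\ne0$ or $\om_-\ne0$, so the cross-terms $Q_{\sph}(e_0,u)$ genuinely contribute and the Schur complement $q_0 - c^{\top}Q^{-1}c$ differs from $q_0 = \tfrac{1}{6}\om^3$ by a nontrivial correction.  There is a conceptual obstruction here, not merely bookkeeping: $\mc{N}_{\svph_0}(\rh)$ and the condition on $\cal{I}_\rh\om$ are both invariant under $\om\mt\om+u\hk\rh$, whereas $\om^3 = \lt(\om_{(1,1)}\rt)^3 + 6\,\om_+\w\om_-\w\om_{(1,1)}$ is not, so the Schur complement cannot reduce to a positive multiple of $\om^3$ once $\om_\pm$ are present.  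The inequality $\om^3<0$ in the statement should therefore be read, as in the paper's argument, only after $\om$ has been gauged into $\ww{1,1}\lt(\bb{R}^6\rt)^*$.  The shear reduction is the missing idea and precisely what lets one bypass the Schur-complement difficulty you correctly anticipate as the hard part.

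A secondary point: your reason for the vanishing of $Q_{\sph}$ on $E_\pm\x E_\pm$ is only partly correct.  The constraint $\ww{4}E_\pm^* = 0$ kills the term $(u\hk\rh)\w(v\hk\rh)\w\om$ but not $-(u\hk\om)\w(v\hk\rh)\w\rh$ or $-(u\hk\rh)\w(v\hk\om)\w\rh$, each of which is individually nonzero for general $\om$; they cancel against each other via $(v\hk\rh)\w\rh = \pm\lt(v\hk vol_\rh\rt)$ and sign bookkeeping under $u\leftrightarrow v$, not for dimensional reasons.
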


\begin{proof}
Firstly, I claim:
\e\label{1-1-split-+}
\mc{N}_{\svph_0}(\rh) = \lt[\mc{N}_{\svph_0}(\rh) \cap \ww{1,1}\lt(\bb{R}^6\rt)^*\rt] \x \ww{2,0}\lt(\bb{R}^6\rt)^* \x \ww{0,2}\lt(\bb{R}^6\rt)^*.
\ee
Indeed, let \(\om \in \ww{2}\lt(\bb{R}^6\rt)^*\) and define a 3-form on \(\bb{R}^7 \cong \bb{R} \ds \bb{R}^6\) via:
\e\label{sph-defn}
\sph = \th \w \om + \rh.
\ee
Let \(u \in \bb{R}^6\) and consider the orientation-preserving automorphism of \(\bb{R}^7\) given by:
\ew
F = \bpm
1_{1 \x 1} & 0_{1 \x 6}\\
u_{6 \x 1} & \Id_{6 \x 6}
\epm
\eew
Then, \(F^*\sph = \th \w \lt(\om + u \hk \rh\rt) + \rh\).  Thus, \(\om \in \mc{N}_{\svph_0}(\rh)\) \iff\ \(\om + u \hk \rh \in \mc{N}_{\svph_0}(\rh)\) for all \(u \in \bb{R}^6\) and \eref{1-1-split-+} follows by \eref{u-i-rh+}.  Moreover, given \(\om \in \ww{2}\lt(\bb{R}^6\rt)^*\), \(\cal{I}_\rh\om\) and \(\om^3\) only depend on the \((1,1)\)-part of \(\om\).  Thus, to prove \pref{TRP}, it suffices to prove:
\ew
\mc{N}_{\svph_0}(\rh) \cap \ww{1,1}\lt(\bb{R}^6\rt)^* = \lt\{ \om \in \ww{1,1}\lt(\bb{R}^6\rt)^* ~\m|~ \cal{I}_\rh\om \text{ has signature } (3,3) \text{ and } \om^3<0 \rt\}.
\eew

Recall the invariant quadratic form \(Q_{\sph}\) defined in \pref{Stable-in-7}.  Using \eref{sph-defn}, for \(a\in\bb{R}\) and \(u\in\bb{R}^6\) one may compute:
\ew
(ae_1 + u)\hk\sph = a\om - \th \w \lt(u \hk \om\rt) + u \hk \rh
\eew
and hence:
\e\label{2b-simp+}
6Q_{\sph}(ae_1 + u) &= \lt[(ae_1 + u)\hk\sph\rt]^2 \w \sph\\
&= a^2 \th \w \om^3 + \underbrace{\th \w \om \w (u \hk \rh)^2}_{(1)} - \underbrace{2a \th \w (u \hk \om) \w \om \w \rh}_{(2)} + \underbrace{2 a \th \w \om^2 \w (u \hk \rh)}_{(3)}\\
&\hs{3cm} - \underbrace{2 \th \w (u\hk\om) \w (u\hk\rh) \w \rh}_{(4)}.
\ee

Term (2) vanishes since \(\om \in \ww{1,1}\lt(\bb{R}^6\rt)^*\) and \(\rh \in \ww{3,0}\lt(\bb{R}^6\rt)^* \ds \ww{0,3}\lt(\bb{R}^6\rt)^*\) and hence \(\om \w \rh = 0\).  To simplify the remaining terms, I utilise the following lemma:
\begin{Lem}\label{SwapForm}
For \(\al\in\ww{p}(\bb{R}^n)^*\), \(\be\in\ww{q}(\bb{R}^n)^*\) with \(p+q = n+1\):
\ew
\forall u\in\bb{R}^n: (u\hk\al)\w\be = (-1)^{p-1}\al\w(u\hk\be).
\eew
\end{Lem}
(To prove this lemma, it suffices by linearity to consider \(u = e_1\), \(\al = \th^{i_1...i_p}\), \(\be = \th^{j_1...j_q}\) with \(1\le i_1< ... < i_p \le n\) and \(1\le j_1 < ... < j_q \le n\). The result then follows by direct calculation.)

Returning to \eref{2b-simp+}, firstly consider term \((3)\). Applying \lref{SwapForm} on \(\bb{R}^6\) yields \(\om^2\w(u\hk\rh) = -(u\hk\om^2)\w\rh = -2(u\hk\om)\w\om\w\rh\), which vanishes as above.  Similarly, for term \((1)\), since \(\om \w (u \hk \rh)^2 = \om \w \lt[u \hk \lt((u\hk\rh)\w\rh\rt)\rt]\), \lref{SwapForm} yields:
\ew
\om \w (u \hk \rh)^2 = -(u\hk\om)\w(u\hk\rh)\w\rh
\eew
and hence terms \((1)\) and \((4)\) may be combined to give \(- 3 \th \w (u\hk\om) \w (u\hk\rh) \w \rh\).  However, \((u \hk \rh) \w \rh = I_{\rh}(u) \hk vol_{\rh}\) by definition of \(I_{\rh}\) and thus by \lref{SwapForm} again:
\ew
-(u\hk\om) \w (u\hk\rh) \w \rh = \cal{I}_\rh\om(u,u) \cdot vol_{\rh}.
\eew
Hence, terms \((1)\) and \((4)\) may collectively be written as \(3 \cal{I}_\rh\om(u,u) \cdot \th \w vol_{\rh}\) and whence:
\e\label{Q-calc-orth}
6Q_{\sph}(ae_1 + u) = a^2\th \w \om^3 + 3\cal{I}_\rh\om(u,u) \th \w vol_{\rh}.
\ee
In particular, \(\bb{L} = \bb{R} \ds 0\) and \(\bb{B} = 0 \ds \bb{R}^6\) are orthogonal \wrt\ \(Q_{\sph}\).

Recall that \(\sph\) is a \sg\ 3-form \iff\ \(Q_{\sph}\) has signature \((3,4)\).  Moreover, since \(\rh = \sph|_\bb{B}\) is an \slr\ 3-form, whenever \(\sph\) is a \sg\ 3-form the hyperplane \(\bb{B} \pc \bb{R}^7\) is timelike by \pref{SG23-F}.  Since \(\bb{L}\) and \(\bb{B}\) are orthogonal, it follows that \(\sph\) is a \sg\ 3-form \iff\ \(Q_{\sph}\) has signature \((3,3)\) on \(\bb{B}\) and signature \((0,1)\) on \(\bb{L}\).  However, by \eref{Q-calc-orth} this is precisely the statement that \(\cal{I}_\rh\om\) has signature \((3,3)\) and \(\om^3 < 0\), as required.

\end{proof}

I now prove \lref{TL}:

\begin{proof}[Proof of \lref{TL}]
\Wlg\ take \(\rh = \rh_+\) (see \eref{rh+}) and consider the 2-forms:
\ew
\om_1 = 2 \th^{14} - \th^{25} - \th^{36}, \hs{5mm} \om_2 = - \th^{14} + 2 \th^{25} - \th^{36} \et \om_3 = - \th^{14} - \th^{25} + 2 \th^{36}.
\eew
Then:
\caw
\cal{I}_{\rh_+}\om_1 = 4 \th^1 \s \th^4 - 2 \th^2 \s \th^5  - 2 \th^3 \s \th^6, \hs{5mm} \cal{I}_{\rh_+}\om_2 = -2 \th^1 \s \th^4 + 4 \th^2 \s \th^5  - 2 \th^3 \s \th^6\\
\et \cal{I}_{\rh_+}\om_3 = -2 \th^1 \s \th^4 - 2 \th^2 \s \th^5  + 4 \th^3 \s \th^6
\caaw
which all have signature \((3,3)\).  Moreover, \(\om_i^3 = -12\th^{12...6}\) for \(i=1,2,3\).  Thus, by \pref{TRP} \(\om_i \in \mc{N}_{\svph_0}(\rh_+)\) for all \(i = 1,2,3\).  Therefore:
\ew
\Conv \lt(\mc{N}_{\svph_0}(\rh_+)\rt) \owns \frac{1}{3}\lt(\om_1 + \om_2 + \om_3\rt) = 0,
\eew
as required.

\end{proof}

\subsection{Spacelike case: \lref{SL}}

The spacelike case is closely analogous to the timelike case; accordingly, the exposition in this subsection will be brief.  Let \(\rh \in \ww[-]{3}\lt(\bb{R}^6\rt)^*\) be an \slc\ 3-form.  The complex structure \(J_\rh\) induces a type-decomposition \(\ww{p}\lt(\bb{R}^6\rt)^* \ts_\bb{R} \bb{C} = \Ds_{r+s = p} \ww{r,s}\lt(\bb{R}^6\rt)^*\).  As in \cite[p.\ 32]{RG&HG}, for \(r \ne s\), write \(\ls\ww{r,s}\lt(\bb{R}^6\rt)^*\rs = \lt(\ww{r,s}\lt(\bb{R}^6\rt)^* \ds \ww{s,r}\lt(\bb{R}^6\rt)^*\rt) \cap \ww{r+s}\lt(\bb{R}^6\rt)^*\) for the set of real forms of type \((r,s)+(s,r)\); likewise, write \(\lt[\ww{r,r}\lt(\bb{R}^6\rt)^*\rt] = \ww{r,r}\lt(\bb{R}^6\rt)^* \cap \ww{2r}\lt(\bb{R}^6\rt)^*\) for the set of real forms of type \((r,r)\).  Then, for all \(p\):
\ew
\ww{2p}\lt(\bb{R}^6\rt)^* = \lt(\Ds_{\substack{r+s = 2p\\ r < s}} \ls\ww{r,s}\lt(\bb{R}^6\rt)^*\rs\rt) \ds \lt[\ww{p,p}\lt(\bb{R}^6\rt)^*\rt] \et \ww{2p+1}\lt(\bb{R}^6\rt)^* = \Ds_{\substack{r+s = 2p\\ r < s}} \ls\ww{r,s}\lt(\bb{R}^6\rt)^*\rs.
\eew
As in the timelike case, \(\rh\) defines an element of \(\ls\ww{3,0}\lt(\bb{R}^6\rt)^*\rs\) and \(u \in \bb{R}^6 \mt u \hk \rh \in \ls\ww{2,0}\lt(\bb{R}^6\rt)^*\rs\) defines an \(\SL(3;\bb{C})\)-equivariant isomorphism.  Moreover, \(J_\rh\) defines a map:
\e\label{calJrh}
\bcd[row sep = 0pt]
\cal{J}_\rh:\ww{2}\lt(\bb{R}^6\rt)^* \ar[r] & \ss{2}\lt(\bb{R}^6\rt)^*\\
\om \ar[r, maps to] & \Big\{(a,b) \mt -\frac{1}{2}\lt[\om(J_\rh a,b) + \om(J_\rh b,a)\rt]\Big\}
\ecd
\ee
(note the difference in sign convention from the timelike case) which vanishes on the subspace \(\ls\ww{2,0}\lt(\bb{R}^6\rt)^*\rs\) and satisfies \(\cal{J}_\rh \om(u_1, u_2) = -\om(J_\rh u_1, u_2)\) for \(\om \in \lt[\ww{1,1}\lt(\bb{R}^6\rt)^*\rt]\).  In particular, \(\cal{J}_\rh\) defines an injection \(\lt[\ww{1,1}\lt(\bb{R}^6\rt)^*\rt] \emb \ss{2}\lt(\bb{R}^6\rt)^*\).

\begin{Prop}
Let \(\rh\) be an \slc\ 3-form on \(\bb{R}^6\).  Then:
\ew
\mc{N}_{\svph_0}(\rh) = \lt\{ \om \in \ww{2}\lt(\bb{R}^6\rt)^* ~\m|~ \cal{J}_\rh\om \text{ has signature } (2,4) \rt\}.
\eew
\end{Prop}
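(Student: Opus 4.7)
The plan is to proceed in direct analogy with the proof of \pref{TRP}, pausing only at the two places where the calculation differs. First, applying the orientation-preserving automorphism $F = \bpm 1 & 0 \\ u & \Id \epm$ of $\bb{R}^7$ to $\sph = \th \w \om + \rh$ yields $F^*\sph = \th \w (\om + u \hk \rh) + \rh$; since the map $u \mt u \hk \rh$ is an $\SL(3;\bb{C})$-equivariant isomorphism $\bb{R}^6 \to \ls\ww{2,0}\lt(\bb{R}^6\rt)^*\rs$, the set $\mc{N}_{\svph_0}(\rh)$ is invariant under translation by $\ls\ww{2,0}\lt(\bb{R}^6\rt)^*\rs$.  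Since, moreover, $\mc{J}_\rh$ vanishes on $\ls\ww{2,0}\lt(\bb{R}^6\rt)^*\rs$, it suffices to characterise $\mc{N}_{\svph_0}(\rh) \cap \lt[\ww{1,1}\lt(\bb{R}^6\rt)^*\rt]$.

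For $\om \in \lt[\ww{1,1}\lt(\bb{R}^6\rt)^*\rt]$, I would then recompute $Q_\sph(ae_1 + u)$ using the same four-term expansion as in \pref{TRP}.  The two key simplifications still apply: $\om \w \rh = 0$ because $\om$ has complex type $(1,1)$ and $\rh$ has type $(3,0)+(0,3)$ (so $\om \w \rh$ would have type $(4,1)+(1,4)$, which vanishes in complex dimension $3$), killing terms $(2)$ and $(3)$; and \lref{SwapForm} combines terms $(1)$ and $(4)$ into $-3\th \w (u\hk\om) \w (u\hk\rh) \w \rh$.  The two differences from the timelike case are: (i) the identity $(u\hk\rh) \w \rh = -2 J_\rh u \hk vol_\rh$ (which follows from the normalisation $J_\rh = -\frac{1}{2}vol_\rh^{-1}K_\rh$) replaces the timelike analogue $(u\hk\rh) \w \rh = I_\rh u \hk vol_\rh$; and (ii) the sign convention in the definition of $\mc{J}_\rh$ differs from that of $\mc{I}_\rh$ (see \eref{calJrh}).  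These combine to give
\ew
6Q_\sph(ae_1 + u) = a^2\, \th \w \om^3 + 6\, \mc{J}_\rh\om(u,u)\, \th \w vol_\rh,
\eew
which in particular shows that $\bb{L} = \bb{R}e_1$ and $\bb{B} = \bb{R}^6$ are $Q_\sph$-orthogonal.

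By \pref{SG23-F}, the fact that $\rh \in \ww[-]{3}\lt(\bb{R}^6\rt)^*$ forces $\bb{B}$ to be spacelike whenever $\sph$ is a \sg\ 3-form, so $g_{\sph}|_{\bb{L}}$ has signature $(1,0)$ and $g_{\sph}|_{\bb{B}}$ has signature $(2,4)$.  Using the positivity of $vol_\sph$ and $vol_\rh$, these conditions translate, via the displayed formula, into $\om^3 > 0$ and $\mc{J}_\rh\om$ having signature $(2,4)$ respectively.  The key structural observation --- accounting for the absence of an $\om^3$-condition in the final statement, in contrast with \pref{TRP} --- is that, for $\om \in \lt[\ww{1,1}\lt(\bb{R}^6\rt)^*\rt]$, the condition that $\mc{J}_\rh\om$ has signature $(2,4)$ already forces $\om^3 > 0$.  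I would verify this by diagonalising $\om = 2\sum_j \la_j\, dx_j \w dy_j$ in a unitary frame adapted to $J_\rh$: then $\mc{J}_\rh\om$ has real signature $(2p, 2q)$, where $(p,q)$ is the complex Hermitian signature of $\diag(\la_j)$, whilst $\om^3 = 48\,\la_1\la_2\la_3 \cdot vol$ has sign $(-1)^q$; for $(p,q) = (1,2)$ this sign is $+$.

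The main obstacle will be disciplined sign-tracking: the normalisation factor in $J_\rh = -\frac{1}{2}vol_\rh^{-1}K_\rh$, the positivity conventions on $vol_\sph$ and $vol_\rh$, and the sign convention in the definition of $\mc{J}_\rh$ must all be handled consistently, and the cleaner form of the final statement (no separate $\om^3$-condition) rests on the Hermitian-signature calculation working out so that the sign of $\om^3$ is fully determined by the signature of $\mc{J}_\rh\om$.
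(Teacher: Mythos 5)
Your proposal is correct and follows essentially the same route as the paper's proof: reduce to $\om \in \lt[\ww{1,1}\lt(\bb{R}^6\rt)^*\rt]$ via the $\SL(3;\bb{C})$-equivariant isomorphism, rerun the four-term expansion of $6Q_\sph(ae_1+u)$ with terms (2)--(3) killed by degree reasons, track the factor of $6$ coming from $(u\hk\rh)\w\rh = -2J_\rh u \hk vol_\rh$ and the minus sign in $\cal{J}_\rh$, then use \pref{SG23-F} to conclude $Q_\sph|_\bb{B}$ has signature $(2,4)$ and $Q_\sph|_\bb{L}$ signature $(1,0)$, i.e.\ $\om^3>0$. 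Where the paper merely asserts that the signature-$(2,4)$ condition on $\cal{J}_\rh\om$ automatically forces $\om^3 > 0$, you supply the missing justification via Hermitian diagonalisation in a $J_\rh$-unitary frame (the signs all check out: real signature $(2p,2q)$ corresponds to Hermitian signature $(p,q)$ and $\sgn(\om^3) = (-1)^q$, so $(p,q)=(1,2)$ gives $\om^3>0$), which is a welcome addition but does not change the overall strategy.
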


\begin{proof}
As in the proof of \pref{TRP}, it suffices to prove that:
\ew
\mc{N}_{\svph_0}(\rh) \cap \lt[\ww{1,1}\lt(\bb{R}^6\rt)^*\rt] = \lt\{ \om \in \lt[\ww{1,1}\lt(\bb{R}^6\rt)^*\rt] ~\m|~ \cal{J}_\rh\om \text{ has signature } (2,4) \rt\}.
\eew

Given \(\om \in \lt[\ww{1,1}\lt(\bb{R}^6\rt)^*\rt]\), writing \(\sph = \th \w \om + \rh \in \ww{3}\lt(\bb{R} \ds \bb{R}^6\rt)^*\), the calculations from the proof of \pref{TRP} yield:
\e\label{Q-S}
6Q_{\sph}(ae_1 + u) = a^2\th \w \om^3 + 6\cal{J}_\rh\om(u,u) \th \w vol_{\rh},
\ee
where the final term has a factor of \(6\) now (rather than a factor of 3) since:
\ew
(u \hk \rh) \w \rh = -2J_\rh(u) \hk vol_\rh  \et  \cal{J}_\rh \om(u_1,u_2) = -\frac{1}{2}\lt[ \om(J_\rh u_1, u_2) + \om(J_\rh u_2, u_1) \rt]
\eew
when \(\rh\) is an \slc\ 3-form, as opposed to:
\ew
(u \hk \rh) \w \rh = I_\rh(u) \hk vol_\rh  \et  \cal{I}_\rh \om(u_1,u_2) = +\frac{1}{2}\lt[ \om(I_\rh u_1, u_2) + \om(I_\rh u_2, u_1) \rt]
\eew
when \(\rh\) is an \slr\ 3-form.  In particular, \(\bb{L} = \bb{R} \ds 0\) and \(\bb{B} = 0 \ds \bb{R}^6\) are again orthogonal \wrt\ \(Q_{\sph}\).

Since \(\sph|_\bb{B} = \rh\) is an \slc\ 3-form, by \pref{SG23-F} whenever \(\sph\) is a \sg\ 3-form, the hyperplane \(\bb{B} \pc \bb{R}^7\) is spacelike and thus \(Q_{\sph}\) must have signature \((2,4)\) upon restriction to \(\bb{B}\) and \((1,0)\) upon restriction to \(\bb{L}\).  Thus, by \eref{Q-S}, one sees that \(\sph\) is a \sg\ 3-form \iff\ \(\cal{J}_\rh\om\) has signature \((2,4)\) and \(\om^3 > 0\).  However, now (unlike the timelike case) the condition that \(\cal{J}_\rh\om\) has signature \((2,4)\) automatically forces \(\om^3>0\).  Thus:
\ew
\mc{N}_{\svph_0}(\rh) = \lt\{ \om \in \ww{2}\lt(\bb{R}^6\rt)^* ~\m|~ \cal{J}_\rh\om \text{ has signature } (2,4) \rt\}
\eew
as required.

\end{proof}

I now prove \lref{SL}:

\begin{proof}[Proof of \lref{SL}]
\Wlg\ take \(\rh = \rh_-\) (see \eref{rh+}) and consider the 2-forms:
\ew
\om_1 = 2 \th^{12} - \th^{34} - \th^{56}, \hs{5mm} \om_2 = - \th^{12} + 2 \th^{34} - \th^{56} \et \om_3 = - \th^{12} - \th^{34} + 2 \th^{56}.\eew
Then:
\caw
\cal{J}_{\rh_-}\om_1 = 2\lt(\th^1\rt)^{\ts2} + 2\lt(\th^2\rt)^{\ts2} - \lt(\th^3\rt)^{\ts2} - \lt(\th^4\rt)^{\ts2} - \lt(\th^5\rt)^{\ts2} - \lt(\th^6\rt)^{\ts2};\\
\cal{J}_{\rh_-}\om_2 = - \lt(\th^1\rt)^{\ts2} - \lt(\th^2\rt)^{\ts2} + 2\lt(\th^3\rt)^{\ts2} + 2\lt(\th^4\rt)^{\ts2} - \lt(\th^5\rt)^{\ts2} - \lt(\th^6\rt)^{\ts2};\\
\cal{J}_{\rh_-}\om_3 = - \lt(\th^1\rt)^{\ts2} - \lt(\th^2\rt)^{\ts2} - \lt(\th^3\rt)^{\ts2} - \lt(\th^4\rt)^{\ts2} + 2\lt(\th^5\rt)^{\ts2} + 2\lt(\th^6\rt)^{\ts2},\\
\caaw
which all have signature \((2,4)\).  Thus by \pref{TRP}, \(\om_i \in \mc{N}_{\svph_0}(\rh_-)\) for all \(i = 1,2,3\).  Therefore:
\ew
\Conv \lt(\mc{N}_{\svph_0}(\rh_+)\rt) \owns \frac{1}{3}\lt(\om_1 + \om_2 + \om_3\rt) = 0,
\eew
as required.

\end{proof}

\subsection{Null case: \lref{NL} -- Connectedness of \(\mb{\Stab_{\GL_+(6;\bb{R})}(\rh)}\)}

As usual, \wlg\ assume that \(\rh = \rh_0\) (see \eref{rh0}).  To prove \lref{NL} -- which is manifestly invariant under the natural \(\GL_+(6;\bb{R})\) action on \(\rh_0\) -- it is beneficial to reduce this `gauge freedom' to \(\SL(6;\bb{R})\), the gauge being (partially) `fixed' by defining \(vol_{\rh_0} = \th^{234567}\).  One can then define a linear map \(H_{\rh_0}:\bb{R}^6 \to \bb{R}^6\) via:
\ew
K_{\rh_0} = H_{\rh_0} \ts vol_{\rh_0}.
\eew
(The need to arbitrarily fix a volume form arises since \(K_{\rh_0}\), being nilpotent, has no non-trivial \(\lt(\ww{6}\lt(\bb{R}^6\rt)^*\rt)^n\)-valued invariants for any \(n\) and thus parabolic 3-forms do not canonically define volume forms as \slc\ and \slr\ 3-forms do.)

To compute \(\Stab_{\GL_+(6;\bb{R})}(\rh_0)\), I begin by identifying a convenient subgroup:
\begin{Prop}\label{SL3-Subgp}
Identify \(\bb{R}^6 = \<e_2,e_3,e_4\? \ds \<e_5,e_6,e_7\? \cong \bb{R}^3 \ds \bb{R}^3\) and let \(\SL(3;\bb{R})\) act diagonally on \(\bb{R}^6\) according to this splitting. Write \(\xi:\SL(3;\bb{R}) \to \SL(6;\bb{R})\) for the corresponding group homomorphism.  Then, \(\xi(\SL(3;\bb{R}))\) preserves \(\rh_0\), \(vol_{\rh_0}\) and \(H_{\rh_0}\).
\end{Prop}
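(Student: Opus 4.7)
The plan is to reinterpret $\rh_0$ tensorially so that the diagonal $\SL(3;\bb{R})$-invariance becomes manifest, and then deduce invariance of $vol_{\rh_0}$ and $H_{\rh_0}$ from the naturality of the constructions $\rh \mapsto vol_\rh$ and $\rh \mapsto K_\rh$. Let $V = \bb{R}^3$ have basis $(f_1,f_2,f_3)$, dual basis $\lt(f^1,f^2,f^3\rt)$ and volume form $vol_V = f^{123}$. Identify $V_a := \<e_2,e_3,e_4\? \cong V$ via $e_{i+1} \leftrightarrow f_i$ and $V_b := \<e_5,e_6,e_7\? \cong V$ via $e_{i+4} \leftrightarrow f_i$, so that $\rh_0 = -\th^{237} + \th^{246} - \th^{345}$ lies in $\ww{2}V_a^* \ts V_b^*$. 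Using the $\SL(3;\bb{R})$-equivariant isomorphism $\ww{2}V^* \cong V \ts \ww{3}V^*$ inverse to $v \ts \mu \mapsto v \hk \mu$, together with regrouping of tensor factors, $\rh_0$ becomes an element of $\End(V) \ts \ww{3}V^*$.

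The first step, and the only genuine calculation, is to verify that under this identification $\rh_0 \leftrightarrow -\Id_V \ts vol_V$. Term-by-term this reads $-\th^{23}\ts\th^7 \leftrightarrow -f_3 \ts f^3 \ts vol_V$, $\th^{24}\ts\th^6 \leftrightarrow -f_2 \ts f^2 \ts vol_V$ and $-\th^{34}\ts\th^5 \leftrightarrow -f_1 \ts f^1 \ts vol_V$, which sum to $-\Id_V \ts vol_V$. Since the diagonal action of $\SL(3;\bb{R})$ on $V \ds V$ induces conjugation on $\End(V)$ (so fixes $\Id_V$) and the trivial action on $\ww{3}V^*$ (so fixes $vol_V$), it follows that $\xi(\SL(3;\bb{R}))$ preserves $\rh_0$.

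Preservation of $vol_{\rh_0} = \th^{234567} = vol_{V_a} \w vol_{V_b}$ is then immediate, since $vol_V$ is $\SL(3;\bb{R})$-invariant on each factor. For $H_{\rh_0}$, the defining identity $(u \hk \rh_0) \w \rh_0 = H_{\rh_0}(u) \hk vol_{\rh_0}$ is natural under $\SL(6;\bb{R})$: for any $F \in \SL(6;\bb{R})$ with $F^*\rh_0 = \rh_0$ and $F^*vol_{\rh_0} = vol_{\rh_0}$, evaluating the identity at $Fu$ and applying $F^*$ to both sides (using $F^*(Fu \hk \rh_0) = u \hk \rh_0$ and $F^*(w \hk vol_{\rh_0}) = (F^{-1}w) \hk vol_{\rh_0}$) yields $H_{\rh_0}(u) = F^{-1} H_{\rh_0}(Fu)$, i.e.\ $F$ commutes with $H_{\rh_0}$. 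Specialising to $F = \xi(g)$ for $g \in \SL(3;\bb{R})$ completes the argument.

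The only real obstacle is sign-bookkeeping in the term-by-term identification of the second paragraph; once this is carried out, the $\SL(3;\bb{R})$-invariance of $\rh_0$ reduces to the observation that $\Id_V$ and $vol_V$ are canonical $\SL(3;\bb{R})$-invariants, and the invariance of $vol_{\rh_0}$ and $H_{\rh_0}$ then follows by pure naturality with no further computation.
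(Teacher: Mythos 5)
Your proof is correct, and it takes a genuinely different route from the paper's. The paper proceeds in the order $vol_{\rh_0} \to H_{\rh_0} \to \rh_0$: after the (trivial) observation for $vol_{\rh_0}$, it computes the block matrix of $H_{\rh_0}$ explicitly and checks directly that $\xi(A)$ commutes with it, and then\emph{ recovers} $\rh_0$ from $H_{\rh_0}$ via an auxiliary map $j$ built out of $H_{\rh_0}^*$ (so that $\xi(\SL(3;\bb{R}))$-invariance of $\rh_0 = -j(\th^{567})$ follows from preservation of $H_{\rh_0}$ and of $\th^{567}$). You reverse the logical order: you first make the invariance of $\rh_0$ itself manifest by exhibiting it, via the $\SL(3;\bb{R})$-equivariant identification $\ww{2}V_a^* \ts V_b^* \cong \Hom(V_b,V_a) \ts \ww{3}V_a^*$, as the canonical tensor $-\Id_V \ts vol_V$, and then obtain invariance of $H_{\rh_0}$ purely from the naturality of its defining relation $(u \hk \rh_0) \w \rh_0 = H_{\rh_0}(u) \hk vol_{\rh_0}$ once invariance of both $\rh_0$ and $vol_{\rh_0}$ is known. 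Your route is more conceptual --- it explains \emph{why} $\rh_0$ should be invariant, rather than verifying it after the fact --- and it avoids the explicit matrix check for $H_{\rh_0}$ and the introduction of $j$; the trade-off is that the tensorial identification requires careful sign-bookkeeping in the term-by-term verification, which you have carried out correctly ($-\th^{23}\ts\th^7 \leftrightarrow -f_3\ts f^3 \ts vol_V$, $\th^{24}\ts\th^6 \leftrightarrow -f_2\ts f^2 \ts vol_V$, $-\th^{34}\ts\th^5 \leftrightarrow -f_1\ts f^1 \ts vol_V$, summing to $-\Id_V \ts vol_V$). The paper's approach is more computational but has the side benefit that the explicit matrix of $H_{\rh_0}$ is recorded for later use in the surrounding arguments.
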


\begin{proof}
Clearly \(\xi(\SL(3;\bb{R}))\) preserves \(vol_{\rh_0}\).  Moreover, \wrt\ the basis \(\<e_2,...,e_7\?\):
\ew
H_{\rh_0} =
\bpm
0 & 0\\
\Id & 0
\epm
\eew
and thus, for \(A\in\SL(3;\bb{R})\):
\ew
\xi(A)\circ H_{\rh_0} = \bpm 0 & 0\\ A & 0 \epm = H_{\rh_0} \circ \xi(A),
\eew
as required.  Now consider the map:
\ew
\bcd[row sep = 0pt]
j:\ww{3}\lt(\bb{R}^6\rt)^* \ar[r] & \ww{3}\lt(\bb{R}^6\rt)^*\\
\th^{qrs} \ar[r, maps to] & H_{\rh_0}^*(\th^{qr})\w\th^s + H_{\rh_0}^*(\th^q)\w\th^r\w H_{\rh_0}^*(\th^s) + \th^q\w H_{\rh_0}^*(\th^{rs}).
\ecd
\eew
Since \(\xi(\SL(3;\bb{R}))\) preserves \(H_{\rh_0}\), it also preserves \(j\).  However, \(-j(\th^{567}) = - \th^{237} + \th^{246} - \th^{345} = \rh_0\) and thus \(\xi(\SL(3;\bb{R}))\) also preserves \(\rh_0\).

\end{proof}

Note that the subspace \(\<e_5,e_6,e_7\?\) can be invariantly defined as the kernel of the map \(K_{\rh_0}\). By applying \pref{SL3-Subgp}, one obtains:

\begin{Cor}\label{para-stab-cor}
Every \(F \in \Stab_{\GL_+(6;\bb{R})}(\rh_0)\) preserves the subspace \(\<e_5,e_6,e_7\?\).  Moreover, \(\Stab_{\GL_+(6;\bb{R})}(\rh_0)\) acts transitively on non-zero vectors and on ordered pairs of linearly independent vectors in \(\<e_5,e_6,e_7\?\).
\end{Cor}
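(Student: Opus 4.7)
The strategy is to use the invariant characterisation of $\<e_5,e_6,e_7\?$ as $\Ker K_{\rh_0}$ (noted just before the statement of the corollary) together with the subgroup of $\Stab_{\GL_+(6;\bb{R})}(\rh_0)$ exhibited in Proposition \ref{SL3-Subgp}.

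For the first claim, since the map $K_{\rh_0}$ depends only on $\rh_0$, any $F \in \Stab_{\GL_+(6;\bb{R})}(\rh_0)$ must preserve $\Ker K_{\rh_0} = \<e_5,e_6,e_7\?$.  Explicitly, the identity $F^*(u \hk \rh_0) = (F^{-1}u) \hk \rh_0$, a one-line consequence of $F^*\rh_0 = \rh_0$, implies $F^*\bigl[(u \hk \rh_0) \w \rh_0\bigr] = \bigl((F^{-1}u) \hk \rh_0\bigr) \w \rh_0$, so the condition $(u \hk \rh_0) \w \rh_0 = 0$ is $F^{-1}$-invariant, hence $F$-invariant.

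For the transitivity statements, Proposition \ref{SL3-Subgp} provides a subgroup $\xi(\SL(3;\bb{R})) \cc \Stab_{\GL_+(6;\bb{R})}(\rh_0)$ acting on $\<e_5,e_6,e_7\? \cong \bb{R}^3$ via the standard representation of $\SL(3;\bb{R})$.  It therefore suffices to verify two elementary transitivity statements for $\SL(3;\bb{R})$ in its defining representation: transitivity on non-zero vectors, which is immediate upon extending any non-zero vector to a basis of determinant $1$; and transitivity on ordered pairs of linearly independent vectors, which follows by extending each pair to a triple, choosing the third basis vector in each triple so as to match the two determinants, and then taking the corresponding basis-change element as the required $\SL(3;\bb{R})$-transformation.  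I do not anticipate any significant obstacle: the invariance claim reduces to an intrinsic description already recorded in the preceding paragraph, and the transitivity statements are routine facts about the defining representation of $\SL(3;\bb{R})$.
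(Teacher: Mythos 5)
Your proof is correct and follows precisely the route the paper intends: the invariance of $\<e_5,e_6,e_7\?$ comes from its intrinsic characterisation as $\Ker K_{\rh_0}$, and the transitivity statements follow from the $\xi(\SL(3;\bb{R}))$-subgroup of Proposition \ref{SL3-Subgp} acting on $\<e_5,e_6,e_7\?$ via the defining representation. The elementary facts about $\SL(3;\bb{R})$-transitivity on nonzero vectors and on ordered pairs of linearly independent vectors are exactly as you describe.
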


I now prove the first half of \lref{NL}.  Specifically:

\begin{Lem}
\(\Stab_{\GL_+(6;\bb{R})}(\rh_0)\) is connected. Explicitly:
\ew
\Stab_{\GL_+(6;\bb{R})}(\rh_0) = \xi\lt(\SL(3;\bb{R})\rt)\cdot \cal{G}
\eew
where \(\xi\) was defined in \pref{SL3-Subgp}, \(\cal{G}\) is the contractible subgroup of \(\Stab_{\GL_+(6;\bb{R})}(\rh_0)\) defined by:
\e\label{calG}
\cal{G} = \lt\{
\lt(
\begin{array}{ccc|ccc}
d & & & & &\\
e & d^{-1} & & & &\\
f & & d^{-1} & & &\\\hline
k & l & m & d^2 & &\\
n & o & p & de & 1 &\\
q & r & s & df & & 1
\end{array}
\rt)
~\m|~
d > 0 \text{ and } d^{-1}el + d^{-1}fm - d^{-2}k - s - o = 0 \rt\}
\ee
and \(\xi\lt(\SL(3;\bb{R})\rt)\cap\cal{G} = \xi(\mc{G})\), where \(\mc{G}\pc\SL(3;\bb{R})\) consists of the set of \(3\x3\)-matrices of the form:
\ew
\bpm
1 & &\\
\la & 1 &\\
\mu & & 1
\epm
\text{ for } \la,\mu\in\bb{R}.
\eew
\end{Lem}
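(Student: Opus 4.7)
The plan is to analyse the stabiliser equation $F^*\rh_0 = \rh_0$ directly in block form, using the splitting $\bb{R}^6 = U \ds V$ with $U = \<e_2,e_3,e_4\?$ and $V = \<e_5,e_6,e_7\?$, and then to exploit the $\xi(\SL(3;\bb{R}))$-freedom to put an arbitrary stabiliser element into the explicit normal form $\cal{G}$. Let $H := H_{\rh_0}|_U: U \to V$, which is simply the identification $e_i \mt e_{i+3}$.

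First, by \cref{para-stab-cor}, every $F \in \Stab_{\GL_+(6;\bb{R})}(\rh_0)$ preserves $V = \Ker(K_{\rh_0})$, so $F$ has block form
\ew
F = \bpm A & 0 \\ B & C \epm,
\eew
with $A \in \GL(U)$, $C \in \GL(V)$ and $B \in \End(U,V)$. The key structural observation is that $\rh_0$ lies entirely in the bidegree $(2,1)$-piece of this decomposition and is explicitly given by
\ew
\rh_0(u_1, u_2, v) = -\det{}_U\lt(u_1, u_2, H^{-1}v\rt) \quad \text{for } u_1, u_2 \in U,~v \in V.
\eew
Expanding $F^*\rh_0 = \rh_0$ and separating by bidegree then yields exactly two constraints: the purely $U$-type piece reduces (via the identity $\sum_i \det(u_1,\ldots,Mu_i,\ldots,u_n) = \Tr(M)\det(u_1,\ldots,u_n)$) to $\Tr(H^{-1}BA^{-1}) = 0$, while the mixed piece reduces to $C = \det(A)^{-1} HAH^{-1}$. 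Consequently, $\det(F) = \det(A)\det(C) = \det(A)^{-1}$, so $F \in \GL_+(6;\bb{R})$ forces $\det(A) > 0$.

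Next, I would verify $\cal{G} \cc \Stab_{\GL_+(6;\bb{R})}(\rh_0)$ by direct substitution: the block $A_{\cal{G}}$ has determinant $d^{-1}$, the relation $C_{\cal{G}} = d \cdot A_{\cal{G}}$ (evident from the parametrisation, since $H$ is represented by the identity matrix in the chosen bases) reproduces the constraint on $C$, and $\Tr(H^{-1}B_{\cal{G}} A^{-1}_{\cal{G}})$ expands to precisely the stated linear relation in $d,e,f,k,l,m,n,o,p,q,r,s$. For the reverse inclusion, given $F = (A, B, C) \in \Stab$, set $d := \det(A)^{-1} > 0$ and define $g \in \SL(3;\bb{R})$ by $ge_3 := dAe_3$, $ge_4 := dAe_4$ and $ge_2 := d^{-1}Ae_2 + \la Ae_3 + \mu Ae_4$ for any $\la,\mu \in \bb{R}$; a direct expansion using $\det(A) = d^{-1}$ gives $\det(g) = 1$ automatically. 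A short calculation then shows $\xi(g)^{-1}F \in \cal{G}$ with parameters $(e,f) = (-\la,-\mu)$: the normal form of the $A$-block is imposed by construction, constraint (ii) on the $C$-block is preserved because $\xi(g)$ commutes with $H$ under our identification, and constraint (i) on the $B$-block is preserved by the cyclic property of the trace.

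Finally, an element $\xi(g) \in \xi(\SL(3;\bb{R})) \cap \cal{G}$ has vanishing lower-left block, which forces $k,l,\ldots,s$ all to be zero (so the linear constraint holds vacuously), while the block-diagonal structure combined with $\det(g) = 1$ forces $d = 1$, leaving exactly the family $\xi(\mc{G})$. Connectedness of $\Stab_{\GL_+(6;\bb{R})}(\rh_0)$ then follows since $\SL(3;\bb{R})$ is connected and $\cal{G}$ is contractible (it is parametrised by $(d, e, f, l, m, n, o, p, q, r, s) \in \bb{R}_{>0} \x \bb{R}^{10}$, with $k$ determined linearly). The main obstacle I anticipate is the bookkeeping in the derivation of the two stabiliser constraints: without the clean bilinear formula $\rh_0(u_1,u_2,v) = -\det(u_1,u_2,H^{-1}v)$, the decomposition of $F^*\rh_0$ by bidegree is a computation of some twenty wedge-product terms that is prone to sign errors and not obviously tractable.
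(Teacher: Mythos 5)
Your proof is correct, and takes a genuinely different route from the paper's. The paper defines \(\cal{G}\) abstractly as the set of stabiliser elements fixing \(e_6\) and \(e_7\) (so that \(\cal{G}\) is a subgroup by construction), obtains the decomposition \(\Stab_{\GL_+(6;\bb{R})}(\rh_0) = \xi(\SL(3;\bb{R})) \cdot \cal{G}\) immediately from the transitivity of \(\xi(\SL(3;\bb{R}))\) on ordered pairs of independent vectors in \(\langle e_5,e_6,e_7\rangle\), and only afterwards computes the explicit \(6\times6\) form of \(\cal{G}\) using the contractions \(e_7 \hk \rh_0 = -\th^{23}\) and \(e_6 \hk \rh_0 = \th^{24}\). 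You instead begin by deriving two invariant block constraints equivalent to \(F^*\rh_0 = \rh_0\) --- namely \(\Tr(H^{-1}BA^{-1}) = 0\) and \(C = \det(A)^{-1}HAH^{-1}\) --- via the bilinear identity \(\rh_0(u_1,u_2,v) = -\det_U(u_1,u_2,H^{-1}v)\); both constraints and the identity check out. Your route replaces a basis-dependent wedge computation with two manifestly structural conditions (which also immediately give \(\det F = \det(A)^{-1}\), hence \(\det(A)>0\)), a real conceptual gain; the price is that your reverse inclusion requires an explicit conjugation into the \(A\)-triangular normal form, which you carry out correctly, where the paper's transitivity argument is a one-liner. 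One detail worth making explicit: since you define \(\cal{G}\) by its matrix form rather than by the paper's invariant ``fixes \(e_6,e_7\)'' condition, the lemma's assertion that \(\cal{G}\) is a \emph{subgroup} needs a short closure check --- the \(A\)-blocks of the given shape are stable under composition and inversion, and the remaining conditions are inherited from membership in \(\Stab_{\GL_+(6;\bb{R})}(\rh_0)\) --- routine, but it should be stated.
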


\begin{proof}
Define:
\ew
\cal{G} = \lt\{ F \in \Stab_{\GL_+(6;\bb{R})}(\rh_0) ~\m|~ F(e_6) = e_6 \text{ and } F(e_7) = e_7 \rt\}.
\eew
Then, since (by \cref{para-stab-cor}) \(\Stab_{\GL_+(6;\bb{R})}(\rh_0)\) preserves \(\<e_5,e_6,e_7\?\) and \(\xi(\SL(3;\bb{R}))\) acts transitively on ordered pairs of linearly independent vectors in \(\<e_5,e_6,e_7\?\), it follows that:
\ew
\Stab_{\GL_+(6;\bb{R})}(\rh_0) = \xi(\SL(3;\bb{R})) \cdot \cal{G}.
\eew

The next task is to verify \eref{calG}.  Let \(F\in\cal{G}\).  Since \(\th^{23} = -e_7 \hk \rh_0\) and \(F\) preserves \(\rh_0\) and \(e_7\), it follows that \(F^*\th^{23} = \th^{23}\) and similarly \(F^*\th^{24} = \th^{24}\), since \(\th^{24} = e_6 \hk \rh_0\). Since \(F\) also preserves \(\<e_5,e_6,e_7\?\), \wrt\ the decomposition \(\bb{R}^6 = \<e_2,e_3,e_4\? \ds \<e_5,e_6,e_7\?\) one can write:
\ew
F =
\lt(
\begin{array}{c|ccc}
F_1 & & &\\\hline
& a & &\\
F_2 & b & 1 &\\
& c & & 1
\end{array}
\rt),
\eew
where \(a,b,c\in\bb{R}\) with \(a\ne0\), \(F_2 \in \End(\bb{R}^3,\bb{R}^3)\), \(F_1 \in \GL(3;\bb{R})\) is such that \(F_1^*\th^{23} = \th^{23}\) and \(F_1^*\th^{24} = \th^{24}\), and \(a \cdot \det(F_1) > 0\).

To better understand the map \(F_1\), let \(\bb{B} = \<e_2,e_3,e_4\?\) and temporarily restrict attention to \(\bb{B}\).  Since \(\<e_4\?\pc\bb{B}\) is the kernel of the linear map \(u \in \bb{B} \mt u \hk \th^{23} \in \bb{B}^*\), the space \(\<e_4\?\) must be preserved by \(F_1\).  Likewise, \(\<e_3\?\) must also be preserved by \(F_1\), since \(F_1\) preserves \(\th^{24}\). Thus:
\ew
F_1 =
\bpm
d & &\\
e & \la &\\
f & & \mu
\epm
\eew
for some \(d, \mu, \la \in \bb{R}\osr\{0\}\) and \(e,f\in\bb{R}\). The conditions \(F_1^*\th^{23} = \th^{23}\) and \(F_1^*\th^{24} = \th^{24}\) then force \(\la = d^{-1}\) and \(\mu = d^{-1}\).

Returning now to \(\bb{R}^6\), it has been shown that:
\ew
F =
\lt(
\begin{array}{ccc|ccc}
d & & & & &\\
e & d^{-1} & & & &\\
f & & d^{-1} & & &\\\hline
k & l & m & a & &\\
n & o & p & b & 1 &\\
q & r & s & c & & 1
\end{array}
\rt)
\eew
for \(k,l,m,n,o,p,q,r,s\in\bb{R}\).  One may then compute that \(F^*\rh_0 = \rh_0\) is equivalent to \(a = d^2\), \(c = df\), \(b = de\), together with the condition:
\ew
d^{-1}el + d^{-1}fm - d^{-2}k - s - o = 0.
\eew
Moreover, given \(a = d^2\) one has \(\det(F) = d > 0\).  Thus, it has been established that:
\ew
\cal{G} =
\lt\{\lt(
\begin{array}{ccc|ccc}
d & & & & &\\
e & d^{-1} & & & &\\
f & & d^{-1} & & &\\\hline
k & l & m & d^2 & &\\
n & o & p & de & 1 &\\
q & r & s & df & & 1
\end{array}
\rt)~\middle|~d>0 \text{ and } d^{-1}el + d^{-1}fm - d^{-2}k - s - o = 0 \rt\},
\eew
as claimed.

The expression for \(\xi(\SL(3;\bb{R}))\cap\cal{G}\) is now manifest.  To see that \(\cal{G}\) is contractible, consider the projection:
\ew
\bcd[row sep = 0pt]
\cal{G} \ar[r, "\mLa{\pi}"] & (0,\infty) \x \bb{R}^6\\
(d,e,f,k,l,m,n,o,p,q,r,s) \ar[r, maps to] & (d,e,f,n,p,q,r).
\ecd
\eew
Then, \(\pi\) is surjective, with fibre over \((d,e,f,n,p,q,r)\) given by:
\ew
\lt\{(k,m,l,o,s) \in \bb{R}^5 ~\m|~ d^{-2}k - d^{-1}fm - d^{-1}el + o + s = 0\rt\}.
\eew
Thus, \(\cal{G}\) is topologically a rank-4 vector bundle over the contractible space \((0,\infty)\x\bb{R}^6\), hence contractible.  This completes the proof.

\end{proof}

\subsection{Null case: \lref{NL} -- \(\mb{0 \in \Conv\lt(\mc{N}_{\svph_0}(\rh)\rt)}\)}

By analogy with \erefs{calIrh} and \eqref{calJrh}, define:
\ew
\bcd[row sep = 0pt]
\cal{H}_{\rh_0}:\ww{2}\lt(\bb{R}^6\rt)^* \ar[r] & \ss{2}\lt(\bb{R}^6\rt)^*\\
\om \ar[r, maps to] & \Big\{(a,b) \mt \frac{1}{2}\lt[\om(H_{\rh_0} a,b) + \om(H_{\rh_0} b,a)\rt]\Big\}.
\ecd
\eew

\begin{Prop}\label{NRP}
\ew
\mc{N}_{\svph_0}(\rh_0) = \lt\{ \om \in \ww{2}\lt(\bb{R}^6\rt)^* ~\m|~ \cal{H}_{\rh_0}\om \text{ has signature } (2,1,3) \rt\}.
\eew
(Here signature \((2,1,3)\) means that \(\cal{H}_{\rh_0}\) has a maximal positive definite subspace of dimension \(2\), a maximal negative definite subspace of dimension \(3\) and a 1-dimensional kernel.)
\end{Prop}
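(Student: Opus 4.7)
The plan is to mirror the strategy used for \pref{TRP} and its spacelike analogue, with modifications accommodating the null nature of the embedding. Throughout, set $\sph = \th^1\w\om + \rh_0$ and $\bb{B} = \<e_2,\ldots,e_7\?$, so $\sph|_\bb{B} = \rh_0$.  First I would verify the ``reduction'' step: exactly as in the timelike case, the change of frame $F = \bpm 1 & 0 \\ u & \Id \epm$ satisfies $F^*\sph = \th^1\w(\om + u\hk\rh_0) + \rh_0$, so $\mc{N}_{\svph_0}(\rh_0)$ is invariant under the gauge shift $\om \mapsto \om + u\hk\rh_0$ for $u \in \bb{R}^6$.  A short direct calculation, using $H_{\rh_0}(\bb{R}^6) = \<e_5,e_6,e_7\?$ together with the fact that every monomial in $\rh_0$ carries exactly one index from $\{5,6,7\}$, shows that $\cal{H}_{\rh_0}(u\hk\rh_0) = 0$, so $\cal{H}_{\rh_0}\om$ is invariant under this shift as well.

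Next I would compute $6Q_\sph(ae_1+u) = \big[(ae_1+u)\hk\sph\big]^2\w\sph$ by expanding and simplifying term-by-term via \lref{SwapForm}, in direct analogy with the proof of \pref{TRP}.  The identity replacing $(u\hk\rh)\w\rh = I_\rh(u)\hk vol_\rh$ is now $(u\hk\rh_0)\w\rh_0 = H_{\rh_0}(u)\hk vol_{\rh_0}$, which is immediate from the definition of $H_{\rh_0}$.  Unlike in the timelike and spacelike cases, however, $\om\w\rh_0 \in \ww{5}\lt(\bb{R}^6\rt)^*$ need not vanish; writing $\om\w\rh_0 = \pi(\om)\hk vol_{\rh_0}$ for a uniquely determined $\pi(\om) \in \bb{R}^6$, the calculation produces
\ew
6Q_\sph(ae_1+u) = a^2\,\th^1\w\om^3 + 3\,\cal{H}_{\rh_0}\om(u,u)\,\th^1\w vol_{\rh_0} - 6a\,\om\lt(u,\pi(\om)\rt)\,\th^1\w vol_{\rh_0}.
\eew
In particular, setting $a = 0$ identifies $Q_\sph|_\bb{B}$ with $\tfrac{1}{2}\cal{H}_{\rh_0}\om$ (up to the positive top form $\th^1\w vol_{\rh_0}$), so these two bilinear forms share a common signature.

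Finally I would deduce the claimed equality.  For the ``only if'' direction, suppose $\sph$ is a \sg\ 3-form; since $\sph|_\bb{B} = \rh_0$ is parabolic, \pref{SG23-F} forces $\bb{B}$ to be a null hyperplane for $g_\sph$, so $Q_\sph|_\bb{B}$ has a 1-dimensional kernel, and a block decomposition of $Q_\sph$ adapted to $\bb{B}$ shows that the total signature $(3,4)$ forces $Q_\sph|_\bb{B}$ (and hence $\cal{H}_{\rh_0}\om$) to have signature $(2,1,3)$.  The main obstacle is the converse direction: if $\cal{H}_{\rh_0}\om$ has signature $(2,1,3)$ with kernel $\<v_0\?$, then the total $7\x7$ matrix of $Q_\sph$ is non-degenerate \iff\ $\om(v_0,\pi(\om))\neq 0$, in which case a direct signature calculation (the hyperbolic pair $\<e_1,v_0\?$ contributing $(1,1)$, combined with signature $(2,3)$ on a non-degenerate complement of $\<v_0\?$ in $\bb{B}$) gives $Q_\sph$ signature $(3,4)$.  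I plan to establish the required non-degeneracy by combining the gauge shift from the first step with the $\Stab_{\GL_+(6;\bb{R})}(\rh_0) = \xi(\SL(3;\bb{R}))\1\cal{G}$-action to put $\om$ into a convenient normal form, followed by a short explicit algebraic verification.
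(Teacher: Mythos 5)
Your plan follows the same route as the paper: reduce modulo the gauge $\om \mapsto \om + u\hk\rh_0$ (which leaves $\cal{H}_{\rh_0}\om$ unchanged since $\Ker(\cal{H}_{\rh_0}) = \bb{R}^6\hk\rh_0$), derive the formula for $6Q_{\sph}(ae_1+u)$ via \lref{SwapForm} (your $-6a\,\om(u,\pi(\om))\,\th^1\w vol_{\rh_0}$ is equal to the paper's $-6a\,\th^1\w(u\hk\om)\w\om\w\rh_0$), and then analyse signatures via a $(1,1)\ds(2,1,3)$ block structure of $Q_{\sph}$ adapted to $\bb{L}\ds\bb{B}$. The ``only if'' direction and the block-hyperbolic signature count in the converse are correct. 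The one step you leave unfinished is the crux: proving that signature $(2,1,3)$ for $\cal{H}_{\rh_0}\om$ actually forces $\om(v_0,\pi(\om))\neq 0$, where $\<v_0\? = \Ker(\cal{H}_{\rh_0}\om)$. The paper handles this exactly as you propose: a preliminary lemma shows $\Ker(\cal{H}_{\rh_0}\om)\cap\<e_5,e_6,e_7\?\neq 0$ for \emph{any} $\om$, so the $(2,1,3)$ hypothesis pins the kernel inside $\<e_5,e_6,e_7\?$; the $\xi(\SL(3;\bb{R}))$-action then moves $v_0$ to $e_7$; in that frame $\om$ has a forced normal form with $\om = \ldots + C\th^{47} + D\th^{56} + \ldots$, and \lref{Signature} makes $D\neq0$ (kernel exactly 1-dimensional) and $C>0$ (signature condition), while $\om(e_7,\pi(\om)) = \pm DC\neq 0$ follows from $\om\w\rh_0 = -D\th^{23567}$. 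So your plan is sound and, if executed, coincides with the paper's argument; your reformulation of the non-degeneracy condition as $\om(v_0,\pi(\om))\neq 0$ is a small but genuine conceptual improvement over the paper's direct $7\times7$ determinant computation.
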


The proof proceeds via a series of lemmas:

\begin{Lem}\label{Signature}
For \(n\ge1\), \(x_1,...,x_n\in\bb{R}\osr\{0\}\) and \(y \in \bb{R}\), any symmetric, `forward-triangular' \((2n+1)\x(2n+1)\) bilinear form:
\ew
M = \bpm
* & * & \hdots & * & \hdots & * & x_1\\
* & * & \hdots & * & \hdots & x_2 & \\
\vdots & \vdots & & & \iddots & &\\
* & * & & y & & &\\
\vdots & \vdots & \iddots & & & &\\
* & x_2 & & & & &\\
x_1 & & & & & &
\epm
\eew
is non-degenerate \iff\ \(y \ne 0\), having signature \((n+1,n)\) if \(y>0\) and signature \((n,n+1)\) if \(y<0\).  Moreover, the \(2n \x 2n\) bilinear form:
\ew
M' = \bpm
* & * & \hdots & * & x_1\\
* & * & \hdots & x_2 & \\
\vdots & \vdots & \iddots & &\\
* & x_2 & & &\\
x_1 & & & &
\epm
\eew
has signature \((n,n)\).
\end{Lem}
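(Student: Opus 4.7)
The plan is to proceed by induction on $n$, at each step peeling off a hyperbolic plane corresponding to the outermost pair of variables $(v_1, v_{2n+1})$ (respectively $(v_1, v_{2n})$ in the even case), and reducing to a form of the same shape on the $(2n-1) \x (2n-1)$ (respectively $(2n-2) \x (2n-2)$) principal submatrix obtained by deleting the first and last rows and columns. The base case for the odd statement is $n = 0$, where $M = (y)$ has signature $(1,0)$, $(0,1)$, or is degenerate according to $\sgn(y)$; the base case for the even statement is $n = 1$, where $M'$ is a $2\x2$ matrix with determinant $-x_1^2 < 0$ and hence signature $(1,1)$.

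For the induction step, set $Q(v) = v^T M v$. The critical observation is that, although the first row of $M$ may be arbitrary up to the anti-diagonal, the last variable $v_{2n+1}$ couples non-trivially to only one other variable, namely $v_1$: indeed, $M_{k, 2n+1} = 0$ for all $k \ge 2$ (and likewise $M_{2n+1, 2n+1} = 0$) since these entries lie strictly below the anti-diagonal. Since $M_{1, 2n+1} = x_1 \ne 0$, one can complete the square in $v_1$ using the pivot $2 x_1 v_1 v_{2n+1}$. Explicitly, the congruence defined by $u_{2n+1} = v_{2n+1} + \tfrac{M_{11}}{2 x_1} v_1 + \sum_{j=2}^{2n} \tfrac{M_{1j}}{x_1} v_j$ (with $u_k = v_k$ for $k \ne 2n+1$) reduces $Q$ to the decoupled form $2 x_1 v_1 u_{2n+1} + Q''(v_2, \dots, v_{2n})$, where $Q''$ is the quadratic form associated to the $(2n-1) \x (2n-1)$ principal submatrix of $M$ obtained by striking rows and columns $1$ and $2n+1$.

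This submatrix inherits the forward-triangular structure, with anti-diagonal entries $x_2, \dots, x_n, y, x_n, \dots, x_2$, so the inductive hypothesis applies directly. The hyperbolic block $2 x_1 v_1 u_{2n+1}$ contributes signature $(1,1)$ irrespective of $\sgn(x_1)$, so combining with the inductively-computed signature of $Q''$ yields the claimed signatures $(n+1,n)$, $(n,n+1)$, or degeneracy according to whether $y > 0$, $y < 0$, or $y = 0$. The even case $M'$ is treated identically, terminating at the $2\x2$ base case without ever introducing a central $y$-contribution, and giving signature $(n,n)$. I do not anticipate any serious obstacle; the only book-keeping required is to note the vanishing of $M_{k, 2n+1}$ for $k \ge 2$, which is precisely what allows the substitution to fully decouple the outermost variable pair from the remaining form.
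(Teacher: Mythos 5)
Your proof is correct, and it takes a genuinely different route from the paper's. The paper observes that the determinant of a forward-triangular matrix is $\pm y\,x_1^2\cdots x_n^2$, independent of the strictly forward entries; since the set of such matrices with a fixed nonzero anti-diagonal is a connected (affine) family of nonsingular symmetric matrices, the signature is constant on it, and one may compute it at the representative with all forward entries zero, where the eigenvectors are explicit. You instead perform an honest congruence reduction: using the fact that $v_{2n+1}$ couples only to $v_1$ (with coefficient $x_1\neq 0$), you split off a hyperbolic plane of signature $(1,1)$ and reduce to the inner $(2n-1)\times(2n-1)$ forward-triangular block, then induct. Both arguments are short; the paper's needs the local constancy of signature on nonsingular symmetric matrices (a mild topological input) but dispatches everything in one step, while yours is completely algebraic and produces an explicit basis realising the signature. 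One small bookkeeping point you elide but which is easy: the congruence you apply (adding multiples of $v_1,\dots,v_{2n}$ to $v_{2n+1}$) is unipotent and hence indeed a change of basis, and the deleted-rows-and-columns submatrix inherits the forward-triangular condition because $i+j>2n+2$ is the same inequality for the relabelled inner indices.
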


\begin{proof}
Start with the matrix \(M\).  Call the diagonal running from the \((2n+1,1)\)-entry to the \((1,2n+1)\)-entry the counter diagonal and call the elements in front of the counter diagonal the strictly forward entries.  Clearly, the bilinear form is degenerate when \(y = 0\).  Moreover, when \(y \ne 0\), \(M\) is non-singular for any values of the strictly forward entries and thus it suffices to compute the signature of \(\M\) when all of the strictly forward entries vanish.  However, in this case, \(M\) has eigenvalues \(y\), \(\pm x_1\), \(\pm x_2\),...,\(\pm x_n\), with corresponding eigenvectors:
\ew
\bpm 0 \\ 0 \\ \vdots \\ 1 \\ \vdots \\ 0 \\ 0 \epm, \bpm 1 \\ 0 \\ \vdots \\ 0 \\ \vdots\\ 0 \\ \pm1 \epm, \bpm 0 \\ 1 \\ \vdots \\ 0 \\ \vdots\\ \pm1 \\ 0 \epm, ..., \bpm 0 \\ \vdots \\ 1 \\ 0 \\ \pm 1 \\ \vdots\\ 0 \epm.
\eew
The case of \(M'\) is similar.

\end{proof}

\begin{Lem}
\ew
\Ker(\cal{H}_{\rh_0}) = \bb{R}^6 \hk \rh_0.
\eew
\end{Lem}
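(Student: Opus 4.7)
The approach is pure linear algebra, exploiting the very explicit form of $H_{\rh_0}$ already visible from the computation of $K_{\rh_0}$ carried out in the proof of \pref{SG23-F}.  Namely $H_{\rh_0}$ has rank $3$: $H_{\rh_0}(e_i) = e_{i+3}$ for $i = 2,3,4$, whilst $\Ker H_{\rh_0} = \Im H_{\rh_0} = \<e_5,e_6,e_7\?$.  Writing $V = \<e_2,e_3,e_4\?$ and $W = \<e_5,e_6,e_7\?$, the adjoint $H_{\rh_0}^*$ thus sends $\th^{i+3} \mt \th^i$ for $i = 2,3,4$ and annihilates $V^*$.

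From the definition of $\cal{H}_{\rh_0}$ one reads off at once
\ew
\cal{H}_{\rh_0}\lt(\th^i \w \th^j\rt) = H_{\rh_0}^*\th^i \odot \th^j - \th^i \odot H_{\rh_0}^*\th^j.
\eew
The plan is to decompose $\ww{2}\lt(\bb{R}^6\rt)^* = \ww{2}V^* \ds (V^* \w W^*) \ds \ww{2}W^*$ (of dimensions $3$, $9$, $3$) and apply this formula on each summand.  On $\ww{2}V^*$, $\cal{H}_{\rh_0}$ vanishes identically.  On $V^* \w W^*$, it acts by $\th^i \w \th^{j+3} \mt -\th^i \odot \th^j$ for $i,j \in \{2,3,4\}$, hence surjects onto $\ss{2}V^*$ (of dimension $6$) with $3$-dimensional kernel spanned by $\th^{26} - \th^{35}$, $\th^{27} - \th^{45}$ and $\th^{37} - \th^{46}$.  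On $\ww{2}W^*$ it embeds into $V^* \odot W^*$, whose image is linearly independent from $\ss{2}V^*$.  Totalling contributions yields $\dim\Im\cal{H}_{\rh_0} = 6 + 3 = 9$, and so $\dim\Ker\cal{H}_{\rh_0} = 15 - 9 = 6$.

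To conclude I would exhibit $6$ linearly independent elements of $\bb{R}^6 \hk \rh_0$ inside this kernel.  Direct contraction yields $e_5 \hk \rh_0 = -\th^{34}$, $e_6 \hk \rh_0 = \th^{24}$ and $e_7 \hk \rh_0 = -\th^{23}$, all of which lie in $\ww{2}V^*$, together with $e_2 \hk \rh_0 = -\th^{37} + \th^{46}$, $e_3 \hk \rh_0 = \th^{27} - \th^{45}$ and $e_4 \hk \rh_0 = -\th^{26} + \th^{35}$, which up to sign are precisely the kernel basis on $V^* \w W^*$ found above.  These six $2$-forms are manifestly linearly independent (so $\rh_0$ is, incidentally, multi-symplectic), and the identity $\bb{R}^6 \hk \rh_0 = \Ker\cal{H}_{\rh_0}$ follows on dimensional grounds.

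There is no real obstacle here --- the argument is entirely mechanical.  The only genuine care required is with signs in the interior and symmetric products, and even this is streamlined by the block decomposition along $V \ds W$, which renders the matrix of $\cal{H}_{\rh_0}$ block-triangular and its rank transparent.
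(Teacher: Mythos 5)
Your argument is correct and follows essentially the same route as the paper: both compute \(\cal{H}_{\rh_0}\) explicitly on a basis of \(\ww{2}\lt(\bb{R}^6\rt)^*\) adapted to the answer, identify the six contractions \(e_i \hk \rh_0\) as lying in the kernel, and conclude by a dimension count. Organizing the computation via the block decomposition \(\ww{2}V^* \ds (V^* \w W^*) \ds \ww{2}W^*\) with \(W = \Ker H_{\rh_0}\) makes the rank of \(\cal{H}_{\rh_0}\) a little more transparent than the paper's bare basis extension, but the underlying calculation is the same.
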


\begin{proof}
Take a basis of \(\bb{R}^6 \hk \rh_0\) as follows:
\ew
(e_2 \hk \rh_0, e_3 \hk \rh_0, ..., e_7 \hk \rh_0) = \lt(-\th^{37} + \th^{46}, \th^{27} - \th^{45}, -\th^{26} + \th^{35}, -\th^{34}, \th^{24}, -\th^{23}\rt).
\eew
Extend this to a basis of \(\ww{2}\lt(\bb{R}^6\rt)^*\) via:
\ew
\lt(\th^{25}, \th^{36}, \th^{47}, \th^{56}, \th^{57}, \th^{67}, \th^{26}+\th^{35}, \th^{27}+\th^{45}, \th^{37}+\th^{46}\rt).
\eew
(By analogy with the spacelike and timelike cases, denote the span of this latter set of 2-forms by \(\ww{1,1}\lt(\bb{R}^6\rt)^*\).)  Then:
\ew
\ww{2}\lt(\bb{R}^6\rt)^* = \ww{1,1}\lt(\bb{R}^6\rt)^* \ds \bb{R}^6\hk\rh_0,
\eew
although the reader should note that this splitting is only defined here for \(\rh_0\); no attempt is made to define \(\ww{1,1}\lt(\bb{R}^6\rt)^*\) for an arbitrary parabolic 3-form.  Then, \(\cal{H}_{\rh_0}\) vanishes identically on \(\bb{R}^6 \hk \rh_0\): indeed:
\ew
\bcd[row sep = 0pt]
-\th^{37} + \th^{46} \ar[r, maps to, "\cal{H}_{\rh_0}"] & \th^3\s \th^4 - \th^3 \s \th^4 = 0\\
\th^{27} - \th^{45} \ar[r, maps to, "\cal{H}_{\rh_0}"] & -\th^2\s \th^4 + \th^2 \s \th^4 = 0
\ecd
\eew
and similarly for the other basis vectors. Moreover, one may verify that:
\e\label{(1,1)-Im}
\bcd[row sep = 0pt]
\bpm
\th^{25}\\
\th^{36}\\
\th^{47}\\
\th^{56}\\
\th^{57}\\
\th^{67}\\
\th^{26}+\th^{35}\\
\th^{27}+\th^{45}\\
\th^{37}+\th^{46}
\epm
\ar[r, maps to, "\cal{H}_{\rh_0}"] &
\bpm
-\th^2\s \th^2\\
-\th^3 \s \th^3\\
-\th^4 \s \th^4\\
\th^2 \s \th^6 - \th^3 \s \th^5\\
\th^2 \s \th^7 - \th^4 \s \th^5\\
\th^3 \s \th^7 - \th^4\s \th^6\\
-2\th^2\s \th^3\\
-2 \th^2\s \th^4\\
-2 \th^3 \s \th^4
\epm.
\ecd
\ee
Since these images are linearly independent, the map \(\cal{H}_{\rh_0}\) is injective when restricted to \(\ww{1,1}\lt(\bb{R}^6\rt)^*\). This completes the proof.

\end{proof}

\begin{Lem}\label{Kernel}
For all \(\om\in\ww{2}\lt(\bb{R}^6\rt)^*\):
\ew
\Ker(\cal{H}_{\rh_0}\om)\cap \<e_5,e_6,e_7\? \ne 0.
\eew
Here \(\Ker(\cal{H}_{\rh_0}\om)\) denotes the kernel of the map:
\ew
\flat:\bb{R}^6 &\to \lt(\bb{R}^6\rt)^*\\
u &\mt \cal{H}_{\rh_0}\om(u,-).
\eew
\end{Lem}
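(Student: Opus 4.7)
The plan is to exploit the special structure of $H_{\rho_0}$, which was computed in the proof of \pref{SG23-F}: it maps $\<e_2,e_3,e_4\?$ isomorphically onto $\<e_5,e_6,e_7\?$ and annihilates $\<e_5,e_6,e_7\?$ itself. In particular, $\Im(H_{\rho_0}) = \Ker(H_{\rho_0}) = \<e_5,e_6,e_7\?$, so $\<e_5,e_6,e_7\?$ is automatically a totally isotropic subspace for $\mathcal{H}_{\rho_0}\omega$, and the entire question reduces to the behaviour of $\omega$ on this subspace alone.

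More precisely, for any $u \in \<e_5,e_6,e_7\?$ and any $v \in \mathbb{R}^6$, since $H_{\rho_0}u = 0$ the defining formula collapses to
\ew
\mathcal{H}_{\rho_0}\omega(u,v) = \tfrac{1}{2}\omega(H_{\rho_0}v, u).
\eew
As $v$ ranges over $\mathbb{R}^6$, the vector $H_{\rho_0}v$ ranges over all of $\<e_5,e_6,e_7\?$. Therefore the condition that a given $u \in \<e_5,e_6,e_7\?$ lie in $\Ker(\mathcal{H}_{\rho_0}\omega)$ is equivalent to the condition $\omega(w,u) = 0$ for all $w \in \<e_5,e_6,e_7\?$, i.e.\ that $u$ lie in the kernel of the restriction $\omega|_{\<e_5,e_6,e_7\?}$ regarded as an alternating bilinear form on the $3$-dimensional space $\<e_5,e_6,e_7\?$.

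The proof is then completed by invoking the elementary fact that any alternating bilinear form on an odd-dimensional vector space has even rank, hence nontrivial kernel; applied to $\omega|_{\<e_5,e_6,e_7\?}$ this yields a nonzero $u \in \<e_5,e_6,e_7\?$ with $u \in \Ker(\mathcal{H}_{\rho_0}\omega)$, as required. There is no genuine obstacle here: the only non-routine step is noticing that $\Im(H_{\rho_0}) = \Ker(H_{\rho_0})$, after which the whole statement reduces to the odd-dimensional degeneracy of alternating forms.
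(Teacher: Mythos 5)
Your proof is correct, and it is cleaner than the one in the paper. The argument in the paper proceeds by explicit computation: using the fact that $\mathcal{H}_{\rho_0}$ vanishes on $\mathbb{R}^6\hk\rho_0$ and inspecting the explicit images under $\mathcal{H}_{\rho_0}$ of a chosen basis of $\ww{1,1}\lt(\bb{R}^6\rt)^*$, it reduces to the case $\omega = \lambda_1\theta^{56}+\lambda_2\theta^{57}+\lambda_3\theta^{67}$, writes down the resulting $3\times 3$ matrix representing $\flat|_{\<e_5,e_6,e_7\?}$, observes that it is antisymmetric, and concludes $\det = 0$. Your proof reaches the same antisymmetry fact but via a structural observation that explains \emph{why} it appears: since $H_{\rho_0}$ annihilates $\<e_5,e_6,e_7\?$ and has image equal to that same subspace, for $u\in\<e_5,e_6,e_7\?$ the expression $\mathcal{H}_{\rho_0}\omega(u,v)=\tfrac12\omega(H_{\rho_0}v,u)$ probes only $\omega|_{\<e_5,e_6,e_7\?}$, and an alternating form on an odd-dimensional space is automatically degenerate. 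This is a genuinely more conceptual route, coordinate-free where the paper's is computational; the trade-off is that the paper's explicit matrix \eref{Null-Space-Sys} is reused immediately afterwards in the proof of \pref{NRP} (to constrain the form of $\omega$ once the null direction is normalised to $\<e_7\?$), so the paper's computation is not wasted, whereas your argument would need to be supplemented by that calculation anyway if one then wants to continue the proof of \pref{NRP}.
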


\begin{proof}
It is equivalent to show that \(\flat|_{\<e_5,e_6,e_7\?}\) is not injective.  Thus, fix \(\om \in \ww{2}\lt(\bb{R}^6\rt)^*\).  Recalling that \(\cal{H}_{\rh_0}\) vanishes on \(\bb{R}^6 \hk \rh_0\) and inspecting \eref{(1,1)-Im}, one sees that \(\flat|_{\<e_5,e_6,e_7\?}\) only depends on the component of \(\om\) in the subspace \(\<\th^{56},\th^{57},\th^{67}\? \pc \ww{1,1}\lt(\bb{R}^6\rt)^*\), so \wlg\ assume that:
\ew
\om = \la_1\th^{56} + \la_2 \th^{57} + \la_3 \th^{67}.
\eew
Thus:
\ew
\cal{H}_{\rh_0}\om = \la_1\lt(\th^2 \s \th^6 - \th^3 \s \th^5\rt) + \la_2\lt(\th^2 \s \th^7 - \th^4 \s \th^5\rt) + \la_3\lt(\th^3 \s \th^7 - \th^4\s \th^6\rt).
\eew
Hence, \(\flat|_{\<e_5,e_6,e_7\?}\) maps \(\<e_5,e_6,e_7\?\) into \(\<e_2,e_3,e_4\?\) and is represented by the matrix:
\e\label{Null-Space-Sys}
\frac{1}{2}\bpm
0 & \la_1 & \la_2\\
-\la_1 & 0 & \la_3\\
-\la_2 & -\la_3 & 0
\epm
\ee
which has determinant 0, as required.

\end{proof}

I now prove \pref{NRP}:

\begin{proof}[Proof of \pref{NRP}]
As in the proof of \pref{TRP}:
\ew
\mc{N}_{\svph_0}(\rh_0) = \lt[\mc{N}_{\svph_0}(\rh_0) \cap \ww{1,1}\lt(\bb{R}^6\rt)^*\rt] \x \lt(\bb{R}^6 \hk \rh_0\rt)
\eew
and, therefore, since \(\cal{H}_{\rh_0}\) vanishes on \(\bb{R}^6 \hk \rh_0\), it suffices to prove that:
\ew
\mc{N}_{\svph_0}(\rh_0) \cap \ww{1,1}\lt(\bb{R}^6\rt)^* = \lt\{ \om \in \ww{1,1}\lt(\bb{R}^6\rt)^* ~\m|~ \cal{H}_{\rh_0}\om \text{ has signature } (2,1,3) \rt\}.
\eew

Let \(\om \in \ww{1,1}\lt(\bb{R}^6\rt)^*\) and define \(\sph = \th \w \om + \rh_0\).  Proceeding as in the proof of \pref{TRP}, one obtains:
\e\label{null-Q}
6Q_{\sph}(ae_1 + u) = a^2\th^1 \w \om^3 - 6a \th^1 \w (u\hk\om) \w \om \w \rh_0 + 3\cal{H}_{\rh_0}\om(u,u) \th^1 \w vol_{\rh_0}
\ee
where now, unlike for \(\SL(3;\bb{R})^2\) and \slc\ 3-forms, \(\om \w \rh_0\) need not vanish.  Initially, suppose that \(\sph\) is of \sg-type and write \(\bb{B} = 0 \ds \bb{R}^6 \pc \bb{R}^7\).  Since \(\sph|_\bb{B} = \rh_0\) is parabolic, it follows that \(\bb{B}\) is null, hence \(Q_{\sph}\) has signature \((2,1,3)\) upon restriction to \(\bb{B}\) and whence by \eref{null-Q} \(\cal{H}_{\rh_0}\om\) has signature \((2,1,3)\), as required.

Conversely, suppose that \(\cal{H}_{\rh_0}\om\) has signature \((2,1,3)\).  Then, \(\cal{H}_{\rh_0}\om\) has a 1-dimensional kernel which by \lref{Kernel} must be contained in \(\<e_5,e_6,e_7\?\). By applying a suitable \(\SL(3;\bb{R})\)-symmetry (see \pref{SL3-Subgp}), \wlg\ one can assume that:
\ew
\Ker(\cal{H}_{\rh_0}\om) = \<e_7\?.
\eew
Since \(\om\in\ww{1,1}\lt(\bb{R}^6\rt)^*\), by examining the matrix for \(\flat|_{\<e_5,e_6,e_7\?}\) in \eref{Null-Space-Sys}, it follows that \(\om\) has the form:
\ew
\om = A \th^{25} + B \th^{36} + C \th^{47} + D \th^{56} + E\lt(\th^{26} + \th^{35}\rt) + F\lt(\th^{27} + \th^{45}\rt) + G\lt(\th^{37} + \th^{46}\rt),
\eew
for some constants \(A,B,C,D,E,F,G\in\bb{R}\) with \(D\ne0\).  Upon restriction to \(\<e_2,...,e_6\?\) the bilinear form \(\cal{H}_{\rh_0}\om\) is represented by the matrix:
\ew
& {\bma \hs{11.5pt}e_2 & \hs{7pt} e_3 & \hs{5pt}e_4 & \hs{4pt}e_5 & \hs{3.5pt}e_6 \ema}\\
{\bma e_2\\ e_3\\ e_4\\ e_5\\ e_6 \ema} ~&~ {\bpm -A & -E & -F & 0 & \frac{D}{2}\\ -E & -B & -G & -\frac{D}{2} & \\ -F & -G & -C & & \\ 0 & -\frac{D}{2} & & & \\ \frac{D}{2} & & & & \epm}.
\eew
By assumption, this bilinear form is non-degenerate with signature \((2,3)\), and thus it follows from \lref{Signature} that \(C > 0\).

Next, observing \(\om\w\rh_0 = -D \th^{23567}\) yields:
\ew
\big((-)\hk\om\big)\w\om\w\rh_0 = D\lt(C \th^7 + F \th^5 + G \th^6\rt)\ts vol_{\rh_0}.
\eew
Substituting this result into \eref{null-Q} and polarising shows that \(2Q_{\sph}\) is represented by the symmetric \(7\x7\)-matrix:
\ew
& {\bma \hs{21.5pt}e_1 & \hs{13pt} e_7 & \hs{9pt}e_2 & \hs{2mm}e_3 & \hs{2mm} e_4 & \hs{8pt}e_5 & \hs{5.5mm}e_6 \ema}\\
{\bma e_1\\ e_7\\ e_2\\ e_3\\ e_4\\ e_5\\ e_6 \ema} ~&~ {\lt( \begin{array}{cc|ccccc} \frac{H}{3} & -DC & & & & -DF & -DG\\ -DC & & & & & &\\\hline  & & -A & -E & -F & 0 & \frac{D}{2}\\ & & -E & -B & -G & -\frac{D}{2} & \\ & & -F & -G & -C & &  \\ -DF & & 0 & -\frac{D}{2} & & & \\ -DG & & \frac{D}{2} & & & & \end{array}\rt)}.
\eew
where \(H\in\bb{R}\) is such that \(\th^1\w\om^3 = H\th^1\w vol_{\rh_0}\).  Thus, to complete the proof, it \stp\ that this matrix has signature \((3,4)\).  In fact, I show that for any \(h,r,s,t\in\bb{R}\), \(r\ne 0\), the matrix:
\ew
& {\bma \hs{13pt}e_1 & \hs{-5pt}e_7 & \hs{1pt}e_2 & \hs{7pt}e_3 & \hs{5pt}e_4 & ~ e_5 & ~ e_6 \ema}\\
M_{h,r,s,t} = ~\ ~\  {\bma e_1\\ e_7\\ e_2\\ e_3\\ e_4\\ e_5\\ e_6 \ema} ~&~ {\lt( \begin{array}{cc|ccccc} h & r & & & & s & t\\ r & & & & & &\\\hline & & -A & -E & -F & 0 & \frac{D}{2}\\ & & -E & -B & -G & -\frac{D}{2} & \\ & & -F & -G & -C & & \\ s & & 0 & -\frac{D}{2} & & & \\ t & & \frac{D}{2} & & & & \end{array}\rt)}
\eew
has signature \((3,4)\), which is sufficient to complete the proof, as \(-DC \ne 0\).  Since \(\det M_{h,r,s,t} = -Cr^2\lt(\frac{D}{2}\rt)^4 \ne 0\), \(M_{h,r,s,t}\) is non-singular for all values of \(h,s,t\) and thus it suffices to consider \(M_{0,r,0,0}\).  However, \(M_{0,r,0,0}\) is block diagonal with blocks:
\ew
\bpm 0 & r \\ r & 0 \epm \et \bpm -A & -E & -F & 0 & \frac{D}{2}\\ -E & -B & -G & -\frac{D}{2} & \\ -F & -G & -C & & \\ 0 & -\frac{D}{2} & & & \\ \frac{D}{2} & & & & \epm.
\eew
By \lref{Signature} the former block has signature \((1,1)\) and the latter block has signature \((2,3)\), and thus \(M_{0,r,0,0}\) has signature \((3,4)\), as claimed.

\end{proof}

I now prove the second part of \lref{NL}.  Specifically:
\begin{Lem}
For each (equivalently any) \(\rh \in \ww[0]{3}\lt(\bb{R}^6\rt)^*\), \(0 \in \Conv\lt(\mc{N}_{\svph_0}(\rh)\rt)\).
\end{Lem}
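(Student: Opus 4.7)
By $\GL_+(6;\bb{R})$-invariance of the conclusion, it suffices to take $\rh = \rh_0$ as in \eref{rh0}.  The plan is to use the $\SL(3;\bb{R})$-structure from \pref{SL3-Subgp} to reduce the signature condition of \pref{NRP} to a clean condition on an invariant pair $(\om_{sym}, \om_W)$, and then to exhibit three explicit elements of $\mc{N}_{\svph_0}(\rh_0)$ summing to zero.

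First, as established in the proof of \pref{NRP}, $\bb{R}^6 \hk \rh_0 \pc \Ker(\mc{H}_{\rh_0})$, so $\mc{N}_{\svph_0}(\rh_0) = [\mc{N}_{\svph_0}(\rh_0) \cap \ww{1,1}\lt(\bb{R}^6\rt)^*] \x (\bb{R}^6 \hk \rh_0)$.  Since the second factor is a linear subspace, it suffices to find $\om^1, \om^2, \om^3 \in \mc{N}_{\svph_0}(\rh_0) \cap \ww{1,1}\lt(\bb{R}^6\rt)^*$ with $\om^1 + \om^2 + \om^3 = 0$; then $\frac{1}{3}(\om^1 + \om^2 + \om^3) = 0 \in \Conv(\mc{N}_{\svph_0}(\rh_0))$, as required.

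Set $V = \<e_2, e_3, e_4\?$ and $W = \<e_5, e_6, e_7\?$, and use $H_{\rh_0}$ to identify $V \cong W$.  Then $\ww{1,1}\lt(\bb{R}^6\rt)^*$ splits as $\ss{2}V^* \ds \ww{2}W^*$, where $\ss{2}V^*$ corresponds to the symmetric part of $V^* \w W^*$ (under $W \cong V$) and $\ww{2}W^*$ sits as a natural direct summand.  Writing $\om = \om_{sym} + \om_W$ and computing from \eref{(1,1)-Im}, one finds that $\mc{H}_{\rh_0}\om$, as a bilinear form on $V \ds W$, has block form
\ew
\mc{H}_{\rh_0}\om = \bpm -\om_{sym} & \tfrac{1}{2}\Om \\ \tfrac{1}{2}\Om^T & 0 \epm,
\eew
where $\Om_{ij} = \om_W(b_i, b_j)$ is antisymmetric.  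A short signature calculation---using $A^2 = -\det(A)\1 I_2$ for any $2\x 2$ antisymmetric $A$ to show that any $4\x 4$ matrix $\bpm S & A \\ -A & 0 \epm$ (with invertible $A$) has characteristic polynomial $(\la^2 - s_1\la - \det A)(\la^2 - s_2\la - \det A)$ and hence signature $(2, 2)$---combined with the fact that $\Ker(\om_W) \pc W$, identified with a line $\<v\? \pc V$ via $V \cong W$, is automatically in $\Ker(\mc{H}_{\rh_0}\om)$, shows that $\om \in \mc{N}_{\svph_0}(\rh_0)$ \iff\ $\om_W \ne 0$ and $\om_{sym}(v, v) > 0$.

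Using the natural isomorphism $\ww{2}W^* \cong W \cong V$ (so $\om_W$ corresponds to a non-zero vector $v_\om \in V$ with $\Ker(\om_W) = \<v_\om\?$), the problem reduces to finding $(q_i, v_i) \in \ss{2}V^* \ds V$, $i = 1, 2, 3$, with $v_i \ne 0$, $q_i(v_i, v_i) > 0$, $\sum_i q_i = 0$ and $\sum_i v_i = 0$.  Writing $V = \<a_1, a_2, a_3\?$, take
\ew
(q_1, v_1) &= \lt(\lt(\th^{a_1}\rt)^2 - 2\lt(\th^{a_2}\rt)^2,\ a_1\rt),\\
(q_2, v_2) &= \lt(-2\lt(\th^{a_1}\rt)^2 + \lt(\th^{a_2}\rt)^2,\ a_2\rt),\\
(q_3, v_3) &= \lt(\lt(\th^{a_1}\rt)^2 + \lt(\th^{a_2}\rt)^2,\ -a_1 - a_2\rt),
\eew
for which $q_i(v_i, v_i) = 1, 1, 2$ respectively (all positive) and $\sum q_i = 0 = \sum v_i$; back-translating via the preceding isomorphisms yields the required $\om^i$.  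The main technical obstacle is the signature characterization in the previous paragraph: establishing that positivity of $\om_{sym}$ on the kernel direction of $\om_W$ alone suffices for signature $(2, 1, 3)$, regardless of the remaining components of $\om_{sym}$.  Once this criterion is in place, the construction of the triple $(q_i, v_i)$ is a short exercise in linear algebra.
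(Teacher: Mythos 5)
Your proof is correct, and it takes a genuinely different (and in some ways cleaner) route than the paper's.  Where the paper simply writes down three explicit $2$-forms $\om_0, \om'_\pm$, computes $\cal{H}_{\rh_0}\om_i$ on the nose and verifies the signature $(2,1,3)$ via \lref{Signature} (with an $\ep$-perturbation to force the sum to vanish while staying in the open set), you first derive a structural characterisation: using $H_{\rh_0}$ to identify $V = \<e_2,e_3,e_4\?$ with $W = \<e_5,e_6,e_7\?$ and the resulting splitting $\ww{1,1}\lt(\bb{R}^6\rt)^* \cong \ss{2}V^* \ds \ww{2}W^*$, you reduce the signature condition of \pref{NRP} to the pair of conditions $\om_W \ne 0$ and $\om_{sym}(v,v) > 0$ on the kernel direction $v$ of $\om_W$.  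This reformulation is accurate, and it makes the construction of the triple essentially trivial --- no perturbation argument needed.  Two remarks on filling in your sketch of the signature step (which you rightly flag as the main technical point): the cleanest route is probably just the paper's \lref{Signature} applied to the $5\x5$ restriction (exactly as in the end of the proof of \pref{NRP}); alternatively, your $4\x4$ block argument does go through, but only once one chooses the $2$-dimensional complement of $\<v\?$ in $V$ to be the $\om_{sym}$-orthocomplement --- with that choice $\<v\?$ becomes $\cal{H}_{\rh_0}\om$-orthogonal to the rest and the $4\x4$ block decouples, whereas for an arbitrary complement it does not.  (Also, a small wording slip: it is $\Ker(\om_W) \pc W$ --- not the identified line $\<v\?\pc V$ --- that sits in $\Ker\lt(\cal{H}_{\rh_0}\om\rt)$; your criterion is nonetheless stated and used correctly.)
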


\begin{proof}
As usual, \wlg\ let \(\rh = \rh_0\).  Consider the 2-form \(\om_0 = - 2 \th^{47} + 2\ep(\th^{67} + \th^{25} - \th^{36})\) for some \(\ep\in\bb{R}\osr\{0\}\) to be specified later. One may compute using \eref{(1,1)-Im} that:
\ew
\cal{H}_{\rh_0}\om_0 = 2 \th^4 \s \th^4 + 2\ep(\th^3 \s \th^7 - \th^4 \s \th^6 - \th^2 \s \th^2 + \th^3 \s \th^3).
\eew
This may be represented by the \(6\x6\)-matrix:
\ew
&~~ {\bma \hs{7pt}e_2 & \hs{1pt}e_3 & \hs{-2pt}e_7 & \hs{-2pt}e_4 & e_6 & \hs{-2pt}e_5 \ema}\\
{\bma e_2\\ e_3\\ e_7\\ e_4\\ e_6\\ e_5 \ema}~
&{\bpm -2\ep & & & & & \\ & 2\ep & \ep & & & \\ & \ep & & & & \\ & & & 2 & -\ep & \\ & & & -\ep & &\\ & & & & & 0 \epm}
\eew
which has signature \((2,1,3)\) by applying \lref{Signature} to each matrix along the (block) diagonal. Thus, \(\om_0\in\mc{N}_{\svph_0}(\rh_0)\).

Now, consider \(\om_\pm = \th^{47} \pm \th^{56}\). Then, one may verify again using \eref{(1,1)-Im} that:
\ew
\cal{H}_{\rh_0}\om_\pm = -\th^4\s \th^4 \pm (\th^2 \s \th^6 - \th^3 \s \th^5)
\eew
both of which have signature \((2,1,3)\). Thus, \(\om_\pm\in\mc{N}_{\svph_0}(\rh_0)\). Moreover, \(\mc{N}_{\svph_0}(\rh_0)\pc\ww{2}\lt(\bb{R}^6\rt)^*\) is open, so it follows that for all \(\ep\in\bb{R}\osr\{0\}\) with \(|\ep|\) sufficiently small, the 3-forms:
\ew
\om'_\pm = \om_\pm - \ep(\th^{67} + \th^{25} - \th^{36})
\eew
also lie in \(\mc{N}_{\svph_0}(\rh_0)\). Fix some suitable choice of \(\ep\); then, the three 2-forms \(\om_0,\om'_\pm\) all lie in \(\mc{N}_{\svph_0}(\rh_0)\).  One may then compute that:
\ew
\frac{1}{3}\lt(\om_0 + \om'_+ + \om'_-\rt) = & \frac{1}{3}\lt(- 2 \th^{47} + 2\ep(\th^{67} + \th^{25} - \th^{36})\rt)\\
& ~+ \th^{47} + \th^{56} - \ep(\th^{67} + \th^{25} - \th^{36})\\
& \lt.~+ \th^{47} - \th^{56} - \ep(\th^{67} + \th^{25} - \th^{36})\rt)\\
=& 0,
\eew
completing the proof.

\end{proof}

Thus, by \tref{FCA->A}, it has been proven:
\begin{Thm}\label{sg-3}
\sg\ 3-forms satisfy the relative \(h\)-principle.
\end{Thm}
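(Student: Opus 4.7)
The plan is to apply Theorem \ref{FCA->A} to $\tilde{\varphi}_0$, reducing the claim to the three properties of faithfulness, connectedness, and abundance. Since Theorem \ref{FCA->A} then yields ampleness, the relative $h$-principle follows from Theorem \ref{hPThm-1}. All three properties are essentially already established by the preceding sections, so the proof is a matter of assembling what has been shown.

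First, I would invoke Proposition \ref{SG23-F}, which establishes faithfulness of $\tilde{\varphi}_0$ by exhibiting three distinct $\tilde{G}_2$-orbits on $\oGr_6(\bb{R}^7)$ (spacelike, timelike, null) and identifying their images under $\mc{T}_{\tilde{\varphi}_0}$ as the distinct orbits $\ww[-]{3}(\bb{R}^6)^*$, $\ww[+]{3}(\bb{R}^6)^*$, and $\ww[0]{3}(\bb{R}^6)^*$ respectively. Next, for connectedness, one must check that for each of these three orbits the stabiliser in $\GL_+(6;\bb{R})$ of a representative form is path-connected: the spacelike case gives $\SL(3;\bb{C})$ and the timelike case gives $\SL(3;\bb{R})^2$ (both connected by Proposition \ref{SL-Prop}), while the null case is precisely the first half of Lemma \ref{NL}, where the stabiliser of $\rh_0$ is exhibited explicitly as $\xi(\SL(3;\bb{R}))\cdot\mc{G}$ with $\mc{G}$ contractible.

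For abundance, one must verify that $0\in\Conv(\mc{N}_{\tilde{\varphi}_0}(\rh))$ for some (equivalently any) representative $\rh$ in each of the three orbits of $\mc{T}_{\tilde{\varphi}_0}$. These are exactly the contents of Lemmas \ref{TL}, \ref{SL}, and the second half of \ref{NL} respectively, each of which is handled by producing an explicit symmetric triple of 2-forms in $\mc{N}_{\tilde{\varphi}_0}(\rh)$ summing to zero (using the characterisations of $\mc{N}_{\tilde{\varphi}_0}(\rh)$ in Propositions \ref{TRP}, the spacelike analogue, and \ref{NRP}).

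With faithfulness, connectedness, and abundance of $\tilde{\varphi}_0$ all in place, Theorem \ref{FCA->A} implies that $\tilde{\varphi}_0$ is ample, and Theorem \ref{hPThm-1} then gives the relative $h$-principle for $\tilde{G}_2$ 3-forms. The main obstacle throughout has been the null orbit: unlike the $\SL(3;\bb{C})$ and $\SL(3;\bb{R})^2$ cases, parabolic 3-forms are not stable, do not canonically induce a volume form, and have a less tractable stabiliser, so both the connectedness computation and the abundance computation there required the explicit choice of $vol_{\rh_0}$, the introduction of the endomorphism $H_{\rh_0}$, and careful signature analysis of the bilinear form $\mc{H}_{\rh_0}\omega$; by contrast, the step of deducing Theorem \ref{sg-3} from these ingredients is formal.
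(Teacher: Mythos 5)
Your proposal is correct and follows essentially the same route as the paper: apply Theorem \ref{FCA->A}, drawing faithfulness from Proposition \ref{SG23-F}, connectedness from the identification of the spacelike/timelike stabilisers as $\SL(3;\bb{C})$ and $\SL(3;\bb{R})^2$ together with the first half of Lemma \ref{NL}, and abundance from Lemmas \ref{TL}, \ref{SL} and the second half of \ref{NL}. Your closing remark that the null (parabolic) orbit is the genuine technical bottleneck, while the final deduction is formal, matches the structure of the paper's argument.
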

~\qed

\appendix

\section{Hodge-type decomposition on non-compact manifolds}\label{NCH-Sec}

The aim of this appendix is to prove \tref{NCH}.  Recall the statement of the theorem:\vs{3mm}

\noindent{\bf Theorem \ref{NCH}.}
\em Let \(\M\) be an \(n\)-manifold (not necessarily oriented and possibly non-compact or with boundary).  Then, there exists an injective, continuous linear operator \(\io:\Ds_p\dR{p}(\M) \to \Ds_p\Om^p(\M)\) of degree 0 and a continuous linear operator \(\de:\Ds_p\Om^p(\M) \to \Ds_p\Om^p(\M)\) of degree \(-1\) satisfying \(\de^2 = 0\) such that for each \(0 \le p \le n\):
\begin{equation*}
\Om^p(\M) = \io\dR{p}(\M) \ds \dd\Om^{p-1}(\M) \ds \de\Om^{p+1}(\M) \tag{\ref*{NCHD-eq}}
\end{equation*}
in the category of \F\ spaces, where both \(\dd\) and \(\de\) act as \(0\) on \(\io\dR{p}(\M)\) and the maps \(\dd: \de\Om^{p+1}(\M) \to \dd\Om^p(\M)\) and \(\de: \dd\Om^p(\M) \to \de\Om^{p+1}(\M)\) are mutually inverse.  In particular, the projections \(\Om^p(\M) \to \dd\Om^{p-1}(\M)\) and \(\Om^p(\M) \to \de\Om^{p+1}(\M)\) are given by \(\dd \circ \de\) and \(\de \circ \dd\), respectively.\vs{2mm}\em

The most significant features of the above theorem is that there is no compactness assumption on \(\M\) and that there is no assumption that \(\M\) be oriented.  Indeed, in the special case where \(\M\) is closed and oriented, after picking a Riemannian metric \(g\) on \(\M\), \tref{NCH} is already known to hold by Hodge Decomposition, with \(\io\) induced by the isomorphism from \(\dR{p}(\M)\) to the space \(\cal{H}^p(\M)\) of harmonic \(p\)-forms on \(\M\) and with \(\de = \dd^*G\), where \(\dd^*\) is the co-exterior derivative and \(G\) is the Green's operator for the Hodge Laplacian \(\De\) defined by \(g\).  Likewise, on compact orientable manifolds with boundary, \tref{NCH} is known to hold by \cite[Cor.\ 2.4.9 and Thm.\ 6.1]{HD-AMfSBVP}, with \(\io\) induced by the isomorphism from \(\dR{p}(\M)\) to the space \(\cal{H}^p_N(\M)\) of \(p\)-forms which are both closed and coclosed and which have vanishing normal component along the boundary of \(\M\), and with \(\de = \dd^* G_N\), where \(G_N\) is the Green's operator for the operator \(\De\) with Neumann boundary conditions.  Explicitly, \(G_N\) acts as 0 on the space \(\cal{H}^p_N(\M)\), whilst for \(p\)-forms \(\al\) which are \(L^2\) orthogonal to \(\cal{H}^p_N(\M)\), \(G_N(\al)\) is the unique solution to the elliptic boundary value problem:
\caw
\De G_N(\al) = \al \text{ on } \M;\\
{\bf n}\,G_N(\al) = 0 \text{ and } {\bf n}\,\dd G_N(\al) = 0 \text{ on } \del\M,
\caaw
where {\bf n} denotes the normal component along \(\del \M\).  However, in the case of non-compact manifolds, to the author's knowledge Hodge decomposition-type results have only been previously obtained by restricting attention to oriented manifolds and spaces of forms with restricted growth `at infinity' (such as Lebesgue or Sobolev spaces), as well as by imposing other geometric assumptions on the chosen Riemannian metric \(g\), such as completeness or lower bounds on curvature; see, e.g.\ \cite{HD-AMfSBVP, THKWHLoNM}.  By contrast, \tref{NCH} applies to any \(n\)-manifold, regardless of whether it is compact or oriented, and applies to the full space \(\Om^p(\M)\) of \(p\)-forms, regardless of their rate of growth at infinity.

A second notable feature of \tref{NCH} is that, in contrast to other Hodge decomposition-type theorems, elliptic regularity does not play a role in the proof of \tref{NCH}, and it is not necessary to choose a Riemannian metric on \(\M\).  Instead, the proof combines \F\ space theory with direct calculations involving triangulations, performed in \cite[Ch.\ IV.C]{GIT}.\\

\subsection{Preliminaries}\label{App-Prelim}

I begin by recounting some preliminary definitions.  The main references for this subsection are \cite[Ch.\ I]{FA(KY)} and \cite[\S18]{TVSI} for \F\ spaces, and \cite[Ch.\ II]{EDT} and \cite[App.\ II]{GIT} for triangulations, although the reader should note that the discussion of the \F\ topologies on the spaces of simplicial cochains, cocycles and coboundaries are the work of the author and cannot, to the author's knowledge, be found elsewhere in the literature.  In this appendix, all vector spaces shall be taken to be real.

A topological vector space is termed a \F\ space if it is complete, metrisable and locally convex.  Given a \F\ space \(F\), one may always choose an increasing sequence \(\|-\|_0 \le \|-\|_1 \le . . .\) of continuous semi-norms such that the sets \(U_k = \lt\{ f \in F ~\m|~ \|f\|_k < \frac{1}{k+1}\rt\}\) form a basis of open neighbourhoods of \(0 \in F\) (in particular, the semi-norms \(\|-\|_k\) are separating, i.e.\ if \(\|f\|_k = 0\) for all \(k \in \bb{N}\), then \(f = 0\)).  A choice of such semi-norms is termed a grading on \(F\).  Whilst gradings are not unique, given any two gradings \(\lt(\|-\|_k\rt)_k\) and \(\lt(\|-\|'_k\rt)_k\), for every \(k \in \bb{N}\), there exist \(k' \in \bb{N}\) and \(C_k > 0\) such that:
\e\label{equiv-grads}
\|-\|_k \le C_k\|-\|'_{k'} \,.
\ee
By definition, a sequence \((f_n)_n\) in \(F\) converges to \(f\) \iff\ \(\|f_n - f\|_k \to 0\) as \(n \to \0\) for all \(k\) (say that \((f_n)_n\) converges to \(f\) \wrt\ each \(\|-\|_k\)).  In particular, the completeness of \(F\) can be phrased in terms of a choice of grading by the following condition:\\

\hangindent = 5mm\noindent(I) For any sequence \((f_n)_n\) in \(F\) which is Cauchy \wrt\ each \(\|-\|_k\), there exists \(f\) in \(F\) such that \(f_n \to f\) as \(n \to \0\) \wrt\ each \(\|-\|_k\).\\

\noindent Conversely, given a vector space \(F\) and a collection of semi-norms \(\|-\|_0 \le \|-\|_1 \le ...\) which is separating and which satisfies (I), the sets \(\lt\{U_k\rt\}_k\) define a basis of open neighbourhoods about \(0 \in F\) for a unique translation-invariant topology on \(F\), making \(F\) into a \F\ space.

Given a \F\ space \(F\) and a closed subspace \(E\), both \(E\) and the quotient \(\rqt{F}{E}\) are also \F\ spaces.  Explicitly, given a grading \((\|-\|_k)_k\) on \(F\), the topology on \(E\) can be induced by restricting the seminorms \(\|-\|_k\) to \(E\) and the topology on \(\rqt{F}{E}\) can be induced by the seminorms:
\ew
\|f+E\|'_k = \inf \lt\{ \|f'\|_k ~\m|~ f' \in f + E \rt\}.
\eew

The following examples of \F\ spaces will be of particular interest in this appendix:
\begin{Exs}\label{FSpace-Exs}~

\noindent(1) Any finite dimensional vector space is \F.  More generally, any Banach space \((B, \|-\|)\) is a graded \F\ space, with grading given by \(\|-\|_k = \|-\|\) for all \(k\).\\

\noindent(2) Consider the space \(\upomega = \bb{R}^\bb{N}\), i.e.\ the space of all real sequences.  Define seminorms \(\|-\|_k\) on \(\upomega\) by:
\ew
\|x\|_k = \sup_{0 \le i \le k} |x_i|.
\eew
Then, \(\lt(\upomega, \lt(\|-\|_k\rt)_k\rt)\) is a graded \F\ space, and the induced topology on \(\upomega\) is simply the product topology.\\

\noindent(3) Let \(\M\) be a manifold (possibly with boundary) and let \(K_0 \cc K_1 \cc ...\) be a compact exhaustion of \(\M\) (in the case where \(\M\) is compact, set \(K_k = \M\) for all \(k\)).  Pick a Riemannian metric \(g\) on \(\M\) and let \(\nabla\) denote the corresponding Levi-Civita connection.  For each \(0 \le p \le n\), define seminorms \(\|-\|_k\) on \(\Om^p(\M)\) via:
\ew
\|\al\|_k = \sup_{x \in K_k} \lt(\sum_{i = 0}^k \lt. \lt| \nabla^{\s i} \al \rt|_g^2 \rt|_x\rt)^\frac{1}{2}.
\eew
Then, \(\lt(\Om^p(\M), \lt(\|-\|_k\rt)_k\rt)\) is a graded \F\ space and the induced topology on \(\Om^p(\M)\) is the topology of uniform convergence of all derivatives on compact subsets.  One may verify that, \wrt\ this topology, the exterior derivative \(\dd: \Om^p(\M) \to \Om^{p+1}(\M)\) is continuous and thus the subspace \(\Om^p_\cl(\M) \pc \Om^p(\M)\), being closed, is a \F\ space.  Likewise, recall that, given any \(\al \in \Om^p(\M)\) and any differentiable singular \(p\)-chain \(A\) in \(\M\), one may compute the integral \(\bigintsss_A \al\) (see \cite[\S4.6]{FoDM&LG}).  One may verify that \(\bigintsss_A: \Om^p(\M) \to \bb{R}\) is continuous.  Moreover, by de Rham's and Stokes' Theorem, the space \(\dd\Om^{p-1}(\M)\) may be characterised as \(\mLa{\bigcap}_A \ker \bigintsss_A\), where the intersection runs over all differentiable singular \(p\)-cycles in \(\M\) (see \cite[Thm.\ 4.17]{FoDM&LG}).  Hence, the space \(\dd\Om^{p-1}(\M) \pc \Om^p(\M)\) is also closed and whence a \F\ space.  Thus, \(\dR{p}(\M) = \rqt{\Om^p_\cl(\M)}{\dd\Om^{p-1}(\M)}\) is also a \F\ space.\\
\end{Exs}

\begin{Rk}
Note that since \(\dd\Om^{p-1}(\M)\) is the image of the \F\ space \(\Om^{p-1}(\M)\) under the continuous linear map \(\dd\), one could also topologise \(\dd\Om^{p-1}(\M)\) via identification with the quotient \(\rqt{\Om^{p-1}(\M)}{\Om^p_\cl(\M)}\).  The resulting topology on \(\dd\Om^{p-1}(\M)\) however, is precisely the same as the subspace topology described above.  Indeed, give \(\dd\Om^{p-1}(\M)\) its subspace topology and consider the continuous linear map \(\dd:\Om^{p-1}(\M) \to \dd\Om^{p-1}(\M)\).  By definition, this map factors through the quotient \(\rqt{\Om^{p-1}(\M)}{\Om^p_\cl(\M)}\) to give a continuous linear bijection \(\rqt{\Om^{p-1}(\M)}{\Om^p_\cl(\M)} \to \dd\Om^{p-1}(\M)\), which is an isomorphism (i.e.\ has a continuous inverse) by the Open Mapping Theorem (see \cite[\S II.5]{FA(KY)}).\\
\end{Rk}

Given a graded \F\ space \((F,(\|-\|_k)_k)\), one may associate a family of Banach spaces \(F_k\), termed the local Banach spaces associated with the grading, by setting \(F_k\) to be the completion of the normed vector space \(\lt(\rqt{F}{\ker\|-\|_k}, \|-\|_k\rt)\), where \(\ker\|-\|_k = \lt\{ f \in F ~\m|~ \|f\|_k = 0\rt\}\), as usual.  Since \(\|-\|_k \le \|-\|_{k+1}\) for each \(k\), the identity map \(\Id:F \to F\) naturally induces maps \(\io^k_l: F_k \to F_l\) whenever \(k \ge l\) which satisfy \(\io^l_j \circ \io^k_l = \io^k_j\) for \(k \ge l \ge j\).  A \F\ space \(F\) is termed nuclear (see \cite[Ch.\ X(App.)]{FA(KY)}) if given some (equivalently any) choice of grading \(\|-\|_k\) on \(F\), for any \(l \in \bb{N}\) there exists \(k \ge l\) such that the mapping \(\io^k_l: F_k \to F_l\) is nuclear, i.e.\ such that there exist sequences \(\Ph_n \in F_k^*\), \(f_n \in F_l\) and \(c_n \in \bb{R}\) satisfying:
\ew
\sup_n \|\Ph_n \|_k < \0, \hs{3mm} \sup_n \|f_n\|_l < \0 \et \sum_n |c_n| < \0
\eew
such that for all \(f \in F_k\):
\ew
\io^k_l(f) = \sum_n c_n\Ph_n(f) f_n.
\eew
(Here \(F^*_k\) denotes the topological dual space of \(F_k\).)  The key result concerning nuclear spaces which will be required in this appendix is the following, proved in \cite[Thm.\ 1.6]{SRoCLMBFS}:
\begin{Thm}\label{nuclear-split}
Let \(E\) and \(G\) be \F\ spaces and suppose that \(E\) is nuclear.  Fix a grading \(\|-\|_k\) on \(E\) and suppose that the linear map:
\ew
\bcd[row sep = 0pt]
\mc{I}: \prod_{k \in \bb{N}} \mc{L}(G,E_k) \ar[r] & \prod_{k \in \bb{N}} \mc{L}(G,E_k)\\
(A_k)_k \ar[r, maps to] & \lt(\io^{k+1}_k A_{k+1} - A_k\rt)_k
\ecd
\eew
is surjective, where \(\mc{L}(-,-)\) denotes the space of continuous linear maps.  Then, every short exact sequence of \F\ spaces:
\ew
\bcd
0 \ar[r] & E \ar[r, "\al"] & F \ar[r, "\be"] & G \ar[r] & 0
\ecd
\eew
splits, i.e.\ there exists \(\ga \in \mc{L}(G,F)\) such that \(\be \circ \ga = \Id_G\).  In particular, \(F = \al(E) \ds \ga(G)\), in the sense that every \(f \in F\) can be written uniquely as the sum of elements of \(\al(E)\) and \(\ga(G)\), and moreover that the projections \(F \to \al(E)\) and \(F \to \ga(G)\) are continuous.
\end{Thm}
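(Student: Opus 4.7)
The plan is to construct the splitting \(\ga : G \to F\) in two stages: first build a family of \emph{local} right inverses at the level of the local Banach spaces \(F_k\), and then correct the failure of compatibility between them using the surjectivity of \(\mc{I}\).

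\emph{Compatible gradings.} Fix a grading \(\lt(\|\1\|^F_k\rt)_k\) on \(F\).  By the Open Mapping Theorem, the induced topologies on the subspace \(\al(E) \pc F\) and on the quotient \(F/\al(E) \cong G\) coincide with the given ones, so after passing to a cofinal subsequence of indices (using \eref{equiv-grads}) we may assume that the restriction of \(\lt(\|\1\|^F_k\rt)_k\) to \(\al(E)\) is equivalent to the given grading on \(E\) and that the quotient grading refines a chosen grading on \(G\).  This yields coherent systems of local Banach spaces \(E_k\), \(F_k\), \(G_k\) with bonding maps \(\io^{k+1}_k\) and induced continuous linear maps \(\al_k:E_k \to F_k\), \(\be_k : F_k \to G_k\), where each \(\be_k\) is surjective and \(E,F,G\) are recovered as the projective limits of these systems.

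\emph{Local lifts.} The heart of the argument is to construct, for each \(k\), a continuous linear map \(s_k \in \mc{L}(G,F_k)\) whose composition with \(\be_k\) equals the canonical projection \(G \to G_k\).  A Banach-space surjection does not in general admit a continuous linear right inverse, so this step is the main technical obstacle.  The remedy is the nuclearity of \(E\): after passing to a subsequence one may factor each bonding map \(\io^{k+1}_k : E_{k+1} \to E_k\) through \(\ell^1\), which has the metric lifting property for linear operators into arbitrary Banach spaces.  This additional factorisation provides the flexibility to run an inductive diagram chase producing the \(s_k\) one level at a time: at stage \(k\) the previously accumulated obstruction lies in an \(E_\ell\)-component which, thanks to nuclearity, can be absorbed through \(\ell^1\) and prolonged through \(\be_k\).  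The outcome is a family \((s_k)_k\) of local right inverses that is compatible with \(\be\) but in general \emph{not} with the bonding maps.

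\emph{Assembly via \(\mc{I}\).} The discrepancy \(T_k := \io^{k+1}_k s_{k+1} - s_k \in \mc{L}(G,F_k)\) satisfies \(\be_k \circ T_k = 0\), so (working with the original exact sequence rather than its completion to avoid taking closures) it takes values in \(\al_k(E_k)\) and yields a well-defined \(A_k \in \mc{L}(G,E_k)\).  The surjectivity hypothesis on \(\mc{I}\) then produces \((B_k)_k \in \prod_k \mc{L}(G,E_k)\) with \(\io^{k+1}_k B_{k+1} - B_k = A_k\), and the corrected maps \(\tld s_k := s_k + \al_k B_k\) satisfy \(\io^{k+1}_k \tld s_{k+1} = \tld s_k\).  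They therefore assemble into a continuous linear map \(\ga : G \to F = \varprojlim_k F_k\), and the identity \(\be \circ \ga = \Id_G\) is verified level-by-level.  The Open Mapping Theorem applied to the continuous linear bijection \(\al \ds \ga : E \ds G \to F\) then upgrades the algebraic decomposition to a topological one, completing the proof.
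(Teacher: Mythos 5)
First, a point of reference: the paper does not prove this statement at all — Theorem \ref{nuclear-split} is quoted from Vogt \cite[Thm.\ 1.6]{SRoCLMBFS} and used as a black box — so your proposal has to stand on its own.  Its architecture is the standard and correct skeleton for results of this type: pass to compatible gradings, build local right inverses \(s_k \in \mc{L}(G,F_k)\) of the induced Banach quotients \(\be_k\), observe that the discrepancies \(\io^{k+1}_k s_{k+1} - s_k\) annihilate \(\be_k\) and hence define an element of \(\prod_k \mc{L}(G,E_k)\), kill it using the surjectivity of \(\mc{I}\), and assemble the corrected maps over the projective limit \(F = \varprojlim F_k\).  Steps one and three are fine modulo routine care (identifying \(\Ker \be_k\) with the isometric image of \(E_k\) in \(F_k\), a harmless sign in the equation \(\io^{k+1}_k B_{k+1} - B_k = -A_k\), and the fact that surjectivity of \(\mc{I}\) is unaffected by passing to an equivalent cofinal grading).

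The gap is step two, which is the entire analytic content of the theorem and which you only gesture at.  The existence of continuous linear \(s_k : G \to F_k\) with \(\be_k s_k\) equal to the canonical map \(G \to G_k\) is a genuine lemma: a Banach quotient map admits no continuous linear right inverse in general, and the map \(G \to G_k\) is not nuclear, so nothing can be lifted ``directly''.  Moreover, the mechanism you invoke is the wrong half of nuclearity: the metric lifting property of \(\ell^1\) (its projectivity) lets one lift operators \emph{defined on} \(\ell^1\) through a quotient, whereas here the nuclearity lives in the kernel \(E\) and the relevant phenomenon is the dual one.  A correct argument runs as follows: push out the given sequence along \(E \to E_k \to \Ker\be_k\) to obtain an extension of \(G\) by the Banach space \(\Ker\be_k\), whose splitting is equivalent to the existence of \(s_k\); since \(E \to E_k\) factors through a nuclear bonding map \(E_m \to E_k\), which in turn factors through \(\ell^\infty\) via its nuclear representation \(\sum_n c_n \Ph_n \otimes f_n\), the pushed-out extension is a further pushout of an extension of \(G\) by \(\ell^\infty\), and that extension splits because \(\ell^\infty\) is complemented in any locally convex space containing it topologically (extend the coordinate functionals equicontinuously by Hahn--Banach).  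Without this, or an equivalent, argument for the local lifts, the proof is incomplete at its crucial point.
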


This appendix also considers a further source of \F\ spaces, {\it viz.}\ the spaces of cochains, cocycles and coboundaries in a simplicial complex.  As stated above, to the author's knowledge this \F\ space perspective on these spaces is new and cannot be found elsewhere in the literature.

Let \(K \cc \bb{R}^m\) be a (possibly infinite) \(n\)-dimensional simplicial complex and for each \(0 \le p \le n\) recall the vector spaces \(C_p(K)\), \(Z_p(K)\) and \(B_p(K)\) of (real) chains, cycles and boundaries in \(K\), and likewise recall the vector spaces \(C^p(K)\), \(Z^p(K)\) and \(B^p(K)\) of cochains, cocycles and coboundaries in \(K\).  \(C^p(K)\) naturally has the structure of a \F\ space as follows: let \(\si_0,\si_1,\si_2,...\) be a (possibly finite) enumeration of the \(p\)-dimensional simplices in \(K\).  Then, one obtains a bijection \(C^p(K) \to \bb{R}^\pt\), where \(\pt = 1,2,...\), or \(\bb{N}\), given by:
\ew
a \in C^p(K) \mt \lt(a(\si_0), a(\si_1), a(\si_2), . . . \rt) \in \bb{R}^\pt,
\eew
and thus \(C^p(K)\) inherits the structure of a \F\ space by \exrefs{FSpace-Exs}(1) \& (2) (note that the topology is independent of the choice of ordering of the simplices).  Moreover, by examining the explicit formula for the coboundary map \(\dd\) on cochains (see, e.g.\ \cite[App.\ II, \S7, eqn.\ 5]{GIT}), one may verify that \(\dd\) is continuous \wrt\ the \F\ space topologies on \(C^p(K)\) and \(C^{p+1}(K)\), hence \(Z^p(K) \pc C^p(K)\) is closed and whence it too is a \F\ space.  Likewise, for each chain \(A \in Z_p(K)\), one obtains a continuous linear map \(\ep_A: C^p(K) \to \bb{R}\) given by evaluation along \(A\).  Then, clearly \(B^p(K) = \mLa{\bigcap}_{A \in Z_p(K)} \Ker(\ep_A) \pc C^p(K)\) is a closed subspace and thus it also inherits a \F\ space topology (note that, as for the space \(\dd\Om^p(\M)\), this topology makes \(B^p(K)\) isomorphic to the quotient \(\rqt{C^p(K)}{Z^p(K)}\)).  In particular, the space \(\sch{p}{K} = \rqt{Z^p(K)}{B^p(K)}\) is a \F\ space.

Now, let \(\M\) be an \(n\)-dimensional manifold and choose a triangulation \((K,f)\) of \(\M\), i.e.\ choose an \(n\)-dimensional simplicial complex \(K\) together with a homeomorphism \(f: |K| \to \M\) such that, for each simplex \(\si \in K\), the restriction \(f|_\si:\si \to \M\) is a smooth embedding.  In the case where \(\del\M \ne \es\), assume moreover that \(f^{-1}(\del\M) = |L|\) for some subcomplex \(L \le K\) (see \cite[Defn.\ 7.1]{EDT} for the definition of a simplicial complex and a subcomplex).  Recall that there is a natural linear map \(\int: \Om^p(\M) \to C^p(K)\) defined by:
\ew
\mbox{\(\int(\al)(\si)\)} = \bigintsss_\si f^*\al
\eew
for each \(p\)-simplex \(\si\) of \(K\) and \(\al \in \Om^p(\M)\).  One may verify that \(\int\) is continuous \wrt\ the \F\ topologies on \(\Om^p(\M)\) and \(C^p(K)\).  Moreover, by Stokes' theorem, \(\dd \circ \int = \int \circ \,\, \dd\) and thus \(\int\) descends to define continuous linear maps \(\int: Z^p(K) \to \Om^p_\cl(\M)\), \(\int: B^p(K) \to \dd\Om^{p-1}(\M)\) and \(\int: \sch{p}{K} \to \dR{p}(\M)\), the last of which is bijective by de Rham's Theorem and hence an isomorphism by the Open Mapping Theorem.

Suppose, initially, that \(\M\) is closed.  It is proven by Whitney in \cite[Ch.\ IV, \S\S25--28]{GIT} that there is a linear map \(\mc{S}: C^p(K) \to \Om^p(\M)\) such that:
\begin{itemize}
\item[(i)] \(\int \circ\ \mc{S} = \Id\) on \(C^p(K)\);

\item[(ii)] \(\mc{S} \circ \dd = \dd \circ \mc{S}\);

\item[(iii)] For each simplex \(\si \in K\), writing \(\de_\si\) for the cochain dual to \(\si\), the \(p\)-form \(\mc{S}(\de_\si)\) vanishes identically near \(\M \osr \mathrm{Star}(\si)\) (see \cite[Defn.\ 7.1]{EDT} for the definition of the star of a simplex);

\item[(iv)] Writing {\bf 1} for the 0-cochain which takes the value 1 on each vertex of \(K\), \(\mc{S}({\bf 1})\) is the constant function \(1 \in \Om^0(\M)\).
\end{itemize}
Moreover, in part by using this result, Whitney proves that for all \(\al \in \Om^p_\cl(\M)\) and \(a \in C^{p-1}(K)\) such that \(\int(\al) = \dd a\), there exists \(\be \in \Om^{p-1}(\M)\) such that \(\dd\be = \al\) and \(\int(\be) = a\).  Notably, it is not proven that the assignment \((\al,a) \mt \be\) can be taken to be linear, nor that either this assignment or \(\mc{S}\) are continuous (indeed, the topologies on the spaces \(C^p(K)\) and \(\Om^p(\M)\) etc.\ are not considered {\it loc.\ cit.}).  However, by modifying the proof in \cite{GIT} in an obvious (and straightforward) way, one may ensure that each step in the construction of the assignments \(\mc{S}\) and \((\al,a) \mt \be\) is linear and continuous, and hence that the assignments themselves are, also, linear and continuous.  Moreover, the proof is valid in the case where \(\M\) is non-compact, even though \cite{GIT} only considers the case where \(\M\) is closed.  Thus, one obtains the following result:

\begin{Lem}\label{Cochain-Lem}
For each \(0 \le p \le n\):

(1) There exists a continuous linear map \(\mc{S}: C^p(K) \to \Om^p(\M)\) satisfying points (i)--(iv) above.

(2) Consider the closed subspace \(F^p(\M,K) \cc \dd\Om^p(\M) \ds C^p(K)\) defined by:
\ew
F^p(\M,K) = \lt\{ (\al, a) \in \dd\Om^p(\M) \ds C^p(K) ~\m|~ \mbox{\(\int\)}(\al) = \dd a \rt\}.
\eew
Then, there is a continuous linear map \(\mc{P}: F^p(\M,K) \to \Om^p(\M)\) such that for all \((\al,a) \in F^p(\M,K)\):
\ew
\mbox{\(\int\)}(\mc{P}(\al,a)) = a \et \dd\mc{P}(\al,a) = \al.
\eew\\
\end{Lem}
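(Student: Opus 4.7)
The plan is to carry out the constructions of \cite[Ch.\ IV, \S\S25--28]{GIT} while ensuring that every step produces a continuous linear operator, and to verify that the arguments pass to the non-compact case. For part (1), assign to each \(p\)-simplex \(\si = [v_{i_0},\ldots,v_{i_p}] \in K\) the Whitney form
\ew
W_\si = p!\sum_{k=0}^p (-1)^k \mu_{i_k}\,\dd\mu_{i_0}\w\cdots\w\widehat{\dd\mu_{i_k}}\w\cdots\w\dd\mu_{i_p},
\eew
where \(\mu_j\) is the barycentric coordinate at the vertex \(v_j\). Then \(W_\si\) is Lipschitz, piecewise smooth, supported in \(\mathrm{Star}(\si)\), satisfies \(\int_\ta W_\si = \de_{\si\ta}\), and commutes with the simplicial coboundary. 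Pulling back via \(f\) and then smoothing---as in \cite[\S28]{GIT}, e.g.\ by convolution against a fixed bump function inside a tubular neighbourhood of \(|K|\) relative to a smooth embedding \(\M \hookrightarrow \bb{R}^N\)---produces a genuine smooth \(\mc{S}(\de_\si)\in\Om^p(\M)\) retaining the support in \(\mathrm{Star}(\si)\) and the integration identities. Setting \(\mc{S}(a) := \sum_\si a(\si)\,\mc{S}(\de_\si)\) then gives a well-defined linear map \(\mc{S}: C^p(K) \to \Om^p(\M)\), since triangulations of manifolds are locally finite.

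Continuity of \(\mc{S}\) is immediate: with respect to the grading of \exrefs{FSpace-Exs}(3), for each \(k\) only finitely many \(\si\) have \(\mathrm{Star}(\si) \cap K_k \ne \es\), so \(\|\mc{S}(a)\|_k\) is bounded by a finite linear combination of the continuous seminorms \(a \mapsto |a(\si_j)|\) on \(C^p(K)\); the crucial observation is that the smoothing operation is fixed once and for all, independently of \(a\). Properties (i)--(iv) carry over from \cite{GIT} unchanged. For part (2), use \(\mc{S}\) to reduce to a primitive problem with trivial integrals: given \((\al,a) \in F^p(\M,K)\), set \(\al' := \al - \mc{S}\lt(\mbox{\(\int\)}\al\rt)\). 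By properties (i) and (ii), \(\mc{S}(\int\al) = \mc{S}(\dd a) = \dd\mc{S}(a)\), so \(\al' \in \dd\Om^p(\M)\), and property (i) further yields \(\int\al' = \int\al - \dd a = 0\). It therefore suffices to construct a continuous linear operator \(\mc{T}\) on the closed subspace \(\lt\{\be \in \dd\Om^p(\M) ~\m|~ \mbox{\(\int\)}\be = 0\rt\}\) satisfying \(\dd\mc{T}(\be) = \be\) and \(\int\mc{T}(\be) = 0\); then \(\mc{P}(\al,a) := \mc{S}(a) + \mc{T}(\al')\) will satisfy \(\dd\mc{P} = \dd\mc{S}(a) + \al' = \al\) and \(\int\mc{P} = a + 0 = a\), as required.

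To construct \(\mc{T}\), first produce a continuous linear primitive \(\ga:\dd\Om^p(\M) \to \Om^p(\M)\) with \(\dd\ga(\be) = \be\): each open star \(\mathrm{Star}(v_\al)\) of a vertex is smoothly contractible, so the standard Poincar\'e homotopy operator centred at \(v_\al\) provides a continuous linear local primitive, and a \v{C}ech-de Rham-style descent using a fixed smooth partition of unity subordinate to the vertex-star cover of \(\M\), iterated through the skeleta of \(K\), glues these local primitives into a continuous linear \(\ga\) on all of \(\M\). With \(\ga\) in hand, the constraint \(\int\be = 0\) together with Stokes' theorem gives \(\dd\int\ga(\be) = \int\be = 0\), so \(\int\ga(\be) \in Z^p(K)\) and \(\mc{S}(\int\ga(\be))\) is closed; setting
\ew
\mc{T}(\be) := \ga(\be) - \mc{S}\lt(\mbox{\(\int\)}\ga(\be)\rt)
\eew
preserves \(\dd\mc{T}(\be) = \be\) and forces \(\int\mc{T}(\be) = 0\). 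The main obstacle is organisational rather than conceptual: every local choice---the embedding, the smoothing kernel, the partition of unity, the Poincar\'e homotopy on each star---must be fixed once and for all, independently of the input, and the \v{C}ech-de Rham descent must be carried out at the level of operators rather than individual forms, so that linearity and continuity are automatic. With this bookkeeping in place, the non-compactness of \(\M\) causes no difficulty, since every compact subset meets only finitely many simplices and every vertex star is contained in a coordinate chart.
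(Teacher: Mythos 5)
Your broad strategy --- fix every choice in Whitney's construction \cite[Ch.\ IV, \S\S25--28]{GIT} so that each step is a fixed continuous linear operator --- is exactly the paper's own (largely uncited) proof, and your treatment of part~(1) essentially reproduces it; the only caveat is that a generic ambient convolution would \emph{not} preserve the identity $\int_\ta\mc{S}(\de_\si) = \de_{\si\ta}$, so the smoothing really must be Whitney's period-preserving one from \cite[\S28]{GIT}, not an arbitrary mollifier. The genuine gap is in part~(2). Your reduction via $\al' = \al - \mc{S}\lt(\int\al\rt)$ to building $\mc{T}$ on $\lt\{\be\in\dd\Om^p(\M) \m| \int\be = 0\rt\}$ is correct and clean, but you then try to obtain $\mc{T}$ from a continuous linear right inverse $\ga:\dd\Om^p(\M)\to\Om^p(\M)$ of $\dd$ defined on \emph{all} of $\dd\Om^p(\M)$. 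That is strictly stronger than what is needed and is in fact precisely the conclusion of \lref{Splitting-Lem}(2), which the paper proves \emph{from} the present lemma using the nuclear-splitting machinery of \tref{nuclear-split}; constructing it here either imports that machinery silently or risks circularity. A \v{C}ech--de~Rham descent alone cannot produce $\ga$: the zig-zag terminates at a simplicial $p$-cocycle that is cohomologically trivial but not zero, and inverting $\dd:C^{p-1}(K)\to B^p(K)$ as a continuous linear map is exactly the splitting of $0\to Z^{p-1}(K)\to C^{p-1}(K)\to B^p(K)\to 0$ --- the locally-finite-dimensionality argument the paper sets up separately. ``Glue the local primitives with a partition of unity'' buries a real functional-analytic step, and the claim that the obstacle is merely organisational understates it.

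The fix, which is Whitney's own route, is to bypass $\ga$ and build $\mc{T}$ on the restricted domain directly by induction over the skeleta of $K$. Suppose $\mc{T}(\be)$ has been defined near the $(k{-}1)$-skeleton with $\dd\mc{T}(\be) = \be$ there and $\int_\si\mc{T}(\be) = 0$ for every $(p{-}1)$-simplex $\si$ in that skeleton; extend across a $k$-simplex $\ta$ by taking a Poincar\'{e} primitive of $\be$ on $\Op(\ta)$ and correcting the closed discrepancy on $\Op(\del\ta)$. Extending that discrepancy as a closed form across $\Op(\ta)$ is obstructed only when $k = p$, and the obstruction is $\int_{\del\ta}(\text{Poincar\'{e} primitive}) - \int_{\del\ta}\mc{T}(\be) = \int_\ta\be - 0 = \lt(\int\be\rt)(\ta) = 0$. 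The compatibility condition built into $F^p(\M,K)$ therefore kills every \emph{local} obstruction, so no global \v{C}ech coboundary solve is required and every step is a fixed continuous linear recipe depending only on $\ta$ and the triangulation. Your \v{C}ech--de~Rham formulation loses exactly this advantage because it does not thread the cochain data through the descent: your $\ga$ is demanded on forms with arbitrary $\int\be$, for which the local obstructions do not vanish.
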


\subsection{Proof of \tref{NCH}}

I now prove \tref{NCH}.  I begin with a definition.
\begin{Defn}\label{LF-FSpace-Defn}
Let \(F\) be a \F\ space and choose a grading \(\lt(\|-\|_k\rt)_k\) on \(F\).  I shall term \(F\) locally finite-dimensional if for every \(k \in \bb{N}\), the space \(F_k\) (equivalently, \(\rqt{F}{\ker\|-\|_k}\)) is finite-dimensional.
\end{Defn}
By \eref{equiv-grads}, this definition is independent of choice of grading.  Moreover, the following proposition is readily verified:
\begin{Prop}\label{Properties-LF-Prop}
Let \(F\) be a locally finite-dimensional \F\ space.  Then:
\begin{enumerate}
\item Every closed subspace of \(F\) is also locally finite-dimensional;

\item Every quotient of a locally finite-dimensional \F\ space by a closed subspace is locally finite-dimensional;

\item \(F\) is nuclear.
\end{enumerate}
\end{Prop}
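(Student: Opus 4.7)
The plan is to deduce all three statements directly from \dref{LF-FSpace-Defn} using elementary properties of finite-dimensional normed spaces. Throughout, fix a grading $(\|-\|_k)_k$ on $F$ witnessing local finite-dimensionality, i.e.\ such that each local Banach space $F_k$ is finite-dimensional.

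For part (1), given a closed subspace $E \cc F$, I would equip $E$ with the restricted grading $\lt(\|-\|_k|_E\rt)_k$, which induces its subspace topology and makes it a \F\ space. The inclusion $E \emb F$ descends to an isometric injection $E/\ker\lt(\|-\|_k|_E\rt) \emb F/\ker\|-\|_k$, whose image is a finite-dimensional subspace of the finite-dimensional space $F/\ker\|-\|_k$; being finite-dimensional, it is already complete and hence coincides with its own completion $E_k$. Thus $E_k$ is finite-dimensional.

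For part (2), given a closed subspace $E \cc F$, I would equip $F/E$ with the quotient grading $\lt(\|-\|'_k\rt)_k$ from \sref{App-Prelim}. The quotient map $\pi:F \to F/E$ is continuous \wrt\ $\|-\|_k$ and $\|-\|'_k$, and so descends to a surjection $F/\ker\|-\|_k \to (F/E)/\ker\|-\|'_k$. Since the domain is finite-dimensional, so too is the codomain; being finite-dimensional it is already complete and hence coincides with $(F/E)_k$. Hence $F/E$ is locally finite-dimensional.

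For part (3), to verify nuclearity I would take $k = l$ for each $l \in \bb{N}$, so that $\io^l_l = \Id: F_l \to F_l$ is the identity on a finite-dimensional Banach space. Choosing any basis $f_1,\ldots,f_d$ of $F_l$ together with its dual basis $\Ph_1,\ldots,\Ph_d \in F_l^*$ yields the finite representation $\io^l_l(f) = \sum_{i=1}^d \Ph_i(f) f_i$, which is visibly a nuclear representation (with coefficients $c_i = 1$ for $1 \le i \le d$), since the three summability conditions reduce to the finiteness of $\max_i \|\Ph_i\|_l$, $\max_i \|f_i\|_l$ and $d$ respectively. I do not anticipate any serious obstacle: all three arguments are routine consequences of the definitions together with the standard fact that every finite-dimensional subspace of a normed space is automatically closed and complete.
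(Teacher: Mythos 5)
Your proof is correct, and it is essentially the argument the paper has in mind: the paper states \pref{Properties-LF-Prop} with the remark that it is ``readily verified'' and omits the proof entirely (the body is just \verb|\qed|). Your three arguments are exactly the routine verifications that justify that omission — restriction of the grading to a closed subspace gives an isometric embedding of local Banach spaces, passage to a quotient gives a surjection of local Banach spaces, and for nuclearity taking \(k = l\) works because every linear map between finite-dimensional spaces is finite-rank and hence nuclear — so there is nothing to contrast.
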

\qed

\begin{Ex}\label{LF-FSpace-Ex}
Recall the space \(\upomega\) and let \(\lt(\|-\|_k\rt)_k\) be the grading on \(\upomega\) described in \exref{FSpace-Exs}(2).  Then, \(\rqt{\upomega}{\ker\|-\|_k} \cong \bb{R}^k\) for each \(k \ge 0\).  Thus, \(\upomega\) is locally finite-dimensional.  In particular, given a simplicial complex \(K\), the vector spaces \(B^p(K)\), \(Z^p(K)\) and \(C^p(K)\) are all locally finite-dimensional.
\end{Ex}

The motivation for \dref{LF-FSpace-Defn} derives from the fact that locally finite-dimensional \F\ spaces enjoy the following property:

\begin{Thm}\label{LF->Ext=0-Thm}
Let \(E\), be a locally finite-dimensional \F\ space.  Then, every short exact sequence of \F\ spaces:
\ew
\bcd
0 \ar[r] & E \ar[r, "\al"] & F \ar[r, "\be"] & G \ar[r] & 0
\ecd
\eew
splits.
\end{Thm}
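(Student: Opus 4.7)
The plan is to deduce this from Theorem \ref{nuclear-split}, so it suffices to verify that the map \(\mc{I}\) appearing there is surjective for some (equivalently any) grading \((\|-\|_k)_k\) of \(E\); nuclearity of \(E\) is automatic from Proposition \ref{Properties-LF-Prop}(3).

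First I would establish that the transition maps \(\io^{k+1}_k: E_{k+1} \to E_k\) are surjective. Because \(E\) is locally finite-dimensional, each \(E_k\) is a finite-dimensional Banach space, and the canonical image \(E/\ker\|-\|_k \pc E_k\) is dense and automatically closed (every finite-dimensional Hausdorff topological vector space is complete), so \(E \to E_k\) is a surjective quotient map. Since this factors as \(E \to E_{k+1} \oto{\io^{k+1}_k} E_k\), the transition \(\io^{k+1}_k\) must also be surjective. With both \(E_k\) and \(E_{k+1}\) finite-dimensional, one may choose a linear (automatically continuous) section \(s^k_{k+1}: E_k \to E_{k+1}\) satisfying \(\io^{k+1}_k \circ s^k_{k+1} = \Id\).

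Given any \((B_k)_k \in \prod_{k\in\bb{N}} \mc{L}(G,E_k)\), I would then construct a preimage recursively: set \(A_0 = 0\) and define \(A_{k+1} = s^k_{k+1} \circ (A_k + B_k) \in \mc{L}(G,E_{k+1})\). By construction \(\io^{k+1}_k A_{k+1} - A_k = B_k\) for every \(k\), so \(\mc{I}(A_k)_k = (B_k)_k\), proving surjectivity of \(\mc{I}\). Theorem \ref{nuclear-split} then delivers the required continuous linear right inverse \(\ga: G \to F\), and hence the splitting.

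The only genuinely nontrivial step is recognizing that local finite-dimensionality forces the transition maps \(\io^{k+1}_k\) to be surjective; once this is in hand, the construction of the preimage under \(\mc{I}\) is a routine recursion, and the rest is an invocation of Theorem \ref{nuclear-split}.
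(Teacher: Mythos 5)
Your proposal is correct and takes essentially the same approach as the paper: both reduce to verifying surjectivity of the map \(\mc{I}\) in \tref{nuclear-split}, choose linear sections of the finite-dimensional transition maps \(\io^{k+1}_k\), and construct preimages from these sections (your recursion \(A_{k+1} = s^k_{k+1}(A_k + B_k)\) with \(A_0 = 0\) unrolls precisely to the paper's closed-form right inverse \(\mc{J}\)). The one point where you are more careful than the paper is in actually justifying that \(\io^{k+1}_k\) is surjective, via the observation that the dense image \(E/\ker\|-\|_k \subseteq E_k\) is already complete because it is finite-dimensional; the paper takes this for granted.
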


\begin{proof}
Since \(E\) is nuclear (by \pref{Properties-LF-Prop}(3)), it suffices to prove that the map \(\mc{I}\) defined in \tref{nuclear-split} is surjective.  For each \(\io^{k+1}_k: E_{k+1} \to E_k\), since \(E_{k+1}\) and \(E_k\) are finite-dimensional and \(\io^{k+1}_k\) is surjective, one may choose a linear map \(\la^k_{k+1}:E_k \to E_{k+1}\) such that \(\io^{k+1}_k \circ \la^k_{k+1} = \Id_{E_k}\).  In general, for any \(l < k\) define \(\la^l_k = \la^{k-1}_k \circ ... \circ \la^l_{l+1}\).  Using the \(\la^l_k\), define a map:
\ew
\bcd[row sep = 0pt]
\mc{J}: \prod_{k \in \bb{N}} \mc{L}(G,E_k) \ar[r] & \prod_{k \in \bb{N}} \mc{L}(G,E_k)\\
(A_k)_k \ar[r, maps to] & \lt(\sum_{i = 0}^{k-1}\la^i_k \circ A_i\rt)_k
\ecd
\eew
where, as usual, \(\sum_{i=0}^{-1}\) is interpreted as the empty sum.  Then, one computes that:
\ew
\mc{I}\circ\mc{J}\lt[(A_k)_k\rt] &= \lt(\io^{k+1}_k \circ \sum_{i = 0}^k\la^i_{k+1} \circ A_i - \sum_{i = 0}^{k-1}\la^i_k \circ A_i\rt)_k\\
&= \lt(\sum_{i = 0}^k\la^i_k \circ A_i - \sum_{i = 0}^{k-1}\la^i_k \circ A_i\rt)_k\\
&= \lt(A_k\rt)_k
\eew
and thus, \(\mc{I}\) is surjective (having right inverse \(\mc{J}\)).

\end{proof}

Using \tref{LF->Ext=0-Thm}, I now prove \tref{NCH}.  I begin with the following lemma:

\begin{Lem}\label{Splitting-Lem}
Let \(\M\) be an \(n\)-manifold (not necessarily oriented and possibly non-compact or with boundary).  Then, for any \(0 \le p \le n\) the following sequences split in the category of \F\ spaces:
\begin{enumerate}
\item \(\bcd 0 \ar[r] & \dd\Om^{p-1}(\M) \ar[r, hook] & \Om^p_\cl(\M) \ar[r, "proj"] & \dR{p}(\M) \ar[r] & 0 \ecd\)\\

\item \( \bcd 0 \ar[r] & \Om^p_\cl(\M) \ar[r, hook] & \Om^p(\M) \ar[r, "\dd"] & \dd\Om^p(\M) \ar[r] & 0. \ecd\)
\end{enumerate}
(Here, as usual, \(\Om^{-1}(\M)\) is formally taken to be \(0\).)
\end{Lem}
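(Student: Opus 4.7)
The plan is to reduce both splittings to the simplicial setting via a smooth triangulation, where the relevant Fréchet spaces are locally finite-dimensional and hence \tref{LF->Ext=0-Thm} applies. First, fix a smooth triangulation $(K,f)$ of $\M$. By \exref{LF-FSpace-Ex} and \pref{Properties-LF-Prop}, each of $C^p(K)$, $Z^p(K)$, $B^p(K)$ and $\sch{p}{K}$ is a locally finite-dimensional Fréchet space. The continuous linear maps $\mc{S}:C^p(K)\to\Om^p(\M)$ and $\mc{P}:F^p(\M,K)\to\Om^p(\M)$ supplied by \lref{Cochain-Lem} will serve as the bridge back to smooth forms.

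For part (1), I would observe that the short exact sequence
\ew
0 \to B^p(K) \emb Z^p(K) \to \sch{p}{K} \to 0
\eew
has locally finite-dimensional kernel, and hence splits by \tref{LF->Ext=0-Thm}; let $s:\sch{p}{K}\to Z^p(K)$ denote a continuous linear section. Recall from \sref{App-Prelim} that integration induces a Fréchet space isomorphism between $\dR{p}(\M)$ and $\sch{p}{K}$. Because $\mc{S}$ intertwines $\dd$, the composition
\ew
\io_p = \mc{S}\circ s\circ \mbox{$\int$} : \dR{p}(\M)\to\Om^p_\cl(\M)
\eew
is a continuous linear map. Writing $q:\Om^p_\cl(\M)\to\dR{p}(\M)$ for the quotient and using property (i) of $\mc{S}$ together with the defining property of $s$, one finds that $\int\io_p[\al] = s(\int[\al])\in Z^p(K)$ represents the class $\int[\al]\in\sch{p}{K}$; applying the de Rham isomorphism yields $q(\io_p[\al]) = [\al]$, so $\io_p$ is the required continuous linear section of $q$.

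For part (2), I would apply \tref{LF->Ext=0-Thm} to the short exact sequence
\ew
0 \to Z^p(K) \emb C^p(K) \xrightarrow{\dd} B^{p+1}(K) \to 0
\eew
of locally finite-dimensional Fréchet spaces to obtain a continuous linear section $r:B^{p+1}(K)\to C^p(K)$ with $\dd\circ r = \Id$. Given $\be\in\dd\Om^p(\M)$, the cochain $\int\be$ lies in $B^{p+1}(K)$, and then $(\be,r(\int\be))\in F^p(\M,K)$ by construction of $r$. Define
\ew
\ta_p:\dd\Om^p(\M)\to\Om^p(\M), \quad \be \mt \mc{P}(\be,r(\mbox{$\int$}\be)).
\eew
This $\ta_p$ is continuous and linear, and the defining property of $\mc{P}$ gives $\dd\ta_p = \Id$, supplying the required splitting.

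The conceptual obstacle is that the Fréchet spaces $\dd\Om^{p-1}(\M)$ and $\Om^p_\cl(\M)$ are themselves typically far from locally finite-dimensional, so \tref{LF->Ext=0-Thm} cannot be applied directly to either sequence. The essential novelty will be to route the problem through the simplicial complex $K$---where local finite-dimensionality holds automatically by \exref{LF-FSpace-Ex}---and to use Whitney's continuous linear right inverses $\mc{S}$ and $\mc{P}$ of \lref{Cochain-Lem} as the bridge back to smooth forms.
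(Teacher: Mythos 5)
Your proposal is correct and coincides essentially verbatim with the paper's own proof: both parts split the corresponding simplicial short exact sequences (with locally finite-dimensional kernel) via \tref{LF->Ext=0-Thm}, then transport the sections back to smooth forms using the de Rham isomorphism together with Whitney's continuous operators \(\mc{S}\) and \(\mc{P}\) from \lref{Cochain-Lem}. The maps you call \(s\), \(r\) and \(\ta_p\) are precisely the paper's \(\fr{p}\), \(\fr{d}\) and \(\de_{p+1}\).
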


\begin{proof}
(1) Choose a triangulation \((K,f)\) of \(\M\) and consider the corresponding diagram:
\ew
\bcd[row sep = 15mm, column sep = 15mm]
0 \ar[r] & \dd\Om^{p-1}(\M) \ar[r, hook] \ar[d, "\int"] & \Om^p_\cl(\M) \ar[r, "proj", shift left = 3pt] \ar[d, "\int", shift left = 3pt] & \dR{p}(\M) \ar[r] \ar[d, "\int"] \ar[l, "\io_p", dashed, shift left = 3pt] & 0\\
0 \ar[r] & B^p(K) \ar[r, hook] & Z^p(K) \ar[u, "\mc{S}", shift left = 3pt] \ar[r, "proj", shift left = 3pt] & \sch{p}{K} \ar[l, "\fr{p}", dashed, shift left = 3pt] \ar[r] & 0
\ecd
\eew

Since \(B^p(K)\) is locally finite-dimensional (see \exref{LF-FSpace-Ex}), the bottom sequence in this diagram splits and thus there is a map \(\fr{p}: \sch{p}{K} \to Z^p(K)\) such that \(proj \circ \fr{p} = \Id_{\sch{p}{K}}\).  Now, define \(\io_p: \dR{p}(\M) \to \Om^p_\cl(\M)\) by \(\io_p = \mc{S} \circ \fr{p} \circ \int\).  I claim that \(proj \circ \io_p = \Id_{\dR{p}(\M)}\).  Since \(\int: \dR{p}(\M) \to \sch{p}{K}\) is an isomorphism, it suffices to prove that \(\int \circ\, proj \circ \io_p = \int\).  A direct calculation then yields:
\ew
\mbox{\(\int\)} \circ\, proj \circ \io_p &= \mbox{\(\int\)} \circ\, proj \circ \mc{S} \circ \fr{p} \circ \mbox{\(\int\)}\\
&= proj \circ \mbox{\(\int\)} \circ \mc{S} \circ \fr{p} \circ \mbox{\(\int\)}\\
&= proj \circ \fr{p} \circ \mbox{\(\int\)} \hs{5mm} \mbox{(since \(\int \circ\, \mc{S} = \Id_{Z^p(K)}\), by \lref{Cochain-Lem})}\\
&= \mbox{\(\int\)} \hs{5mm} \text{(since \(proj \circ \fr{p} = \Id_{\sch{p}{K}}\))},
\eew
as required.\\

(2) Now, consider the diagram:
\ew
\bcd[row sep = 15mm, column sep = 15mm]
0 \ar[r] & \Om^p_\cl(\M) \ar[r, hook] \ar[d, "\int"] & \Om^p(\M) \ar[r, "\dd", shift left = 3pt] \ar[d, "\int"] & \dd\Om^p(\M) \ar[r] \ar[d, "\int"] \ar[l, "\de_{p+1}", dashed, shift left = 3pt] & 0\\
0 \ar[r] & Z^p(K) \ar[r, hook] & C^p(K) \ar[r, "\dd", shift left = 3pt] & B^{p+1}(K) \ar[l, "\fr{d}", dashed, shift left = 3pt] \ar[r] & 0
\ecd
\eew
and recall the space \(F^p(\M,K)\) defined in \lref{Cochain-Lem}.  Since \(Z^p(K)\) is locally finite-dimensional (see \exref{LF-FSpace-Ex}), the lower sequence in this diagram splits and thus there is a map \(\fr{d}: B^{p+1}(K) \to C^p(K)\) such that \(\dd \circ \fr{d} = \Id_{B^{p+1}(K)}\).  In particular, the image of the map \(\Id \ds \lt(\fr{d} \circ \int\rt): \dd\Om^p(\M) \to \dd\Om^p(\M) \ds C^p(K)\) lies in the space \(F^p(\M,K)\) and thus the composite \(\de_{p+1} = \mc{P} \circ \lt[ \Id \ds \lt(\fr{d} \circ \int\rt)\rt]: \dd\Om^p(\M) \to \Om^p(\M)\) is well-defined.  I claim that \(\dd \circ \de_{p+1} = \Id_{\dd\Om^p(\M)}\).  Indeed, for \(\al \in \dd\Om^p(\M)\):
\ew
\dd \circ \de_{p+1}(\al) &= \dd \circ \mc{P} \lt[ \al \ds \lt(\fr{d} \circ \mbox{\(\int\)}(\al)\rt)\rt]\\
&= \al,
\eew
by \lref{Cochain-Lem}.  This completes the proof.

\end{proof}

Using \lref{Splitting-Lem}, I now complete the proof of \tref{NCH}

\begin{proof}[Proof of \tref{NCH}]
Let \(\io_p\) and \(\de_p\) be as in \lref{Splitting-Lem} and define \(\io\) to be the composite:
\ew
\bcd \Ds_{p = 0}^n \dR{p}(\M) \ar[r, "\Ds_{p = 0}^n \io_p"] & \Ds_{p=0}^n \Om^p_\cl(\M) \ar[r, hook] & \Ds_{p=0}^n \Om^p(\M)\ecd.
\eew
Then, by \lref{Splitting-Lem}:
\ew
\Om^p(\M) &= \Om^p_\cl(\M) \ds \de_{p+1}\dd\Om^p(\M)\\
&= \io\dR{p}(\M) \ds \dd\Om^{p-1}(\M) \ds \de_{p+1}\dd\Om^p(\M).
\eew
Now, define \(\de\) on \(\Om^p(\M)\) by setting:
\ew
\de =
\begin{dcases*}
0 &on \(\io\dR{p}(\M) \ds \de_{p+1}\dd\Om^p(\M)\)\\
\de_p &on \(\dd\Om^{p-1}\).
\end{dcases*}
\eew
Then, \(\de\Om^p(\M) = \de_p\dd\Om^{p-1}(\M)\) and thus one obtains \eref{NCHD-eq}, as claimed.  By construction, \(\de\) clearly satisfies \(\de^2 = 0\).  The rest of the claim now follows at once.

\end{proof}

~\vs{5mm}

\noindent Laurence H.\ Mayther\\
University of Cambridge\\
United Kingdom\\
{\it lhm32@cam.ac.uk}


\begin{thebibliography}{10}
\bibitem{MwEH} Bryant, R.L., `Metrics with exceptional holonomy', {\it Ann. Math.} {\bf 126} (1987), no. 3, 525--576.

\bibitem{THKWHLoNM} Bueler, E.L., `The heat kernel weighted Hodge Laplacian on noncompact manifolds', {\it Trans. Am. Math. Soc.} {\bf 351} (1999), no. 2, 683--713.

\bibitem{HFS&SH} Cort\'{e}s, V., Leistner, T., Sch\"{a}fer, L. and Schulte-Hengesbach, F., `Half-flat structures and special holonomy', {\it Proc. London Math. Soc.} {\bf 102} (2011), no. 3, 113--158.

\bibitem{NIoG2S} Crowley, D. and Nordstr\"{o}m, J., `New invariants of \g-structures', {\it Geom. Topol.} {\bf 19} (2015), no. 5, 2949--2992.

\bibitem{CoToa8DRVS} Djokovi\'{c}, D.Ž., `Classification of trivectors of an eight-dimensional real vector space', {\it Linear Multilinear Algebra} {\bf 13} (1983), no. 1, 3--39.

\bibitem{RoG2MwB} Donaldson, S.K., `Remarks on \g-manifolds with boundary', in {\it Celebrating the \(50^\text{th}\) Anniversary of the Journal of Differential Geometry: Lectures given at the Geometry and Topology Conference at Harvard University in 2017}, ed. by H-D. Cao, J. Li, R.M. Schoen and S-T. Yau, Surveys in Differential Geometry, {\bf XXII} (International Press of Boston, Somerville (MA), 2018), 103--124.

\bibitem{ItthP} Eliashberg, Y. and Mishachev, N., {\it Introduction to the \(h\)-Principle}, Graduate Studies in Mathematics, {\bf 48} (American Mathematical Society, Providence (RI), 2002).

\bibitem{CIoPDR} Gromov, M.L., `Convex integration of differential relations. I', trans. by N. Stein, {\it Math. USSR Inv.} {\bf 7} (1973), no. 2, 329--343.

\bibitem{PDR} Gromov, M.L., {\it Partial Differential Relations}, Ergebnisse der Mathematik und ihrer Grenzgebiete, 3. Folge, {\bf 9} (Springer-Verlag, Berlin, 1987).

\bibitem{A3F&ESLGoTG2} Herz, C., `Alternating 3-forms and exceptional simple Lie groups of type \g', {\it Can. J. Math.} {\bf XXXV} (1983), no. 5, 776--806.

\bibitem{IoM} Hirsch, M.W., `Immersions of manifolds', {\it Trans. Am. Math. Soc.} {\bf 93} (1959), no. 2, 242--276.

\bibitem{TGo3Fi6&7D} Hitchin, N.J., `The geometry of three-forms in six and seven dimensions', arXiv:math/0010054 [math.DG] (2000); see also `The geometry of three-forms in six and seven dimensions', {\it J. Differ. Geom.} {\bf 56} (2000), no. 3, 547--576.

\bibitem{SF&SM} Hitchin, N.J., `Stable forms and special metrics', in {\it Global Differential Geometry: The Mathematical Legacy of Alfred Gray}, ed. M. Fern\'{a}ndez and J.A. Wolf, Contemporary Mathematics, {\bf 288} (American Mathematical Society, Providence (RI), 2001), 70--89.

\bibitem{SG2SoPRM} Kath, I., `\(\Gg^*_{2(2)}\)-structures on pseudo-Riemannian manifolds', {\it J. Geom. Phys.} {\bf 27} (1998), 155--177.

\bibitem{TVSI} K\"{o}the, G., {\it Topological Vector Spaces I}, trans. by D.J.H. Garling, revised second printing, Grundlehren der mathematischen Wissenschaften, {\bf 159} (Springer-Verlag, Berlin, 1983).

\bibitem{MASF} L\^{e}, H-V., Pan\'{a}k, M. and Van\v{z}ura, J., `Manifolds admitting stable forms', {\it Comment. Math. Univ. Carolin.} {\bf 49} (2008), no. 1, 101--117.

\bibitem{UA&BotDHFoG2&SG2F} Mayther, L.H., `Unboundedness above and below of the Donaldson--Hitchin functionals on \g- and \sg-forms', \href{https://arxiv.org/abs/2308.15438}{arXiv:2308.15438 [math.DG]} (2023).

\bibitem{AoCItS&CG} McDuff, D., `Application of convex integration to symplectic and contact geometry', {\it Ann. Inst. Fourier (Grenoble)} {\bf 37} (1987), no. 1, 107--133.

\bibitem{EDT} Munkres, J.R., {\it Elementary Differential Topology}, Annals of Mathematics Studies, {\bf 54} (Princeton University Press, Princeton (NJ), 1963).

\bibitem{DOoVB} Palais, R.S., `Differential operators on vector bundles', in {\it Seminar on the Atiyah--Singer Index Theorem}, ed. by R.S. Palais, Annals of Mathematics Studies, {\bf 57} (Princeton University Press, Princeton (NJ), 1965), 51--93.

\bibitem{HToIDM} Palais, R.S., `Homotopy theory of infinite dimensional manifolds', {\it Topology} {\bf 5} (1966), no. 1, 1--16.

\bibitem{RG&HG} Salamon, S.M., {\it Riemannian Geometry and Holonomy Groups}, Pitman Research Notes in Mathematics Series, {\bf 201} (Longman Scientific and Technical, Essex, 1989).

\bibitem{HD-AMfSBVP} Schwarz, G., {\it Hodge Decomposition - A Method for Solving Boundary Value Problems}, Lecture Notes in Mathematics, {\bf 1607} (Springer-Verlag, Berlin, 1995).

\bibitem{CIT} Spring, D., {\it Convex Integration Theory: Solutions to the \(h\)-Principle in Geometry and Topology},  Monographs in Mathematics, {\bf 92} (Birkh\"{a}user Verlag, Switzerland, 1998).

\bibitem{SRoCLMBFS} Vogt, D., `Some results on continuous linear maps between \F\ spaces', in {\it Functional Analysis: Surveys and Recent Results III}, ed. by K-D. Bierstedt and B. Fuchssteiner, North-Holland Mathematics Studies, {\bf 90} (Elsevier Science Publishers B.V., Amsterdam, 1984), 349--381.

\bibitem{FoDM&LG} Warner, F.W., {\it Foundations of Differentiable Manifolds and Lie Groups}, (Scott, Foresman \& Company, 1971; repr. as Graduate Texts in Mathematics, {\bf 94}, Springer-Verlag, Berlin, 1983).

\bibitem{GIT} Whitney, H., {\it Geometric Integration Theory}, Princeton Mathematical Series, {\bf 21} (Princeton University Press, Princeton (NJ), 1957).

\bibitem{FA(KY)} Yosida, K., {\it Functional Analysis}, \(5^\text{th}\) edn., Grundlehren der mathematischen Wissenschaften, {\bf 123} (Springer-Verlag, Berlin, 1978).
\end{thebibliography}
\end{document}